\theoremstyle{plein}
\newtheorem{theorem}{Theorem}[section]
\newtheorem*{theorem*}{Theorem}
\newtheorem{lemma}[theorem]{Lemma}
\newtheorem{proposition}[theorem]{Proposition}
\newtheorem*{proposition*}{Proposition}
\newtheorem{corollary}[theorem]{Corollary}
\newtheorem*{corollary*}{Corollary}
\newtheorem{conjecture*}{Conjecture}
\theoremstyle{definition}
\newtheorem{construction}[theorem]{Construction}
\newtheorem{notation}[theorem]{Notation}
\newtheorem{example}[theorem]{Example}
\newtheorem{definition}[theorem]{Definition}
\newtheorem{definition*}{Definition}
\newtheorem{remark}[theorem]{Remark}
\newtheorem{remark*}{Remark}
\newcommand{\bQ}{{\mathbb Q}}
\newcommand{\mA}{{\mathcal A}}
\newcommand{\mB}{{\mathcal B}}
\newcommand{\mC}{{\mathcal C}}
\newcommand{\mD}{{\mathcal D}}
\newcommand{\mE}{{\mathcal E}}
\newcommand{\mL}{{\mathcal L}}
\newcommand{\mM}{{\mathcal M}}
\newcommand{\mN}{{\mathcal N}}
\newcommand{\mP}{{\mathcal P}}
\newcommand{\mQ}{{\mathcal Q}}
\newcommand{\mR}{{\mathcal R}}
\newcommand{\mS}{{\mathcal S}}
\newcommand{\mU}{{\mathcal U}}
\newcommand{\mV}{{\mathcal V}}
\newcommand{\mW}{{\mathcal W}}
\newcommand{\mX}{{\mathcal X}}
\newcommand{\mY}{{\mathcal Y}}
\newcommand{\A}{{\mathrm A}}
\newcommand{\B}{{\mathrm B}}
\newcommand{\C}{{\mathrm C}}
\newcommand{\F}{{\mathrm F}}
\newcommand{\G}{{\mathrm G}}
\newcommand{\rH}{{\mathrm H}}
\newcommand{\K}{{\mathrm K}}
\renewcommand{\L}{{\mathrm L}}
\newcommand{\N}{{\mathrm N}}
\newcommand{\W}{{\mathrm W}}
\newcommand{\X}{{\mathrm X}}
\newcommand{\Y}{{\mathrm Y}}
\newcommand{\Z}{{\mathrm Z}}
\newcommand{\rc}{{\mathrm c}}
\newcommand{\bj}{{\mathrm j}}
\newcommand{\bi}{{\mathrm i}}
\newcommand{\m}{{\mathrm m}}
\newcommand{\bk}{{\mathrm k}}
\newcommand{\q}{{\mathrm q}}
\newcommand{\g}{{\mathrm g}}
\newcommand{\n}{{\mathrm n}}
\newcommand{\op}{\mathrm{op}}
\newcommand{\dual}{\vee}
\newcommand{\Ho}{\mathsf{Ho}}
\newcommand{\f}{\mathsf{f}}
\newcommand{\colim}{\mathrm{colim}}
\newcommand{\ot}{\otimes}
\newcommand{\h}{\mathrm{h}}
\newcommand{\id}{\mathrm{id}}
\newcommand{\Cat}{\mathsf{Cat}}
\newcommand{\Set}{\mathsf{Set}}
\renewcommand{\Pr}{\mathrm{Pr}}
\newcommand{\Fun}{\mathrm{Fun}}
\newcommand{\cocart}{{\mathrm{cocart}}}
\newcommand{\Cocart}{{\mathrm{Cocart}}}
\newcommand{\cart}{{\mathrm{cart}}}
\newcommand{\Cart}{{\mathrm{Cart}}}
\newcommand{\Tw}{{\mathrm{Tw}}}
\newcommand{\rev}{{\mathrm{rev}}}
\newcommand{\Env}{{\mathrm{Env}}}
\newcommand{\bZ}{{\mathbb{Z}}}
\newcommand{\Act}{{\mathrm{Act}}}
\newcommand{\Mor}{{\mathrm{Mor}}}
\newcommand{\Corr}{{\mathrm{Corr}}}
\newcommand{\CART}{{\mathrm{CART}}} 
\newcommand{\CAT}{{\mathrm{CAT}}} 
\newcommand{\Lax}{{\mathrm{Lax}}}
\newcommand{\CORR}{{\mathrm{CORR}}} 
\newcommand{\PR}{{\mathrm{PR}}} 
\newcommand{\FUN}{{\mathrm{FUN}}} 
\title{A local-global principle for parametrized $\infty$-categories} % of $\infty$-categories}
\begin{document}
	
\author{Hadrian Heine, \\ University of Oslo, Norway, \\ hadriah@math.uio.no}
\maketitle

\begin{abstract}
We prove a local-global principle for parametrized $\infty$-categories: 
we show that any functor $\mathcal{B} \to \mathcal{C}$ is determined by the following data: the collection of fibers $\mathcal{B}_X$ for $X$ running through the set of equivalence classes of objects of $\mathcal{C}$ endowed with the action of the space of automorphisms $\mathrm{Aut}_X(\mathcal{B})$ on the fiber, the local data, together with a locally cartesian fibration $\mD \to \mathcal{C}$ and $\mathrm{Aut}_X(\mathcal{B})$-linear equivalences $\mD_X \simeq \mP(\mathcal{B}_X)$ to the $\infty$-category of presheaves on $\mathcal{B}_X$, the gluing data. 
As applications we compute the mapping spaces of the conditionally existing internal hom of $\infty\Cat_{/\mathcal{C}}$
and extend the $\infty$-categorical Grothendieck-construction by proving that
$\infty$-categories over any $\infty$-category $\mathcal{C}$
are classified by normal lax 2-functors to a double $\infty$-category of correspondences.

\end{abstract}

\tableofcontents

\section{Introduction}

%Hasse's local-global principle is one of the guiding principles in arithmetic and 

By Hasse's famous local-global principle %or principle about the arithmetic fracture square
%asserts 
the natural commutative square
$$
\begin{xy}
\xymatrix{
\bZ \ar[d]
\ar[rr]
&&\bQ \ar[d]
\\
\prod_{p} \bZ_p  \ar[rr] && (\prod_{p} \bZ_p) \ot_\bZ \bQ}
\end{xy}$$
relating the integers, rationals and $p$-adic numbers, is a cartesian square, known as the arithmetic fracture square.
%The latter square, also known as the arithmetic fracture square, 
Hasse's principle serves as a guiding principle in arithmetic and arithmetic geometry 
and was extended to a fundamental principle in unstable and stable homotopy theory fracturing an appropriate finite (stable) homotopy type into its rationalization and $p$-completion.

It is goal of this article to prove a local-global principle for parametrized $\infty$-categories.
It is a fundamental insight of \cite{BarwickParametrized}, \cite{barwick2016parametrizedhighercategorytheory},  \cite{nardinparametrizedequivarianthigheralgebra}, \cite{ShahParametrized}, \cite{shahparametrizedhighercategorytheory}
that parametrized $\infty$-category theory, i.e. the systematic study of $\infty$-categories over a fixed base $\infty$-category, is a powerful tool to perform constructions in equivariant and motivic homotopy theory \cite{barwickfibrewiseeffectiveburnsideinftycategory}, \cite{quigleyparametrizedtate}, \cite{cnossen2024normedequivariantringspectra}, \cite{Bachmann2017NormsIM}. 
%This comes primarily from the fact that genuine actions of a group on some space or $\infty$-category are best understood as cocartesian fibrations to the orbit category of the group, a presentation that which give rise to various examples of parametrized $\infty$-categories
%that are not necessarily cocartesian fibrations anymore.
Parametrized $\infty$-category theory relies on a deep understanding and efficient control of fibrations of $\infty$-categories.
The latter play the role in higher category theory that Kan-fibrations play in homotopy theory
but the complexity is drastically higher \cite{MR4074276}.
%relating homotopical universal constructions to tractable combinatorial and geometrical models. 
%Similarly, work of Lurie \cite{lurie.HTT}, \cite{lurie.higheralgebra} and Ayala-Francis \cite{MR4074276} demonstrate that higher category theory heavily relies on a deep understanding of fibrations of $\infty$-categories and manipulating fibrations is the key technique when working sucessfully with $\infty$-categories.
%Extending the theory of Kan-fibrations 
The next diagram gives an impression of the plethora of fibrations of $\infty$-categories, where all sub-squares are cartesian:
$$
\begin{xy}
\xymatrix{
&  \{ \mathrm{Kan} \ \mathrm{fibrations} \}
\ar[rd] \ar[ld] \ar[d]
\\
\{ \mathrm{Left} \ \mathrm{fibrations} \} \ar[rd] \ar[d] &   \{ {\mathrm{Bicartesian} \ \mathrm{fibrations}} \} \ar[ld]\ar[rd] & \{ \mathrm{Right} \ \mathrm{fibrations} \} \ar[ld] \ar[d] 
\\
\{ \mathrm{Cocartesian} \ \mathrm{fibrations} \}  \ar[rd] \ar[d] & \{ \underset{\mathrm{whose} \ \mathrm{fibers} \ \mathrm{are} \ \mathrm{spaces}}{\mathrm{Exponential} \ \mathrm{fibrations}} \}\ar[d] & \{ \mathrm{Cartesian} \ \mathrm{fibrations} \}  \} \ar[ld] \ar[d]\\
\{ \underset{\mathrm{fibrations} }{\mathrm{Locally} \ \mathrm{cocartesian} \} } \ar[rd] & \{ \mathrm{Exponential} \ \mathrm{fibrations} \} \ar[d] & \underset{\mathrm{fibrations} }{\{\mathrm{Locally} \ \mathrm{cartesian} \} }  \ar[ld] \\
& \{ \mathrm{parametrized}\ \infty\mathrm{-categories} \} &
}
\end{xy} $$

All fibrations in the latter diagram have a common base $\infty$-category. Choosing the base to be a space the latter diagram degenerates and only two types of fibrations remain:
Kan-fibrations and bicartesian fibrations.
Since any space splits as the disjoint union of path components, the first type of fibration
classifies a family of spaces, the family of fibers over the path components, equipped with the action of the loop space.
Similarly, the second type of fibration classifies a family of $\infty$-categories equipped with the action of the loop space.

We prove a local-global principle for parametrized $\infty$-categories following the next table of analogies:
$$\begin{tabular}{ l | c}
\hline			
$\bZ$ & \{$\infty$-categories over the base\} \\
\hline
$\bQ $ & \{Locally cartesian fibrations whose fibers are presentable \\ & and whose fiber transports admit a right adjoint\} \\
\hline
primes $p$ & equivalence classes $p$ of objects of the base \\
\hline
$\bZ_p $ & $ \infty\Cat[\Omega_p]$, the $\infty$-category of small $\infty$-categories \\ & equipped with an action of the loop space at $p$. \\
\hline
$\bQ_p = \bQ \ot_\bZ \bZ_p $ & $\Pr^L[\Omega_p]$, the $\infty$-category of presentable $\infty$-categories \\ & equipped with an action of the loop space at $p$. 
\end{tabular}$$

We prove the following theorem, where for any small $\infty$-category $\mC$ we write $\pi_0(\mC)$ for the set of equivalence classes in $\mC$
and $ \mathrm{LoCART}^L_\mC $ for the $\infty$-category of locally cartesian fibrations whose fibers are presentable and whose fiber transports admit a right adjoint and arbitrary functors over
$\mC$ inducing left adjoints on every fiber:
\begin{theorem}\label{0}(Theorem \ref{corr})
For every small $\infty$-category $\mC$ there is a pullback square of $(\infty,2)$-categories
$$
\begin{xy}
\xymatrix{
\infty\Cat_{/\mC} \ar[d] \ar[rr]
&& \mathrm{LoCART}^L_\mC \ar[d]^{}
\\
\prod_{p \in \pi_0(\mC)} \infty\Cat[\Omega_p]\ar[rr] && \prod_{p \in \pi_0(\mC)} \Pr^L[\Omega_p].}
\end{xy}$$
\end{theorem}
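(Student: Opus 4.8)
My plan is to identify the top horizontal arrow with a relative presheaf construction, reduce the assertion that the square is a pullback to a question about normal lax functors out of $\mC$, and settle that by the pointwise fact that profunctors between $\infty$-categories are precisely colimit-preserving functors between their presheaf $\infty$-categories. I expect the technical heart, and the main obstacle, to be the bookkeeping that makes all of this coherent at the level of $(\infty,2)$-categories.

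\emph{The comparison functor.} First I would construct the relative presheaf functor $\mP_{\mC}\colon \infty\Cat_{/\mC}\to\infty\Cat_{/\mC}$ carrying $\pi\colon\mB\to\mC$ to a locally cartesian fibration $\mP_{\mC}(\mB)\to\mC$ with fibre $\mP(\mB_X)$ over $X$, whose transport along $f\colon X\to X'$ is the right adjoint of the colimit-preserving functor $\mP(\mB_X)\to\mP(\mB_{X'})$ classified by the profunctor $\Map^{f}_{\mB}(-,-)\colon\mB_X^{\op}\times\mB_{X'}\to\mS$. One checks that $\mP_{\mC}(\mB)\to\mC$ lies in $\mathrm{LoCART}^{L}_{\mC}$ (presheaf $\infty$-categories are presentable, and the displayed transport is by construction right-adjointable), that $\mP_{\mC}$ upgrades to a functor of $(\infty,2)$-categories, and that its fibre over a chosen representative $X_p$ is $\mP(\mB_{X_p})$ with its $\Omega_p$-action, so that the outer square commutes and one gets a comparison functor $\Phi$ from $\infty\Cat_{/\mC}$ to the pullback. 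It then suffices to show $\Phi$ is essentially surjective and an equivalence on mapping $\infty$-categories.

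\emph{Reduction to lax functors.} To prove this I would pass to Grothendieck-type classifications, with lax morphisms on both sides: $\infty\Cat_{/\mC}\simeq\Fun^{\mathrm{nlax}}(\mC,\Corr(\infty\Cat))$ (normal lax $2$-functors to the double $\infty$-category of correspondences of $\infty$-categories, with lax natural transformations as morphisms), $\mathrm{LoCART}^{L}_{\mC}\simeq\Fun^{\mathrm{nlax}}(\mC,\Pr^{L})$ (a locally cartesian fibration with presentable fibres and right-adjointable transports becomes, via the left adjoints of its transports, a normal lax functor to $\Pr^{L}$, and an arbitrary functor over $\mC$ between two such becomes a lax natural transformation), and $\prod_{p}\infty\Cat[\Omega_p]\simeq\Fun(\mC^{\simeq},\infty\Cat)$, $\prod_{p}\Pr^{L}[\Omega_p]\simeq\Fun(\mC^{\simeq},\Pr^{L})$ (a lax functor out of the $\infty$-groupoid $\mC^{\simeq}$ is automatically a genuine functor, and $\mC^{\simeq}=\coprod_p B\Omega_p$). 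Under these the vertical arrows become restriction along $\mC^{\simeq}\hookrightarrow\mC$ and the horizontal arrows become postcomposition with the functor $\Corr(\infty\Cat)\to\Pr^{L}$, $\mathcal{A}\mapsto\mP(\mathcal{A})$, sending a profunctor to the colimit-preserving functor it classifies. So it remains to see that the resulting square of (lax) functor $\infty$-categories is a pullback.

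\emph{Pointwise input and the obstacle.} This I would deduce from two inputs: (i) $\Corr(\infty\Cat)\to\Pr^{L}$, $\mathcal{A}\mapsto\mP(\mathcal{A})$, is fully faithful on mapping $\infty$-categories --- the free-cocompletion universal property identifies profunctors $\mathcal{A}\not\to\mathcal{A}'$ with colimit-preserving functors $\mP(\mathcal{A})\to\mP(\mathcal{A}')$, naturally; and (ii) the remaining information in a normal lax functor $\mC\to\Corr(\infty\Cat)$ beyond its postcomposite to $\Pr^{L}$ --- namely the actual $\infty$-categories $\mB_X$ and their automorphism-equivariance --- is exactly what is recorded by $\Fun(\mC^{\simeq},\infty\Cat)$. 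Concretely, given a normal lax functor $\mC\to\Pr^{L}$ together with equivariant presheaf presentations $\mE_X\simeq\mP(\mB_X)$ of its values, one transports each structure functor $\mE_X\to\mE_{X'}$ to a colimit-preserving functor $\mP(\mB_X)\to\mP(\mB_{X'})$, hence by (i) to a profunctor $\mB_X\not\to\mB_{X'}$, and transports the coherence data along; by (i) this lift is unique, giving essential surjectivity, and the same computation at the level of mapping $\infty$-categories gives the fully faithful statement. The principal obstacle throughout is coherence: establishing the two double-$\infty$-categorical Grothendieck classifications with the correct notion of lax morphism on each side, and then executing the pullback comparison as a statement about $(\infty,2)$-categories rather than merely on objects and $1$-morphisms. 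Embedded in this is the one genuinely geometric point --- that ``carrying a presheaf presentation on each fibre'' is a structure which must propagate across the \emph{non-invertible} morphisms of $\mC$, and it is precisely the existence of left adjoints to the transports (the ``$L$'' in $\mathrm{LoCART}^{L}_{\mC}$), together with (i), that forces it to do so --- and this is where the gluing datum is obliged to be a locally cartesian fibration over $\mC$ rather than the merely fibrewise, automorphism-equivariant data visible to $\prod_{p}\infty\Cat[\Omega_p]$.
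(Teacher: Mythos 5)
Your comparison functor in the first step is exactly the paper's: you build $\mP^{\mC}$ sending $\mB\to\mC$ to a locally cartesian fibration with fibre $\mP(\mB_X)$ and right-adjointable transports, and you correctly identify the pointwise kernel of the argument --- that profunctors $\mB_X\not\to\mB_{X'}$ correspond to colimit-preserving functors $\mP(\mB_X)\to\mP(\mB_{X'})$ --- as the geometric content that must propagate across the non-invertible morphisms of $\mC$.

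The reduction step, however, is circular in a way that empties the proposal of content. You invoke, as an available classification, the equivalence $\infty\Cat_{/\mC}\simeq\Fun^{\mathrm{nlax}}(\mC,\Corr)$ of arbitrary functors over $\mC$ with normal lax functors to a double $\infty$-category of correspondences. In this paper that equivalence is Theorem \ref{2} (resp.\ Corollary \ref{genstr}), and it is \emph{deduced from} Theorem \ref{corr}: both the construction of the double $\infty$-category $\CORR$ (Theorem \ref{corre}, via Corollary \ref{corflat}) and the identification $\Lax\Fun(\N(\mC),\CORR)\simeq\infty\Cat_{/\mC^{\op}}$ rest on the pullback square you are trying to prove. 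The paper flags the construction of $\CORR$ as answering an open question, so this is not a classification one can quote. The other three Grothendieck-type identifications you use are fine --- $\mathrm{LoCART}^{\L}_{\mC}\simeq\Fun^{\mathrm{nlax}}(\mC,\Pr^{\L})$ follows from Ayala--Francis plus the $\L$-restriction, and the $\infty$-groupoid corners are ordinary straightening --- but without the top-left classification the ``resulting square of lax functor $\infty$-categories'' you want to compare with is not known to have $\infty\Cat_{/\mC}$ as its top-left corner, which is the whole point.

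What the circularity masks is precisely where the work happens in the paper. For fully faithfulness, the paper does not pass through any Grothendieck classification; it proves a universal property of $\mP^{\mA}(\mC)$ (Proposition \ref{uuuu}) identifying $\Fun^{\L}_{\mA}(\mP^{\mA}(\mC),\mP^{\mA}(\mD))$ with $\Fun_{\mA}(\mC,\mP^{\mA}(\mD))$, and then observes that $\Fun_{\mA}(\mC,\mD)$ is detected inside it by restriction to $\iota(\mA)$. For essential surjectivity, given $\mB\to\iota(\mA)$, $\mD\to\mA$ in $\mathrm{LoCART}^{\L}_{\mA}$ and an equivalence $\mP^{\iota(\mA)}(\mB)\simeq\iota(\mA)\times_{\mA}\mD$, the paper takes $\mC\subset\mD$ to be the essential image of the Yoneda embedding, extends $\mC\subset\mD$ to $\beta\colon\mP^{\mA}(\mC)\to\mD$ over $\mA$ via Proposition \ref{uuuu}, and proves $\beta$ is an equivalence by reducing (via Remark \ref{remre}) to $\mA=[1]$ and computing mapping spaces against the cartesian transports. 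Your item (i), the equivalence between profunctors and colimit-preserving functors of presheaves, is in effect what Proposition \ref{uuuu} encapsulates, and your item (ii) is the essential surjectivity step --- but as written you appeal to the conclusion to justify them. To repair the proposal you would need either a genuinely independent construction of the $\Corr$-valued Grothendieck classification, or to replace the reduction step by the paper's direct universal-property argument; the comparison-functor setup you already have is the right starting point for the latter.
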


Theorem \ref{0} is useful since it reduces the theory of parametrized $\infty$-categories to the theory of locally cartesian fibrations whose fibers are presentable and fiber transports admit a right adjoint.
This correspondence restricts to several classes of fibrations of interest:
Theorem \ref{0} reduces the theory of exponential fibrations to the theory of cartesian fibrations whose fibers are presentable and fiber transports admit a right adjoint (Proposition \ref{expo}). 
It reduces the theory of cocartesian fibrations to the theory of bicartesian fibrations whose fibers are presentable and fiber transports admit a right adjoint (Lemma \ref{lemar}).
In particular, Theorem \ref{0} relates several classes of relevant fibrations of $\infty$-categories to each other and therefore helps to understand their relationships.

Parametrized $\infty$-category theory might be thought of as a generalization
of $\infty$-category theory and parametrized homotopy theory that studies $\infty$-categories over  a fixed $\infty$-category $\mC.$ For $\mC=[1]$ the walking arrow parametrized $\infty$-category theory specializes to the theory of correspondences of $\infty$-categories, which are a crucial concept in the theory of dualities of $\infty$-categories \cite{heine2024infinitycategoriesdualityhermitian}.
We apply Theorem \ref{0} for $\mC =[1]$ to in \cite[Theorem 5.21.]{heine2024infinitycategoriesdualityhermitian} and \cite[Theorem 6.27.]{heine2021realktheorywaldhauseninfinity} to prove an equivalence between symmetric bilinear functors and dualities.

We use Theorem \ref{0} to construct a Grothendieck construction for parametrized $\infty$-categories.
The Grothendieck construction for cocartesian and cartesian fibrations of $\infty$-categories 
\cite[Theorem 3.2.0.1.]{lurie.HTT}, \cite{nuiten2023straightening}, \cite{moser2023inftyncategoricalstraighteningunstraighteningconstruction}, \cite{abellán2024straighteninglaxtransformationsadjunctions}
is a cornerstone in higher category theory and an essential and crucial tool to construct functors to the 
$\infty$-category $\infty\Cat$ of small $\infty$-categories that has ubiquitious applications.
Due to its importance and relevance several extensions of the Grothendieck-construction have been studied: \cite[\S 3]{luriegoodwillie}, \cite[Theorem B.4.3.]{Ayala2019StratifiedNG} offer a 
Grothendieck construction for locally cartesian fibrations. 
\cite[Theorem 0.8]{MR4074276}, \cite[4.8. Proposition]{barwick2016fibrationsinftycategorytheory}
construct a Grothendieck-construction for exponential fibrations.

Since Theorem \ref{0} reduces the theory of parametrized $\infty$-categories to the theory of locally cartesian fibrations, we can apply Theorem \ref{0} to extend the Grothendieck construction for locally cartesian fibrations to a Grothendieck construction for parametrized $\infty$-categories,
which extends all Grothendieck constructions.
While the Grothendieck-construction for cartesian fibrations classifies functors to the $\infty$-category $\infty\Cat$ of small $\infty$-categories and functors, the Grothendieck-construction for exponential fibrations classifies functors to the $\infty$-precategory $\Corr$ of small $\infty$-categories and correspondences.
We extend all Grothendieck constructions via the concept of double $\infty$-category \cite[Definition 2.4.3.]{MR3345192}, which is a category object
%, i.e. Segal object, 
in $\infty\Cat$, and a 2-categorical refinement of an $\infty$-precategory. % and so a categorical enhancement of a Segal space.
We use Theorem \ref{0} to construct a double $\infty$-category $\CORR$ (Theorem \ref{corre}) that refines the $\infty$-precategory $\Corr$ of correspondences, thereby answering a question of \cite[Question 0.14]{MR4074276}.
We apply Theorem \ref{0} to prove the following Grothendieck-construction for parametrized $\infty$-categories that classifies lax normal functors to the double $\infty$-category $\CORR,$
where $\N$ is the (derived version of the) nerve of an $\infty$-category:
%We prove the following corollary, where we view an $\infty$-category canonically as an $\infty$-precategory and so as a Segal space and double $\infty$-category:
%\begin{corollary}\label{99}(Corollary \ref{genstr}) Let $\mC$ be a small $\infty$-category.
%There is a canonical equivalence of $\infty$-categories $$ \FUN(\N(\mC^\op), \CORR) \simeq \mathrm{EXP}_{\mC}$$	
%between the $\infty$-category of maps of double $\infty$-categories $\mC^\op \to \CORR$ and the
%full subcategory $\mathrm{EXP}_{\mC} \subset \infty\Cat_{/\mC}$ spanned by the exponential fibrations over $\mC$.
%\end{corollary}
%Corollary \ref{99} in different formulation is also claimed without proof in \cite[4.8. Proposition]{barwick2016fibrationsinftycategorytheory}. 
\begin{theorem}\label{2} (Corollary \ref{genstr}) Let $\mC$ be a small $\infty$-category.
There is a canonical equivalence of $\infty$-categories $$ \Lax\Fun(\N(\mC^\op), \CORR) \simeq \infty\Cat_{/\mC},$$	
where the left hand side is the $\infty$-category of lax normal functors.
% of double $\infty$-categories $\mC^\op \to \CORR$.
\end{theorem}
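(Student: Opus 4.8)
The plan is to deduce Theorem \ref{2} from Theorem \ref{0} by using the double $\infty$-category $\CORR$ constructed in Theorem \ref{corre} as a bookkeeping device for the pullback square. First I would unwind what a lax normal functor $\N(\mC^\op) \to \CORR$ is. Since $\CORR$ is a double $\infty$-category refining the $\infty$-precategory $\Corr$ of correspondences, a lax normal functor should be equivalent to a pair consisting of (a) a functor of $\infty$-precategories $\N(\mC^\op) \to \Corr$ — equivalently, by the Grothendieck construction for exponential fibrations (\cite[Theorem 0.8]{MR4074276}, \cite[4.8.~Proposition]{barwick2016fibrationsinftycategorytheory}), an exponential fibration over $\mC$, or after the reductions discussed after Theorem \ref{0}, a cartesian fibration over $\mC$ whose fibers are presentable and whose fiber transports admit right adjoints — together with (b) lax structure data that records, for each object of $\mC$, how the value on that object deviates from being a genuine $\infty$-category, i.e. the loop-space action data. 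The key point is that the ``vertical'' direction of $\CORR$ encodes $\Pr^L$-valued data (presheaf categories and left adjoints) while the ``normal lax'' part encodes the passage back to honest $\infty$-categories, exactly matching the two legs of the square in Theorem \ref{0}.

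The main steps, in order, are: (1) Construct or invoke from Theorem \ref{corre} the double $\infty$-category $\CORR$, and identify $\Lax\Fun(\N(\mC^\op), \CORR)$ with the $\infty$-category of normal lax $2$-functors; this requires a model for lax functors out of a (derived) nerve, for which I would use the description of lax functors via the cocartesian-fibration / oplax-colimit formalism or the twisted-arrow / correspondence description of \cite{heine2024infinitycategoriesdualityhermitian}. (2) Show that a normal lax $2$-functor $\N(\mC^\op) \to \CORR$ is the same datum as a vertex of the pullback
$$
\begin{xy}
\xymatrix{
\Lax\Fun(\N(\mC^\op), \CORR) \ar[d] \ar[rr] && \mathrm{LoCART}^L_\mC \ar[d] \\
\prod_{p \in \pi_0(\mC)} \infty\Cat[\Omega_p] \ar[rr] && \prod_{p \in \pi_0(\mC)} \Pr^L[\Omega_p]
}
\end{xy}
$$
by analysing the structure of $\CORR$ fiberwise over $\pi_0(\mC)$: the $\infty$-category of objects of $\CORR$ recovers $\Pr^L$-type data with its Aut-actions, the horizontal morphisms recover the locally cartesian fibration $\mD \to \mC$, and the normality constraint together with the lax $2$-cells pins down the local $\infty\Cat[\Omega_p]$-data together with the $\mathrm{Aut}_X$-linear equivalences $\mD_X \simeq \mP(\mathcal{B}_X)$. (3) Conclude by Theorem \ref{0} that this pullback is $\infty\Cat_{/\mC}$, and check that the composite equivalence is natural/canonical.

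The hard part will be step (2): making precise the claim that the definition of $\CORR$ (whatever explicit category-object-in-$\infty\Cat$ model Theorem \ref{corre} gives) has its space of normal lax functors from $\N(\mC^\op)$ agreeing, functorially in $\mC$, with the homotopy pullback in Theorem \ref{0}. This amounts to a careful comparison of two combinatorial encodings: on one side, the $2$-categorical data of a lax functor (values on objects, on morphisms, unitality witnessing normality, and associativity/compatibility $2$-cells), and on the other side, the gluing-plus-local data packaged by the fracture square. I expect the cleanest route is to first prove it for $\mC$ a single object, where the statement degenerates (as in the discussion of spaces in the introduction) to the assertion that normal lax functors from a point into $\CORR$ are $\infty$-categories with a loop-space action — essentially the content of the local model — and then bootstrap to general $\mC$ by descent along the skeletal filtration of $\N(\mC^\op)$, using that both sides are limit-preserving in $\mC$ and that $\CORR$ is a category object so its lax-functor $\infty$-categories are controlled by the $1$- and $2$-truncations. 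A secondary technical obstacle is verifying that the functor $\Lax\Fun(\N(\mC^\op), \CORR) \to \prod_p \Pr^L[\Omega_p]$ induced by restriction to objects lands in, and surjects onto, the correct leg of the square — i.e. that the ``whose fibers are presentable and whose fiber transports admit a right adjoint'' conditions are automatic for the correspondences appearing in $\CORR$ — which should follow from the construction of $\CORR$ in Theorem \ref{corre} but must be stated explicitly.
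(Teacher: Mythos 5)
Your high-level plan — realize $\Lax\Fun(\N(\mC^\op),\CORR)$ as a pullback using Theorem~\ref{corre} and then match that pullback against the fracture square of Theorem~\ref{0} — is indeed the skeleton of the paper's argument. But there is a genuine gap at the heart of your step (2): you do not have a mechanism to identify $\Lax\Fun(\N(\mC^\op),\PR^\L)$ with $\mathrm{LoCART}^\L_{\mC}$. In the paper this is exactly the role of Proposition~\ref{sss}, which asserts $\Lax\Fun(\mC,\infty\CAT) \simeq \mathrm{LoCART}_{\tau^*\mC}$; its proof passes through the enveloping double $\infty$-category $\N\Env(-)$ and reduces a lax normal functor out of $\N(\mC)$ to a genuine (non-lax) map of double $\infty$-categories out of $\N\Env(\N(\mC))$, which can then be unstraightened, and finally invokes the straightening theorem for locally cartesian fibrations (\cite[Theorem B.4.3.]{Ayala2019StratifiedNG}). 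Without this ``lax functors classify locally cartesian fibrations'' bridge, there is no way to match the $\mathrm{LoCART}^\L_\mC$ leg of the fracture square with anything you can extract from $\Lax\Fun(\N(\mC^\op),\CORR)$. Your proposal gestures at ``the cocartesian-fibration / oplax-colimit formalism or the twisted-arrow description'' for lax functors out of a nerve, but these do not give the locally-cartesian-fibration classification that is actually needed.

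Two further concerns. First, the ``decompose fiberwise over $\pi_0(\mC)$'' and ``bootstrap by descent along the skeletal filtration of $\N(\mC^\op)$'' steps are not how the paper argues and are not obviously correct: lax functor $\infty$-categories are not naturally controlled by the skeletal filtration of the source, and the paper never physically decomposes over $\pi_0$ — Theorem~\ref{corr} is phrased in terms of pullback along $\iota(\mC) \hookrightarrow \mC$, and the simplification on the ``local'' side comes from the formal fact that $\Lax\Fun(\N(\mC),\Delta_{\mE}) \simeq \Fun(\iota(\mC),\mE)$ for any $\infty$-category $\mE$ (because $\Delta_\mE$ has only trivial horizontal composition), not from a path-component-by-path-component analysis. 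Second, your sanity check for a single object is off: a lax normal functor from a contractible $(\infty,2)$-precategory picks out a single object of $\CORR$ (with no residual loop-space action), so the single-object case is degenerate rather than the ``local model'' you describe. In short, the route via $\N\Env$ and the locally cartesian straightening theorem (Proposition~\ref{sss} of the paper) is the essential missing idea, and replacing it with a skeletal-filtration descent argument would not succeed as written.
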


%Theorem \ref{2} was very recently also deduced \cite{blomstraighteningfunctor} from a universal property of the double $\infty$-category of spans \cite[Corollary 3.11.]{spans}.
Although Theorem \ref{2} is interesting from a theoretical point of view, one might wonder
why a Grothendieck construction for parametrized $\infty$-categories is useful, and what kind of lax normal functors to $\CORR$ one might like to classify. We offer an instructive example (Example \ref{exoo}).
Parametrized $\infty$-category theory studies for every
small $\infty$-category $\mC$ the $\infty$-category $\infty\Cat_{/\mC}$ of small $\infty$-categories over $\mC.$
The $\infty$-category $\infty\Cat_{/\mC}$ is not cartesian closed. However by the defining property of exponentiability for every exponential fibration $\mA \to \mC$ the functor
$\mA \times_\mC (-) :  \infty\Cat_{/\mC} \to \infty\Cat_{/\mC}$ admits a right adjoint
$\Fun^\mC(\mA,-).$
%Parametrized $\infty$-category theory might be thought of as a generalization
%of $\infty$-category theory and parametrized homotopy theory that specializes to the theory of correspondences, $\infty$-categories over the walking arrow $[1].$
%Correspondences are a useful concept categorifying the concept of relation and present in higher geometry. 
%Theorem \ref{0} for $\mC=[1]$ identies correspondences $\mM \to [1]$ with cartesian fibrations $\mD \to [1]$ such that $\mD_0 \simeq \mP(\mM_0), \mD_1 \simeq \mP(\mM_1).$
%By the $\infty$-categorical Grothendieck construction the latter classifies a left adjoint functor
%$ \mP(\mM_1) \to \mP(\mM_0)$ whose restriction along the Yoneda-embedding
%gives a functor $\mM_0^\op \times \mM_1 \to \mS$ known as a pro-functor $\mM_1 $ to $\mM_0.$
%Consequently, Theorem \ref{0} for $\mC=[1]$ recovers the well-known equivalence between correspondences and pro-functors but gives a global version of this equivalence (Theorem \ref{lfib}).
%It is an important insight of ... that the $\infty$-category $\infty\Cat_{/[1]}$ of correspondences is cartesian closed. The internal hom $\Fun^{[1]}(\mC,\mD)\to [1] $ associated to two correspondences
%$\mC \to [1], \mD\to [1]$ plays the role in the theory of correspondences that the $\infty$-category of functors plays in $\infty$-category theory. For that reason it is a central and ubiquitious construction when working with correspondences. However 
For every functor $\mB \to \mC$ the internal hom $\Fun^{\mC}(\mA,\mB) \to \mC $ is a parametrized $\infty$-category that plays the role of the $\infty$-category of functors in higher category theory,
and therefore is a crucial and ubiquitious construction in parametrized $\infty$-category theory,
which is unfortunately generally hard to understand.
The internal hom is generally not a cocartesian, cartesian or even exponential fibration and therefore cannot be analyzed by means of the usual Grothendieck-construction.
%used to produce a functor $\mC \to \infty\Cat$ or $\mC \to \Corr.$
%analyzed via the usual Grothendieck-constructions.
%, and so requires a 
However, we can apply Theorem \ref{2} to classify the internal hom $\Fun^{\mC}(\mA,\mB)\to \mC $
by a lax normal functor $\mC^\op \to  \CORR$, which can be used to analyze the internal hom.
The classified lax normal functor $\mC^\op \to  \CORR$ sends any morphism $\theta: X \to Y$ in $\mC$
to a pro-functor
$$  \Fun^{\mC}(\mA,\mB)_Y \simeq \Fun(\mA_Y,\mB_Y) \to \Fun^{\mC}(\mA,\mB)_X \simeq \Fun(\mA_X,\mB_X)$$ corresponding to a functor
$\gamma: \Fun(\mA_X,\mB_X)^\op \times \Fun(\mA_Y,\mB_Y)  \to \mS$.
The functors $\mA \to \mC, \mB \to \mC$ classify lax normal functors
$\mC^\op \to \CORR$ that send the morphism $\theta: X \to Y$ to pro-functors
$ \A_Y \to \mA_X, \mB_Y \to \mB_X$ corresponding to functors $\alpha: \mA_X^\op \times \mA_Y \to \mS, \beta: \mB_X^\op \times \mB_Y \to \mS.$
The functor $\gamma$ sends any pair $(F,G) \in \Fun(\mA_X,\mB_X)^\op \times \Fun(\mA_Y,\mB_Y)$
to the limit of the functor $$\beta \circ (F^\op \times G) \circ \rho: \mW \to \mS,$$ where
$\rho: \mW \to  \mA_X^\op \times \mA_Y$ is the left fibration classified by $\alpha$.
We use the latter description of the internal hom in $\infty\Cat_{/[1]}$ in \cite[Proposition 3.22.]{heine2024equivalencerealshermitian} to prove an equivalence between a real $S$-construction and hermitian $\mQ$-construction.

\newpage

\section{Notation and terminology}

%\subsection{Notation and terminology}

We fix a hierarchy of Grothendieck universes whose objects we call small, large, very large, etc.
We call a space small, large, etc. if its set of path components and its homotopy groups are for any choice of base point. We call an $\infty$-category small, large, etc. if its maximal subspace and all its mapping spaces are.

\vspace{2mm}

We write 
\begin{itemize}
\item $\Set$ for the category of small sets.
\item $\Delta$ for (a skeleton of) the category of finite, non-empty, partially ordered sets and order preserving maps, whose objects we denote by $[\n] = \{0 < ... < \n\}$ for $\n \geq 0$.
\item $\mS$ for the $\infty$-category of small spaces.
\item $ \infty\Cat$ for the $\infty$-category of small $\infty$-categories.
\item $\infty\Cat^{\rc \rc} $ for the $\infty$-category of large $\infty$-categories with small colimits and small colimits preserving functors.
\end{itemize}

\vspace{2mm}

We often indicate $\infty$-categories of large objects by $\widehat{(-)}$, for example we write $\widehat{\mS}, \infty\widehat{\Cat}$ for the $\infty$-categories of large spaces, $\infty$-categories.

For any $\infty$-category $\mC$ containing objects $\A, \B$ we write
\begin{itemize}
\item $\mC(\A,\B)$ for the space of maps $\A \to \B$ in $\mC$,
\item $\mC_{/\A}$ for the $\infty$-category of objects over $\A$,
\item $\Ho(\mC)$ for its homotopy category,
\item $\mC^{\triangleleft}, \mC^{\triangleright}$ for the $\infty$-category arising from $\mC$ by adding an initial, final object, respectively,
\item $\iota(\mC) $ for the maximal subspace in $\mC$.
\end{itemize}

Note that $\Ho(\infty\Cat)$ is cartesian closed and for small $\infty$-categories $\mC,\mD$ we write $\Fun(\mC,\mD)$ for the internal hom, the $\infty$-category of functors $\mC \to \mD.$ 
%\subsubsection*{Inclusions and embeddings}

We often call a fully faithful functor an embedding.
We call a functor an inclusion %(of a subcategory)
if %one of the following equivalent conditions holds:
%\begin{itemize}\item For any $\infty$-category $\mB$ the induced map$\Cat_\infty(\mB,\mC) \to \Cat_\infty(\mB,\mD)$ is an embedding.\item The functor $\phi: \mC \to \mD$ 
if it induces an embedding on maximal subspaces and on all mapping spaces.
%\item The functor $\Ho(\phi):\Ho(\mC) \to \Ho(\mD)$ is an inclusion and the functor $\mC \to \Ho(\mC) \times_{\Ho(\mD)} \mD$ is an equivalence.\end{itemize}

%or say that $\phi$ exhibits $\mC$ as a subcategory of $\mD$

%In this case $\phi$ is uniquely determined by $\mD$ and $\Ho(\phi): \Ho(\mC) \to \Ho(\mD).$\vspace{2mm}

%We call a monoidal $\infty$-category compatible with small colimits if it admits small colimits, which are preserved by the tensor product in each component.
%We call a monoidal $\infty$-category presentable if it is compatible with small colimits and its underlying $\infty$-category is presentable.
%\vspace{2mm}

%\subsubsection*{Relative cocartesian fibrations}

%Let $\mA$ be an $\infty$-category and $\mE \subset \Fun([1],\mA)$ a full subcategory. We call a functor $\X \to \mA$ a cocartesian fibration relative to $\mE$ if for every morphism $[1] \to \mA$ that belongs to $\mE$ the pullback$[1] \times_\mA \X \to [1]$ is a cocartesian fibration whose cocartesian morphisms are preserved by the projection $[1] \times_\mA \X \to \X.$We call a functor $\X \to \Y$ over $\mA$ a map of cocartesian fibrations relative to $\mE$ if it preserves cocartesian lifts of morphisms of $\mE.$We write $\Cat_{\infty/\mA}^\mE \subset \Cat_{\infty/\mA}$ for the subcategory of cocartesian fibrations relative to $\mE$ and maps of such.

\section{Parametrized $\infty$-categories of presheaves}

Our key tool to prove a local global principle for parametrized $\infty$-categories,
i.e. $\infty$-categories over a given base $\infty$-category $\mC,$ is a parametrized version of presheaf $\infty$-category that we construct in this section.

%associated to any parametrized $\infty$-category.We devote this section to the construction of this parametrized version of presheaf $\infty$-category.

\begin{definition}
A right fibration $\mA \to \mB$ is representable if $\mA$ has a final object.	
\end{definition}
\begin{remark}
A right fibration $\mA \to \mB$ is representable if and only if $\mA \to \mB$ classifies a representable presheaf.
\end{remark}
\begin{notation}
Let $$\mR \subset \Fun([1],\infty\Cat)$$ be the full subcategory of right fibrations and $$\mU \subset \mR$$ the full subcategory of representable right fibrations.
\end{notation}

\begin{lemma}\label{lemmop}
Evaluation at the target $\rho: \mR \to \infty\Cat$ is a cocartesian and cartesian fibration that restricts to a cocartesian fibration $\kappa: \mU \to \infty\Cat$ with the same cocartesian morphisms.
	
\end{lemma}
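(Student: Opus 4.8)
The plan is to start from the well-known fact that evaluation at the target $\mathrm{ev}_1 \colon \Fun([1], \infty\Cat) \to \infty\Cat$ is a cartesian fibration (it is the pullback of the universal cartesian fibration, or one checks directly that a morphism $(f \colon \mA \to \mB) \Rightarrow (f' \colon \mA' \to \mB')$ is $\mathrm{ev}_1$-cartesian iff the induced square is a pullback). Restricting along the inclusion $\mR \hookrightarrow \Fun([1], \infty\Cat)$, I first check that $\rho \colon \mR \to \infty\Cat$ is again a cartesian fibration: given a right fibration $\mA \to \mB$ and a functor $g \colon \mB' \to \mB$, the pullback $\mB' \times_\mB \mA \to \mB'$ is again a right fibration (right fibrations are stable under base change), and the cartesian square exhibiting this is exactly an $\mathrm{ev}_1$-cartesian morphism lying in $\mR$; since $\mR$ is a full subcategory, these remain $\rho$-cartesian. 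That $\rho$ is also cocartesian is the substantive half: given a right fibration $p \colon \mA \to \mB$ and $g \colon \mB \to \mB'$, I need a cocartesian lift, i.e. a right fibration $\mA' \to \mB'$ and a map over $g$ that is initial among maps out of $p$ in the appropriate sense. The natural candidate is to take $\mA' \to \mB'$ to be the right fibration classified by the left Kan extension $g_! \colon \PSh(\mB) \to \PSh(\mB')$ composed with the presheaf classifying $p$ — equivalently, one can build $\mA'$ as the composite $\mA \to \mB \to \mB'$ followed by "right-fibration replacement," or more concretely using that the square
$$
\begin{xy}
\xymatrix{
\mA \ar[r] \ar[d]_p & \mA' \ar[d] \\
\mB \ar[r]_g & \mB'
}
\end{xy}
$$
should be the universal one completing to a right fibration on the right. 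The cleanest route is: $\mR \to \infty\Cat$ is, via unstraightening, equivalent to the cartesian unstraightening of the functor $\infty\Cat \to \widehat{\infty\Cat}$, $\mB \mapsto \PSh(\mB)$, with functoriality by restriction $g^*$; this functor also admits left adjoints $g_!$ pointwise, hence the associated cartesian fibration is also cocartesian, with cocartesian transport computed by $g_!$.

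Second, I turn to $\kappa \colon \mU \to \infty\Cat$. A representable right fibration over $\mB$ is the data of an object $b \in \mB$ together with $\mB_{/b} \to \mB$; under unstraightening this is the full subcategory of $\PSh(\mB)$ on representables, which is $\mB^{\mathrm{op}}$... more usefully, $\mU \to \infty\Cat$ is equivalent to the cartesian fibration for $\mB \mapsto \mB$ with functoriality by... here one must be careful, because restriction $g^*$ does not preserve representables, so $\mU$ is \emph{not} a cartesian subfibration. But left Kan extension $g_!$ \emph{does} send the representable $h_b$ to $h_{g(b)}$, so $\mU \to \infty\Cat$ \emph{is} a cocartesian subfibration of $\mR \to \infty\Cat$, with the \emph{same} cocartesian morphisms (a $\rho$-cocartesian morphism out of a representable right fibration, being computed by $g_!$, again has representable target, hence lies in $\mU$, and is then automatically $\kappa$-cocartesian since $\mU$ is full). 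This is exactly the assertion of the lemma.

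The main obstacle I anticipate is giving a clean, model-independent construction of the cocartesian transport for $\rho$ and proving it \emph{is} cocartesian — i.e. verifying the universal mapping property of $g_!$ at the level of the total category $\mR$, rather than after straightening. I would handle this by identifying $\rho$ with an unstraightening: produce the functor $\infty\Cat \to \widehat{\infty\Cat}$, $\mB \mapsto \PSh(\mB)$ (presheaves valued in \emph{small} spaces, so values are large but locally small, matching that $\mR \subset \Fun([1],\infty\Cat)$ consists of right fibrations with small total space), note its pointwise left adjoints assemble (by adjoint functor theorem / the fact that $g \mapsto g^*$ is functorial and each $g^*$ has a left adjoint) into a compatible system, and invoke the standard fact that a cartesian fibration whose straightening takes values in functors admitting left adjoints is a bicartesian fibration. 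The compatibility needed (a Beck–Chevalley-type or rather mere functoriality of $g_!$) is routine once set up this way. The identification of $\mU$ with the representable-presheaves subfunctor, and the observation that $g_!$ preserves representables while $g^*$ does not, then gives the last clause essentially for free.
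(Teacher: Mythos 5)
Your proposal is correct and follows essentially the same route as the paper: restriction of the target-evaluation cartesian fibration (pullback preserves right fibrations), then observing the fiber transports $g^*$ admit left adjoints $g_!$ (left Kan extension on presheaves) to get the cocartesian structure, then noting $g_!$ preserves representables to restrict to $\mU$. Your additional observations — that $g^*$ does \emph{not} preserve representables so $\mU$ is not a cartesian subfibration, and the size bookkeeping — are sound elaborations of the same argument rather than a different proof.
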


\begin{proof}
The functor $\gamma: \Fun([1],\infty\Cat) \to \infty\Cat$ evaluating at the target is a cartesian fibration because $\infty\Cat$ admits pullbacks.
Since right fibrations are stable under pullback, $\gamma$ restricts to a cartesian fibration $\rho: \mR \to \infty\Cat$ with the same cartesian morphisms.
For any functor $\psi: \mC \to \mD$ the fiber transport $\mR_\mD \to \mR_\mC$ of $\mR \to \infty\Cat$ admits a left adjoint $\psi_!$ that takes the left Kan extension along the functor $\mC^\op \to \mD^\op.$
%is the functor $\psi^* : \Fun(\mD^\op, \mS) \to \Fun(\mC^\op,\mS)$ precomposing with $\psi.$Since $\psi^*$ admits a left adjoint $\psi_! : \Fun(\mC^\op, \mS) \to \Fun(\mD^\op,\mS)$ for any functor $\psi$, the cartesian fibration $\mR \to \infty\Cat$ is also a cocartesian fibration.Because the functor $\psi_! $ preserves representables for any functor $\psi$, the cocartesian fibration $\mR \to \infty\Cat$ restricts to a cocartesian fibration $\mU \to \infty\Cat$ with the same cocartesian morphsisms.
This guarantees that the cartesian fibration $\rho: \mR \to \infty\Cat$ is also a cocartesian fibration. Since $\psi_!$ preserves representables,
the cocartesian fibration $\rho: \mR \to \infty\Cat$ restricts to a cocartesian fibration $\kappa: \mU \to \infty\Cat$ with the same cocartesian morphisms.
\end{proof}

\begin{definition}\label{parpre}
For every cocartesian fibration $\mD \to \mC$ classifying a functor
$\psi : \mC \to \infty\Cat$ let $\mP^\mC(\mD) \to \mC $ be the pullback of 
$\rho: \mR \to \infty\Cat$ along $\psi: \mC \to \infty\Cat$
\end{definition}

\begin{remark}
Evaluation at the target $\mR \to \infty\Cat$ is a cocartesian and cartesian fibration by Lemma \ref{lemmop}. So for every cocartesian fibration $\mD \to \mC$ 
whose fibers are small, the functor $\mP^\mC(\mD) \to \mC $ is a cocartesian and cartesian fibration.
\end{remark}

\begin{lemma}\label{lqym}
\begin{enumerate}
\item The cocartesian fibration $\kappa: \mU \to \infty\Cat$ classifies the identity.

\vspace{1mm}
By (1) for every cocartesian fibration $\mD \to \mC$ classifying a functor $\mC \to \infty\Cat$ there is a canonical embedding of cocartesian fibrations over $\mC$:
$$\mD \simeq \mC \times_{\infty\Cat} \mU \subset \mP^\mC(\mD)=\mC \times_{\infty\Cat} \mR.$$

\item The embedding $$ \mD \simeq \mC \times_{\infty\Cat} \mU \subset \mP^\mC(\mD)=\mC \times_{\infty\Cat} \mR \subset \mC \times_{\Fun(\{1\},\infty\Cat)} \Fun([1],\infty\Cat)$$ corresponds to a natural transformation of functors $\mD \to \infty\Cat$ that is classified by the canonical map $\mD^{[1]} \to \mD \times_\mC \mD$ of cocartesian fibrations over $\mD.$
\end{enumerate}

\end{lemma}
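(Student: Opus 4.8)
The plan is to prove parts (1) and (2) in sequence, since (2) rests on the identification in (1), and to use the cocartesian-fibration description established in Lemma \ref{lemmop}.

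\emph{Part (1).} First I would identify the cocartesian fibration $\kappa\colon \mU \to \infty\Cat$ with the universal cocartesian fibration, i.e.\ show it classifies $\mathrm{id}_{\infty\Cat}$. The cleanest approach: by Lemma \ref{lemmop}, $\kappa$ is a cocartesian fibration with fiber over $\mA \in \infty\Cat$ the $\infty$-category $\mU_\mA$ of representable right fibrations over $\mA$. There is a natural straightening equivalence $\mU_\mA \simeq \PSh(\mA)^{\mathrm{rep}} \simeq \mA$ via the (co)Yoneda embedding, since a representable presheaf on $\mA$ is the same datum as an object of $\mA$; this equivalence is natural in $\mA$ because the cocartesian transport of $\kappa$ was shown in Lemma \ref{lemmop} to be left Kan extension $\psi_!$, which on representables is precisely the functor $\psi$ itself (left Kan extension of a representable $h_c$ along $\psi^\op$ is $h_{\psi(c)}$). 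Hence under straightening $\kappa$ corresponds to the identity functor $\infty\Cat \to \infty\Cat$. The asserted embedding then follows by pulling back the inclusion $\mU \subset \mR$ along $\psi\colon \mC \to \infty\Cat$, using that $\mD \simeq \mC \times_{\infty\Cat} \mU$ is exactly the unstraightening of $\psi$ once $\kappa$ is known to classify the identity, and that pullback of cocartesian fibrations computes the composite (respectively pulled-back) functor; fully faithfulness of $\mC \times_{\infty\Cat}\mU \hookrightarrow \mC \times_{\infty\Cat}\mR$ is inherited from that of $\mU \subset \mR$ since pullback along $\psi$ preserves fully faithful maps of cocartesian fibrations.

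\emph{Part (2).} Here I would unwind what the composite embedding
$\mD \simeq \mC \times_{\infty\Cat}\mU \subset \mC\times_{\infty\Cat}\mR \subset \mC \times_{\Fun(\{1\},\infty\Cat)}\Fun([1],\infty\Cat)$
classifies. Evaluation at the target $\Fun([1],\infty\Cat) \to \infty\Cat$ is a cocartesian fibration whose straightening is the functor $\infty\Cat \to \infty\Cat$, $\mA \mapsto \infty\Cat_{/\mA}$; the sub-fibration $\mR$ corresponds to the subfunctor $\mA \mapsto \PSh(\mA) \hookrightarrow \infty\Cat_{/\mA}$ (right fibrations over $\mA$), and by Part (1) the further sub-fibration $\mU$ corresponds to the Yoneda embedding $\mA \simeq \PSh(\mA)^{\mathrm{rep}} \hookrightarrow \PSh(\mA)$. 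Pulling back along $\psi$ and straightening over $\mC$, the inclusion $\mD \subset \mP^\mC(\mD)$ is therefore classified, fiberwise over $X \in \mC$, by the Yoneda embedding $\mD_X \hookrightarrow \PSh(\mD_X)$, i.e.\ by a natural transformation of functors $\mD \to \infty\Cat$ whose component at $d \in \mD_X$ is the representable $h_d$. To see this natural transformation is the one classified by $\mD^{[1]} \to \mD \times_\mC \mD$: the cocartesian fibration $\mD \times_\mC \mD \to \mD$ (second projection) unstraightens the functor $d \mapsto \mD_{X}$ (constant on fibers of $\mD \to \mC$, but with the parametrized structure of $\mD$), while $\mD^{[1]} = \mD \times_{\Fun(\{1\},\mD)}\Fun([1],\mD) \to \mD$ is the ``arrow with fixed target'' fibration; the canonical map between them is a family of right fibrations over $\mD$, representable on each fiber by the identity arrow $\mathrm{id}_d$, hence classifies exactly the family of representable presheaves $h_d$. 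Matching the two descriptions of this natural transformation — both are "$d \mapsto h_d \in \PSh(\mD_X)$", natural in $\mD$ — gives the claim; the only real content is that the straightening of $\mathrm{ev}_1\colon \Fun([1],\infty\Cat)\to\infty\Cat$ restricted along Yoneda agrees with the arrow-fibration construction, which is the naturality statement already implicit in the coYoneda lemma.

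\emph{Main obstacle.} The genuinely delicate point is Part (2): rather than proving two $\infty$-categories are equivalent, one must match two \emph{natural transformations} of functors valued in $\infty\Cat$, which requires care about coherence. The cleanest way around this is to avoid ad hoc comparisons and instead characterize both sides by a universal property of the Yoneda embedding — e.g.\ that $\mD \hookrightarrow \mP^\mC(\mD)$ is initial among functors of cocartesian fibrations over $\mC$ from $\mD$ into a fiberwise-presheaf fibration — and then observe that $\mD^{[1]} \to \mD\times_\mC\mD$ satisfies the same universal property fiberwise, applying (the parametrized form of) the uniqueness of Yoneda. I expect assembling this coherently over all of $\mC$, rather than fiber by fiber, to be where most of the work lies.
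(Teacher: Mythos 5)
Your argument for Part (1) has a genuine coherence gap, and it is the same gap you flag as the ``main obstacle'' for Part (2) but do not flag for Part (1). Knowing that $\kappa$ has fiber $\mU_{\mA}\simeq \mA$ over each $\mA$ and that the transports are ``left Kan extension restricted to representables, i.e.\ $\psi$'' does not by itself produce an equivalence of straightenings: it identifies the fibers and the transports up to equivalence, but not the coherence data, and the phrase ``natural in $\mA$'' is carrying all the weight. To make this rigorous you either need to exhibit a comparison functor over $\infty\Cat$ (and then check it preserves cocartesian lifts and is a fiberwise equivalence) or argue via a universal property. The paper does the latter and it is the cleaner route: writing $\mV\to\infty\Cat$ for the unstraightening of the identity, it produces, for every $\infty$-category $\mA$, a natural bijection between equivalence classes of functors $\mA\to\mV$ over $\infty\Cat$ (equivalently, cocartesian fibrations $\mC\to\mA$ equipped with a section) and equivalence classes of functors $\mA\to\mU$ over $\infty\Cat$ (equivalently, maps of cocartesian fibrations $\mD\to\mC$ over $\mA$ which are fiberwise representable right fibrations). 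Concretely, a section $\alpha$ of $\mC\to\mA$ gives the map $\mA\times_{\mC^{\{1\}}}\mC^{[1]}\to\mC^{\{0\}}$, and conversely a fiberwise representable right fibration $\mD\to\mC$ has a final section whose composite with $\mD\to\mC$ recovers $\alpha$, and one checks that $\mD\to\mC$ is equivalent over $\mC$ to the former. This simultaneously handles Part (2) with no extra work: taking $\mA=\mD$ with the tautological section, the map produced is precisely $\mD^{[1]}\to\mD\times_\mC\mD$, which is why the paper can simply say that the proof of (1) shows (2).

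Your proposed resolution for Part (2) --- characterize both sides by an initiality property of the Yoneda embedding --- would probably also work, but is a second universal-property argument layered on top of an argument for Part (1) that itself needs one. The paper's single functor-of-points argument is more economical: the equivalence $\mV\simeq\mU$ is built at the level of maps into them, so the identification of the embedding in (2) falls out of the same construction rather than requiring a separate Yoneda-uniqueness step fiber by fiber and then a coherence assembly over $\mC$. If you want to pursue your route, the thing to be careful about is exactly what you suspect: matching a natural transformation of $\infty\Cat$-valued functors by matching components fiberwise is not enough, and the uniqueness clause you invoke must be stated and applied at the level of cocartesian fibrations over $\mD$, not fiberwise.
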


\begin{proof}
(1): Let $\mV \to \infty\Cat$ be the cocartesian fibration classifying the identity.
We will construct an equivalence $\mV \simeq \mU$
over $\infty\Cat$ by naturally identifying 
for any $\infty$-category $\mA$ the set of equivalence classes of functors $\mA \to \mV$ over $\infty\Cat$ and $\mA \to \mU$ over $\infty\Cat$.

A functor $\mA \to \mV$ over $\infty\Cat$ is classified by a cocartesian fibration
$\mC \to \mA$ equipped with a section of $\mC \to \mA$.
A functor $\mA \to \mU \subset \Fun([1],\infty\Cat)$ over $\infty\Cat$ is classified by a map of cocartesian fibrations $\mD \to \mC$ over $\mA$ that induces on the fiber over any $\A \in \mA$ a representable right fibration.
A section $\alpha$ of $\mC \to \mA$ gives rise to a map $\mA \times_{\mC^{\{1\}} } \mC^{[1]} \to \mC^{\{0\}}$ of cocartesian fibrations over $\mA$ that induces on the fiber over any $\A \in \mA$ the representable right fibration $(\mC_\A)_{/\alpha(\A)} \to \mC_\A$
and induces on sections the functor
$\Fun_\mA(\mA,\mC)_{/\alpha} \to \Fun_\mA(\mA,\mC)$,
which is an equivalence if and only if $\alpha$ is a final object in
$\Fun_\mA(\mA,\mC)$. This is the case if for any $\A \in \mA$ the image $\alpha(\A)$ is final in $\mC_\A.$

Let $\psi: \mD \to \mC$ be a map of cocartesian fibrations over $\mA$ such that for any $\A \in \mA$ the fiber $\mD_\A$ has a final object.
Then the $\infty$-category $\Fun_\mA(\mA,\mD)$
has a final object $\beta$ such that for any $\A \in \mA$ the image $\beta(\A)$ is final in $\mD_\A.$
So we get a a section $\psi \circ \beta$ of $\mC \to \mA$. For $\mD= \mA \times_{\mC^{\{1\}} } \mC^{[1]} $
we have $\Fun_\mA(\mA,\mD) \simeq \Fun_\mA(\mA,\mC)_{/\alpha}$ over $\Fun_\mA(\mA,\mC)$ so that $\psi \circ \beta =\alpha.$
In general the induced map
$\mA \times_{\mD^{\{1\}} } \mD^{[1]} \to \mA \times_{\mC^{\{1\}} } \mC^{[1]} $ of cocartesian fibrations over $\mA$ induces on the fiber over any $\A \in \mA$ the equivalence $(\mD_\A)_{/\beta(\A)} \simeq (\mC_\A)_{/\alpha(\A)}$ (coming from the fact that $\mD_\A \to \mC_\A$ is a right fibration).
Hence the induced map
$\mD \simeq \mA \times_{\mD^{\{1\}} } \mD^{[1]} \to \mA \times_{\mC^{\{1\}} } \mC^{[1]} $ of cocartesian fibrations over $\mA$ is an equivalence.
So $\mD \to \mC$ is equivalent over $\mC$ to $\mA \times_{\mC^{\{1\}} } \mC^{[1]} \to \mC^{\{0\}}$.
The proof of (1) shows (2).

\end{proof}

\begin{definition}Let $\mC \to \mA$ be a functor. The enveloping cocartesian fibration of $\mC \to \mA$ is the pullback 
$$\Env(\mC):= \Fun([1], \mA) \times_{\Fun(\{0\}, \mA)} \mC \to \Fun(\{1\}, \mA).$$ 
\end{definition}
The diagonal embedding $\mA \to \Fun([1],\mA)$ 
induces an embedding $\theta: \mC \to \Env(\mC)$ over $\mA$.
The following proposition is \cite[Proposition A.2.]{HEINE2023108941new}:

\begin{proposition}
Let $\mC \to \mA$ be a functor.
%\begin{remark}The diagonal embedding $\mA \to \Fun([1],\mA)$ is left adjoint to evaluation atthe source. Thus the induced embedding $\theta: \mC \subset \Env(\mC) $ admits a right adjoint relative to $\mC.$\end{remark}
%By \cite[Proposition 2.2.4.9.]{lurie.higheralgebra} 
Restriction along $\theta$ induces for every cocartesian fibration $\mD \to \mA$ an equivalence
$$ \Fun^\cocart_\mA(\Env(\mC),\mD) \to \Fun_\mA(\mC,\mD),$$
where the left hand side is the full subcategory of maps of cocartesian fibrations over $\mA.$	
	
\end{proposition}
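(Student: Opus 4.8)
The plan is to realise $\theta^{*}$ as coming from a relative‑Kan‑extension adjunction of functor $\infty$-categories and to identify its essential image with the cocartesian functors. First I would record the cocartesian structure on the enveloping construction. The target evaluation $\Fun([1],\mA)\to\Fun(\{1\},\mA)=\mA$ is a cocartesian fibration whose cocartesian morphisms are the commutative squares whose restriction to $\{0\}$ is an equivalence; and $\Env(\mC)$ is the pullback of $p\colon\mC\to\mA$ along the source evaluation $\Fun([1],\mA)\to\Fun(\{0\},\mA)=\mA$, regarded over $\mA$ via the target evaluation. A direct check on mapping spaces then shows $\Env(\mC)\to\mA$ is again a cocartesian fibration, whose cocartesian morphisms are the pairs $(\sigma,\beta)$ with $\beta$ an equivalence in $\mC$. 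In particular, for $c\in\mC$ over $x:=p(c)$ and a morphism $\phi\colon x\to a$ in $\mA$, the canonical morphism $u_{\phi}\colon\theta(c)=(\id_{x},c)\to(\phi,c)$ is $\Env(\mC)$-cocartesian over $\phi$, and every cocartesian morphism $(\phi,c)\to(f\phi,c)$ of $\Env(\mC)\to\mA$ over some $f\colon a\to a'$ factors through $u_{\phi}$ by the cocartesian lifting property, so that the $u_{\phi}$ generate all cocartesian morphisms.

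Next I would build a candidate inverse $F\mapsto\widetilde F$. Given $F\colon\mC\to\mD$ over $\mA$, I claim that the left Kan extension of $F$ along $\theta$ relative to the structure map $q\colon\mD\to\mA$ (the $q$-left Kan extension in the sense of \cite[\S 4.3]{lurie.HTT}) exists and is given at $(\phi\colon x\to a,\ c)$ by $\widetilde F(\phi,c)\simeq\phi_{!}F(c)$, the cocartesian pushforward of $F(c)\in\mD_{x}$ along $\phi$. The key point is that the relevant relative comma $\infty$-category over $(\phi,c)$ is equivalent to the slice $\mC_{/c}$, which has a terminal object, so the relative colimit computing $\widetilde F(\phi,c)$ requires no genuine colimit in the fibers of $\mD$ — only the existence of cocartesian pushforwards along $q$ — and reduces to $\phi_{!}F(c)$. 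By the standard theory of relative Kan extensions along the fully faithful $\theta$, it follows that $\theta_{!}:=\widetilde{(-)}$ is a fully faithful left adjoint of $\theta^{*}\colon\Fun_{\mA}(\Env(\mC),\mD)\to\Fun_{\mA}(\mC,\mD)$, and hence that $\theta^{*}$ restricts to an equivalence onto $\Fun_{\mA}(\mC,\mD)$ from the full subcategory of those functors out of $\Env(\mC)$ that are relative left Kan extensions from $\mC$.

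It then remains to identify that full subcategory with $\Fun^{\cocart}_{\mA}(\Env(\mC),\mD)$. For one inclusion I would show $\widetilde F$ preserves cocartesian morphisms: reading off the cocone of the relative colimit at the terminal object of $\mC_{/c}$ shows that $\widetilde F$ carries each generating morphism $u_{\phi}$ to the $q$-cocartesian pushforward $F(c)\to\phi_{!}F(c)$; then, for a general cocartesian morphism $(\sigma,\id_{c})\colon(\phi,c)\to(f\phi,c)$, the factorisation $u_{f\phi}=(\sigma,\id_{c})\circ u_{\phi}$ together with the cancellation property of cocartesian morphisms (if $g\circ h$ and $h$ are cocartesian, so is $g$; \cite[\S 2.4]{lurie.HTT}) forces $\widetilde F(\sigma,\id_{c})$ to be $q$-cocartesian. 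Conversely, if $G\colon\Env(\mC)\to\mD$ over $\mA$ preserves cocartesian morphisms, then applying $G$ to $u_{\phi}$ gives $G(\phi,c)\simeq\phi_{!}\,G(\theta(c))$, which is exactly the formula for the relative left Kan extension of $G\circ\theta$ along $\theta$; so $G\simeq\widetilde{G\circ\theta}$ lies in the essential image of $\theta_{!}$. Combining, $\theta^{*}$ restricts to the asserted equivalence $\Fun^{\cocart}_{\mA}(\Env(\mC),\mD)\xrightarrow{\ \sim\ }\Fun_{\mA}(\mC,\mD)$, with inverse $F\mapsto\widetilde F$. I expect the main obstacle to be the claim that $\widetilde F$ preserves cocartesian morphisms — specifically, controlling the relative left Kan extension on morphisms well enough to see that the $u_{\phi}$ go to $q$-cocartesian morphisms — along with the bookkeeping needed to verify that the relative Kan extensions in play exist in the first place, which is exactly where the terminal object of $\mC_{/c}$ and the cocartesian pushforwards along $q$ enter.
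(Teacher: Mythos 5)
The paper does not prove this proposition in the text; it cites it as \cite[Proposition~A.2]{HEINE2023108941new} (by the same author), so there is no in-text argument to compare against directly. That said, your proposal is the standard argument for the universal property of the free cocartesian fibration, and it is essentially correct. The key structural facts you use all hold: $\Env(\mC)\to\mA$ is a cocartesian fibration whose cocartesian edges project to equivalences in $\mC$; $\theta$ is fully faithful (a pullback of the diagonal $\mA\hookrightarrow\Fun([1],\mA)$); and the relevant comma $\infty$-category $\mC\times_{\Env(\mC)}\Env(\mC)_{/(\phi,c)}$ is $\mC_{/c}$, which has a terminal object, so the relative left Kan extension exists and reduces to cocartesian pushforward of $F(c)$ along $\phi$, with no colimit hypotheses on $\mD$.

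Two places where you should spell out more detail when turning the sketch into a proof. First, the identification of the essential image of $\theta_!$ with $\Fun^{\cocart}_\mA(\Env(\mC),\mD)$ should be run through the characterization of relative Kan extensions over a comma category with a terminal object: the restriction of $G$ to $(\mC_{/c})^{\triangleright}\to\Env(\mC)\to\mD$ is a $q$-colimit diagram if and only if $G(u_\phi)$ is $q$-cocartesian, which is equivalent to the counit $\theta_!\theta^*G\to G$ being an equivalence at $(\phi,c)$. This packages your ``converse'' direction correctly: object-wise agreement is not enough by itself, but the counit argument gives the needed natural equivalence. Second, your appeal to the cancellation property to get cocartesianness of $\widetilde F(\sigma,\id_c)$ on a general cocartesian edge from $u_{f\phi}\simeq(\sigma,\id_c)\circ u_\phi$ is correct; just note that cocartesian edges in $\Env(\mC)$ can also have a nontrivial equivalence in the $\mC$-component, but these are harmless. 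With these details filled in the argument is complete.
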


\begin{lemma}\label{lemar}
Let $\mC \to \mA$ be a cocartesian fibration between small $\infty$-categories.
There is a canonical embedding $$\mP(\mC) \hookrightarrow \mP^\mA(\Env(\mC))$$ over $\mA$ that induces on the fiber over every $\A \in \mA$ the embedding 
$(\theta_\A)_!: \mP(\mC_\A) \subset \mP(\Env(\mC)_\A)$.
	
\end{lemma}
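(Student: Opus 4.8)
The plan is to build the embedding $\mP(\mC) \hookrightarrow \mP^\mA(\Env(\mC))$ by exhibiting it as a pullback along $\mA \to \infty\Cat$ of a single universal embedding of cocartesian fibrations over $\infty\Cat$, whose fiberwise behavior is the left Kan extension $(\theta_\A)_!$. First I would set up the relevant cocartesian fibrations: the fibers $\mP(\mC_\A)$ should be assembled into a cocartesian fibration $\mathfrak{P}(\mC) \to \mA$ whose fiber transport along $\theta\colon X \to Y$ in $\mA$ is the left Kan extension $\mP(\mC_X) \to \mP(\mC_Y)$ along the functor $\mC_X^\op \to \mC_Y^\op$ induced by the cocartesian transport of $\mC \to \mA$; and by Definition \ref{parpre}, $\mP^\mA(\Env(\mC)) \to \mA$ is the pullback of $\rho\colon \mR \to \infty\Cat$ along the functor $\mA \to \infty\Cat$ classifying the cocartesian fibration $\Env(\mC) \to \mA$. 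Since $\rho$ is both cartesian and cocartesian (Lemma \ref{lemmop}), both sides are cocartesian fibrations over $\mA$ by the remark following Definition \ref{parpre}.

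The key point is to identify $\mathfrak{P}(\mC) \to \mA$ itself as a pullback of $\rho\colon \mR \to \infty\Cat$. The natural candidate is: $\mathfrak{P}(\mC) \simeq \mA \times_{\infty\Cat} \mR$, where $\mA \to \infty\Cat$ now classifies the cocartesian fibration $\Env(\mC) \to \mA$ — but with the right-fibration datum coming not from all of $\Env(\mC)$ but from the sub-cocartesian-fibration $\mC \hookrightarrow \Env(\mC)$ via $\theta$. Concretely, a functor $\mB \to \mathfrak{P}(\mC)$ over $\mA$ should be a pair consisting of a functor $\mB \to \mA$ together with a right fibration over $\mB \times_\mA \mC$ (equivalently, by the Proposition just quoted, a map of cocartesian fibrations $\mE \to \mB \times_\mA \Env(\mC)$ over $\mB$ that is a right fibration on each fiber and is left Kan extended from $\mB \times_\mA \mC$). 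Unwinding the universal property of $\mR$ via the cartesian-fibration structure of $\rho$ on one hand, and via the cocartesian structure realizing $\psi_!$ as left Kan extension on the other, one sees that restriction along $\theta$ on presheaf $\infty$-categories, fiberwise, is exactly $(\theta_\A)^*$, whose left adjoint is $(\theta_\A)_!$. Since $\theta_\A\colon \mC_\A \hookrightarrow \Env(\mC)_\A$ is fully faithful (the diagonal $\mA \to \Fun([1],\mA)$ pulls back to a fully faithful inclusion), $(\theta_\A)_!$ is fully faithful, so fiberwise we get an embedding. I would then invoke the description in Lemma \ref{lqym}(2) to see that these fiberwise embeddings are compatible with the cocartesian transports: both sides are pullbacks of $\rho$, and the comparison map is induced by $\theta$ at the level of classifying functors $\mA \to \infty\Cat$ and the natural transformation of Lemma \ref{lqym}(2), hence a map of cocartesian fibrations over $\mA$.

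The remaining verification is that this map is an embedding (fully faithful), not merely fiberwise fully faithful. Here I would use the standard criterion that a map of cocartesian fibrations over $\mA$ which is fully faithful on every fiber and sends cocartesian edges to cocartesian edges is itself fully faithful — this reduces the global statement to the fiberwise statement about $(\theta_\A)_!$ already established. I expect the main obstacle to be the careful bookkeeping in identifying $\mathfrak{P}(\mC) \to \mA$ with a pullback of $\rho\colon \mR \to \infty\Cat$ in a way that makes the fiber transport visibly the Kan-extension functor $(\theta_\A)_!$ rather than its adjoint $(\theta_\A)^*$: one must track which way the cocartesian transport of $\rho$ goes (it computes $\psi_!$, i.e.\ left Kan extension of presheaves, by the proof of Lemma \ref{lemmop}) and match it against the cocartesian transport of $\mC \to \mA$ and $\Env(\mC) \to \mA$. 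Once that identification is pinned down, fully faithfulness of $\theta_\A$ and the cocartesian-edge criterion close the argument.
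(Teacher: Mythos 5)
Your plan hinges on exhibiting the comparison $\mP^\mA(\mC) \to \mP^\mA(\Env(\mC))$ directly as a map of cocartesian fibrations over $\mA$ induced by $\theta$ at the level of classifying functors, and then invoking the criterion that a map of cocartesian fibrations which is fiberwise fully faithful and preserves cocartesian edges is globally fully faithful. The critical gap is that $\theta\colon \mC \to \Env(\mC)$ is \emph{not} a map of cocartesian fibrations over $\mA$, so there is no natural transformation of classifying functors $\mA \to \infty\Cat$ arising from it. Concretely: for $\X \in \mC_\A$ and a morphism $f\colon \A \to \B$, the cocartesian transport of $\theta(\X) = (\id_\A,\X)$ in $\Env(\mC)$ is $(f,\X) \in \Env(\mC)_\B$, whereas $\theta(\psi_f(\X)) = (\id_\B, \psi_f(\X))$; these objects of $\Env(\mC)_\B$ are connected only by the localization morphism, not by an equivalence. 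Consequently the would-be comparison, whose fibers are $(\theta_\A)_!$, fails to commute with cocartesian transports, and your cocartesian-edge criterion cannot be applied. (This is not a bookkeeping issue you could fix by flipping a variance: the obstruction is structural, as the Yoneda embedding into the envelope is genuinely lax with respect to cocartesian transport.)

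The paper sidesteps exactly this by going through the relative \emph{left} adjoint $\L\colon \Env(\mC) \to \mC$ of $\theta$ (which \emph{is} a map of cocartesian fibrations over $\mA$, by \cite[Lemma A.7.]{HEINE2023108941new}). Applying $\mP^\mA$ to $\L$ produces a map of cocartesian fibrations $\L_!\colon \mP^\mA(\Env(\mC)) \to \mP^\mA(\mC)$; fiberwise this is the localization $(\L_\A)_!$ with fully faithful right adjoint $(\theta_\A)_!$, and \cite[Proposition 7.3.2.6.]{lurie.higheralgebra} then supplies a fully faithful right adjoint \emph{relative to $\mA$}, which is the sought embedding. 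The desired map is thus produced as a relative right adjoint, not as a map of cocartesian fibrations, and its fully faithfulness comes from the relative-adjunction machinery rather than from the cocartesian-edge criterion. If you want to salvage your strategy, the fix is to replace the direct comparison induced by $\theta$ with the one induced by $\L$ and then pass to relative right adjoints; the fiberwise identification of the right adjoint as $(\theta_\A)_!$ (and its full faithfulness, coming from full faithfulness of $\theta_\A$) is then the fiberwise input you already articulated.
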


\begin{proof} By \cite[Lemma A.7.]{HEINE2023108941new} the embedding $\theta: \mC \subset \Env(\mC)$ admits a left adjoint $\L$ relative to $\mA$, which is a map of cocartesian fibrations over $\mA$.
The relative left adjoint gives rise to a map
$\L_!: \mP^\mA(\Env(\mC)) \to \mP^\mA(\mC)$ of cocartesian fibrations over $\mA$.
For every $\A \in \mA$ the induced localization $\L_\A: \Env(\mC)_\A \rightleftarrows \mC_\A: \theta_\A$ induces a localization
$(\L_!)_\A \simeq (\L_\A)_! : \mP(\Env(\mC)_\A)\rightleftarrows \mP(\mC_\A): (\theta_\A)_! $.
By \cite[Proposition 7.3.2.6.]{lurie.higheralgebra} this implies that the map $ \L_!: \mP^\mA(\Env(\mC)) \to \mP^\mA(\mC)$ of cocartesian fibrations over $\mA$ admits a fully faithful right adjoint relative to $\mA$ that induces on the fiber over $\A \in \mA$ the embedding $(\theta_\A)_!.$
%which identifies $\mP^\mA(\mC)$ with the claimed full subcategory of $\mP^\mA(\Env(\mC)) $. % spanned by the objects of $\mP^\mA(\Env(\mC))_\A \simeq \mP(\Env(\mC)_\s) $ that belong to the essential image of the embedding $\mP(\mC_\s) \subset \mP(\Env(\mC)_\s)$ for some $\s \in \mA$.
%that is a map of cartesian fibrations over $\mA$.
	
\end{proof}

Lemma \ref{lemar} motivates the following extension of Definition \ref{parpre}.

\begin{definition}
For every functor $\mC \to \mA$ between small $\infty$-categories let $$\mP^\mA(\mC) \subset \mP^\mA(\Env(\mC)) $$ be the full subcategory spanned by the objects of $\mP^\mA(\Env(\mC))_\A \simeq \mP(\Env(\mC)_\A) $ that belong to the essential image of the embedding $(\theta_\A)_!: \mP(\mC_\A) \subset \mP(\Env(\mC)_\A)$ for some $\A \in \mA$.
\end{definition}

%\begin{remark}For every functor $\mC \to \mA$ whose fibers are small,and every $\A \in \mA$ the induced embedding $\mP^\mA(\mC)_\A \subset \mP^\mA(\Env(\mC))_\A$ is left adjoint to restriction along the embedding $\mC_\A \subset \Env(\mC)_\A.$ \end{remark}
\begin{remark}
The embedding $\mC \subset \Env(\mC) \to \mP^\mA(\Env(\mC))$ over $\mA$ of Lemma \ref{lqym} induces an embedding $\mC \to \mP^\mA(\mC)$ over $\mA$.
\end{remark}
%\begin{lemma}\label{loccc}Let $\phi: \mC \to \mA$ a cocartesian fibration. 
%Let $\phi: \mC \to \mA$ be a locally cocartesian fibration. The embedding $\mC \subset \Env(\mC)$ induces on the fiber over every $\A \in \mA$ a right adjoint embedding $\mC_\A \to \Env(\mC)_\A.$The embedding $\mC \subset \Env(\mC)$ admits a left adjoint relative to $\mA.$\end{lemma}
%\begin{proof}Let $(\X, \f: \phi(\X) \to \Y) \in \Env(\mC)$ and let$\g: \X \to \X'$ be a $\phi$-cocartesian lift of $\f.$ Note that the canonical embedding $\iota: \mC \subset \Env(\mC)$ sends $\Z \in \mC $ to $(\Z,\id_{\phi(\Z)}) \in \Env(\mC). $The morphism $(\g, \id_\Y) : (\X,\f) \to \iota(\X') $ in $\Env(\mC)$ induces for any $\Z \in \mC$ a map$$ \mC(\X', \Z) \to \Env(\mC)((\X, \f), \iota(\Z) ),$$which we will prove to be an equivalence: this map factors as $$ \mC(\X', \Z) \simeq \mA(\Y, \phi(\Z)) \times_{\mA(\phi(\X), \phi(\Z))} \mC(\X, \Z) \simeq  \Env(\mC)((\X, \f), \iota(\Z) ).$$\end{proof}  

\begin{lemma}\label{lemaso}
Let $\phi: \mD \to \mA$ be a locally cocartesian fibration and $\mC \subset \mD$ a full subcategory such that for every $\A \in \mA$ the induced embedding $\mC_\A \subset \mD_\A$ admits a left adjoint $\L_\A.$	
The restriction $\phi': \mC \subset \mD \to \mA$ is a locally cocartesian fibration. A morphism of $\mC$ lying over a morphism $\f: \A \to \B $ of $\mA$ is locally $\phi'$-cocartesian if and only if it factors as $\X \to \Y \to \Y'$, where $\X \to \Y$ is $\phi$-cocartesian and $\Y \to \Y'$ is a $\L_{\B}$-local equivalence.
 
\end{lemma}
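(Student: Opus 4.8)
The plan is to identify the locally cocartesian morphisms of $\phi' \colon \mC \to \mA$ fiberwise over each edge of $\mA$, using that locally (co)cartesian fibrations are detected one simplex of the base at a time. First I would recall that, pulling back $\phi$ and $\phi'$ along an arbitrary edge $\f \colon \A \to \B$ of $\mA$ (i.e.\ along $[1] \to \mA$), one reduces to the case $\mA = [1]$: a functor is a locally cocartesian fibration if and only if its pullback to every $[1] \to \mA$ is a cocartesian fibration, and its locally cocartesian edges are exactly the cocartesian edges of these pullbacks. So fix $\f \colon \A \to \B$ and write $\mD_\f \to [1]$ for the pullback of $\phi$, a cocartesian fibration with fibers $\mD_\A, \mD_\B$ and fiber transport $\f_! \colon \mD_\A \to \mD_\B$; likewise $\mC_\f \to [1]$ is the pullback of $\phi'$, with fibers $\mC_\A \subset \mD_\A$ and $\mC_\B \subset \mD_\B$.

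Next I would show $\mC_\f \to [1]$ is a cocartesian fibration and compute its transport. The composite $\L_\B \circ \f_! \circ (\text{inclusion}) \colon \mC_\A \to \mD_\A \to \mD_\B \to \mC_\B$ is the natural candidate for the fiber transport of $\mC_\f$. To see $\mC_\f \to [1]$ is cocartesian with this transport, I would argue directly with the universal property of cocartesian edges: given $\X \in \mC_\A$, form the $\phi$-cocartesian edge $\X \to \f_!\X$ in $\mD_\f$, then postcompose with the unit $\f_!\X \to \L_\B(\f_!\X)$ of the adjunction $\L_\B \dashv (\mC_\B \subset \mD_\B)$. For any target $\Z \in \mC_\B$, the mapping space in $\mC_\f$ out of $\X$ toward $\Z$ over the nonidentity edge of $[1]$ is $\mathrm{Map}_{\mD_\B}(\f_!\X, \Z)$ (by cocartesianness of $\X \to \f_!\X$ in $\mD_\f$ together with $\mC_\B \subset \mD_\B$ fully faithful), and this is identified via the unit with $\mathrm{Map}_{\mC_\B}(\L_\B(\f_!\X), \Z)$; hence $\X \to \L_\B(\f_!\X)$ is $\phi'$-cocartesian. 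This also shows that a morphism $\X \to \Y'$ of $\mC$ over $\f$ is locally $\phi'$-cocartesian precisely when the induced map $\L_\B(\f_!\X) \to \Y'$ in $\mC_\B$ is an equivalence, which is exactly the assertion that $\X \to \Y'$ factors as a $\phi$-cocartesian edge $\X \to \Y = \f_!\X$ followed by an $\L_\B$-local equivalence $\Y \to \Y'$ (an $\L_\B$-local equivalence being, by definition, a morphism inverted by $\L_\B$, equivalently one whose image under $\L_\B$ is an equivalence). Conversely, any such factorization gives a locally cocartesian edge since $\phi$-cocartesian edges compose correctly and $\L_\B$-local equivalences map to equivalences under $\L_\B$.

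Finally I would assemble these fiberwise statements: since $\mC_\f \to [1]$ is a cocartesian fibration for every edge $\f$ of $\mA$, the functor $\phi' \colon \mC \to \mA$ is a locally cocartesian fibration, and its locally $\phi'$-cocartesian morphisms are exactly those described. The main obstacle I anticipate is the verification that $\mC_\f \to [1]$ is genuinely a cocartesian fibration — i.e.\ that the candidate edge $\X \to \L_\B(\f_!\X)$ really is $\phi'$-cocartesian — which requires care in handling the mapping-space computation in the Grothendieck-style pullback $\mC_\f = \mC \times_\mA [1]$ and the interaction of the adjunction unit with the cocartesian edge; everything else is formal once this is in place. (Alternatively, one could cite the relative adjoint functor theorem / straightening to produce $\mC_\f \to [1]$ as the cocartesian fibration classified by $\L_\B \circ \f_! \colon \mC_\A \to \mC_\B$, but the hands-on argument makes the characterization of the locally cocartesian edges transparent.)
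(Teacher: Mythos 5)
Your proposal is correct and takes essentially the same approach as the paper: both verify the universal property of the candidate locally cocartesian edge by chaining the equivalences $\mC_\B(\Y',\Z) \simeq \mD_\B(\Y,\Z) \simeq \{\f\}\times_{\mA(\A,\B)}\mD(\X,\Z)$ for $\Z \in \mC_\B$, using the $\L_\B$-local equivalence (adjunction unit) for the first and $\phi$-cocartesianness for the second. The paper compresses this to a single displayed line without the explicit reduction to $\mA = [1]$, but the substance is identical.
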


\begin{proof}

For every $\Z \in \mC$ lying over $\B \in \mA$ the canonical map
$$ \mC_\B(\Y',\Z) \simeq \mD_\B(\Y,\Z) \to \{\f\}\times_{\mA(\A, \B)} \mD(\X,\Z)$$ is an equivalence.	
	
\end{proof}

\begin{proposition}\label{class}
For every functor $\mC \to \mA$ of small $\infty$-categories the functor $\mP^\mA(\mC) \to \mA$ is a locally cartesian fibration.
%For every morphism $\A \to \B$ of $\mA$ the induced functor $ \mP^\mA(\mC)_\B \to \mP^\mA(\mC)_\A$ admits a right adjoint.
For every morphism $\A \to \B$ of $\mA$ the induced functor $ \mP^\mA(\mC)_\B \to \mP^\mA(\mC)_\A$ factors as $$ \mP(\mC_\B) \subset \mP(\Env(\mC)_\B) \to \mP(\mC_\A),$$
where the last functor is restriction along the functor $\mC_\A \subset \Env(\mC)_\A \to \Env(\mC)_\B,$ and thus is a left adjoint.
\end{proposition}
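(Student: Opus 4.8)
The plan is to realise $\mP^\mA(\mC)$ as a full subcategory of the cartesian fibration $\mP^\mA(\Env(\mC))\to\mA$ whose fibre inclusions are coreflective, and then to feed this into Lemma~\ref{lemaso} applied to opposite categories. First I would record the properties of the ambient object. The functor $\Env(\mC)\to\mA$ is a cocartesian fibration with small fibres: its fibre over $\A$ is $\mA_{/\A}\times_\mA\mC$, which is small, and the cocartesian transport along a morphism $\f:\A\to\B$ of $\mA$ is the functor $\f_*:\Env(\mC)_\A\to\Env(\mC)_\B$ induced by postcomposition with $\f$ in the $\Fun([1],\mA)$-coordinate. If $\psi:\mA\to\infty\Cat$ classifies it, then $\mP^\mA(\Env(\mC))=\mA\times_{\psi,\infty\Cat,\rho}\mR$ by Definition~\ref{parpre}, so Lemma~\ref{lemmop}, which says $\rho$ is a cartesian fibration, shows that $\mP^\mA(\Env(\mC))\to\mA$ is a cartesian fibration; moreover, since the cartesian transport of $\rho$ along a functor $g:\mE\to\mE'$ is the restriction $g^*:\mP(\mE')\to\mP(\mE)$, the cartesian transport of $\mP^\mA(\Env(\mC))\to\mA$ along $\f$ is $(\f_*)^*:\mP(\Env(\mC)_\B)\to\mP(\Env(\mC)_\A)$.

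Next I would treat the fibres. As $\theta:\mC\to\Env(\mC)$ is an embedding over $\mA$, each $\theta_\A:\mC_\A\to\Env(\mC)_\A$ is fully faithful, hence $(\theta_\A)_!:\mP(\mC_\A)\hookrightarrow\mP(\Env(\mC)_\A)$ is a fully faithful left adjoint with right adjoint the restriction $\theta_\A^*$; its essential image, which is $\mP^\mA(\mC)_\A$ by definition, is therefore coreflective in $\mP(\Env(\mC)_\A)$ with coreflector $\R_\A:=(\theta_\A)_!\theta_\A^*$. Passing to opposite categories, $\mP^\mA(\Env(\mC))^\op\to\mA^\op$ is cocartesian, in particular locally cocartesian, and the fibre inclusions of its full subcategory $\mP^\mA(\mC)^\op$ admit left adjoints, namely the $\R_\A^\op$. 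Applying Lemma~\ref{lemaso} to $\mP^\mA(\Env(\mC))^\op\to\mA^\op$ I would conclude that $\mP^\mA(\mC)^\op\to\mA^\op$ is a locally cocartesian fibration, equivalently that $\mP^\mA(\mC)\to\mA$ is a locally cartesian fibration, and I would extract from Lemma~\ref{lemaso} the description of the locally cartesian morphisms: a morphism of $\mP^\mA(\mC)$ over $\f:\A\to\B$ is locally cartesian if and only if it factors as $\X'\to\X\to\Y$, where $\X\to\Y$ is cartesian for $\mP^\mA(\Env(\mC))\to\mA$ and $\X'\to\X$ is the counit of the coreflection $\R_\A$.

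It then remains to read off the fibre transport. Given $\Y\in\mP^\mA(\mC)_\B$, corresponding via $(\theta_\B)_!$ to $\Y_0\in\mP(\mC_\B)$, the description above says its image under the fibre transport along $\f$ is $\R_\A\bigl((\f_*)^*(\theta_\B)_!\Y_0\bigr)=(\theta_\A)_!\,\theta_\A^*(\f_*)^*(\theta_\B)_!\Y_0=(\theta_\A)_!\bigl((\f_*\circ\theta_\A)^*(\theta_\B)_!\Y_0\bigr)$. Under the identifications $\mP^\mA(\mC)_\B\simeq\mP(\mC_\B)$ and $\mP^\mA(\mC)_\A\simeq\mP(\mC_\A)$ this is exactly the composite of the embedding $(\theta_\B)_!:\mP(\mC_\B)\hookrightarrow\mP(\Env(\mC)_\B)$ with the functor $(\f_*\circ\theta_\A)^*:\mP(\Env(\mC)_\B)\to\mP(\mC_\A)$, i.e.\ the asserted factorisation, the second functor being restriction along $\mC_\A\hookrightarrow\Env(\mC)_\A\xrightarrow{\f_*}\Env(\mC)_\B$; and restriction functors between presheaf $\infty$-categories are left adjoints, the right adjoint being the corresponding right Kan extension.

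The step I expect to need the most care is the combination underlying the last two paragraphs: keeping the opposite categories straight while invoking Lemma~\ref{lemaso}, and correctly matching the cartesian transport of $\mP^\mA(\Env(\mC))\to\mA$ with restriction along the cocartesian pushforward $\f_*$ of $\Env(\mC)\to\mA$. Everything else — full faithfulness of $(\theta_\A)_!$, coreflectivity of its essential image, and the fact that restriction is a left adjoint — is formal.
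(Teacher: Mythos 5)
Your proposal is correct and takes essentially the same route as the paper: identify $\mP^\mA(\Env(\mC))\to\mA$ as a cartesian fibration via Lemma~\ref{lemmop}, observe that the fibre inclusions $(\theta_\A)_!:\mP(\mC_\A)\hookrightarrow\mP(\Env(\mC)_\A)$ are coreflective, and invoke (the opposite of) Lemma~\ref{lemaso}. The paper's proof leaves the passage to opposites and the resulting fibre-transport factorisation implicit, whereas you spell both out explicitly; your computation of the transport as $(\theta_\B)_!$ followed by restriction along $\f_*\circ\theta_\A$ matches the statement exactly.
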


\begin{proof}
The functor $\mP^\mA(\Env(\mC)) \to \mA $ is a cartesian fibration and for every $\A \in \mA$ the induced embedding $(\theta_\A)_!: \mP^\mA(\mC)_\A \simeq \mP(\mC_\A) \subset \mP^\mA(\Env(\mC))_\A \simeq \mP(\Env(\mC)_\A)$ admits a right adjoint. Thus by Lemma \ref{lemaso} the restriction $\mP^\mA(\mC) \subset \mP^\mA(\Env(\mC)) \to \mA$ is a locally cartesian fibration. 	
\end{proof}

\begin{notation}
Let $\mC \to \mA, \mD \to \mB$ be functors.
Let $$ \Theta(\mC,\mD) := \Fun(\mA,\mB) \times_{\Fun(\mC,\mB)} \Fun(\mC, \mD)$$ and
$$ \Theta(\mC,\mD)^\L \subset \Theta(\mC,\mD)$$ the full subcategory spanned by the commutative squares
$$
\begin{xy}
\xymatrix{
\mC\ar[d]
\ar[rr]
&& \mD \ar[d]^{} 
\\
\mA \ar[rr]^\psi  && \mB}
\end{xy}$$
such that for every $\A \in \mA$ the induced functor
$\mC_\A \to \mD_{\psi(\A)}$ on the fiber over $\A$ preserves small colimits.
	
\end{notation}

The following universal property of $\mP^\mA(\mC) \to \mA$ is crucial for our main theorem:

\begin{proposition}\label{uuuu}Let $\mD \to \mB$ be a locally cartesian fibration whose fibers admit small colimits.
	
\begin{enumerate}
\item Let $\mC \to \mA$ be a functor between small $\infty$-categories.
Restriction along the embedding $\mC \to \mP^\mA(\mC)$ induces an equivalence
$$\theta_\mD: \Theta(\mP^\mA(\mC), \mD)^\L \to \Theta(\mC, \mD).$$

\item Let $\mC \to \mA$ be a functor between $\infty$-categories
and $\mP^\mA(\mC) $ the full subcategory of $\widehat{\mP}^\mA(\mC)$ such that for every
$\A \in \mA$ the fiber $\mP^\A(\mC)_\A \subset \widehat{\mP}^\mA(\mC)_\A \simeq \widehat{\mP}(\mC_\A)$ is generated by $\mC_\A$ under small colimits. 
Restriction along the induced embedding $\mC \to \mP^\mA(\mC) \subset \widehat{\mP}^\mA(\mC)$ induces an equivalence
$$\theta_\mD: \Theta(\mP^\mA(\mC), \mD)^\L \to \Theta(\mC, \mD).$$
	
\end{enumerate}
\end{proposition}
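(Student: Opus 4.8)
The plan is to reduce the parametrized statement to the fiberwise universal property of presheaf $\infty$-categories, using the enveloping cocartesian fibration $\Env(\mC) \to \mA$ as an intermediary so that we may invoke Lemma \ref{lqym} and the preceding proposition on envelopes. First I would treat case (1) and then explain how (2) follows by replacing $\mP$ with $\widehat{\mP}$ throughout and restricting to the subcategory generated under small colimits. The key point is that a commutative square classified by a point of $\Theta(\mP^\mA(\mC),\mD)$ amounts to a functor $\psi\colon \mA \to \mB$ together with a functor $\mP^\mA(\mC) \to \mD$ over $\psi$, and the condition of lying in $\Theta(-)^\L$ says this functor is fiberwise colimit-preserving; by Proposition \ref{class} the fiber $\mP^\mA(\mC)_\A \simeq \mP(\mC_\A)$ and the fiber transports of $\mP^\mA(\mC) \to \mA$ are the expected left adjoints, so fiberwise we are exactly in the situation of the classical universal property $\Fun^L(\mP(\mC_\A), \mD_{\psi(\A)}) \simeq \Fun(\mC_\A, \mD_{\psi(\A)})$.

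The main steps, in order: (i) First reduce to the case of a cocartesian fibration $\mC \to \mA$ by factoring $\mC \to \mP^\mA(\mC)$ through $\mC \to \Env(\mC) \to \mP^\mA(\Env(\mC))$ and using the universal property of $\Env$ (the cited Proposition) to identify $\Theta(\mC,\mD) \simeq \Theta^\cocart(\Env(\mC),\mD)$, the variant where the top functor is required to send cocartesian morphisms to locally cocartesian morphisms; here Lemma \ref{lemar} guarantees $\mP^\mA(\mC)$ sits inside $\mP^\mA(\Env(\mC))$ compatibly with fibers. (ii) For a cocartesian fibration $\mC \to \mA$, build the candidate inverse to $\theta_\mD$: given a square $\mC \to \mD$ over $\psi\colon \mA \to \mB$ with $\mC_\A \to \mD_{\psi(\A)}$ only required to exist (not preserve colimits), produce $\mP^\mA(\mC) \to \mD$ by applying the classical unstraightened Yoneda extension fiberwise and checking these assemble into a functor over $\psi$ — this is where I would use that $\mP^\mA(\mC) \to \mA$ is itself built as a pullback of $\mR \to \infty\Cat$ (Definition \ref{parpre}, Lemma \ref{lemmop}) so that maps out of it over $\mA$ are controlled by maps out of $\mR$. (iii) Check the two composites are equivalent to the identity by checking fiberwise, using that a map between locally (co)cartesian fibrations over $\mA$ inducing equivalences on all fibers is an equivalence, together with Proposition \ref{class} to see the fiber transports match up. (iv) For part (2), the only change is that $\widehat{\mP}(\mC_\A)$ is not small but the universal property $\Fun^L(\widehat{\mP}(\mC_\A), \mE) \simeq \Fun(\mC_\A, \mE)$ still holds for $\mE$ with small colimits, and restricting to the colimit-generated subcategory $\mP^\mA(\mC)$ does not change the mapping spaces into a fiberwise-cocomplete target $\mD_{\psi(\A)}$ since any colimit-preserving functor out of the generated subcategory extends uniquely.

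The hard part will be step (ii): assembling the fiberwise Yoneda extensions into a single functor $\mP^\mA(\mC) \to \mD$ over $\psi$, since $\mD \to \mB$ is only a \emph{locally} cartesian fibration, so there is no straightening available and one cannot simply compose straightened data. I expect to handle this by working directly with the models: a functor $\mP^\mA(\mC) \to \mD$ over $\psi$ is the same as a functor $\mA \to \mB$ together with a map from the cartesian (indeed bicartesian, by the Remark after Definition \ref{parpre}) fibration $\mP^\mA(\mC) \to \mA$ to the pullback $\mD \times_\mB \mA \to \mA$, and the colimit-preserving condition fiberwise lets one pin this down by its restriction to $\mC \subset \mP^\mA(\mC)$ via a relative left Kan extension argument along $\mC \hookrightarrow \mP^\mA(\mC)$ — the relative left Kan extension exists precisely because each fiber transport of $\mP^\mA(\mC) \to \mA$ is a left adjoint (Proposition \ref{class}) and each $\mD_\B$ is cocomplete, so the pointwise formula for the left Kan extension along $\mC_\A \hookrightarrow \mP(\mC_\A)$ is computed by a fiberwise colimit that is preserved by the (locally cartesian) transition functors up to the canonical comparison. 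This reduces the whole statement to the assertion that relative left Kan extension along $\mC \hookrightarrow \mP^\mA(\mC)$ exists and is computed fiberwise, which is exactly the content one extracts from Lemma \ref{lemaso} and Proposition \ref{class}.
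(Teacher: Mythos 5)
Your plan correctly identifies the overall strategy (reduce to the fiberwise universal property of presheaves via relative left Kan extension along $\mC \hookrightarrow \mP^\mA(\mC)$) and correctly flags step (ii) as the hard part. But you have not resolved the difficulty you name there, and as written the plan has a genuine gap at exactly that point.

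The issue is that your proposed relative left Kan extension along $\mC \hookrightarrow \mP^\mA(\mC)$ relative to $\mA$, with target $\mA \times_\mB \mD \to \mA$, is not covered by the machinery you invoke. The existence results you implicitly need (\cite[Corollary 4.3.2.14, Proposition 4.3.2.17]{lurie.HTT}) require the target to be a \emph{cocartesian} fibration whose fibers are cocomplete and whose \emph{fiber transports preserve colimits}; a locally cartesian fibration with cocomplete fibers supplies neither of these, and there is no straightening available to assemble the fiberwise Yoneda extensions into a functor over $\psi$. Cocompleteness of each $\mD_\B$ alone does not give relative colimits in $\mD$ over $\mB$, and the phrase ``preserved by the (locally cartesian) transition functors up to the canonical comparison'' is not something that holds or can be arranged in general — the locally cartesian transports of $\mD$ go the wrong way and satisfy no compatibility with the cocartesian transports of $\mP^\mA(\mC)$. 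The same issue undermines step (i): the universal property of $\Env(\mC)$ that you want to invoke ($\Fun^\cocart_\mA(\Env(\mC),\mD) \simeq \Fun_\mA(\mC,\mD)$) requires $\mD \to \mA$ to be a cocartesian fibration, which is again not a hypothesis.

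The paper resolves this by first proving Lemma \ref{lkj}: every locally cocartesian fibration of small $\infty$-categories embeds over the base, fiberwise small-limit-preservingly, into a bicartesian fibration whose fibers are cocomplete and locally small. Applying the dual (in a larger universe) produces an embedding $\mD \subset \mE$ over $\mB$ where $\mE \to \mB$ is a cocartesian and cartesian fibration with large colimits in fibers. Because each $\mP(\mC_\A)$ is generated by representables under small colimits, $\theta_\mD$ becomes a pullback of $\theta_\mE$, and one may then assume the target is a bicartesian fibration with cocomplete fibers and colimit-preserving transports — precisely the setting where the HTT relative Kan extension machinery applies. After that, $\theta_\mE$ is a map of bicartesian fibrations over $\Fun(\mA,\mB)$, so one can work over a fixed $\psi$, and the remaining argument (conservativity of the restriction by generation under colimits, existence of a fully faithful left adjoint by relative Kan extension, and a $\mathrm{Env}(\mC)$ detour to verify fiberwise colimit-preservation via \cite[Proposition 4.3.3.10, Lemma 5.1.5.5]{lurie.HTT}) is close to what you sketch. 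You should add the Lemma~\ref{lkj} reduction at the front; without it the relative left Kan extension you rely on does not exist.
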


\begin{proof}
(2) is similar to (1). We prove (1).
By the opposite of Lemma \ref{lkj} (applied to a larger universe) there is an embedding $\mD \subset \mE$ over $\mB$ that induces 
on the fiber over every object of $\mA$ a small colimits preserving functor, where $\mE \to \mA$ is a cocartesian and cartesian fibration whose fibers admit large colimits and are locally large.
Since the $\infty$-category of presheaves is generated by the representable presheaves under small colimits \cite[Lemma 2.10.]{heine2024topologicalmodelcellularmotivic}, the functor $\theta_\mD$ is the pullback of the functor $\theta_\mE$.
So it is enough to check that $\theta_\mE$ is an equivalence and we can assume that $\mD \to \mA$ is a cocartesian and cartesian fibration whose fibers admit large colimits and are locally large. 

Since $\mD \to \mB$ is a cocartesian and cartesian fibration, the functor
$\theta_\mD$ is a map of cocartesian and cartesian fibrations over $\Fun(\mA,\mB).$
Consequently, it is enough to see that $\theta_\mD$ induces on the fiber over every functor $\psi: \mA \to \mB$ an equivalence.
The functor $\theta_\mD$ induces on the fiber over $\psi$
the canonical functor $$\Fun_\mA^\L(\mP^\mA(\mC), \mA \times_\mB \mD) \to \Fun_\mA(\mC, \mA \times_\mB \mD),$$ 
where the left hand side is the full subcategory of functors over $\mA$ that induce on every fiber a small colimits preserving functor. 
Therefore it is enough to see that for every cocartesian and cartesian fibration $\mD \to \mA$ whose fibers admit large colimits and are locally large
the canonical functor $$\kappa: \Fun_\mA^\L(\mP^\mA(\mC), \mD) \to \Fun_\mA(\mC, \mD)$$ is an equivalence. 
The functor $\kappa$ is conservative since for every $\A \in \mA$ the $\infty$-category $\mP^\mA(\mC)_\A \simeq \mP(\mC_\A)$ is generated by $\mC_\A$ under small colimits. Thus it is enough to check that $\kappa$ admits a fully faithful left adjoint.
By Proposition \ref{class} the functor $\mP^\mA(\mC) \to \mA$ is a locally cartesian fibration. This implies that $\mP^\mA(\mC)$ is large because
$\mA$ is large and the fibers of $\mP^\mA(\mC) \to \mA$ are large.
Since $\mP^\mA(\mC)$ is large and the fibers of the cocartesian fibration $\mD \to \mA$ admit large colimits preserved by the fiber transports, by \cite[Corollary 4.3.2.14.]{lurie.HTT} and \cite[Proposition 4.3.2.17.]{lurie.HTT} the induced functor $$\Fun_\mA(\mP^\mA(\mC), \mD) \to \Fun_\mA(\mC, \mD)$$ admits a fully faithful left adjoint $\iota$
taking the left Kan extension relative to $\mA$ along the embedding $\mC \to \mP^\mA(\mC).$
We complete the proof by showing that the left Kan extension $\F: \mP^\mA(\mC) \to \mD$ relative to $\mA$ of any functor $\mC \to \mD$ over $\mA$ preserves fiberwise small colimits.
Because $\mP^\mA(\Env(\mC))$ is large, the induced functor $$\Fun_\mA(\mP^\mA(\Env(\mC)), \mD) \to \Fun_\mA(\mP^\mA(\mC), \mD)$$ admits a fully faithful left adjoint $\beta.$
Since the embedding $\mP^\mA(\mC) \subset \mP^\mA(\Env(\mC))$ yields on the fiber over every object of $ \mA$ a left adjoint functor, it is enough to see that $\beta(\F)$ preserves fiberwise small colimits.
Since $\beta(\F) \simeq \beta(\iota(\F_{\mid \mC})) $ is the left Kan extension relative to $\mA$ of
$\F_{\mid \mC}: \mC \to \mD$ along the embedding $\mC \subset \Env(\mC) \to \mP^\mA(\Env(\mC))$ over $\mA,$  it is enough to check that the left Kan extension $\bar{\rH}: \mP^\mA(\Env(\mC)) \to \mD $
relative to $\mA$ of any functor $\rH: \Env(\mC) \to \mD$ over $\mA$ preserves fiberwise small colimits.
Because $\Env(\mC) \subset \mP^\mA(\Env(\mC))$ is a map of cocartesian fibrations over $\mA$, by \cite[Proposition 4.3.3.10.]{lurie.HTT} for every $\A \in \mA$ the induced functor $$\bar{\rH}_\A:  \mP^\mA(\Env(\mC))_\A \simeq \mP(\Env(\mC)_\A) \to \mD_\A $$ is the left Kan extension of $\rH_\A: \Env(\mC)_\A \to \mD_\A$, which by \cite[Lemma 5.1.5.5.]{lurie.HTT} preserves small colimits.

\end{proof}

\begin{lemma}\label{lkj}
	
Let $\mD \to \mA$ be a locally cocartesian fibration of small $\infty$-categories.
There is an embedding $\mD \subset \mE$ over $\mA$ that induces on the fiber over every object of $\mA$ a small limits preserving functor, where $\mE \to \mA$ is a cocartesian and cartesian fibration whose fibers admit small limits and are locally small.
	
\end{lemma}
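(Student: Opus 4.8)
The plan is to prove the lemma with $\mE:=\mP^\mA(\Env(\mD))$, the parametrized presheaf $\infty$-category of the enveloping cocartesian fibration $\Env(\mD)\to\mA$ of $\mD\to\mA$, and to take the embedding $\mD\subset\mE$ to be the composite of the canonical embedding $\theta\colon\mD\hookrightarrow\Env(\mD)$ with the parametrized Yoneda embedding $\Env(\mD)\hookrightarrow\mP^\mA(\Env(\mD))$ supplied by Lemma \ref{lqym}~(1) applied to the cocartesian fibration $\Env(\mD)\to\mA$.

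First I would record the formal properties of $\mE$. Since $\mD\to\mA$ is a functor of small $\infty$-categories, $\Env(\mD)\to\mA$ is a cocartesian fibration of small $\infty$-categories whose fiber over $\A\in\mA$ is $\Env(\mD)_\A\simeq\mA_{/\A}\times_\mA\mD$. By the Remark following Definition \ref{parpre} the functor $\mP^\mA(\Env(\mD))\to\mA$ is therefore a cocartesian and cartesian fibration, and its fiber over $\A$ is the presheaf $\infty$-category $\mP(\Env(\mD)_\A)$, which admits all small limits and is locally small because $\Env(\mD)_\A$ is small. By Lemma \ref{lqym}~(1) the embedding $\Env(\mD)\hookrightarrow\mP^\mA(\Env(\mD))$ over $\mA$ restricts on the fiber over $\A$ to the Yoneda embedding $j_\A\colon\Env(\mD)_\A\hookrightarrow\mP(\Env(\mD)_\A)$, while $\theta$ restricts there to the embedding $\theta_\A\colon\mD_\A\hookrightarrow\Env(\mD)_\A$, $d\mapsto(d,\id_\A)$; hence the composite embedding $\mD\hookrightarrow\mE$ restricts on the fiber over $\A$ to $j_\A\circ\theta_\A$.

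It then remains to show that $j_\A\circ\theta_\A$ preserves all small limits existing in $\mD_\A$. Since the Yoneda embedding $j_\A$ preserves all small limits, it suffices to show $\theta_\A$ does, and for this I would produce a left adjoint to $\theta_\A$: because $\mD\to\mA$ is a locally cocartesian fibration, every object $(d,\alpha\colon x\to\A)$ of $\Env(\mD)_\A\simeq\mA_{/\A}\times_\mA\mD$ admits a locally cocartesian lift $\chi\colon d\to\alpha_!d$ of $\alpha$ with $\alpha_!d\in\mD_\A$, and — using that $\id_\A$ is a terminal object of $\mA_{/\A}$ — the universal property of $\chi$ shows that $\chi$ is a unit map exhibiting $(d,\alpha)\mapsto\alpha_!d$ as a left adjoint to $\theta_\A$. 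Hence $\theta_\A$ is a right adjoint and preserves all existing small limits; once these fiberwise left adjoints are at hand one could alternatively quote Lemma \ref{lemaso}. Combined with the properties of $\mE$ recorded above, this gives all assertions of the lemma.

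I expect the only real subtlety to be this last step. The obstacle one might worry about is that $\mD\to\mA$ is merely locally cocartesian, so that there need not be a left adjoint to $\theta$ relative to $\mA$ — such a relative left adjoint would force the locally cocartesian pushforwards to compose coherently — but what makes the argument go through is that one only needs the fiberwise left adjoints $\Env(\mD)_\A\to\mD_\A$, which are assembled from the individual locally cocartesian lifts guaranteed by the hypothesis. One should also bear in mind that "preserves small limits" has to be read as "preserves whatever small limits exist", since the fibers of a general locally cocartesian fibration need not be complete.
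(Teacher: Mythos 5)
Your proof is correct and follows essentially the same route as the paper: take $\mE:=\mP^\mA(\Env(\mD))$ and observe that the embedding $\mD\subset\Env(\mD)\to\mP^\mA(\Env(\mD))$ is fiberwise a composite of a right adjoint with a Yoneda embedding, hence preserves small limits. The one cosmetic difference is that the paper outsources the existence of the fiberwise left adjoint to $\theta_\A$ to \cite[Lemma A.7.]{HEINE2023108941new}, whereas you build it directly from the locally cocartesian lifts, which is a perfectly fine substitute (though, as a small aside, Lemma \ref{lemaso} does not actually supply those adjoints — it only consumes them — so the parenthetical citation there is off target).
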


\begin{proof}
	
Since $\mD \to \mA$ is a locally cocartesian fibration, by \cite[Lemma A.7.]{HEINE2023108941new} the embedding $\mD \subset \Env(\mD)$ induces on the fiber over every object of $\mA$ 
a right adjoint functor.
Since $\mD \to \mA$ is a locally cocartesian fibration of small $\infty$-categories, also $\mD$ is small.
Take $\mE:= \mP^\mA(\Env(\mD)).$ The embedding $\mD \subset \Env(\mD) \to \mP^\mA(\Env(\mD))$ induces on every fiber a small limits preserving functor.
	
\end{proof}

\begin{remark}\label{remre}
Proposition \ref{uuuu} gives for any functors $\mA' \to \mA, \mC \to \mA$ between small $\infty$-categories a canonical functor
\begin{equation}\label{eay}
\mP^{\mA'}( \mA' \times_{\mA} \mC) \to \mA' \times_{ \mA} \mP^{\mA}(\mC)
\end{equation} over $\mA',$
which is a map of locally cartesian fibrations over $\mA'$, and induces equivalences on fibers and so is an equivalence.
\end{remark}

\begin{lemma}\label{class1}
	
Let $\mC \to \mA$ be a locally cartesian fibration between small $\infty$-categories.
The embedding $\mC \subset \mP^\mA(\mC) $ is a map of locally cartesian fibrations over $\mA.$
The functor $\mC \to \mA$ is a cartesian fibration if and only if the functor
$ \mP^\mA(\mC) \to \mA$ is a cartesian fibration.
	
\end{lemma}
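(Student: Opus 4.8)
The plan is to run everything off the explicit formula for the fiber transport of $\mP^\mA(\mC)\to\mA$ supplied by Proposition \ref{class}, together with the fact that $\iota\colon\mC\hookrightarrow\mP^\mA(\mC)$ is fully faithful over $\mA$ and realizes each fibre $\mC_\A\hookrightarrow\mP^\mA(\mC)_\A\simeq\mP(\mC_\A)$ as the Yoneda embedding (Lemma \ref{lqym}); recall also that $\mP^\mA(\mC)\to\mA$ is a locally cartesian fibration by Proposition \ref{class} and $\mC\to\mA$ is one by hypothesis, so the first assertion only asks that $\iota$ preserve locally cartesian morphisms. First I would pin down the transports. Fix $\f\colon\A\to\B$ in $\mA$ and let $u_\f\colon\mC_\B\to\mC_\A$ be the fiber transport of $\mC\to\mA$, so that for $\X\in\mC_\A,\Y\in\mC_\B$ the locally cartesian lift $\overline{\f}\colon u_\f(\Y)\to\Y$ over $\f$ gives an equivalence $\mC_\A(\X,u_\f(\Y))\simeq\{\f\}\times_{\mA(\A,\B)}\mC(\X,\Y)$. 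By Proposition \ref{class} the transport of $\mP^\mA(\mC)\to\mA$ over $\f$ is the composite $\mP(\mC_\B)\hookrightarrow\mP(\Env(\mC)_\B)\to\mP(\mC_\A)$, restriction along $\mC_\A\subset\Env(\mC)_\A\to\Env(\mC)_\B$; unwinding the definition of $\Env(\mC)$, a morphism in $\Env(\mC)_\B$ from the image of $\X$ to the image of $\Y$ is exactly a morphism $\X\to\Y$ of $\mC$ lying over $\f$, so by the displayed equivalence this composite sends the representable at $\Y$ to the representable at $u_\f(\Y)$. Since it preserves colimits (Proposition \ref{class}) and $\mP(\mC_\B)$ is generated under small colimits by representables, it equals the colimit‑preserving extension $(u_\f)_!$ of $\mC_\B\xrightarrow{u_\f}\mC_\A\hookrightarrow\mP(\mC_\A)$.

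Granting this, I would check that $\iota$ is a map of locally cartesian fibrations as follows. By the previous step $\iota(u_\f(\Y))=(u_\f)_!(\iota\Y)$ is the source of the locally cartesian lift of $\f$ with target $\iota(\Y)$, and $\iota(\overline{\f})$ is a morphism with this source and target lying over $\f$; to see it is the locally cartesian one it suffices to check that $\iota(\overline{\f})\circ(-)\colon\mP(\mC_\A)(z,\iota u_\f(\Y))\to\{\f\}\times_{\mA(\A,\B)}\mP^\mA(\mC)(z,\iota\Y)$ is an equivalence for all $z\in\mP(\mC_\A)$. Both sides are continuous functors of $z$ — the right one because $\mP^\mA(\mC)\to\mA$ is a locally cartesian fibration, so its $\f$‑fibre of mapping spaces is corepresented in $\mP(\mC_\A)$ by $(u_\f)_!(\iota\Y)$ — so by generation of $\mP(\mC_\A)$ by representables it is enough to test at $z=\iota(\X)$, $\X\in\mC_\A$; there the map becomes $\mC_\A(\X,u_\f(\Y))\to\{\f\}\times_{\mA(\A,\B)}\mC(\X,\Y)$, $h\mapsto\overline{\f}\circ h$ (using full faithfulness of $\iota$), an equivalence because $\overline{\f}$ is locally cartesian in $\mC$. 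As every locally cartesian morphism of $\mC$ over $\f$ is equivalent to one of the form $\overline{\f}$, this proves the claim.

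For the cartesianness equivalence I would use that a locally cartesian fibration is a cartesian fibration precisely when its locally cartesian morphisms are closed under composition. If $\mC\to\mA$ is cartesian, classified by $F\colon\mA^\op\to\infty\Cat$, the first step says the fiber transports of $\mP^\mA(\mC)\to\mA$ are the $(-)_!$ of those of $\mC$; applying the functorial presheaf operation $\mP(-)\colon\infty\Cat\to\infty\widehat{\Cat}$ to $F$ produces a genuine cartesian fibration with those same transports, and $\mP^\mA(\mC)\to\mA$ is identified with it — most efficiently by observing that for $\mC$ cartesian both are locally cartesian fibrations under $\mC$ over $\mA$ enjoying the universal property of Proposition \ref{uuuu} — so $\mP^\mA(\mC)\to\mA$ is cartesian. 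Conversely, if $\mP^\mA(\mC)\to\mA$ is cartesian and $\X\xrightarrow{m}\Y\xrightarrow{m'}\Z$ are composable locally cartesian morphisms of $\mC$ over $\f\colon\A\to\B$ and $\g\colon\B\to\C$, then by the first assertion $\iota(m'\circ m)=\iota(m')\circ\iota(m)$ is locally cartesian in $\mP^\mA(\mC)$; testing its defining property against representable sources $\iota(\X')$, $\X'\in\mC_\A$, and using Yoneda and full faithfulness of $\iota$, one gets that $\mC_\A(\X',\X)\to\{\g\f\}\times_{\mA(\A,\C)}\mC(\X',\Z)$ is an equivalence for all $\X'$, i.e.\ $m'\circ m$ is locally cartesian in $\mC$; hence $\mC\to\mA$ is cartesian.

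The step I expect to require the most care is the one flagged in the third paragraph: identifying, for $\mC$ cartesian, $\mP^\mA(\mC)\to\mA$ with the cartesian fibration classified by $\mP\circ F$ — equivalently, checking that the comparison $2$‑cells $\f^*\circ\g^*\Rightarrow(\g\f)^*$ pass from $\mC$ to $\mP^\mA(\mC)$ simply by applying $\mP(-)$. Doing this by a hands‑on computation with $\Env$ is painful; invoking the universal property of Proposition \ref{uuuu} to characterize $\mP^\mA(\mC)$ up to equivalence, and transporting the cartesianness through that characterization, appears to be the clean route, and is the approach I would take.
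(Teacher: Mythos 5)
Your proposal is correct, and the core of it — computing the fiber transport of $\mP^\mA(\mC)\to\mA$ as $(u_\f)_!$ from the formula in Proposition \ref{class}, then testing locally cartesian lifts against representable objects of the fibers — is essentially the same argument as the paper's, though you argue globally over a general morphism $\f$ whereas the paper reduces to $\mA=[1]$ via Remark \ref{remre} and writes the transport as $\mP(\mM_1)\hookrightarrow\mP(\mM)\xrightarrow{\mathrm{restr.}}\mP(\mM_0)$. Your argument for $\mP^\mA(\mC)$ cartesian $\Rightarrow$ $\mC$ cartesian (test the composed locally cartesian morphisms of $\mP^\mA(\mC)$ against representable sources and use full faithfulness of $\iota$) also matches the paper's implicit argument.

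Where you diverge is the direction $\mC$ cartesian $\Rightarrow$ $\mP^\mA(\mC)$ cartesian. Your primary route — identifying $\mP^\mA(\mC)\to\mA$ with the cartesian fibration classified by $\mP\circ F$ via the universal property of Proposition \ref{uuuu} — is valid: the Grothendieck construction of $\mP\circ F$ is a locally cartesian fibration with presentable fibers and colimit-preserving transports, so the universal map $\mP^\mA(\mC)\to\mE$ exists, and it is a fiberwise equivalence since it restricts to the identity on representables. But this is heavier machinery than the paper uses. The paper's argument is the one you mention parenthetically: since a locally cartesian fibration is cartesian iff each comparison $\f^*\circ\g^*\Rightarrow(\g\f)^*$ is an equivalence, and since the fibers of $\mP^\mA(\mC)\to\mA$ are generated by representables under small colimits while the transports $\f^*,\g^*,(\g\f)^*$ preserve small colimits, it suffices to check on representables, where by the first part of the lemma the comparison cell is $\iota$ applied to the corresponding comparison cell in $\mC$, which is an equivalence because $\mC\to\mA$ is cartesian. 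This avoids constructing any identification with $\mP\circ F$ and the associated need to verify the universal property for $\mE$, which is the step you rightly flag as requiring care. Either route is sound; the paper's is shorter.
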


\begin{proof}
We first prove that the embedding $\mC \subset \mP^\mA(\mC) $ is a map of locally cartesian fibrations over $\mA$ if $\mC \to \mA$ is a locally cartesian fibration.
By Remark \ref{remre} for every functor $\mA' \to \mA$ there is a canonical equivalence
$$\mA' \times_{ \mA} \mP^{\mA}(\mC) \simeq \mP^{\mA'}( \mA' \times_{\mA} \mC)
$$ over $\mA'$ so that we can reduce to the case $\mA=[1].$
Let $\mM \to [1]$ be a functor and $\mC:=\mM_0,\mD:=\mM_1$ the fibers.
The functor $\mP(\mM)^{[1]} \to [1]$ is a cartesian fibration classifying the 
functor $\phi: \mP(\mD) \subset \mP(\mM) \to \mP(\mC)$,
where the first functor takes left Kan extension along the embedding $\mD \subset \mM$ and the second functor restricts along the embedding $\mC \subset \mM.$
If $\mM\to [1]$ is a cartesian fibration classifying a functor
$\G: \mD \to \mC,$ then $\phi$ is the extension of the functor
$\mD \xrightarrow{\G} \mC \subset \mP(\mC).$
So the embedding $\mM \subset \mP(\mM)^{[1]}$ is a map of cartesian fibrations over $[1].$
	
A locally cartesian fibration $\phi$ is a cartesian fibration if and only if
the collection of $\phi$-cartesian morphisms is closed under composition.
Hence $\mC \to \mA$ is a cartesian fibration if the functor
$ \mP^\mA(\mC) \to \mA$ is a cartesian fibration.
Conversely, if $\mC \to \mA$ is a cartesian fibration, the functor $ \mP^\mA(\mC) \to \mA$ is a cartesian fibration since the fibers of 
$\mP^\mA(\mC) \to \mA$ are generated by the representable presheaves under small colimits, and the fiber transports of $\mP^\mA(\mC) \to \mA$ preserve small colimits.
	
\end{proof}

\begin{lemma}\label{locol}
Let $\mC \to \mD$ be a map  of locally (co)cartesian fibrations over $\mA$.
The functor $\mP^\mA(\mC) \to \mP^\mA(\mD)$
is a map of locally (co)cartesian fibrations over $\mA$.
\end{lemma}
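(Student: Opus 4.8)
The plan is to reduce everything to the fiberwise situation by applying the functorial identification of fibers established in Proposition \ref{class} (and its locally cocartesian counterpart via Lemma \ref{lemaso}), and then invoke the naturality of the presheaf construction on $\infty$-categories. First I would treat the locally cocartesian case (the locally cartesian one is dual). So suppose $\mC \to \mD$ is a map of locally cocartesian fibrations over $\mA$. By Proposition \ref{class} applied to the transposed/opposite situation — or more precisely by the construction of $\mP^\mA(-)$ together with Lemma \ref{lemaso} — the functors $\mP^\mA(\mC) \to \mA$ and $\mP^\mA(\mD) \to \mA$ are locally cocartesian fibrations, and for each morphism $\f \colon \A \to \B$ of $\mA$ the fiber transport of $\mP^\mA(\mC) \to \mA$ along $\f$ is identified with the composite $\mP(\mC_\B) \subset \mP(\Env(\mC)_\B) \to \mP(\mC_\A)$ where the last functor is restriction along $\mC_\A \subset \Env(\mC)_\A \to \Env(\mC)_\B$, and similarly for $\mD$.

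Next I would use the fact that a morphism of locally cocartesian fibrations is precisely a functor over $\mA$ sending locally cocartesian morphisms to locally cocartesian morphisms. By Lemma \ref{lemaso}, a locally cocartesian morphism of $\mP^\mA(\mC)$ over $\f \colon \A \to \B$ factors as $\X \to \Y \to \Y'$ with $\X \to \Y$ cocartesian for $\mP^\mA(\Env(\mC)) \to \mA$ and $\Y \to \Y'$ an $\L_\B$-local equivalence. So it suffices to check two things: (i) the functor $\mP^\mA(\Env(\mC)) \to \mP^\mA(\Env(\mD))$ is a map of cocartesian fibrations over $\mA$, and (ii) the comparison functor on fibers intertwines the localizations $\L_\B$ appropriately, i.e.\ sends $\L_\B$-local equivalences to $\L_{\B}$-local equivalences. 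For (i): $\mP^\mA(\Env(\mC)) = \mA \times_{\infty\Cat} \mR$ where $\mA \to \infty\Cat$ classifies $\Env(\mC) \to \mA$; since $\rho \colon \mR \to \infty\Cat$ is a cocartesian fibration (Lemma \ref{lemmop}) and the construction is pulled back along a natural transformation $\Env(\mC) \Rightarrow \Env(\mD)$ of functors $\mA \to \infty\Cat$, the induced map over $\mA$ preserves cocartesian morphisms — equivalently, on fibers it is the functor $\mP(\Env(\mC)_\A) \to \mP(\Env(\mD)_\A)$ given by left Kan extension, and left Kan extensions compose, so cocartesian morphisms go to cocartesian morphisms. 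For (ii): on the fiber over $\B$, the map is $\mP(\mC_\B) \to \mP(\mD_\B)$, which is the restriction to the full subcategories of the left Kan extension functor $\mP(\Env(\mC)_\B) \to \mP(\Env(\mD)_\B)$; since $\mC \to \mD$ restricts to a functor $\mC_\B \to \mD_\B$ compatible with the inclusions into the envelopes (because $\Env$ is functorial and $\theta$ is natural), this functor sends the reflective localization $\L_\B \colon \mP(\Env(\mC)_\B) \to \mP(\mC_\B)$-local equivalences to $\L_\B \colon \mP(\Env(\mD)_\B) \to \mP(\mD_\B)$-local equivalences. Combining (i) and (ii) with the factorization from Lemma \ref{lemaso} shows that locally cocartesian morphisms of $\mP^\mA(\mC)$ are sent to locally cocartesian morphisms of $\mP^\mA(\mD)$, so $\mP^\mA(\mC) \to \mP^\mA(\mD)$ is a map of locally cocartesian fibrations over $\mA$.

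The locally cartesian case follows by the same argument, now reading Proposition \ref{class} directly: the fiber transports are the composites $\mP(\mC_\B) \subset \mP(\Env(\mC)_\B) \to \mP(\mC_\A)$ where the last functor is restriction, and one checks that $\mP^\mA(\mC) \to \mP^\mA(\mD)$ is a map of cartesian fibrations for the ambient envelopes (the fiber transports are restrictions, and restriction is strictly functorial) and is compatible with the localizations on fibers; by Lemma \ref{lemaso} again (applied to the opposite fibration) the locally cartesian morphisms have the analogous factorization and are preserved. The main obstacle I anticipate is purely bookkeeping: making the naturality of $\Env$, of the unit $\theta$, and of the localizations $\L_\A$ precise enough that the fiberwise functors $\mP(\mC_\A) \to \mP(\mD_\A)$ assemble into a functor over $\mA$ compatible with the factorization of Lemma \ref{lemaso}; once the fiberwise picture and the compatibility with fiber transport are in place, preservation of (local) (co)cartesian morphisms is formal.
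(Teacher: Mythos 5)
Your route is genuinely different from the paper's. The paper's proof is a short density argument: it records that $\mC \to \mP^\mA(\mC)$ and $\mD \to \mP^\mA(\mD)$ are maps of locally (co)cartesian fibrations, that the fibers of $\mP^\mA(-)$ are generated under small colimits by the representables, and that the fiber transports of $\mP^\mA(-)$ preserve small colimits; since the induced fiberwise functors $\mP(\mC_\A)\to\mP(\mD_\A)$ also preserve small colimits, compatibility with fiber transport can be checked on representables, where it follows from the hypothesis that $\mC\to\mD$ is a map of locally (co)cartesian fibrations. You instead try to trace the explicit factorization of locally (co)cartesian morphisms supplied by Lemma~\ref{lemaso} through the envelope $\mP^\mA(\Env(-))$.

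There are two genuine gaps in your version. First, in step~(i) for the \emph{cartesian} case you claim the envelope map $\mP^\mA(\Env(\mC))\to\mP^\mA(\Env(\mD))$ is a map of cartesian fibrations because ``restriction is strictly functorial.'' That justification misses the point: the fiberwise component of this map is the left Kan extension $(\psi_\A)_!$ along $\Env(\mC)_\A\to\Env(\mD)_\A$, while the cartesian fiber transports are restrictions $u^*$; so the required commutation is the Beck--Chevalley equivalence $(\psi_\A)_!\circ u^*\simeq v^*\circ(\psi_\B)_!$. This is not automatic from any ``strict functoriality'' of restriction and must be proved (it does hold here, but only because of the special slice structure of $\Env(\mC)_\A\simeq \mA_{/\A}\times_\mA\mC$, which forces the relevant comma categories to agree). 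Second, in step~(ii) you attribute the compatibility of the reflections $\L^\mC_\B$ and $\L^\mD_\B$ to ``$\Env$ is functorial and $\theta$ is natural.'' Naturality of $\theta$ only gives that the \emph{inclusions} into the envelopes commute; compatibility of the \emph{left adjoints} is precisely the statement that the square $\L^\mD_\B\circ\psi_\B\simeq \bar\psi_\B\circ\L^\mC_\B$ commutes, which is exactly the hypothesis that $\mC\to\mD$ preserves locally cocartesian morphisms, translated through the explicit description $\L_\B(f,X)=f_!X$. That hypothesis is what you must invoke there, not functoriality of $\Env$. Once both points are repaired your factorization argument does go through, but it is considerably more bookkeeping than the paper's reduction to representables, which bypasses both issues.
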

\begin{proof}By Lemma \ref{class1} for any locally cartesian fibration
$\mC \to \mA$ the functor $\mP^\mA(\mC) \to \mA$ is a locally cartesian fibration and the embedding $\mC \to \mP^\mA(\mC) $ is a map of locally cartesian fibrations over $\mA$.
By Lemma \ref{lemar} and Remark \ref{remre} for every locally cocartesian fibration
$\mC \to \mA$ the functor $\mP^\mA(\mC) \to \mA$ is a locally cocartesian and locally cartesian fibration and the embedding $\mC \to \mP^\mA(\mC) $ is a map of locally cocartesian fibrations over $\mA$.
Since the fibers of $\mP^\mA(\mC) \to \mA$ are generated under small colimits by the representables and the fiber transports of the locally (co)cartesian fibration $\mP^\mA(\mC) \to \mA$ preserve small colimits by Proposition \ref{class}, the induced functor $\mP^\mA(\mC) \to \mP^\mA(\mD)$
is a map of locally (co)cartesian fibrations over $\mA$.
\end{proof}

\begin{definition}
A functor $\phi: \mD \to \mC$ of small $\infty$-categories
is an exponential fibration if the functor $\mD \times_\mC (-): \infty\Cat_{/\mC} \to \infty\Cat_{/\mC}$ taking pullback along $\phi$ preserves small colimits or equivalently admits a right adjoint.
	
\end{definition}

\begin{notation}
	
For every exponential fibration $\phi: \mD \to \mC$ let $\Fun^\mC(\mD,-): \infty\Cat_{/\mC} \to \infty\Cat_{/\mC}$ be the right adjoint of the functor $\mD \times_\mC (-): \infty\Cat_{/\mC} \to \infty\Cat_{/\mC}$.
	
\end{notation}

The next remark is \cite[Remark 3.71.]{HEINE2023108941}:

\begin{remark}
	
The pullback of any exponential fibration $\mD \to \mC$ along any functor $\mB \to \mC$ is an exponential fibration and for every functor $\mE \to \mC$ there is a canonical equivalence $\mB \times_\mC 	\Fun^\mC(\mD,\mE) \simeq \Fun^\mB(\mB \times_\mC \mD,\mB \times_\mC \mE)$ over $\mB.$
	
\end{remark}

\begin{proposition}\label{expo}
A functor $\mC \to \mA$ of small $\infty$-categories
is an exponential fibration if and only if the locally cartesian fibration $\mP^{\mA}(\mC) \to \mA$ is a cartesian fibration.
\end{proposition}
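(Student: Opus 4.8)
The plan is to characterize exponentiability of $\mC \to \mA$ by a known internal description and match it up with the cartesian-fibration condition on $\mP^\mA(\mC)\to\mA$, using the universal property in Proposition~\ref{uuuu} as the bridge. Recall (e.g.\ from the cited work of Ayala--Francis, or \cite[Remark 3.71.]{HEINE2023108941}) that a functor $\mC \to \mA$ is an exponential fibration if and only if for every morphism $\A \to \B$ of $\mA$ the induced functor on fibers behaves ``locally like a base change''; the cleanest formulation for our purposes is that $\mC \to \mA$ is exponential if and only if it is a \emph{Conduch\'e fibration}, equivalently the functor classifying $\mC$ on $[1]$-shaped diagrams admits the appropriate factorization-lifting. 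Since both conditions in the statement are stable under base change along arbitrary functors $\mA' \to \mA$ (exponentiability by the Remark preceding the Proposition, and the cartesian-fibration condition for $\mP^{\mA'}(\mA'\times_\mA\mC)\to\mA'\simeq \mA'\times_\mA\mP^\mA(\mC)\to\mA'$ by Remark~\ref{remre}), I would first reduce to the case $\mA = [1]$, exactly as in the proof of Lemma~\ref{class1}.

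So write $\mM \to [1]$ with fibers $\mC := \mM_0$, $\mD := \mM_1$. By Proposition~\ref{class} the functor $\mP^{[1]}(\mM)\to[1]$ is a locally cartesian fibration whose fiber transport over $0\to 1$ is the composite $\mP(\mD)\subset\mP(\mM)\to\mP(\mC)$, restriction along $\mC\subset\mM$ composed with left Kan extension along $\mD\subset\mM$; call this functor $\phi:\mP(\mD)\to\mP(\mC)$. A locally cartesian fibration over $[1]$ is a cartesian fibration if and only if this fiber transport $\phi$ admits a \emph{left adjoint}. On the other hand, $\mM\to[1]$ being exponential over $[1]$ means precisely that $\mM\times_{[1]}(-):\infty\Cat_{/[1]}\to\infty\Cat_{/[1]}$ admits a right adjoint; unwinding the Grothendieck correspondence $\infty\Cat_{/[1]}\simeq\Fun([1],\infty\Cat)^{\mathrm{lax}}$ (correspondences of $\infty$-categories), this right adjoint exists if and only if the ``pullback'' functor on the relevant lax-arrow category has a right adjoint, which translates back through Proposition~\ref{uuuu} into: the functor $\mC\to\mP^{[1]}(\mM)$ extends to a functor that is right-adjointable fiberwise — i.e.\ $\phi$ has a left adjoint. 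I would make this translation precise by applying Proposition~\ref{uuuu}(1) with $\mD \rightsquigarrow$ the various locally cartesian fibrations in play, so that maps into $\mP^{[1]}(\mM)$ over $[1]$ that are fiberwise colimit-preserving correspond to arbitrary maps out of $\mM$ over $[1]$; exponentiability of $\mM$ is then the statement that the resulting restriction functors assemble into an adjunction, which is exactly the assertion that $\phi$ is a left adjoint, i.e.\ that $\mP^{[1]}(\mM)\to[1]$ is cartesian.

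The main obstacle I anticipate is the bookkeeping in this last translation: making the identification between "the base-change functor on $\infty\Cat_{/[1]}$ has a right adjoint" and "$\phi:\mP(\mD)\to\mP(\mC)$ has a left adjoint" genuinely rigorous, rather than merely plausible. The subtlety is that $\Fun^{[1]}(\mM,-)$ is computed as a \emph{relative} right Kan extension / fiberwise limit, and one must check that the presheaf-parametrized object $\mP^{[1]}(\mM)$ corepresents, via Proposition~\ref{uuuu}, the correct mapping-space functor — essentially that $\infty\Cat_{/[1]}(\mM\times_{[1]}\mB,\mE)\simeq\infty\Cat_{/[1]}(\mB,\Fun^{[1]}(\mM,\mE))$ matches $\Theta(\mC,-)^\L$-style data attached to $\mP^{[1]}(\mM)$. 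Once that corepresentability is pinned down, the equivalence of the two conditions is formal: a locally cartesian fibration over $[1]$ is cartesian iff its fiber transport has a left adjoint, and the universal property forces that left adjoint to exist precisely when the internal hom $\Fun^\mC(\mC,-)$ does. I would therefore spend the bulk of the argument on the $\mA=[1]$ corepresentability identification and treat the reduction to $[1]$ and the "left-adjoint $\Leftrightarrow$ cartesian" step as routine, citing Lemma~\ref{class1} and the standard criterion for when a locally cartesian fibration is cartesian.
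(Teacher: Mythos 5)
There is a genuine and fatal gap in your proposal: the reduction to $\mA = [1]$ trivializes \emph{both} sides of the biconditional and therefore proves nothing. Over $[1]$ there are no nontrivial composites in the base, so a locally cartesian fibration over $[1]$ is \emph{automatically} a cartesian fibration (the composition criterion from \cite[Proposition 2.4.2.8.]{lurie.HTT} is vacuous). Likewise every functor $\mM \to [1]$ is already an exponential fibration --- this is precisely the content of the Corollary immediately following Proposition~\ref{expo} in the paper, asserting that $\infty\Cat_{/[1]}$ is cartesian closed. Consequently your intermediate claim that ``a locally cartesian fibration over $[1]$ is a cartesian fibration if and only if the fiber transport $\phi$ admits a left adjoint'' is false: that adjoint condition characterizes when the fibration is additionally \emph{cocartesian}, not when it is cartesian. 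The obstruction governing both exponentiability and the locally-cartesian-versus-cartesian distinction lives over $[2]$, where one first sees a composable pair of morphisms whose compatibility can fail.

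The paper's proof reduces to $\mA = [2]$, not $[1]$. There, $\mP^{[2]}(\mC) \to [2]$ classifies three fiber transports $\f : \mP(\mC_2) \to \mP(\mC_1)$, $\g : \mP(\mC_1) \to \mP(\mC_0)$, $\h : \mP(\mC_2) \to \mP(\mC_0)$ together with a natural transformation $\alpha : \g \circ \f \to \h$; by \cite[Proposition B.3.2.]{lurie.higheralgebra} exponentiability of $\mC \to [2]$ is equivalent to $\alpha$ being an equivalence, and by \cite[Proposition 2.4.2.8.]{lurie.HTT} so is the cartesian-fibration property of $\mP^{[2]}(\mC) \to [2]$. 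Your base-change reduction step is sound in spirit (and Remark~\ref{remre} does supply the needed compatibility $\mP^{\mA'}(\mA'\times_\mA\mC) \simeq \mA'\times_\mA\mP^\mA(\mC)$), but it must land on $[2]$-shaped probes, not $[1]$-shaped ones, for the two conditions to carry any content.
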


\begin{proof}
A functor $\mB \to \mA$ is an exponential fibration (cartesian fibration) if and only if the pullbacks along any functors $[2] \to \mA$ are exponential fibrations (cartesian fibrations).
So by Remark \ref{remre} we can reduce to the case that $\mA=[2].$
The locally cartesian fibration $\mP^{[2]}(\mC) \to [2]$ classifies 
three functors $\f: \mP(\mC_2) \to \mP(\mC_1), \g: \mP(\mC_1) \to \mP(\mC_0),  \h: \mP(\mC_2) \to \mP(\mC_0)$ and a natural transformation
$\alpha: \g \circ \f \to \h.$ 
By \cite[Proposition B.3.2.]{lurie.higheralgebra} a functor $\mC \to [2]$ is an exponential fibration if and only if 
$\alpha$ is an equivalence.
By \cite[Proposition 2.4.2.8.]{lurie.HTT} the locally cartesian fibration $\mP^{\mA}(\mC) \to \mA$ is a cartesian fibration if and only if 
$\alpha$ is an equivalence.
	
\end{proof}

\begin{corollary}Every functor $\mD \to [1]$ is an exponential fibration.
So the $\infty$-category $\infty\Cat_{/[1]}$ is cartesian closed.
\end{corollary}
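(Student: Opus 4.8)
The plan is to obtain this as an immediate consequence of Proposition \ref{expo}. That proposition identifies, for a functor $\mD \to [1]$, exponentiality with the condition that the locally cartesian fibration $\mP^{[1]}(\mD) \to [1]$ (which is indeed locally cartesian by Proposition \ref{class}) be a genuine cartesian fibration. So the only thing that really needs proving is the following general fact: every locally cartesian fibration with base $[1]$ is a cartesian fibration.

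To check this I would use the standard criterion --- already invoked in the proof of Lemma \ref{class1} and recorded in \cite[Proposition 2.4.2.8.]{lurie.HTT} --- that a locally cartesian fibration is cartesian exactly when its locally cartesian morphisms are closed under composition. Let $\X \to \Y \to \Z$ be a composable pair of locally cartesian morphisms of $\mP^{[1]}(\mD)$; their images form a composable pair in $[1]$, and since the unique nonidentity morphism $0 \to 1$ of $[1]$ does not compose with itself, at least one of these images is an identity. Now a locally cartesian morphism lying over an identity of $[1]$ is an equivalence --- its fiber transport is the identity functor --- and locally cartesian morphisms over a fixed morphism of $[1]$ are stable under pre- and post-composition with equivalences; hence the composite $\X \to \Z$ is again locally cartesian. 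I expect this bookkeeping about equivalences and the cases $0 \to 0 \to 1$, $0 \to 1 \to 1$, and the fiberwise ones to be the only place requiring any care, though nothing deep is involved; alternatively one could simply rerun the proof of Proposition \ref{expo} with $\mA = [1]$, observing that there are no nondegenerate $2$-simplices to worry about and hence no transformation $\alpha$ whose invertibility must be checked.

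Granting this, Proposition \ref{expo} yields that every $\mD \to [1]$ is an exponential fibration. For the second assertion I would just unwind definitions: the binary product in $\infty\Cat_{/[1]}$ of $\mC \to [1]$ and $\mD \to [1]$ is the fiber product $\mC \times_{[1]} \mD \to [1]$ and the terminal object is $\id_{[1]}$, so cartesian closedness of $\infty\Cat_{/[1]}$ means precisely that every object $\mC \to [1]$ is exponentiable, i.e.\ that $\mC \times_{[1]} (-)$ admits a right adjoint $\Fun^{[1]}(\mC,-)$. This is exactly the assertion that each $\mC \to [1]$ is an exponential fibration, so the first statement immediately implies the second.
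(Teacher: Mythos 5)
Your proposal is correct and is exactly the argument the paper intends (the paper states this as an unproved corollary immediately after Proposition~\ref{expo}). The one nontrivial observation you need — that a locally cartesian fibration over $[1]$ is automatically a cartesian fibration, because in $[1]$ every composable pair of edges has at least one identity factor and a locally cartesian morphism over an identity is an equivalence — is stated carefully and is right; combined with Proposition~\ref{class} (giving that $\mP^{[1]}(\mD)\to[1]$ is locally cartesian) and Proposition~\ref{expo}, this yields exponentiability, and your unwinding of what ``cartesian closed'' means for $\infty\Cat_{/[1]}$ is exactly the definition of exponential fibration. Your alternative remark that one could rerun the proof of Proposition~\ref{expo} over $[1]$ and note the absence of any nondegenerate $2$-simplex (and hence of the obstructing transformation $\alpha$) is also a valid, equivalent shortcut.
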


\begin{corollary}
	
Let $\phi:\mA \to \mB$ be an inclusion of $\infty$-categories
such that for every morphisms $\X \to \Y, \Y \to \Z$ we have that $\Y \in \mA$ if $\X,\Z \in \mA.$ Then $\phi$ is an exponential fibration.
	
\end{corollary}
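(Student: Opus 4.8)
The plan is to reduce the statement to Proposition \ref{expo}, which says that $\phi$ is an exponential fibration precisely when the locally cartesian fibration $\mP^{\mB}(\mA) \to \mB$ is a cartesian fibration and --- as recorded in the first step of its proof --- that exponentiability, like cartesianness, is detected after pulling back along all functors $[2] \to \mB$. Inclusions are stable under pullback, and the stated closure property (belonging to the middle of a factorization) persists under pullback along any $\psi \colon [2] \to \mB$, so it suffices to treat the case $\mB = [2]$ with $\mA \subseteq [2]$ a sub-$\infty$-category closed under factorizations of its morphisms; concretely this forces $\mA$ to be one of the few sub-$\infty$-categories of $[2]$ that never contains the morphism $0 \to 2$ unless $\mA$ is all of $[2]$ --- in particular $\mA$ is never (a copy of) $\Delta^{\{0,2\}} \hookrightarrow \Delta^2$.

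It then remains to show $\mP^{[2]}(\mA) \to [2]$ is a cartesian fibration. As recalled in the proof of Proposition \ref{expo}, this locally cartesian fibration classifies functors $\f \colon \mP(\mA_2) \to \mP(\mA_1)$, $\g \colon \mP(\mA_1) \to \mP(\mA_0)$, $\h \colon \mP(\mA_2) \to \mP(\mA_0)$ and a natural transformation $\alpha \colon \g \circ \f \to \h$, and it is cartesian iff $\alpha$ is an equivalence; here $\mP(\mA_i)$ is a contractible $\infty$-category when $i \notin \mA$ and is $\mS$ when $i \in \mA$. The computational heart of the argument, which I would deduce from Proposition \ref{class} and a direct description of the enveloping fibration $\Env(\mA) \to [2]$, is that each fiber transport --- say the one over $i \to j$ --- factors as a left Kan extension along a fully faithful functor (a left adjoint, hence preserving initial objects) followed by restriction along a functor (which has both adjoints, hence also preserves initial objects), and that this transport is an equivalence when the structure morphism $i \to j$ lies in $\mA$ and is otherwise constant at the initial object of its target. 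With this in hand I would conclude by cases. If $0 \notin \mA$ then $\mP(\mA_0)$ is contractible, so $\g$, $\h$ and any transformation between their composites are determined up to a contractible space of choices, and $\alpha$ is invertible. If $0 \in \mA$ but $2 \notin \mA$ then $\mP(\mA_2)$ is contractible and $\f$, $\g$, $\h$ all preserve or select initial objects, so $\g \circ \f$ and $\h$ both pick out the initial object of $\mP(\mA_0)$ and $\alpha$ is invertible. If $0, 2 \in \mA$, then either $0 \to 2$ lies in $\mA$ --- in which case the closure hypothesis forces $\mA = [2]$, so $\phi = \id_{[2]}$, which is exponential because $[2] \times_{[2]} (-)$ is the identity of $\infty\Cat_{/[2]}$, whence $\mP^{[2]}([2]) \to [2]$ is cartesian by Proposition \ref{expo} --- or $0 \to 2 \notin \mA$, in which case $\h$ is constant at the initial presheaf while $\mA$ cannot contain both $0 \to 1$ and $1 \to 2$ (their composite would be $0 \to 2$), so at least one of $\f$, $\g$ is constant at the initial presheaf and therefore so is $\g \circ \f$, giving $\g \circ \f \simeq \h$ and $\alpha$ invertible.

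The step I expect to cost the most work is establishing the dichotomy for the fiber transports: this requires unwinding $\Env(\mA) \to [2]$ and tracking how its fibers break up according to which structure morphisms of $[2]$ land in $\mA$, together with the identification of the representable-type presheaves produced by the Kan extensions. A secondary point of care --- inherent to working with locally cartesian fibrations --- is that one must show the comparison transformation $\alpha$ itself, not just its source and target, is an equivalence; this is why the degenerate cases are argued via transformations into a contractible $\infty$-category and via morphisms between two initial objects, and why the one genuinely non-degenerate possibility is handed back to the already-established exponentiability of $\id_{[2]}$. (Alternatively one could bypass Proposition \ref{expo} and invoke Lurie's flatness criterion: for every $2$-simplex $\X \to \Y \to \Z$ of $\mB$ carrying a morphism of $\mA$ over $\X \to \Z$, the closure hypothesis puts the whole $2$-simplex into $\mA$, and the relevant $\infty$-category of liftings is then weakly contractible; but routing through Proposition \ref{expo} keeps the argument inside the machinery developed here.)
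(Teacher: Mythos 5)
Your overall strategy coincides with the paper's: invoke Proposition~\ref{expo} to replace exponentiability by cartesianness of the locally cartesian fibration $\mP^{\mB}(\mA)\to\mB$, then verify that the comparison transformation $\theta\colon\alpha^*\circ\beta^*\to(\beta\alpha)^*$ is invertible by a case analysis on which of the relevant objects lie in $\mA$. Your reduction to $\mB=[2]$ is the same move the paper makes inside the proof of Proposition~\ref{expo}, and your first two cases (target fiber contractible, resp.\ source fiber contractible with the initial-object observation) are exactly the paper's first two cases.

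The gap is in your handling of the case $0,2\in\mA$ with $0\to 2\in\mA$. You claim that ``the closure hypothesis forces $\mA=[2]$'', but the hypothesis of the corollary is a closure condition \emph{on objects}: if $\X,\Z\in\mA$ and $\X\to\Y\to\Z$ then $\Y\in\mA$. It tells you that $1\in\mA$, nothing more; it places no constraint at the morphism level. Under the paper's definition of ``inclusion'' (embedding on maximal subspaces and on each mapping space, so $\mA$ is an arbitrary subcategory, not necessarily full), one can have $\mA$ with objects $\{0,1,2\}$ and with $0\to 2$ in $\mA$ but $0\to 1,\ 1\to 2$ not in $\mA$. By your own dichotomy for the fiber transports (equivalence if the structure morphism lies in $\mA$, constant at the initial object otherwise), this yields $\g\circ\f$ constant at the initial presheaf while $\h$ is an equivalence, so $\theta\colon\g\f\to\h$ is \emph{not} invertible and the argument breaks. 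The same issue infects your alternative route via Lurie's flatness criterion: the closure hypothesis does not ``put the whole $2$-simplex into $\mA$''; it only puts the middle vertex in. Note that the paper's own treatment of the third case --- asserting that once $\X,\Y,\Z\in\mA$ the transports $\alpha^*,\beta^*$ are ``the identity on objects'' --- quietly uses that $\alpha$ and $\beta$ actually lift to $\mA$, which is automatic only when $\phi$ is fully faithful. In other words, both your argument and the paper's really require $\mA$ to be a \emph{full} subcategory satisfying the factorization-closure condition; with that strengthened hypothesis your sub-case ``$0\to 2\in\mA$'' does force $\mA=[2]$ and the sub-case ``$0\to 2\notin\mA$'' becomes vacuous, so your proof (and the paper's) goes through. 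As stated, though, the step in question is not justified and would fail on the example above.
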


\begin{proof}
	
By Proposition \ref{expo} the functor $\phi$ is an exponential fibration if and only if the locally cartesian fibration $\mP^{\mB}(\mA) \to \mB$ is a cartesian fibration.
By \cite[Proposition 2.4.2.8.]{lurie.HTT} the locally cartesian fibration $\mP^{\mB}(\mA) \to \mB$ is a cartesian fibration if and only if for every morphisms $\alpha: \X \to \Y, \beta: \Y \to \Z $ the induced natural transformation $\theta: \alpha^* \circ \beta^* \to (\beta\circ \alpha)^*: \mP(\mA_\Z) \to \mP(\mA_\X)$ is an equivalence.
If $\X$ does not belong to $\mA$, the fiber of $\phi$ over $\X$ is empty so that
$\mP(\mA_\X)$ is contractible. So there is nothing to show and we assume that $\X $ belongs to $\mA$ so that $\mP(\mA_\X)\simeq \mS.$
If $\Z$ does not belong to $\mA$, the fiber of $\phi$ over $\Z$ is empty so that
$\mP(\mA_\Z)$ is contractible. In this case there is nothing to show, too, because $\alpha^*,\beta^*$ preserve small colimits and so the initial object so that $\theta$ is an endomorphism of the initial object in $\mP(\mA_\X)\simeq \mS$ and so an equivalence.
So we can assume that $\Z$ belongs to $\mA$. 
By assumption we find that $\Y$ belongs to $\mA$.
In this case the small colimits preserving functors $\alpha^*,\beta^*:\mS \to \mS$ are the identity on objects and so preserve the final object. This implies that $\alpha^*,\beta^*:\mS \to \mS$ are equivalences and $\theta$ is an equivalence since the component of $\theta$ at the final space is an endomorphism of the final space and so an equivalence.	
	
%Let $\phi: [\n]\to [\m]$ be a map of the form $[\n] \cong \{\bi+0,...,\bi+\n \} \subset [\m]$ for some $\bi \geq 0$.By Proposition \ref{expo} the functor $\phi$ is an exponential fibration if and only ifthe locally cartesian fibration $\mP^{[\m]}([\n]) \to [\n]$ is a cartesian fibration.By \cite[Proposition 2.4.2.8.]{lurie.HTT} the locally cartesian fibration $\mP^{[\m]}([\n]) \to [\n]$ is a cartesian fibration if and only if for every $0 \leq \bk \leq \ell \leq \br \leq \m$the induced natural transformation $\alpha$ from $\mP^{[\m]}([\n])_\br \to \mP^{[\m]}([\n])_\ell \to \mP^{[\m]}([\n])_\bk $ to $\mP^{[\m]}([\n])_\br \to \mP^{[\m]}([\n])_\bk$ is an equivalence.If $\bk < \bi$, there is nothing to show since $\mP^{[\m]}([\n])_\bk \simeq *$ since the fiber of $\phi$ over $\bk$ is empty. So we can assume that $\bk \leq \bi.$If $\br >\bi+\n $, then $\mP^{[\m]}([\n])_\br \simeq *$ since the fiber of $\phi$ over $\br$ is empty. In this case So there is nothing to show.So we can assume that $\br \leq \bi+\n$. Moreover we can assume that $\bk \leq \bi$ because otherwise  $\mP^{[\m]}([\n])_\br \simeq \mS$The functors The natural transformation $\alpha$
	
\end{proof}

\begin{definition}\label{inert}
A map $[\n]\to [\m]$ of $\Delta $ is inert if it is of the form	
$[\n] \cong \{\bi+0,...,\bi+\n \} \subset [\m]$ for some $\bi \geq 0$.
	
\end{definition}

\begin{corollary}\label{ine}
	
Every inert map $[\n] \to [\m]$ of $\Delta$ (viewed as a functor of $\infty$-categories) is an exponential fibration.	
	
\end{corollary}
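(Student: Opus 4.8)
The plan is to reduce the statement directly to the previous corollary, so the task is just to check its two hypotheses for an inert map. Write the inert map as $\phi\colon [\n] \to [\m]$, $j \mapsto \bi + j$, for some $\bi \geq 0$. First I would check that $\phi$ is an inclusion of $\infty$-categories in the sense fixed in the Notation section. It is injective on objects, hence an embedding on maximal subspaces; and, viewing $[\n]$ and $[\m]$ as posets, $\phi$ restricts to an order-preserving bijection of $[\n]$ onto the full subposet $\{\bi, \bi+1, \dots, \bi+\n\} \subseteq [\m]$ whose inverse is again order-preserving, hence an isomorphism of posets onto a full subposet. Thus $\phi$ is fully faithful, in particular an embedding on all mapping spaces, so it is an inclusion.

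Next I would verify the convexity hypothesis of the preceding corollary for $\mA = [\n] \hookrightarrow [\m] = \mB$. A pair of composable morphisms in $[\m]$ is a chain $\X \leq \Y \leq \Z$. If the endpoints $\X$ and $\Z$ lie in the image $\{\bi, \dots, \bi+\n\}$ of $\phi$, then $\bi \leq \X$ and $\Z \leq \bi + \n$, hence $\bi \leq \X \leq \Y \leq \Z \leq \bi + \n$ and $\Y$ lies in the image as well. So the condition ``$\Y \in \mA$ whenever $\X \to \Y$, $\Y \to \Z$ and $\X, \Z \in \mA$'' holds.

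Applying the previous corollary to $\phi$ then yields that $\phi$ is an exponential fibration, which is the assertion. I do not expect any genuine obstacle here; the only point deserving a moment's care is that an inert map really is an inclusion, i.e. fully faithful, which holds because it is an isomorphism onto the full (``interval'') subposet it spans.
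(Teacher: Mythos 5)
Your proof is correct and is exactly the argument the paper intends: the corollary is stated immediately after, and as a specialization of, the corollary on convex inclusions, and your two checks (that an inert map is an inclusion in the paper's sense, and that its image is an interval and hence convex) are precisely what is needed.
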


\begin{corollary}\label{reco}
A locally cartesian fibration of small $\infty$-categories
is a cartesian fibration if and only if it is an exponential fibration.

\end{corollary}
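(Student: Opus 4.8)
The plan is to deduce this directly from Proposition~\ref{expo} and Lemma~\ref{class1}, both of which are already available. Let $\phi \colon \mC \to \mA$ be a locally cartesian fibration between small $\infty$-categories.

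First I would apply Proposition~\ref{expo} to the functor $\phi$ — which is legitimate, since that proposition concerns an arbitrary functor of small $\infty$-categories — to conclude that $\phi$ is an exponential fibration if and only if the locally cartesian fibration $\mP^{\mA}(\mC) \to \mA$ is a cartesian fibration. Next, since $\phi$ is by hypothesis a locally cartesian fibration, Lemma~\ref{class1} applies and tells us that $\mP^{\mA}(\mC) \to \mA$ is a cartesian fibration if and only if $\phi$ itself is a cartesian fibration.

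Chaining the two equivalences yields: $\phi$ is an exponential fibration if and only if $\mP^{\mA}(\mC) \to \mA$ is a cartesian fibration if and only if $\phi$ is a cartesian fibration, which is exactly the assertion. The argument is purely formal once Proposition~\ref{expo} and Lemma~\ref{class1} are in hand, so there is no genuine obstacle; the only point worth flagging is that the standing assumption that $\phi$ be a locally cartesian fibration is precisely what licenses the use of Lemma~\ref{class1} (whereas Proposition~\ref{expo} alone would only compare exponentiality of $\phi$ with cartesianness of the parametrized presheaf fibration, not with cartesianness of $\phi$).
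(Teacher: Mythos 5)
Your proposal is correct and follows exactly the same route as the paper: combine Proposition~\ref{expo} (exponentiality of $\phi$ is equivalent to $\mP^\mA(\mC)\to\mA$ being cartesian) with Lemma~\ref{class1} (for $\phi$ locally cartesian, cartesianness of $\phi$ is equivalent to cartesianness of $\mP^\mA(\mC)\to\mA$). The remark you flag at the end — that the locally cartesian hypothesis is needed precisely to invoke Lemma~\ref{class1} — is an accurate reading of where that assumption enters.
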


\begin{proof}
	
Let $\mC \to \mA$ be a locally cartesian fibration of small $\infty$-categories.
By Lemma \ref{class1} the functor $\mC \to \mA$ is a cartesian fibration if and only if the functor
$ \mP^\mA(\mC) \to \mA$ is a cartesian fibration.
By Proposition \ref{expo} the functor $\mC \to \mA$ is an exponential fibration if and only if the functor $\mP^{\mA}(\mC) \to \mA$ is a cartesian fibration.
	
\end{proof}

%\begin{construction}\label{const}Let $\mB$ be an $\infty$-category and $\mC \to \mA \times \mB$a map of cocartesian fibrations over $\mB$ classifying a functor $\gamma: \mB \to \infty\Cat_{/\mA}$.The functor $\mP^{\mA\times\mB}(\mC) \to \mA \times \mB$ is a map of cocartesian fibrations over $\mB$ classifying a functor $\mB \to \widehat{\Cat}_{\infty/\mA}.$For $\gamma$ the identitywe obtain a functor $\mP^\mA: \infty\Cat_{/\mA} \to \widehat{\Cat}_{\infty/\mA}$ sending $\mC$ to $\mP^\mA(\mC).$The embedding $\mC \subset \mP^{\mA\times\mB}(\mC)$ is a map ofcocartesian fibrations over $\mB$ over $\mA \times \mB$ classifying for $\gamma$ the identity a map $\id \to \mP^\mA$ of functors $\infty\Cat_{/\mA} \to \widehat{\Cat}_{\infty/\mA}$ whose component at a functor $\mD \to \mA$ is the canonical embedding $\mD \to \mP^\mA(\mD).$\end{construction}

\begin{notation}\label{notate}
Let $$\mathrm{LoCART} \subset \Fun([1],\infty\Cat)$$ be the full subcategory of locally cartesian fibrations.
\end{notation}
\begin{notation}Let $$\mathrm{LoCART}^\L \subset \mathrm{Lo}\widehat{\CART}$$ be the subcategory of locally cartesian fibrations whose target is small and whose fibers are presentable and commutative squares 
$$
\begin{xy}
\xymatrix{
\mC\ar[d]
\ar[rr]
&& \mD \ar[d]^{} 
\\
\mA \ar[rr]^\psi && \mB}
\end{xy}$$
such that for every $\A \in \mA$ the induced functor
$\mC_\A \to \mD_{\psi(\A)}$ on the fiber over $\A$ preserves small colimits.

%that induce fiber-wise a small colimits preserving functor. 
%intersection of the subcategories$\mathrm{LocCart}$ and $\Fun([1],\infty\widehat{\Cat})^{\rc\rc}.$
\end{notation}

For the next corollary we use the notion of adjunction of $(\infty,2)$-categories, i.e.
$\infty$-categories enriched in $\infty\Cat.$
This is a special case of the notion of enriched adjunction (see \cite[Definition 2.66.]{heine2024bienriched} or \cite[6.2.6]{heinerestricted} for details).

\begin{corollary}

The inclusion of $(\infty,2)$-categories $$ \mathrm{LoCART}^\L \subset \Fun([1],\infty\widehat{\Cat})$$
admits a left adjoint that sends a functor $\mC \to \mA$ between small $\infty$-categories to $\mP^\mA(\mC)\to \mA.$
%In particular, the 2-functor $\infty\Cat^\op \to \Enr\Fun_{\infty\Cat}(\mathrm{LoCART}^\L, \infty\widehat{\Cat}), \mC \mapsto \Fun(\mC,-)$induces a 2-functor $\infty\Cat^\op \to (\mathrm{LoCART}^\L)^\op$ opposite to a 2-functor $\mP: \infty\Cat \to \mathrm{LoCART}^\L.$
	
\end{corollary}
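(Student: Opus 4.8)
The plan is to apply the characterization of enriched (that is, $(\infty,2)$-categorical) adjunctions recalled just before the statement: to produce the left adjoint it suffices to exhibit, for every object $\mC \to \mA$ of $\Fun([1],\infty\widehat{\Cat})$, an object of $\mathrm{LoCART}^\L$ together with a unit morphism against which composition induces an equivalence on all hom-$\infty$-categories; the value of the adjoint is then forced and its functoriality is formal. For a functor $\mC \to \mA$ between small $\infty$-categories I take the candidate value $\mP^\mA(\mC)\to\mA$, with unit the embedding $\mC \hookrightarrow \mP^\mA(\mC)$ over $\mA$ constructed in the remark following the definition of $\mP^\mA(\mC)$, regarded as a morphism of $\Fun([1],\infty\widehat{\Cat})$ whose bottom component is $\id_\mA$. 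First one checks this candidate is admissible: by Proposition \ref{class} the functor $\mP^\mA(\mC)\to\mA$ is a locally cartesian fibration with small target $\mA$, and its fibers $\mP^\mA(\mC)_\A \simeq \mP(\mC_\A)$ are presheaf $\infty$-categories on small $\infty$-categories, hence presentable; so $\mP^\mA(\mC)\to\mA$ lies in $\mathrm{LoCART}^\L$.

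The core of the argument is the computation of hom-$\infty$-categories. For $\mD \to \mB$ any object of $\mathrm{LoCART}^\L$, unwinding the $(\infty,2)$-structure of $\Fun([1],\infty\widehat{\Cat})$ identifies the mapping $\infty$-category from $\mC\to\mA$ to $\mD\to\mB$ with $\Theta(\mC,\mD) = \Fun(\mA,\mB)\times_{\Fun(\mC,\mB)}\Fun(\mC,\mD)$ (a $1$-morphism is a commutative square, a $2$-morphism a natural transformation of squares), and by construction the mapping $\infty$-category of the subcategory $\mathrm{LoCART}^\L$ from $\mP^\mA(\mC)\to\mA$ to $\mD\to\mB$ is the full subcategory $\Theta(\mP^\mA(\mC),\mD)^\L \subset \Theta(\mP^\mA(\mC),\mD)$ of squares inducing fibrewise small-colimits-preserving functors, since $\mathrm{LoCART}^\L$ is precisely the subcategory cut out by this condition on morphisms. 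Under these identifications, composition with the unit is restriction along $\mC \hookrightarrow \mP^\mA(\mC)$, i.e.\ the functor $\theta_\mD$ of Proposition \ref{uuuu}; because $\mD\to\mB$ is a locally cartesian fibration whose fibers are presentable and in particular admit small colimits, Proposition \ref{uuuu}(1) gives that $\theta_\mD$ is an equivalence. This establishes the universal property of the unit, hence yields the left adjoint on the sub-$(\infty,2)$-category spanned by functors between small $\infty$-categories, with the stated values.

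For a functor $\mC \to \mA$ that need not be between small $\infty$-categories one runs the identical argument with Proposition \ref{uuuu}(2) in place of (1): one sets $\mP^\mA(\mC) \subset \widehat{\mP}^\mA(\mC)$ to be the fibrewise full subcategory generated by $\mC_\A$ under small colimits (enlarging the universe so that this again has presentable fibers over a small base), and part (2) once more shows $\theta_\mD$ is an equivalence for every $\mD\to\mB$ in $\mathrm{LoCART}^\L$, supplying the required unit. I expect the main obstacle to be the bookkeeping in the hom-$\infty$-category step --- verifying that the $(\infty,2)$-categorical mapping objects of $\Fun([1],\infty\widehat{\Cat})$ and of $\mathrm{LoCART}^\L$ are exactly $\Theta$ and $\Theta^\L$, and that composition with the unit is on the nose the restriction $\theta_\mD$ --- after which Proposition \ref{uuuu} does all the work; a secondary nuisance is the universe enlargement needed to treat the general case together with the check that $\mP^\mA(\mC)\to\mA$ genuinely has presentable fibers and hence lands in $\mathrm{LoCART}^\L$.
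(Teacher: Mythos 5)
Your proposal is correct and proceeds exactly as the paper intends: the corollary is stated without proof immediately after Proposition \ref{uuuu} precisely because the left adjoint is produced objectwise via the unit $\mC \hookrightarrow \mP^\mA(\mC)$ and the equivalence $\theta_\mD$, once one has identified the $(\infty,2)$-categorical mapping objects of $\Fun([1],\infty\widehat{\Cat})$ and $\mathrm{LoCART}^\L$ with $\Theta$ and $\Theta^\L$ respectively. Your identification of those mapping objects, the verification that $\mP^\mA(\mC)\to\mA$ lands in $\mathrm{LoCART}^\L$ via Proposition \ref{class}, and the reduction of the universal property to Proposition \ref{uuuu}(1) (with (2) for the general case, together with the universe adjustment) all match what the paper leaves to the reader.
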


\begin{corollary}Let $\mA$ be a small $\infty$-category. The inclusion of $(\infty,2)$-categories $$ \mathrm{LoCART}^\L_\mA \subset \infty\widehat{\Cat}_{/\mA}$$
admits a left adjoint that sends a functor $\mC \to \mA$, where $\mC$ is small, to $\mP^\mA(\mC)\to \mA.$
	
\end{corollary}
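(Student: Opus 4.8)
The plan is to deduce this from the preceding corollary by passing to the fibre over $\mA$ under evaluation at the target. Write $\iota\colon \mathrm{LoCART}^\L \hookrightarrow \Fun([1],\infty\widehat{\Cat})$ for the inclusion of $(\infty,2)$-categories and $L$ for the left adjoint supplied there, so that $L(\mC\to\mA)\simeq(\mP^\mA(\mC)\to\mA)$ whenever $\mC$ is small. Evaluation at the target gives a functor $\ev_1\colon\Fun([1],\infty\widehat{\Cat})\to\infty\widehat{\Cat}$ which restricts to $\mathrm{LoCART}^\L$ (all of whose targets are small), and whose fibre over a small $\infty$-category $\mA$ is $\infty\widehat{\Cat}_{/\mA}$, respectively $\mathrm{LoCART}^\L_\mA$.

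The key point is that $L\dashv\iota$ is compatible with $\ev_1$, hence is a relative adjunction over $\infty\widehat{\Cat}$: the value $L(\mC\to\mA)=(\mP^\mA(\mC)\to\mA)$ has the same target $\mA$; the unit morphism is the canonical embedding $\mC\hookrightarrow\mP^\mA(\mC)$ over $\mA$ and so is carried by $\ev_1$ to an identity; and the counit is an equivalence because $\iota$ is fully faithful. Invoking that a relative adjunction of $(\infty,2)$-categories restricts to an adjunction on the fibres over each object of the base — part of the enriched adjunction formalism of \cite[Definition 2.66.]{heine2024bienriched} and \cite[6.2.6]{heinerestricted} — and taking the fibre over $\mA$ yields a left adjoint $L_\mA$ to $\mathrm{LoCART}^\L_\mA\subset\infty\widehat{\Cat}_{/\mA}$. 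By construction $L_\mA$ sends $\mC\to\mA$ with $\mC$ small to the fibre over $\mA$ of $\mP^\mA(\mC)\to\mA$, namely $\mP^\mA(\mC)\to\mA$ itself; this does lie in $\mathrm{LoCART}^\L_\mA$ since it is a locally cartesian fibration by Proposition \ref{class}, its target $\mA$ is small, and its fibres $\mP(\mC_\A)$ are presheaf $\infty$-categories on small $\infty$-categories, hence presentable.

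Alternatively, and more self-containedly, one can argue directly from the universal property. Since $\iota$ is fully faithful, producing a left adjoint amounts, by the enriched reflection criterion of the cited references, to exhibiting for each $\mC\to\mA$ in $\infty\widehat{\Cat}_{/\mA}$ an object of $\mathrm{LoCART}^\L_\mA$ together with a unit morphism over $\mA$ that induces an equivalence of mapping $\infty$-categories $\Fun_\mA^\L(\mP^\mA(\mC),\mD)\to\Fun_\mA(\mC,\mD)$ for every locally cartesian fibration $\mD\to\mA$ with presentable fibres. For $\mC$ small this is the object $\mP^\mA(\mC)\to\mA$ with the canonical embedding as unit: presentable fibres in particular admit small colimits, so Proposition \ref{uuuu}(1) applies, and the required equivalence is precisely the fibre over $\mathrm{id}_\mA$ of $\theta_\mD$, identified in the proof of that proposition with the restriction functor $\Fun_\mA^\L(\mP^\mA(\mC),\mD)\to\Fun_\mA(\mC,\mD)$. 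For large $\mC$ one argues in the same way using Proposition \ref{uuuu}(2) and $\widehat{\mP}^\mA(\mC)$.

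The main obstacle is not a concrete computation — those are delegated to Proposition \ref{uuuu} and Proposition \ref{class} — but the $(\infty,2)$-categorical bookkeeping: making precise either that the relative adjunction genuinely restricts to fibres as an adjunction of $(\infty,2)$-categories, or, in the second approach, that the pointwise equivalences of mapping $\infty$-categories assemble into an $\infty\Cat$-enriched adjunction. This is exactly what the enriched adjunction machinery of \cite{heine2024bienriched} and \cite{heinerestricted} is built to handle, so the argument reduces to supplying it with the input above.
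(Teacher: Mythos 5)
Your second, self-contained approach via Proposition~\ref{uuuu} is the right one, and since the paper gives no proof for this corollary it is presumably also what is intended: the unit is the Yoneda-type embedding $\mC\hookrightarrow\mP^\mA(\mC)$ and Proposition~\ref{uuuu}(1) supplies precisely the equivalence $\Fun^\L_\mA(\mP^\mA(\mC),\mD)\xrightarrow{\sim}\Fun_\mA(\mC,\mD)$ on mapping $\infty$-categories needed for the universal-arrow criterion for an $\infty\Cat$-enriched left adjoint, with Proposition~\ref{uuuu}(2) covering the case of large $\mC$. The first approach (relative adjunction over $\infty\widehat{\Cat}$ restricting to fibers) is also reasonable, though it requires that the preceding corollary be shown first, which in turn reduces to the same input from Proposition~\ref{uuuu}, so it is not a shortcut.

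One point you should correct: you assert twice that the inclusion $\iota\colon\mathrm{LoCART}^\L_\mA\subset\infty\widehat{\Cat}_{/\mA}$ is fully faithful, and describe the left adjoint as a ``reflection.'' That is false. By Notation~\ref{notate} and the following one, $\mathrm{LoCART}^\L$ is a \emph{sub}category, not a full subcategory, of $\mathrm{Lo}\widehat{\mathrm{CART}}$: morphisms are restricted to those squares inducing colimit-preserving functors on fibers. Accordingly $\mathrm{LoCART}^\L_\mA(\mD,\mD')=\Fun^\L_\mA(\mD,\mD')$ is a proper full subcategory of $\infty\widehat{\Cat}_{/\mA}(\mD,\mD')=\Fun_\mA(\mD,\mD')$ in general, so $\iota$ is only a (locally fully faithful) inclusion, not an embedding, and the counit need not be an equivalence. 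Fortunately this does not affect your argument: the universal-arrow criterion for an enriched left adjoint requires only that, for each $\mC\to\mA$, composition with the unit $\mC\to\mP^\mA(\mC)$ induce equivalences $\mathrm{LoCART}^\L_\mA(\mP^\mA(\mC),\mD)\to\infty\widehat{\Cat}_{/\mA}(\mC,\iota\mD)$, which is exactly what Proposition~\ref{uuuu} gives. It does not presuppose fully faithfulness of $\iota$. You should simply delete the two claims that $\iota$ is fully faithful and the word ``reflection''; the remainder stands.
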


In the following we compare Definition \ref{parpre} to another one
(Corollary \ref{homint}).

\begin{notation}
Let $\mL \subset \Fun([1],\infty\Cat)$ be the full subcategory of left fibrations.	
	
\end{notation}

\begin{proposition}
	
There is a canonical equivalence
$$ \mL \simeq \Fun^{\infty\Cat}(\mU,\mS \times \infty\Cat)$$
over $\infty\Cat,$	
where $\mL \to \infty\Cat$ is evaluation at the target.
	
\end{proposition}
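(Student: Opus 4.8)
The plan is to recognise both $\mL$ and $\Fun^{\infty\Cat}(\mU,\mS\times\infty\Cat)$, as $\infty$-categories over $\infty\Cat$, as the cartesian fibration classified by the functor $\mC\mapsto\Fun(\mC,\mS)$ with restriction transition maps, and then to realise the equivalence by an explicit comparison functor which I check is a fibrewise equivalence.

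First I would check that the right-hand side is defined. By Lemma \ref{lemmop} the functor $\kappa\colon\mU\to\infty\Cat$ is a cocartesian fibration, and cocartesian fibrations are exponential fibrations (in the appropriate larger universe, $\infty\Cat$ being large; cf. the introductory diagram and \cite{MR4074276}), so $\mU\times_{\infty\Cat}(-)$ on $\infty\widehat{\Cat}_{/\infty\Cat}$ admits a right adjoint $\Fun^{\infty\Cat}(\mU,-)$, and $\mS\times\infty\Cat$ is read over $\infty\Cat$ via $\pr_2$. Next I would describe the two fibrations. Repeating the argument of Lemma \ref{lemmop} with left fibrations in place of right fibrations — evaluation at the target $\Fun([1],\infty\Cat)\to\infty\Cat$ is a cartesian fibration since $\infty\Cat$ has pullbacks, and left fibrations are stable under pullback — gives that $\mL\to\infty\Cat$ is a cartesian fibration whose cartesian morphisms are the pullback squares of left fibrations; straightening identifies its fibre over $\mC$ with $\Fun(\mC,\mS)$ and its cartesian transport along $f\colon\mC'\to\mC$ with restriction $f^*$. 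For the right-hand side I would use the base-change formula for the internal hom \cite[Remark 3.71.]{HEINE2023108941}: the fibre of $\Fun^{\infty\Cat}(\mU,\mS\times\infty\Cat)$ over $\mC$ is $\Fun(\mU_\mC,\mS)$, and since $\kappa$ classifies the identity by Lemma \ref{lqym}(1) — so $\mU_\mC\simeq\mC$, and the transport of $\mU$ along $f\colon\mC'\to\mC$ is $f$ itself — the relative functor $\infty$-category $\Fun^{\infty\Cat}(\mU,\mS\times\infty\Cat)\to\infty\Cat$ is, fibrewise, $\Fun(-,\mS)$ of the fibres of $\kappa$ with restriction transports, hence is likewise classified by $\mC\mapsto\Fun(\mC,\mS)$, $f\mapsto f^*$.

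Having matched the straightenings, I would produce the comparison directly rather than quote that unstraightening is an equivalence. Consider the fibrewise evaluation
$$\ev\colon\mL\times_{\infty\Cat}\mU\longrightarrow\mS,\qquad\big((p\colon\mX\to\mC),\,(\mC_{/x}\to\mC)\big)\longmapsto\big|\,\mX\times_\mC\mC_{/x}\,\big|\ \simeq\ p^{-1}(x);$$
it is a functor over $\infty\Cat$, being the composite of the pullback functor $\mL\times_{\infty\Cat}\mU\to\Fun([1],\infty\Cat)$, $(\mX\to\mC,\mE\to\mC)\mapsto(\mX\times_\mC\mE\to\mE)$, evaluation $\ev_0$ at the source, and the classifying-space functor $\infty\Cat\to\mS$; the displayed equivalence holds because $\mC_{/x}$ has a final object. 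Adjoint to $\ev$ is a functor $\Phi\colon\mL\to\Fun^{\infty\Cat}(\mU,\mS\times\infty\Cat)$ over $\infty\Cat$, and I would check that $\Phi$ carries a pullback square of left fibrations to a cartesian morphism (the presheaf classifying a pullback of left fibrations is the restriction of the classifying presheaf) and induces on the fibre over $\mC$ the equivalence $\Fun(\mC,\mS)\xrightarrow{\ \sim\ }\Fun(\mU_\mC,\mS)$ coming from $\mU_\mC\simeq\mC$. Since a functor over $\infty\Cat$ between cartesian fibrations that preserves cartesian morphisms and is an equivalence on every fibre is an equivalence, $\Phi$ is the asserted equivalence over $\infty\Cat$.

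The hard part will be the bookkeeping in the second paragraph: pinning down the cartesian transports (equivalently the classifying functor) of the relative functor $\infty$-category $\Fun^{\infty\Cat}(\mU,\mS\times\infty\Cat)\to\infty\Cat$ from the universal property of $\mU$ in Lemma \ref{lqym}, together with turning the fibrewise evaluation $\ev$ into a genuinely coherent functor. Once those are in place, preservation of cartesian morphisms and the fibrewise computation for $\Phi$ — and hence the proof — are formal. An alternative to the last paragraph is to test both sides against an arbitrary $\mA\to\infty\Cat$ classifying $\psi$: one gets $\Fun_{\infty\Cat}(\mA,\Fun^{\infty\Cat}(\mU,\mS\times\infty\Cat))\simeq\Fun(\mA\times_{\infty\Cat}\mU,\mS)$ by adjunction, and $\Fun_{\infty\Cat}(\mA,\mL)\simeq\Fun(\mA\times_{\infty\Cat}\mU,\mS)$ because $\mA\times_{\infty\Cat}\mU\to\mA$ is the cocartesian fibration classified by $\psi$ and functors out of it into $\mS$ are sections of the relative functor $\infty$-category $\mA\times_{\infty\Cat}\mL\to\mA$; then one concludes by the Yoneda lemma.
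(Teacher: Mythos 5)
Your proposal offers two routes. The first — identifying both sides as cartesian fibrations classified by $\mC\mapsto\Fun(\mC,\mS)$ and producing an explicit comparison $\Phi$ adjoint to a fibrewise evaluation $\ev\colon\mL\times_{\infty\Cat}\mU\to\mS$ — is genuinely different from the paper's argument, and it carries a circularity that is not mere bookkeeping. To invoke the criterion that a fibrewise equivalence between cartesian fibrations preserving cartesian morphisms is an equivalence, you must already know that $\Fun^{\infty\Cat}(\mU,\mS\times\infty\Cat)\to\infty\Cat$ is a cartesian fibration, and which morphisms are cartesian. Neither is supplied by the definition of the relative functor $\infty$-category; it is essentially the content of the proposition, since $\mL\to\infty\Cat$ is the unstraightening of $\mC\mapsto\Fun(\mC,\mS)$. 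You would have to exhibit candidate cartesian lifts by hand (say via base change along cocartesian morphisms of $\kappa$) and verify the universal property, which is comparable in effort to the statement itself. Your computation of the transpose is, however, fine: the identification $|\mX\times_\mC\mC_{/x}|\simeq p^{-1}(x)$ is correct because $\mC_{/x}$ has a final object and the colimit of a space-valued functor classified by a left fibration is the classifying space of its total $\infty$-category, and the induced map on fibres over $\mC$ is exactly straightening of left fibrations.

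The alternative you sketch at the end is the paper's proof, and it is the route to take. One computes $\Fun_{\infty\Cat}(\mA,\Fun^{\infty\Cat}(\mU,\mS\times\infty\Cat))\simeq\Fun(\mB,\mS)$ by the internal-hom adjunction together with base change for exponential fibrations, where $\mB=\mA\times_{\infty\Cat}\mU$ is the cocartesian fibration classified by $\mA\to\infty\Cat$; and one identifies $\Fun_{\infty\Cat}(\mA,\mL)$ with maps of cocartesian fibrations $\mX\to\mB$ over $\mA$ that are fibrewise left fibrations. The ingredient missing from your sketch — the reason "functors out of it into $\mS$" coincide with sections of $\mA\times_{\infty\Cat}\mL\to\mA$ — is that such a map of cocartesian fibrations is itself a left fibration $\mX\to\mB$ by \cite[Propositions 2.4.2.4 and 2.4.2.11]{lurie.HTT}, hence corresponds to a functor $\mB\to\mS$; Yoneda then closes the argument. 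This route needs no a priori knowledge of any fibration structure on the right-hand side and bypasses the coherence issues you rightly flag as the hard part of the first route.
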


\begin{proof}Let $\mC \to \infty\Cat$ be a functor classifying a cocartesian fibration $\mB \to \mC.$
We prove that for any $\infty$-category $\mC$ functors
$\mC \to \mL$ over $\infty\Cat$ naturally correspond to functors
$\mC \to \Fun^{\infty\Cat}(\mU,\mS \times \infty\Cat)$ over $\infty\Cat$.
Functors $\mC \to \Fun^{\infty\Cat}(\mU,\mS \times \mC)$ over $\infty\Cat$ correspond to sections of the functor $\Fun^{\mC}(\mB,\mS \times \mC) \to \mC$.
Such sections correspond to functors $\mB \to \mS \times \mC$ over $\mC$ and so to functors
$\mB \to \mS.$
A functor $\mB \to \mL \subset \Fun([1],\infty\Cat)$ over $\infty\Cat$ is classified by a map of cocartesian fibrations $\mA \to \mB$ over $\mC$ that is fiberwise a left fibration and so a left fibration by \cite[Proposition 2.4.2.4., Proposition 2.4.2.11.]{lurie.HTT}.
Such a functor corresponds to a left fibration $\mA \to \mB$ classifying a functor $\mB \to \mS.$
	
\end{proof}

\begin{corollary}
	
There is a canonical equivalence
$$ \mR \simeq \Fun^{\infty\Cat}(\mU^\rev,\mS \times \infty\Cat)$$
over $\infty\Cat,$	
where $\mR \to \infty\Cat$ is evaluation at the target.

\end{corollary}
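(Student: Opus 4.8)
The statement is the opposite of the preceding proposition, and the plan is to deduce it from there by transporting along the involution $D:=(-)^{\op}\colon\infty\Cat\to\infty\Cat$. The first step is to note that the fibrewise-opposite construction, sending a left fibration $\mX\to\mY$ to the right fibration $\mX^{\op}\to\mY^{\op}$, assembles --- using evaluation at the target on both sides --- into an equivalence $\mR\simeq D^{*}\mL$ over $\infty\Cat$, where $D^{*}(-)$ denotes pullback along $D$; one should check that this equivalence intertwines the two ``evaluation at the target'' projections and is compatible with the relevant (co)cartesian structures, so that $D^{*}\mL$ is genuinely $\mR$ and not just fibrewise equivalent to it.

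Next recall that $\kappa\colon\mU\to\infty\Cat$ is a cocartesian fibration (Lemma \ref{lemmop}), hence an exponential fibration, and that $\mU^{\rev}$ is by construction the pullback $D^{*}\mU$ (equivalently, the cocartesian fibration classified by $(-)^{\op}\colon\infty\Cat\to\infty\Cat$, whose fibre over $\mathcal{A}$ is $\mathcal{A}^{\op}$). Applying the base-change compatibility of internal homs of exponential fibrations (the remark preceding Proposition \ref{expo}) to the exponential fibration $\mU\to\infty\Cat$, the functor $D\colon\infty\Cat\to\infty\Cat$, and the object $\mS\times\infty\Cat\to\infty\Cat$, one obtains a canonical equivalence
$$ D^{*}\Fun^{\infty\Cat}(\mU,\mS\times\infty\Cat)\;\simeq\;\Fun^{\infty\Cat}\bigl(D^{*}\mU,\,D^{*}(\mS\times\infty\Cat)\bigr) $$
over $\infty\Cat$. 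Since the left factor of $\mS\times\infty\Cat$ is constant, $D^{*}(\mS\times\infty\Cat)\simeq\mS\times\infty\Cat$ over $\infty\Cat$, while $D^{*}\mU=\mU^{\rev}$. Composing with the preceding proposition then gives
$$ \mR\;\simeq\;D^{*}\mL\;\simeq\;D^{*}\Fun^{\infty\Cat}(\mU,\mS\times\infty\Cat)\;\simeq\;\Fun^{\infty\Cat}(\mU^{\rev},\mS\times\infty\Cat) $$
over $\infty\Cat$, as claimed.

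I expect the only genuine obstacle to be the bookkeeping in the first step --- verifying that the fibrewise-opposite equivalence $\mR\simeq D^{*}\mL$ is an equivalence of fibrations over $\infty\Cat$ rather than merely an objectwise one. If one prefers to sidestep this, the corollary can instead be proved by re-running the proof of the preceding proposition word for word with ``left fibration'' replaced by ``right fibration'' and $\mU$ replaced by $\mU^{\rev}$, invoking the opposite of \cite[Proposition 2.4.2.4, Proposition 2.4.2.11]{lurie.HTT} to see that a map of cocartesian fibrations over the base that restricts to a right fibration on every fibre is itself a right fibration; the argument is otherwise identical.
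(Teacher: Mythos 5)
Your first approach---transporting the preceding proposition along the involution $D := (-)^\op$---is correct, and the bookkeeping you flag is not actually an obstacle. The functor $(-)^\op$ on $\Fun([1],\infty\Cat)$ restricts to an equivalence $\mL \to \mR$, and the square with evaluation at the target on both vertical sides and $D$ along the bottom commutes and is automatically a pullback because its two horizontal arrows are equivalences; this gives $\mL \simeq D^*\mR$ over $\infty\Cat$, hence $D^*\mL \simeq D^*D^*\mR \simeq \mR$. Combined with $D^*\mU \simeq \mU^\rev$ (both classify $(-)^\op\colon\infty\Cat\to\infty\Cat$), the base-change formula for $\Fun^{\infty\Cat}(-,-)$, and the preceding proposition, the corollary follows exactly as you describe.

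The alternative you offer in your last paragraph, however, contains a genuine error. It is \emph{false} that a map of cocartesian fibrations over a base which is fiberwise a right fibration is itself a right fibration; the opposite of \cite[Prop.\ 2.4.2.4, Prop.\ 2.4.2.11]{lurie.HTT} gives the analogous statement for maps of \emph{cartesian} fibrations, which is not what you have here. For a counterexample take $\mB = [1]$ with the identity cocartesian fibration and $\mA \to [1]$ a cocartesian fibration with space fibers classifying a non-invertible map $\mA_0 \to \mA_1$: then $\mA \to \mB$ is a map of cocartesian fibrations over $[1]$ whose restriction to each fiber is a map from a space to a point (hence a right fibration), yet $\mA \to [1]$ is not a right fibration because it is not even a cartesian fibration. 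A literal word-for-word replacement therefore fails. The correct repair is to pass to the fiberwise opposite: a map of cocartesian fibrations $\mA \to \mB$ over $\mC$ which is fiberwise a right fibration becomes, under the fiberwise $(-)^\op$-involution, a map $\mA^\rev \to \mB^\rev$ of cocartesian fibrations over $\mC$ which is fiberwise a left fibration, hence a left fibration classifying a functor $\mB^\rev \to \mS$, matching $\mC \times_{\infty\Cat}\mU^\rev \simeq \mB^\rev$ on the other side---but at that point one is essentially re-deriving your first argument.
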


\begin{corollary}\label{homint} Let $\mC \to \mA$ be a cocartesian fibration. There is a canonical equivalence
$$ \mP^\mA(\mC) \simeq \Fun^{\mA}(\mC^\rev,\mS \times \mA)$$ over $\mA.$	

\end{corollary}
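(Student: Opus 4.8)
The plan is to deduce the statement from the already-established equivalence $\mR \simeq \Fun^{\infty\Cat}(\mU^\rev, \mS \times \infty\Cat)$ over $\infty\Cat$ by base changing along the functor classifying $\mC \to \mA$. First I would let $\psi \colon \mA \to \infty\Cat$ be the functor classifying the cocartesian fibration $\mC \to \mA$. By Lemma \ref{lqym}(1) the cocartesian fibration $\kappa \colon \mU \to \infty\Cat$ classifies the identity, so $\mC \simeq \mA \times_{\infty\Cat} \mU$ over $\mA$; and by Definition \ref{parpre} the functor $\mP^\mA(\mC) \to \mA$ is by construction the pullback of evaluation at the target $\rho \colon \mR \to \infty\Cat$ along $\psi$, i.e. $\mP^\mA(\mC) \simeq \mA \times_{\infty\Cat} \mR$ over $\mA$.

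Next I would substitute the preceding corollary, $\mR \simeq \Fun^{\infty\Cat}(\mU^\rev, \mS \times \infty\Cat)$ over $\infty\Cat$, and push the base change $\mA \times_{\infty\Cat} (-)$ through the internal hom. This is legitimate because $\mU^\rev \to \infty\Cat$ is a cocartesian fibration, hence an exponential fibration, and the remark preceding Proposition \ref{expo} (\cite[Remark 3.71.]{HEINE2023108941}) says pullback of an exponential fibration along any functor is again exponential and commutes with $\Fun^{(-)}$. This yields
$$ \mA \times_{\infty\Cat} \Fun^{\infty\Cat}(\mU^\rev, \mS \times \infty\Cat) \;\simeq\; \Fun^\mA\bigl( \mA \times_{\infty\Cat} \mU^\rev,\; \mA \times_{\infty\Cat} (\mS \times \infty\Cat) \bigr) $$
over $\mA$. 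To finish I would identify the two arguments: since $\mS \times \infty\Cat \to \infty\Cat$ is the second projection, $\mA \times_{\infty\Cat} (\mS \times \infty\Cat) \simeq \mS \times \mA$ over $\mA$; and forming the fiberwise opposite of a cocartesian fibration is post-composition of its classifying functor with $(-)^\op \colon \infty\Cat \to \infty\Cat$, while base change is pre-composition, so these commute and $\mA \times_{\infty\Cat} \mU^\rev \simeq \mC^\rev$ over $\mA$. Chaining the canonical equivalences $\mP^\mA(\mC) \simeq \mA \times_{\infty\Cat} \mR \simeq \mA \times_{\infty\Cat} \Fun^{\infty\Cat}(\mU^\rev, \mS \times \infty\Cat) \simeq \Fun^\mA(\mC^\rev, \mS \times \mA)$ over $\mA$ gives the claim.

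The only genuinely non-formal inputs here are external: the preceding corollary and the cited compatibility of $\Fun^{(-)}$ with pullback of exponential fibrations. The rest is routine bookkeeping of canonical equivalences over $\mA$; the one point to keep honest is that $\mU^\rev \to \infty\Cat$ really is an exponential fibration — which is immediate since it is a cocartesian fibration (cf. Corollary \ref{reco}) — and that the fiberwise-opposite operation commutes with base change, which I would justify in one line via straightening as above. I expect no serious obstacle.
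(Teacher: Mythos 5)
Your proposal is correct and is, as far as one can tell, exactly the argument the paper intends (the corollary is stated without proof immediately after $\mR \simeq \Fun^{\infty\Cat}(\mU^\rev, \mS \times \infty\Cat)$, so the pullback-along-the-classifying-functor argument is the natural reading). The only small inaccuracy is the parenthetical citation of Corollary \ref{reco} for the exponentiality of $\mU^\rev \to \infty\Cat$: that corollary treats locally \emph{cartesian} fibrations, whereas $\mU^\rev \to \infty\Cat$ is cocartesian; the cleaner justification is the remark after Definition \ref{parpre} together with Proposition \ref{expo} (for a cocartesian fibration $\mD \to \mC$ the functor $\mP^\mC(\mD) \to \mC$ is a cartesian fibration, so $\mD \to \mC$ is exponential). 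It is also worth noting, though the paper elides it too, that for a cocartesian fibration $\mC \to \mA$ the extended definition of $\mP^\mA(\mC)$ introduced after Lemma \ref{lemar} agrees with the original Definition \ref{parpre} (this is the content of Lemma \ref{lemar}), so invoking $\mP^\mA(\mC) \simeq \mA \times_{\infty\Cat} \mR$ is legitimate.
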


\begin{remark}
In \cite[Example 9.9.]{barwick2016parametrizedhighercategorytheory} the authors give a different construction
of a parametrized $\infty$-category of presheaves associated to a cocartesian fibration.
Corollary \ref{homint} implies that our construction of parametrized $\infty$-category of presheaves associated to a functor $\phi: \mC \to \mA$ agrees with the one of \cite{barwick2016parametrizedhighercategorytheory} in case that $\phi$ is a cocartesian fibration.
	
\end{remark}

\section{A local-global principle}

Now we are ready to prove the main theorem.

\begin{theorem}\label{corr}Let $\mA$ be an $\infty$-category.
The canonical commutative square of $\infty$-categories
\begin{equation}\label{sqkh}
\begin{xy}
\xymatrix{
\infty\Cat_{/\mA} \ar[d]
\ar[rr]^{\mP^\mA}
&& \mathrm{LoCART}^\L_\mA \ar[d]^{} 
\\
\infty\Cat_{/\iota(\mA)} \ar[rr]^{\mP^{\iota(\mA)}}  && \mathrm{LoCART}^\L_{\iota(\mA)}}
\end{xy} 
\end{equation}
is a pullback square.

\end{theorem}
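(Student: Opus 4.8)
Set $\iota := \iota(\mA)$, and recall that $\iota$ and $\mA$ have the same objects. Base change (Remark \ref{remre}) gives a natural equivalence $\mP^{\iota}(\iota\times_\mA\mathcal{E})\simeq\iota\times_\mA\mP^\mA(\mathcal{E})$, so the square \eqref{sqkh} commutes (and $\mP^\mA(\mathcal{E})$ lies in $\mathrm{LoCART}^\L_\mA$ by Proposition \ref{class}); it therefore defines a comparison functor
$$\Phi\colon \infty\Cat_{/\mA}\longrightarrow \infty\Cat_{/\iota}\times_{\mathrm{LoCART}^\L_\iota}\mathrm{LoCART}^\L_\mA,\qquad \mathcal{E}\longmapsto\bigl(\iota\times_\mA\mathcal{E},\ \mP^\mA(\mathcal{E})\bigr).$$
The plan is to show that $\Phi$ is fully faithful and essentially surjective, working at the level of mapping $\infty$-categories so as to obtain the $(\infty,2)$-categorical statement; the $\infty$-categorical one follows by passing to cores.

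\textbf{Full faithfulness.}
Fix $\mathcal{E}\to\mA$ and $\mathcal{E}'\to\mA$ and abbreviate $\iota\mathcal{E}:=\iota\times_\mA\mathcal{E}$, $\iota\mathcal{E}':=\iota\times_\mA\mathcal{E}'$. The universal property of Proposition \ref{uuuu}(1), on the fibre over $\mathrm{id}_\mA\in\Fun(\mA,\mA)$, identifies the mapping $\infty$-category $\Fun_\mA^\L(\mP^\mA(\mathcal{E}),\mP^\mA(\mathcal{E}'))$ in $\mathrm{LoCART}^\L_\mA$ with $\Fun_\mA(\mathcal{E},\mP^\mA(\mathcal{E}'))$, and similarly over $\iota$; using $\mP^\iota(\iota\mathcal{E}')\simeq\iota\times_\mA\mP^\mA(\mathcal{E}')$ once more, the mapping $\infty$-category of $\Phi(\mathcal{E})$ and $\Phi(\mathcal{E}')$ in the pullback becomes
$$\Fun_\iota(\iota\mathcal{E},\iota\mathcal{E}')\times_{\Fun_\iota(\iota\mathcal{E},\,\mP^\iota(\iota\mathcal{E}'))}\Fun_\mA(\mathcal{E},\mP^\mA(\mathcal{E}')),$$
the second leg being restriction along $\iota\hookrightarrow\mA$. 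Since the parametrized Yoneda embeddings $\mathcal{E}'\hookrightarrow\mP^\mA(\mathcal{E}')$ and $\iota\mathcal{E}'\hookrightarrow\mP^\iota(\iota\mathcal{E}')$ are fully faithful, postcomposition identifies $\Fun_\mA(\mathcal{E},\mathcal{E}')$ and $\Fun_\iota(\iota\mathcal{E},\iota\mathcal{E}')$ with full subcategories of $\Fun_\mA(\mathcal{E},\mP^\mA(\mathcal{E}'))$, respectively $\Fun_\iota(\iota\mathcal{E},\mP^\iota(\iota\mathcal{E}'))$; hence the displayed pullback is the full subcategory of $\Fun_\mA(\mathcal{E},\mP^\mA(\mathcal{E}'))$ on those functors whose restriction to $\iota$ factors through $\iota\mathcal{E}'$. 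Because $\iota$ has the same objects as $\mA$, a functor $g\colon\mathcal{E}\to\mP^\mA(\mathcal{E}')$ over $\mA$ has this property if and only if it carries every object of $\mathcal{E}$ into $\mathcal{E}'$, i.e.\ if and only if it factors through $\mathcal{E}'\hookrightarrow\mP^\mA(\mathcal{E}')$; so the pullback is exactly $\Fun_\mA(\mathcal{E},\mathcal{E}')$, which is what $\Phi$ computes on mapping objects.

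\textbf{Essential surjectivity.}
Let an object of the pullback be given by $\mathcal{C}\to\iota$, a fibration $\mD\to\mA$ in $\mathrm{LoCART}^\L_\mA$, and an equivalence $\phi\colon\mP^\iota(\mathcal{C})\simeq\iota\times_\mA\mD$ over $\iota$; on fibres $\phi$ gives equivalences $\mP(\mathcal{C}_X)\simeq\mD_X$ for every object $X$. Let $\mathcal{E}\subset\mD$ be the full subcategory whose fibre $\mathcal{E}_X\subset\mD_X$ is the image, under this equivalence, of the Yoneda embedding $\mathcal{C}_X\hookrightarrow\mP(\mathcal{C}_X)$. Then $\mathcal{E}$ is small (its fibres are the small $\mathcal{C}_X$ and its base is the small $\mA$), and $\phi$ restricts to an equivalence $\iota\times_\mA\mathcal{E}\simeq\mathcal{C}$ over $\iota$, since the fibrewise representable objects of $\mP^\iota(\mathcal{C})$ are precisely $\mathcal{C}$. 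It remains to produce an equivalence $\mP^\mA(\mathcal{E})\simeq\mD$ over $\mA$ compatible with $\phi$. By Proposition \ref{uuuu}(1) the inclusion $\mathcal{E}\hookrightarrow\mD$ over $\mA$ — a functor into a locally cartesian fibration with fibrewise small colimits — extends to a functor $G\colon\mP^\mA(\mathcal{E})\to\mD$ over $\mA$ that is fibrewise colimit-preserving; on the fibre over $X$ it is the colimit-preserving extension of the Yoneda embedding $\mathcal{C}_X\hookrightarrow\mP(\mathcal{C}_X)$, hence an equivalence. It suffices to verify that $G$ preserves locally cartesian morphisms: then $G$ is a fibrewise equivalence of locally cartesian fibrations over $\mA$, hence an equivalence, and restricting it along $\iota\hookrightarrow\mA$ recovers $\phi$. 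For this, unwind the description of the transports of $\mP^\mA(\mathcal{E})\to\mA$ from Proposition \ref{class}: along $f\colon X\to Y$ the transport $\mP(\mathcal{E}_Y)\to\mP(\mathcal{E}_X)$ is colimit-preserving, and — using that $\mathcal{E}\hookrightarrow\Env(\mathcal{E})$ is fully faithful and $\mathcal{E}\subset\mD$ is full — its value on a representable agrees, under $G_X$ and $G_Y$, with the value of the (colimit-preserving) transport $\mD_Y\to\mD_X$ of $\mD\to\mA$ on that representable; two colimit-preserving functors out of a presheaf $\infty$-category that agree on representables are equivalent, so $G$ commutes with the transports.

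\textbf{The main obstacle.}
I expect the real work to be the last point of essential surjectivity: upgrading the fibrewise equivalence $G\colon\mP^\mA(\mathcal{E})\to\mD$ to an equivalence of locally cartesian fibrations. A fibrewise equivalence over $\mA$ need not be an equivalence, so one must genuinely match the locally cartesian transports of $\mP^\mA(\mathcal{E})$ — described through the enveloping cocartesian fibration in Proposition \ref{class} — with those of the abstractly presented $\mD$, carefully passing through the representable objects and invoking the colimit-preservation of the transports of $\mD$ built into the definition of $\mathrm{LoCART}^\L$. The remaining ingredients — the universal property of $\mP^\mA(\mathcal{E})$, base change, and the bookkeeping exploiting that $\iota(\mA)$ has the same objects as $\mA$ — are comparatively routine.
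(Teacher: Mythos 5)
Your proof is correct and follows essentially the same strategy as the paper: for full faithfulness, reducing to the fact that a functor into $\mP^\mA(\mathcal{E}')$ factors through the (full) subcategory $\mathcal{E}'$ if and only if it does so on objects, which can be tested over $\iota(\mA)$; for essential surjectivity, carving out $\mathcal{E}\subset\mD$ as the fiberwise representables, extending to $G\colon\mP^\mA(\mathcal{E})\to\mD$ by the universal property, and verifying that $G$ intertwines the locally cartesian transports by checking on representables and invoking colimit-preservation. The only cosmetic divergence is in the final step: the paper reduces to $\mA=[1]$ via Remark \ref{remre} and computes mapping spaces directly, whereas you argue globally that a map of locally cartesian fibrations over $\mA$ which is a fiberwise equivalence and preserves locally cartesian morphisms is an equivalence; be sure to phrase that final check as showing the \emph{canonical comparison transformation} $G_X\circ f^*\to f^{\mD,*}\circ G_Y$ is an equivalence on representables (rather than merely that the two colimit-preserving functors agree abstractly there), as the paper does by observing that $\rho_\Y$ is natural in $\Y$.
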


\begin{proof}
%So it is enough to prove that square (\ref{sqkh}) is a pullback square. 

Let $\mC \to \mA,\mD \to \mA$ be functors of small $\infty$-categories.
Square (\ref{sqkh}) induces on morphism $\infty$-categories the commutative square
\begin{equation*}\label{sqkk}
\begin{xy}
\xymatrix{
\Fun_\mA(\mC,\mD) \ar[d]
\ar[rr]
&& \Fun^\L_\mA(\mP^\mA(\mC), \mP^\mA(\mD)) \ar[d]^{} 
\\
\Fun_{\iota(\mA)}(\iota(\mA) \times_\mA \mC, \iota(\mA) \times_\mA \mD) \ar[rr]^{}  && \Fun^\L_{\iota(\mA)}(\iota(\mA) \times_\mA \mP^\mA(\mC), \iota(\mA) \times_\mA \mP^\mA(\mD)),}
\end{xy} 
\end{equation*}
which identifies with the commutative square
\begin{equation}\label{sqkkk}
\begin{xy}
\xymatrix{
\Fun_\mA(\mC,\mD) \ar[d]
\ar[rr]
&& \Fun_\mA(\mC, \mP^\mA(\mD)) \ar[d]^{} 
\\
\Fun_{\iota(\mA)}(\iota(\mA) \times_\mA \mC, \iota(\mA) \times_\mA \mD) \ar[rr]^{}  && \Fun_{\iota(\mA)}(\iota(\mA) \times_\mA \mC, \iota(\mA) \times_\mA \mP^\mA(\mD)),}
\end{xy} 
\end{equation}
where we use Remark \ref{remre}.
The bottom and top functor of square (\ref{sqkkk}) are fully faithful,
which guarantees that square (\ref{sqkkk}) is a pullback square.
This implies that the functor $$\theta: \infty\Cat_{/\mA} \to \infty\Cat_{/\iota(\mA)} \times_{\mathrm{LoCART}^\L_{\iota(\mA)}} \mathrm{LoCART}^\L_{\mA} $$ 
is fully faithful.
So it remains to see that $\theta$ is essentially surjective.

Let $\mB \to \iota(\mA)$ be a functor, $ \mD \to \mA$ a locally cartesian fibration
whose fibers admit small colimits and whose fiber transports preserve small colimits, and $\alpha: \mP^{\iota(\mA)}(\mB) \simeq \iota(\mA) \times_\mA \mD $ an equivalence over $\iota(\mA)$.
Let $\mC \subset \mD$ be the essential image of the Yoneda-embedding
$\mB \subset \mP^{\iota(\mA)}(\mB) \simeq \iota(\mA) \times_\mA \mD \subset \mD$.
Then there is a canonical equivalence $\iota(\mA) \times_\mA\mC \simeq \mB$ over $\iota(\mA)$ and the embedding $\mC \subset \mD$ over $\mA$ extends to a canonical functor $\beta: \mP^\mA(\mC) \to \mD$ over $\mA,$
whose pullback to $\iota(\mA)$ is the equivalence
$\alpha: \iota(\mA) \times_\mA\mP^\mA(\mC) \simeq \mP^{\iota(\mA)}(\mB) \simeq \iota(\mA) \times_\mA\mD.$
Therefore it is enough to see that $\beta$ is an equivalence.
For this %it is enough to check that $\beta$ induces an equivalence after pulling back along any functor $[1]\to \mA$.Consequently, 
by Remark \ref{remre} we can reduce to the case that $\mA=[1]$
because a functor over $\mA$ is an equivalence if all pullbacks along functors $[1] \to \mA$ are equivalences. Since $\beta$ is essentially surjective, we need to check that for any $\X \in \mP(\mC_0), \Y \in \mP(\mC_1)$ the canonical map $\theta_{\X,\Y}: \mP^{[1]}(\mC)(\X, \Y) \to \mD(\beta(\X),\beta(\Y))$ induced by $\beta$ is an equivalence.
Let $\G: \mP(\mC_1) \to \mP(\mC_0)$ be the left adjoint functor classified by the cartesian fibration $\mP^{[1]}(\mC) \to [1]$ and $\G': \mD_1 \to \mD_0$ the left adjoint functor classified by the cartesian fibration $\mD \to [1]$.
The unique cartesian lift $\G(\Y) \to \Y$, where $\G(\Y) \in \mP(\mC_0)$,
is sent to a morphism $\beta(\G(\Y)) \to \beta(\Y)$
that factors as $\beta(\G(\Y)) \xrightarrow{\rho_\Y} \G'(\beta(\Y)) \to \beta(\Y)$, where $\G'(\beta(\Y)) \in \mD_0$. The map $\theta_{\X,\Y}$ identifies with the map
$$ \mP(\mC_0)(\X, \G(\Y)) \to \mD_0(\beta(\X), \beta(\G(\Y))) \to \mD_0(\beta(\X), \G'(\beta(\Y))).$$
Since the first map is an equivalence, it is enough to see that $\rho_\Y$ is an equivalence for every $\Y \in \mP(\mC_1).$
Since $\beta_\mC$ is fully faithful, $\theta_{\X,\Y}$ is an equivalence if
$\X \in \mC_0, \Y \in \mC_1$.
Because $\mP(\mC_0)$ is generated by $\mC_0$ under small colimits,
$\theta_{\X,\Y}$ is an equivalence for all $\X \in \mP(\mC_0), \Y \in \mC_1$.
Thus $\rho_\Y$ is an equivalence for every $\Y \in \mC_1$.
Therefore $\rho_\Y$ is also an equivalence for every $\Y \in \mP(\mC_1)$
because $\mP(\mC_1)$ is generated by $\mC_1$ under small colimits and the functors $\G, \G'$ preserve small colimits and $\rho_\Y$ is natural in $\Y$.
\end{proof}

\begin{corollary}\label{alfo}
	
The canonical commutative square
\begin{equation}\label{sqkt}
\begin{xy}
\xymatrix{
\Fun([1],\infty\Cat) \ar[d]
\ar[rr]^{\mP}
&&\mathrm{LoCART}^\L \ar[d]^{} 
\\
\iota^*(\Fun([1],\infty\Cat)) \ar[rr]^{\iota^*(\mP)}  && \iota^*(\mathrm{LoCART}^\L)}
\end{xy} 
\end{equation}
of cartesian fibrations over $\infty\Cat$ is a pullback square. 	
	
\end{corollary}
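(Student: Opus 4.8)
The plan is to reduce the statement to Theorem~\ref{corr} via the general principle that a commutative square of cartesian fibrations over a fixed $\infty$-category $\mB$, all four of whose edges are maps of cartesian fibrations (that is, preserve cartesian morphisms), is a pullback square of $\infty$-categories if and only if the induced square of fibres over every object of $\mB$ is a pullback square. This principle follows from straightening: such squares correspond to squares in $\Fun(\mB^{\op}, \infty\widehat{\Cat})$, where limits and equivalences are both detected objectwise, and the total-space functor from cartesian fibrations over $\mB$ to $\infty$-categories preserves pullbacks and detects equivalences. Applying this with $\mB = \infty\Cat$, and observing that the fibre of square~(\ref{sqkt}) over an $\infty$-category $\mA$ is precisely square~(\ref{sqkh}), which is a pullback by Theorem~\ref{corr}, will finish the proof. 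So the actual work is to check the hypotheses of the principle.

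First I would verify that all four functors to $\infty\Cat$ in~(\ref{sqkt}) are cartesian fibrations. The functor $\Fun([1],\infty\Cat) \to \infty\Cat$ evaluating at the target is a cartesian fibration with the pullback squares as cartesian morphisms, as recalled in the proof of Lemma~\ref{lemmop}. The functor $\mathrm{LoCART}^\L \to \infty\Cat$ is a cartesian fibration: the cartesian lift of $\psi\colon \mA \to \mB$ ending at a locally cartesian fibration $\mD \to \mB$ with presentable fibres is $\mA \times_\mB \mD \to \mD$ over $\psi$, which again lies in $\mathrm{LoCART}^\L$ because the fibre of $\mA \times_\mB \mD \to \mA$ over $\ra$ is $\mD_{\psi(\ra)}$ and the comparison square $\mA \times_\mB \mD \to \mD$ is fibrewise the identity, hence fibrewise small-colimit preserving. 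Finally $\iota^*(\Fun([1],\infty\Cat)) \to \infty\Cat$ and $\iota^*(\mathrm{LoCART}^\L) \to \infty\Cat$ are pullbacks of cartesian fibrations along $\iota\colon \infty\Cat \to \infty\Cat$, hence again cartesian fibrations.

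Next I would check that the four edges of~(\ref{sqkt}) preserve cartesian morphisms. For the horizontal functors $\mP$ and $\iota^*(\mP)$ this is exactly Remark~\ref{remre}: since the comparison map $\mP^{\mA'}(\mA' \times_\mA \mC) \to \mA' \times_\mA \mP^\mA(\mC)$ is an equivalence, $\mP$ sends the pullback square $\mA' \times_\mA \mC \to \mC$ lying over $\mA' \to \mA$ to the pullback square $\mA' \times_\mA \mP^\mA(\mC) \to \mP^\mA(\mC)$ lying over $\mA' \to \mA$. For the vertical functors, which send $\mC \to \mA$ to $\iota(\mA) \times_\mA \mC \to \iota(\mA)$, I would use the naturality square of the inclusion $\iota \Rightarrow \mathrm{id}_{\infty\Cat}$: for a pullback square $\mA' \times_\mA \mC \to \mC$ over $\psi\colon \mA' \to \mA$ there are canonical equivalences $\iota(\mA') \times_{\mA'} (\mA' \times_\mA \mC) \simeq \iota(\mA') \times_\mA \mC \simeq \iota(\mA') \times_{\iota(\mA)} (\iota(\mA) \times_\mA \mC)$ over $\iota(\mA') \to \iota(\mA)$, the two outer pullbacks agreeing because the composites $\iota(\mA') \to \iota(\mA) \to \mA$ and $\iota(\mA') \to \mA' \to \mA$ coincide. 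Hence the image is a pullback square.

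With all hypotheses in place the general principle applies, reducing~(\ref{sqkt}) to its fibres over $\infty\Cat$, which are the squares~(\ref{sqkh}) of Theorem~\ref{corr}. The only step that requires genuine care is the verification that the four edges are maps of cartesian fibrations — especially that $\mP$ commutes with pullback along arbitrary functors, which is Remark~\ref{remre} — while the passage to fibres and the appeal to Theorem~\ref{corr} are then formal.
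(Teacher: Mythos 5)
Your proof is correct and takes essentially the same approach as the paper, which simply observes that square~(\ref{sqkt}) restricts on the fiber over each small $\infty$-category $\mA$ to the pullback square~(\ref{sqkh}) of Theorem~\ref{corr}. You have usefully unpacked the implicit steps — chiefly that the four edges of~(\ref{sqkt}) preserve cartesian morphisms, with Remark~\ref{remre} as the key input for the horizontal edges — but the route to the result is identical.
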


\begin{proof}
	
Square (\ref{sqkt}) induces on the fiber over every small $\infty$-category $\mA$
the pullback square (\ref{sqkh}) of $\infty$-categories.	
	
\end{proof}

\begin{remark}
	
We expect that the analogue of Theorem \ref{corr} also holds for $(\infty,n)$-categories for
$ 1 \leq n \leq \infty$ for appropriate notions of locally cartesian fibrations.
Here the role of the parametrized category of presheaves is taken by a parametrized version of
presheaves of higher categories that can be conveniently described via the language of enriched presheaves \cite{HINICH2020107129}, \cite{heine2024equivalencemodelsinftycategoriesenriched}.
This will be topic of future work.

\end{remark}

\begin{notation}
Let 
\begin{itemize}
\item $\mathrm{EXP}\subset \Fun([1],\infty\Cat)$
be the full subcategory of exponential fibrations,
\item $\mathrm{CART} \subset \Fun([1],\infty\Cat) $ be the full subcategory of cartesian fibrations,
\item $\CART^\L \subset \mathrm{LoCART}^\L $ be the full subcategory of cartesian fibrations. 
\end{itemize}	
	
\end{notation}

Corollary \ref{alfo} and Proposition \ref{expo} imply the following corollary:
\begin{corollary}\label{corflat}
The commutative square
\begin{equation}\label{sqk}
\begin{xy}
\xymatrix{
\mathrm{EXP} \ar[d]
\ar[rr]^{\mP}
&& \CART^\L \ar[d]^{} 
\\
\iota^*(\mathrm{EXP}) \ar[rr]^{\iota^*(\mP)}  && \iota^*(\CART^\L)}
\end{xy} 
\end{equation}
of cartesian fibrations over $\infty\Cat$ is a pullback square. 
\end{corollary}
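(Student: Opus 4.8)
The plan is to deduce Corollary \ref{corflat} formally from Corollary \ref{alfo} together with the characterization of exponential fibrations in Proposition \ref{expo}. The point is that Corollary \ref{alfo} already exhibits square (\ref{sqkt}) as a pullback of cartesian fibrations over $\infty\Cat$, and square (\ref{sqk}) is obtained from it by passing to suitable full subcategories on each of the four corners; so I only need to check that these full-subcategory inclusions are compatible with the pullback, i.e. that an object of $\Fun([1],\infty\Cat)$ lies in $\mathrm{EXP}$ precisely when its image in $\mathrm{LoCART}^\L$ lies in $\CART^\L$ and (automatically) its image in $\iota^*(\Fun([1],\infty\Cat))$ lies in $\iota^*(\mathrm{EXP})$.

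Concretely, I would argue as follows. First, recall that a pullback square of $\infty$-categories restricts to a pullback square on full subcategories as soon as the chosen full subcategories are ``matching'', meaning a vertex of the top-left corner lies in its prescribed full subcategory if and only if its images in the top-right and bottom-left corners lie in theirs; then the bottom-right corner's subcategory can be taken to be the full subcategory on the union of the images, and the restricted square is again a pullback. (This is a routine diagram chase with the universal property of the pullback, using that all four inclusions are fully faithful.) Second, by Proposition \ref{expo} a functor $\mC \to \mA$ of small $\infty$-categories is an exponential fibration if and only if the locally cartesian fibration $\mP^\mA(\mC) \to \mA$ is a cartesian fibration, i.e.\ if and only if $\mP(\mC \to \mA)$ lies in $\CART^\L \subset \mathrm{LoCART}^\L$; and trivially $\mC \to \mA$ is an exponential fibration as an object over $\mA$ iff its pullback to $\iota(\mA)$ is, since $\iota(\mA) = \mA$ when $\mA$ is a space and the relevant exponentiability is detected on pullbacks along $[2] \to \mA$ (Remark \ref{remre}, as used in the proof of Proposition \ref{expo}) — in fact the bottom map of (\ref{sqk}) is just $\iota^*$ of the top, so this compatibility is automatic. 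Third, I note that the fibers of $\mathrm{EXP} \to \infty\Cat$, $\CART^\L \to \infty\Cat$ etc.\ over a small $\infty$-category $\mA$ are exactly $\mathrm{EXP}_\mA$, $\CART^\L_\mA$, and that the restricted square on fibers over $\mA$ is the full-subcategory version of square (\ref{sqkh}); so it suffices to know (\ref{sqkh}) restricted to exponential fibrations is a pullback, which follows by combining Theorem \ref{corr} with the matching-subcategory lemma and Proposition \ref{expo}.

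Putting this together: start from the pullback square (\ref{sqkt}) of Corollary \ref{alfo}; on each corner pass to the full subcategory $\mathrm{EXP}$, $\CART^\L$, $\iota^*(\mathrm{EXP})$, $\iota^*(\CART^\L)$ respectively; observe via Proposition \ref{expo} that these are matching full subcategories (an object upstairs is exponential iff its image under $\mP$ is a cartesian fibration with presentable fibers and colimit-preserving transports, and its image under $\iota^*$ is again exponential since that map is literally $\iota^*$); conclude by the matching-subcategory lemma that the restricted square (\ref{sqk}) is a pullback of cartesian fibrations over $\infty\Cat$. The only mild subtlety is to make sure the restricted vertical maps are still cartesian fibrations over $\infty\Cat$: but $\mathrm{EXP} \subset \Fun([1],\infty\Cat)$ and $\CART^\L \subset \mathrm{LoCART}^\L$ are stable under the relevant cartesian transport (pullback along functors of bases preserves exponential and cartesian fibrations, cf.\ Remark after the definition of exponential fibration and the stability results for $\mP^\mA$ in Lemma \ref{locol} and Lemma \ref{class1}), so the cartesian fibration structure restricts. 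The main obstacle, such as it is, is purely bookkeeping: verifying the matching condition cleanly on all four corners and confirming that ``full subcategory of a pullback along matching subcategories is a pullback'' is applied with the correct choice of bottom-right subcategory; there is no new geometric input beyond Proposition \ref{expo} and Corollary \ref{alfo}.
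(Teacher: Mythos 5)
Your argument is correct and is essentially the route the paper takes: the paper deduces this corollary from Corollary~\ref{alfo} and Proposition~\ref{expo} exactly by restricting the pullback square~(\ref{sqkt}) to the matching full subcategories, with Proposition~\ref{expo} providing the matching condition (and, as you observe, the condition on the bottom-left corner is vacuous since over a space every functor is an exponential fibration). Your additional bookkeeping --- the matching-subcategory lemma and the check that $\mathrm{EXP}$ and $\CART^\L$ are stable under base change so the restricted maps remain cartesian fibrations over $\infty\Cat$ --- is exactly the content the paper leaves implicit.
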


%\begin{proof}%By Proposition \ref{expo} a functor $\mM \to \mA$ of small $\infty$-categories is an exponential fibration if and only if the functor $\mP^{\mA}(\mM) \to \mA$ is a cartesian fibration.So it remains to see that a functor $\theta: \mM \to \mM'$ over $\mA$is strict if and only if the induced functor$\mP^\mA(\mM) \to \mP^\mA(\mM')$ preserves cartesian morphisms over $\mA$.By Remark \ref{remre} we can reduce to the case $\mA=[1].$
%By Proposition \ref{class} the locally cartesian fibration $\mP^{[1]}(\mM) \to [1]$ classifies the functor $\psi:\mP(\mM_1) \to \mP(\mM) \to \mP(\mM_0)$, where the first functor takes left Kan extension along the embedding $\mM_1 \subset \mM$ and the second functor restricts along the embedding $\mM_0 \subset \mM$, and similar for $\mM' \to [1].$ The functor $\gamma: \mP^{[1]}(\mM) \to \mP^{[1]}(\mM')$ over $[1]$ classifies a natural transformation $\rho: \mP(\theta_0) \circ \psi \to \psi' \circ \mP(\theta_1).$ The functor $\theta$over $[1]$ is strict if $\rho$ is an equivalence, which is equivalent to say that $\gamma$ is a map of cartesian fibrations over $[1]$.
%preserves cartesian morphisms over $[1].$	\end{proof}

\section{Correspondences as left fibrations}

The next definition is \cite[Notation 3.3.]{heine2024infinitycategoriesdualityhermitian}:

\begin{definition}Let $\mC$ be an $\infty$-category.
The twisted arrow left fibration $\Tw(\mC)\to \mC^\op \times \mC$ is the left fibration  classifying the mapping space functor $\mC^\op \times \mC \to \mS.$	
	
\end{definition}

%By \cite[Notation 3.3.]{heine2024infinitycategoriesdualityhermitian} there is a functor $\Tw:\infty\Cat \to \infty\Cat$ and natural transformations$\Tw \to (-)^\op, \Tw \to \id$ of functors $\infty\Cat \to \infty\Cat$ whose component at any small $\infty$-category $\mC$ give the twisted arrow left fibration $\Tw(\mC)\to \mC^\op \times \mC.$

\begin{theorem}\label{lfib} There is a canonical pullback square of $\infty$-categories
$$
\begin{xy}
\xymatrix{
\infty\Cat_{/[1]}
\ar[d]
\ar[rr]
&&  \mL \ar[d]^{} 
\\
\infty\Cat \times \infty\Cat \ar[rr]^{(-)^\op\times(-)}  && \infty\Cat,}
\end{xy}$$
where the right hand vertical functor evaluates at the target.
The induced equivalence from the left upper corner of the square to the pullback %of the right vertical and bottom horizontal functor 
sends a functor $\mM \to [1]$ to
%$\mM_0, \mM_1$ and the left fibration over $ \mM_0^\op \times \mM_1$ classifying the functor $\mM_0^\op \times \mM_1 \subset \mM^\op \times \mM \xrightarrow{\mM(-,-)}\mS.$
the triple $$(\mM_0, \mM_1, \mM_0^\op \times \mM_1 \times_{\mM^\op \times \mM} \Tw(\mM) \to  \mM_0^\op \times \mM_1).$$

\end{theorem}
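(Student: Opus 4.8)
The statement has two parts: (i) the commutative square is a pullback, and (ii) the induced equivalence sends $\mM \to [1]$ to the displayed triple. I would prove these together by directly unwinding both sides over a test $\infty$-category.

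First I would identify the pullback $\infty\Cat \times \infty\Cat \times_{\infty\Cat} \mL$ concretely: an object is a pair $(\mC_0,\mC_1)$ of $\infty$-categories together with a left fibration over $\mC_0^\op \times \mC_1$, i.e.\ (by straightening, \cite[Theorem 3.2.0.1.]{lurie.HTT}) a functor $\mC_0^\op \times \mC_1 \to \mS$. So I must produce an equivalence between $\infty\Cat_{/[1]}$ and this $\infty$-category of ``bifunctors to spaces with variable source categories''. The functor in the statement is already given: send $\mM \to [1]$ to $(\mM_0,\mM_1)$ together with the left fibration $\mM_0^\op \times \mM_1 \times_{\mM^\op \times \mM} \Tw(\mM) \to \mM_0^\op \times \mM_1$, which classifies the functor $(x,y) \mapsto \mM(x,y)$ for $x \in \mM_0$, $y \in \mM_1$ (this is just the restriction of the twisted-arrow/mapping-space functor of $\mM$ to the relevant product of fibers). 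The commutativity of the square is then the observation that evaluating this left fibration at the target recovers $\mC_0^\op \times \mC_1$.

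The core is showing this functor is an equivalence; equivalently, that the square is a pullback. One clean route: the square of Theorem \ref{lfib} is the ``space-valued'' shadow of the pullback square (\ref{sqk}) of Corollary \ref{corflat} with $\mA = [1]$. Indeed, every functor $\mM \to [1]$ is an exponential fibration (Corollary after Proposition \ref{expo}), so $\infty\Cat_{/[1]} = \mathrm{EXP}_{[1]} = \CART^\L_{[1]}$ via $\mP^{[1]}$, and a cartesian fibration over $[1]$ is just a functor $\G\colon \mP(\mC_1) \to \mP(\mC_0)$ between presentable $\infty$-categories that preserves small colimits. By Corollary \ref{corflat} such data is glued from its fiberwise data $(\mC_0,\mC_1)$ — here $\iota([1])$ is discrete with two points, so ``$\Omega_p$-equivariance'' is trivial — together with the equivalences $\mathrm{LoCART}^\L_{\iota([1])} \simeq \Pr^L \times \Pr^L$ recording $\mP(\mC_0),\mP(\mC_1)$. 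Thus $\infty\Cat_{/[1]}$ is the $\infty$-category of triples $(\mC_0,\mC_1,\G\colon \mP(\mC_1)\to\mP(\mC_0))$ with $\G$ colimit-preserving. Now a colimit-preserving functor $\mP(\mC_1)\to\mP(\mC_0)$ is, by the universal property of presheaves \cite[Theorem 5.1.5.6.]{lurie.HTT}, the same as a functor $\mC_1 \to \mP(\mC_0)$, i.e.\ a functor $\mC_0^\op \times \mC_1 \to \mS$, i.e.\ a left fibration over $\mC_0^\op \times \mC_1$. Chasing these identifications through shows the triple is $(\mC_0,\mC_1,\text{the left fibration classifying }(x,y)\mapsto\mM(x,y))$, which is precisely the twisted-arrow description in the statement, since for a correspondence $\mM\to[1]$ the space $\mM(x,y)$ with $x\in\mM_0,y\in\mM_1$ is computed by the fiber of $\Tw(\mM)$ over $(x,y)$.

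The main obstacle will be matching the two descriptions of the gluing data precisely on the nose — i.e.\ verifying that the functor $\G\colon \mP(\mC_1)\to\mP(\mC_0)$ obtained from $\mP^{[1]}(\mM)$ in Corollary \ref{corflat} is indeed the (colimit extension of the) functor $\mC_1 \to \mP(\mC_0)$ classifying the mapping-space bifunctor of $\mM$, and that this identification is natural in $\mM$. Concretely one must check that the relative left adjoint $\G$ restricted along $\mC_1 \subset \mP(\mC_1)$ sends $y\in\mM_1$ to the presheaf $x\mapsto \mM(x,y)$ on $\mM_0$; this is a direct computation with the enveloping construction and the formula for $\mP^{[1]}$ fiber transport from Proposition \ref{class}, together with the standard identification of $\Tw(\mM)$ restricted to $\mM_0^\op\times\mM_1$ with the left fibration for the mapping space. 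Alternatively, one can bypass Corollary \ref{corflat} entirely and argue by hand: define the functor as in the statement, show it is fully faithful by computing mapping spaces on both sides (a functor $\mM\to\mM'$ over $[1]$ is a pair of functors $\mM_i\to\mM'_i$ plus compatibility with mapping spaces, which matches morphisms of the classified bifunctors), and show essential surjectivity by reconstructing $\mM$ from $(\mC_0,\mC_1,F\colon\mC_0^\op\times\mC_1\to\mS)$ as the total space of the corresponding left fibration over $[1]$ — explicitly, $\mM$ is the collage / cograph of $F$. Either way the heart of the argument is the presheaf universal property converting ``colimit-preserving functor between presheaf categories'' into ``bifunctor to spaces'', which is exactly how the table of analogies in the introduction specializes to $\mC=[1]$.
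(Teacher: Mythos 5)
Your Route 1 is essentially the paper's route: apply Theorem \ref{corr} at $\mA=[1]$ (where $\iota([1])$ is discrete with two points, so $\infty\Cat_{/\iota([1])}\simeq\infty\Cat\times\infty\Cat$ and $\mathrm{LoCART}^\L_{\iota([1])}\simeq\Pr^\L\times\Pr^\L$), note that every functor to $[1]$ is an exponential fibration so the image of $\mP^{[1]}$ lands in $\mathrm{CART}^\L_{[1]}$, and then convert a colimit-preserving functor $\mP(\mC_1)\to\mP(\mC_0)$ via the presheaf universal property into a bifunctor $\mC_0^\op\times\mC_1\to\mS$, i.e.\ a left fibration over $\mC_0^\op\times\mC_1$. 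That is exactly the paper's reduction. (Your line \lq\lq$\mathrm{EXP}_{[1]}=\CART^\L_{[1]}$ via $\mP^{[1]}$\rq\rq\ is false as stated---$\mP^{[1]}$ is a non-surjective embedding---but the ensuing appeal to Corollary \ref{corflat} makes clear you really mean the pullback identification, so this is a slip of notation, not of substance.)

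The gap is in turning the objectwise identification of the two pullbacks into an equivalence of $\infty$-categories, which you flag as the \lq\lq main obstacle\rq\rq\ but do not close. The paper does this representably: for an arbitrary test $\infty$-category $\mT$ it identifies, naturally in $\mT$, equivalence classes of functors from $\mT$ to each pullback. A functor into the $\CART^\L$-pullback is a pair of cocartesian fibrations $\mX,\mY\to\mT$ together with a fiberwise colimit-preserving functor $\mP^\mT(\mY)\to\mP^\mT(\mX)$ over $\mT$; by the \emph{parametrized} universal property of presheaves (Proposition \ref{uuuu}) this is the same as a functor $\mY\to\mP^\mT(\mX)$ over $\mT$, and by Corollary \ref{homint} ($\mP^\mT(\mX)\simeq\Fun^\mT(\mX^\rev,\mS\times\mT)$) that is the same as a functor $\mX^\rev\times_\mT\mY\to\mS$, i.e.\ a left fibration, i.e.\ a functor from $\mT$ into the $\mL$-pullback. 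The absolute universal property of $\mP(-)$, which you invoke, only handles $\mT=*$; it is precisely the parametrized Proposition \ref{uuuu} together with Corollary \ref{homint} that supplies the naturality in $\mT$, and this is the load-bearing step rather than an afterthought. Your sketched Route 2 (direct fully-faithful-plus-essentially-surjective argument via the collage) is a genuinely different strategy not used in the paper, but it is only asserted, not executed, so it does not close the gap either.
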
 

\begin{proof}
	
By Theorem \ref{corr} there is a canonical pullback square of $\infty$-categories:
$$
\begin{xy}
\xymatrix{
\infty\Cat_{/[1]}
\ar[d]
\ar[rr]
&& \mathrm{CART}^\L_{[1]} \ar[d]^{} 
\\
\infty\Cat \times \infty\Cat \ar[rr]^{\mP\times\mP}  && \Pr^\L \times \Pr^\L.}
\end{xy}$$
%Sending an $\infty$-category to its opposite $\infty$-category induces an equivalence$\mR \simeq \mL$.
Consequently, it is enough to construct an equivalence between the pullbacks
$$
\begin{xy}
\xymatrix{
\mA
\ar[d]
\ar[rr]
&&  \mL \ar[d]^{} 
\\
\infty\Cat \times \infty\Cat \ar[rr]^{(-)^\op\times(-)}  && \infty\Cat}
\end{xy}$$
and 
$$
\begin{xy}
\xymatrix{
\mB
\ar[d]
\ar[rr]
&& \mathrm{CART}^\L_{[1]}  \ar[d]^{} 
\\
\infty\Cat \times \infty\Cat \ar[rr]^{\mP\times\mP}  && \Pr^\L \times \Pr^\L.}
\end{xy}$$
We construct this equivalence by giving for every $\infty$-category $\mA$ a bijection between equivalence classes of objects of the $\infty$-categories $ \Fun(\mA,\mA), \Fun(\mA,\mB)$ natural in $\mA.$
A functor $\mA \to \mB$ is classified by a pair of cocartesian fibrations
$\mX \to \mA, \mY \to \mA$ and a map of cocartesian fibrations $\mM \to \mA \times [1]$ over $\mA$ that induces on the fiber over every object $\A\in \mA$ a cartesian fibration $\mM_\A \to [1]$
classifying a small colimits preserving functor $\mM_\A^1 \to \mM_\A^0$ and such that there are equivalences $\mM_0 \simeq \mP^\mA(\mX), \mM_1 \simeq \mP^\mA(\mY)$ over $\mA$.
By \cite[Lemma 2.44.]{heine2023monadicity} a functor $\mW \to \mA \times [1]$ is a map of cocartesian fibrations over $\mA$ that induces on the fiber over every object $\A\in \mA$ a cartesian fibration over $ [1]$ if and only if it is a map of cartesian fibrations over $[1]$ that induces on the fiber over every object of $[1]$ a cocartesian fibration over $\mA$.
Consequently, a functor $\mA \to \mB$ is likewise classified by a pair of cocartesian fibrations
$\mX \to \mA, \mY \to \mA$ and a map of cartesian fibrations $\mM \to \mA \times [1]$ over $[1]$ that induces on the fibers over $0,1 \in [1]$ the cocartesian fibrations $\mP^\mA(\mX), \mP^\mA(\mY)$, respectively, and induces on the fiber over every object $\A\in \mA$ a cartesian fibration $\mM_\A \to [1]$ classifying a small colimits preserving functor $\mM_\A^1 \to \mM_\A^0$.
In other words, a functor $\mA \to \mB$ is classified by a pair of cocartesian fibrations
$\mX \to \mA, \mY \to \mA$ and a functor $\mP^\mA(\mY) \to \mP^\mA(\mX)$ over $\mA$ that induces on the fiber over every object of $\mA$ a small colimits preserving functor.
By Proposition \ref{uuuu} the latter is uniquely determined by its restriction, a functor $\mY \to \mP^\mA(\mX)$ over $\mA$, which corresponds by Proposition \ref{homint} to a functor $\mX^\rev \times_\mA \mY \to \mS$.
The latter is classified by a left fibration $\mA \to \mX^\rev \times_\mA \mY$
or equivalently by a map of cocartesian fibrations $\mA \to \mX^\rev \times_\mA \mY $ over $\mA$
that induces on the fiber over every object of $\mA$ a left fibration.
So a functor $\mA \to \mB$ is classified by a pair of cocartesian fibrations
$\mX \to \mA, \mY \to \mA$ and a map of cocartesian fibrations $\mA \to \mX^\rev \times_\mA \mY $ over $\mA$ that induces on the fiber over every object of $\mA$ a left fibration.
The latter is precisely classified by a functor $\mA \to \mA.$
	
\end{proof}

\begin{remark}
Under the equivalence \begin{equation}\label{vvbt}
\infty\Cat_{/[1]} \simeq (\infty\Cat \times \infty\Cat)\times_{\infty\Cat} \mL\end{equation} of Theorem \ref{lfib} the opposite $\infty$-category involution on $\infty\Cat_{/[1]}$ corresponds to an involution on the right hand pullback
induced by the flip action on the product $\infty\Cat \times \infty\Cat.$
This gives an easy way to understand the opposite $\infty$-category involution on $\infty\Cat_{/[1]}$ and is used in \cite[Theorem 5.21.]{heine2024infinitycategoriesdualityhermitian} and \cite[Theorem 6.27.]{heine2021realktheorywaldhauseninfinity} to compare different models of dualities.
In fact in \cite[Proposition 5.23.]{heine2024infinitycategoriesdualityhermitian} equivalence (\ref{vvbt}) is enhanced to a $\C_2$-equivariant equivalence for the $C_2$-actions refining the opposite $\infty$-category involution and the flip action.
	
\end{remark}

\begin{notation}
	
Let $\Cocart \subset \Fun([1],\infty\Cat)$ be the full subcategory of cocartesian fibrations.	
	
\end{notation}

\begin{corollary}\label{fib} Let $\mC,\mD$ be small $\infty$-categories. There is a canonical equivalence
$$\{(\mC,\mD)\}\times_{\infty\Cat \times \infty\Cat} \Cat_{/[1]} \simeq \Fun(\mC^\op, \Fun(\mD,\mS))$$
that sends a functor $\mM \to [1]$ and $\mM_0 \simeq \mC, \mM_1 \simeq \mD$
to the functor $\mC^\op \times \mD \subset \mM^\op \times \mM \xrightarrow{\mM(-,-)} \mS.$ %classified by the left fibration $\mC^\op \times \mD \times_{\mM^\op \times \mM} \Tw(\mM) \to \mC^\op \times \mD. $

The latter equivalence restricts to an equivalence
$$\{(\mC,\mD)\}\times_{\infty\Cat \times \infty\Cat} \Cocart_{[1]} \simeq \Fun(\mC^\op, \mD^\op) \simeq \Fun(\mC,\mD)^\op$$
that sends a functor $\mM \to [1]$ and $\mM_0 \simeq \mC, \mM_1 \simeq \mD$
to the functor $\mC \to \mD$ classified by $\mM \to [1]$.

\end{corollary}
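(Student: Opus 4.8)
The plan is to deduce Corollary \ref{fib} from Theorem \ref{lfib} by unwinding what the classifying data for a left fibration over a product $\mC^\op \times \mD$ is. First I would recall that Theorem \ref{lfib} produces a canonical pullback square identifying $\infty\Cat_{/[1]}$ with $(\infty\Cat \times \infty\Cat) \times_{\infty\Cat} \mL$, where $\mL \to \infty\Cat$ is evaluation at the target and the bottom map is $(-)^\op \times (-)$. Taking the fiber of this pullback square over the point $(\mC,\mD) \in \infty\Cat \times \infty\Cat$ gives
$$\{(\mC,\mD)\} \times_{\infty\Cat \times \infty\Cat} \infty\Cat_{/[1]} \simeq \{\mC^\op \times \mD\} \times_{\infty\Cat} \mL,$$
and the right-hand side is by definition the space (indeed $\infty$-category) of left fibrations over the fixed base $\mC^\op \times \mD$, which is canonically equivalent to $\Fun(\mC^\op \times \mD, \mS) \simeq \Fun(\mC^\op, \Fun(\mD,\mS))$ by the straightening equivalence for left fibrations \cite[Theorem 2.2.1.2.]{lurie.HTT}. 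To identify the composite on objects, I would trace an $\mM \to [1]$ with chosen equivalences $\mM_0 \simeq \mC, \mM_1 \simeq \mD$ through the formula in Theorem \ref{lfib}: it is sent to the left fibration $\mM_0^\op \times \mM_1 \times_{\mM^\op \times \mM} \Tw(\mM) \to \mM_0^\op \times \mM_1$, which is precisely the left fibration classified by the restriction of the mapping-space functor $\mM(-,-): \mM^\op \times \mM \to \mS$ along $\mC^\op \times \mD \subset \mM^\op \times \mM$, since $\Tw(\mM) \to \mM^\op \times \mM$ classifies $\mM(-,-)$ by definition.

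For the second statement, I would restrict the equivalence to the full subcategory $\Cocart_{[1]} \subset \Cat_{/[1]}$ of cocartesian fibrations over $[1]$. A functor $\mM \to [1]$ is a cocartesian fibration if and only if the functor $\mM(-,-): \mM_0^\op \times \mM_1 \to \mS$ it determines takes values in the maximal subspace direction compatibly — more precisely, $\mM \to [1]$ is classified by a functor $\mM_0 \to \mM_1$, and under the correspondence a cocartesian fibration corresponds to those $\mC^\op \times \mD \to \mS$ that are (contravariantly) corepresentable in the $\mD$-variable, i.e. of the form $(c,d) \mapsto \mD(F(c), d)$ for a functor $F: \mC \to \mD$; equivalently those left fibrations over $\mC^\op \times \mD$ whose projection to $\mD$ is a trivial fibration, equivalently whose straightening lands in $\Fun(\mC^\op,\mD^\op) \subset \Fun(\mC^\op, \Fun(\mD,\mS))$ along the Yoneda embedding $\mD^\op \hookrightarrow \Fun(\mD,\mS)$. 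This identifies the restricted equivalence with $\Fun(\mC^\op,\mD^\op) \simeq \Fun(\mC,\mD)^\op$, and tracing $\mM$ through shows it goes to the functor $\mC \to \mD$ classified by the cocartesian fibration $\mM \to [1]$, since the Grothendieck construction for a cocartesian fibration over $[1]$ and the mapping-space description of $\mM$ agree via the cocartesian lifts.

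The main obstacle I anticipate is the bookkeeping in the second part: making precise, and checking functorially in $(\mC,\mD)$, that the subcategory $\Cocart_{[1]}$ corresponds exactly to the left fibrations over $\mC^\op \times \mD$ that are corepresentable in the second variable, and that this matches the subcategory $\Fun(\mC^\op, \mD^\op)$ under straightening and Yoneda. This is essentially the statement that a functor $\mM \to [1]$ is a cocartesian fibration iff its fiber transport, encoded by $\mM(-,-)$, is a left adjoint's worth of data in the right way; one clean route is to invoke the Grothendieck construction for cocartesian fibrations over $[1]$ \cite[Theorem 3.2.0.1.]{lurie.HTT} to get the outer equivalence $\Cocart_{[1]} \simeq \Fun([1],\infty\Cat)$ directly on $[1]$, and then only check compatibility of this with the left-fibration description on the fiber over $(\mC,\mD)$, reducing everything to the unit/counit compatibility between the twisted-arrow left fibration of $\mM$ and the cocartesian lifts in $\mM$. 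The first part, by contrast, is a formal consequence of Theorem \ref{lfib} together with straightening for left fibrations and should require only the identification of the twisted-arrow construction with the classifier of the mapping-space functor.
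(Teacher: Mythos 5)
Your argument follows the paper's route for both halves: take the fiber over $(\mC,\mD)$ of the pullback square from Theorem \ref{lfib}, identify the result as $\{\mC^\op\times\mD\}\times_{\infty\Cat}\mL$ and then, via straightening of left fibrations, as $\Fun(\mC^\op\times\mD,\mS)\simeq\Fun(\mC^\op,\Fun(\mD,\mS))$, and trace the twisted-arrow formula to see the object-level map is $\mM(-,-)$ restricted to $\mC^\op\times\mD$. For the restriction to $\Cocart_{[1]}$, both your account and the paper's hinge on the same observation: when $\mM\to[1]$ is cocartesian classifying $\alpha:\mC\to\mD$, one has $\mM(-,-)\simeq\mD(\alpha(-),-)$, i.e.\ the image of $\alpha^\op$ under the fully faithful Yoneda embedding $\mD^\op\hookrightarrow\Fun(\mD,\mS)$ in the second factor. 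One small inaccuracy worth flagging: a left fibration over $\mC^\op\times\mD$ that is corepresentable in the $\mD$-variable is not the same as one ``whose projection to $\mD$ is a trivial fibration''; for each fixed $c$ the relevant left fibration is $\mD_{\alpha(c)/}\to\mD$, which has an initial object but is not a trivial fibration. Your other two characterizations (corepresentable in $d$, and landing in $\Fun(\mC^\op,\mD^\op)$ under Yoneda) are the correct ones and are what the argument actually uses, so this does not affect the soundness of the proof.
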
 

\begin{proof}By Theorem \ref{lfib} there is a canonical equivalence
$$\{(\mC,\mD)\}\times_{\infty\Cat \times \infty\Cat} \Cat_{/[1]} \simeq \{\mC^\op \times \mD \}\times_{\infty\Cat} \mL \simeq \Fun(\mC^\op\times\mD,\mS) \simeq \Fun(\mC^\op, \Fun(\mD,\mS))$$
that sends a functor $\mM \to [1]$ and $\mM_0 \simeq \mC, \mM_1 \simeq \mD$
to the functor $\mC^\op \times \mD \subset \mM^\op \times \mM \xrightarrow{\mM(-,-)} \mS.$ %classified by the left fibration $\mC^\op \times \mD \times_{\mM^\op \times \mM} \Tw(\mM) \to \mC^\op \times \mD. $

The latter equivalence restricts to an equivalence
$$\{(\mC,\mD)\}\times_{\infty\Cat \times \infty\Cat} \Cocart_{[1]} \simeq \Fun(\mC^\op, \mD^\op) \simeq \Fun(\mC,\mD)^\op$$
that sends a functor $\mM \to [1]$ and $\mM_0 \simeq \mC, \mM_1 \simeq \mD$
to the functor $\mC \to \mD$ classified by $\mM \to [1]$.	
	
For every cocartesian fibration $\mM \to [1]$ and $\mM_0 \simeq \mC, \mM_1 \simeq \mD$
classifying a functor $\alpha: \mC \to \mD$ there is a canonical equivalence
between the functor $\mC^\op \times \mD \subset \mM^\op \times \mM \xrightarrow{\mM(-,-)} \mS$
and the functor $\mC^\op \times \mD \xrightarrow{\alpha^\op \times \mD} \mD^\op \times \mD \xrightarrow{\mD(-,-)} \mS$.
	
\end{proof}

\section{An end formula for mapping spaces of parametrized functor $\infty$-categories}

In this section we describe the mapping space functor of the internal hom $\Fun^{[1]}(\mM,\mN)$ between two functors $\mM \to [1],\mN \to [1]$ (Theorem \ref{mapinner}).
Having this description we relate the mapping spaces of the conditionally existing internal hom $\Fun^{\mC}(\mM,\mN)$ between two functors $\mM \to \mC,\mN \to \mC$ to the mapping space functors of $\mM,\mN$
(Corollary \ref{cmapin}).
As a corollary we compute the mapping space functor of the $\infty$-category
$\Fun(\mA,\mB)$ of functors between two $\infty$-categories $\mA,\mB$
(Corollary \ref{endd}) recovering a global version of a result of \cite[Proposition 5.1.]{articles}.

\begin{lemma}
	
Let $\mV$ be a closed monoidal $\infty$-category, $\mW \subset \Fun([1],\mV)$ a full monoidal subcategory such that the base change of any object $\X \to \Y$ of $\W$ along any morphism $\Z \to \X$ of $\mV$ exists and is an object of $\mW$ and for every object $\X \to \Y$ of $\mW$ and every object $\Z$ of $\mV$ the induced morphism $\Mor_\mV(\Z,\X) \to \Mor_\mV(\Z,\Y)$ in $\mV$ is an object of $\mW.$
Let $\alpha: \mA \to \mV$ be a monoidal functor. The pullback $\mA \times_{\mV} \mW$ of evaluation at the target $\mW \to \mV$ along $\alpha$, which is a monoidal $\infty$-category, is closed.
The internal hom of every $(\A, \f: \X \to \alpha(\A)), (\B, \g: \Y \to \alpha(\B))\in \mA \times_{\mV} \mW $ is $$(\Mor_\mA(\A,\B), \alpha(\Mor_\mA(\A,\B)) \times_{\Mor_\mV(\X, \alpha(\B))} \Mor_\mV(\X,\Y) \to \alpha(\Mor_\mA(\A,\B))) \in \mA \times_{\mV} \mW.$$
	
\end{lemma}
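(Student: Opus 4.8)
The goal is to show that the pullback $\mA \times_{\mV} \mW \to \mV$, with its monoidal structure inherited from $\mV$ and $\mW$, is closed, and to identify the internal hom explicitly. I would first pin down the monoidal structure: an object is a pair $(\A, \f\colon \X \to \alpha(\A))$ with $\A \in \mA$ and $\X \to \alpha(\A)$ an object of $\mW$, and the tensor product of $(\A,\f)$ and $(\B,\g)$ is $(\A \otimes \B, \X \otimes \Y \to \alpha(\A) \otimes \alpha(\B) \simeq \alpha(\A \otimes \B))$, using that $\alpha$ is monoidal and that $\mW$ is a monoidal subcategory of $\Fun([1],\mV)$ (so tensor products of arrows are computed pointwise and stay in $\mW$). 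The hypotheses on $\mW$ — stability under base change along arbitrary morphisms of $\mV$ and under the internal-hom-valued construction $\Mor_\mV(\Z,-)$ applied to an object of $\mW$ — are exactly what is needed to guarantee that the candidate internal hom lands back in $\mA \times_\mV \mW$.

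**Key steps.** First I would write down the candidate internal hom
$$\RHom\big((\A,\f),(\B,\g)\big) = \Big(\Mor_\mA(\A,\B),\ \alpha(\Mor_\mA(\A,\B)) \times_{\Mor_\mV(\X,\alpha(\B))} \Mor_\mV(\X,\Y) \to \alpha(\Mor_\mA(\A,\B))\Big),$$
where the map to $\Mor_\mV(\X,\alpha(\B))$ on the left is $\alpha$ applied to the "postcompose with $\g$" map $\Mor_\mA(\A,\B) \to \Mor_\mV(\alpha\A,\alpha\B) \xrightarrow{\f^*} \Mor_\mV(\X,\alpha\B)$, and on the right is $\Mor_\mV(\X,-)$ applied to $\g\colon\Y\to\alpha(\B)$. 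The object $\Mor_\mV(\X,\Y)\to\Mor_\mV(\X,\alpha(\B))$ is in $\mW$ by the last hypothesis (applied to $\g$, viewed as an object of $\mW$, with $\Z = \X$), and then the fibre product over $\Mor_\mV(\X,\alpha\B)$ along the map from $\alpha(\Mor_\mA(\A,\B))$ is again in $\mW$ by the base-change hypothesis; hence the pair is a genuine object of $\mA\times_\mV\mW$. Then I would verify the adjunction
$$(\mA\times_\mV\mW)\big((\A,\f)\otimes(\B,\g),\,(\C,\h)\big) \simeq (\mA\times_\mV\mW)\big((\A,\f),\,\RHom((\B,\g),(\C,\h))\big)$$
naturally in all three variables. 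Unwinding both sides: a map on the left is a map $u\colon\A\otimes\B\to\C$ in $\mA$ together with a map $\X\otimes\Y\to\Z$ in $\mV$ lying over $\alpha(u)$ composed with $\h$; a map on the right is a map $v\colon\A\to\Mor_\mA(\B,\C)$ in $\mA$ together with a map $\X\to\alpha(\Mor_\mA(\B,\C))\times_{\Mor_\mV(\Y,\alpha\C)}\Mor_\mV(\Y,\Z)$ over $\alpha(v)$. Using that $\mA$ is closed (the adjunction $\A\otimes\B\to\C \leftrightarrow \A\to\Mor_\mA(\B,\C)$) and that $\mV$ is closed (so $\X\otimes\Y\to\Z \leftrightarrow \X\to\Mor_\mV(\Y,\Z)$), together with the universal property of the pullback defining the target, both sides become the same space of compatible pairs; the compatibility over the base is precisely encoded by the pullback $\alpha(\Mor_\mA(\B,\C))\times_{\Mor_\mV(\Y,\alpha\C)}\Mor_\mV(\Y,\Z)$, because the two legs record "what $v$ does after applying $\alpha$ and postcomposing with $\h$" versus "the $\mV$-adjoint of the arrow-component". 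I would make this precise by chasing the square of mapping spaces and invoking that $\alpha$ being monoidal gives a canonical map $\alpha(\Mor_\mA(\A,\B))\to\Mor_\mV(\alpha\A,\alpha\B)$ compatible with composition.

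**Main obstacle.** The delicate point is the bookkeeping of the base-compatibility in the adjunction: one must check that the pullback appearing in the internal hom is set up so that a map $\X\to\alpha(\Mor_\mA(\A,\B))\times_{\Mor_\mV(\X,\alpha\B)}\Mor_\mV(\X,\Y)$ corresponds, under the two closed structures, to exactly the data of a morphism in $\mA\times_\mV\mW$ out of the tensor product — no more and no less. Equivalently, one must verify that the natural transformation $\alpha(\Mor_\mA(\A,-))\to\Mor_\mV(\alpha\A,\alpha(-))$ intertwines the "evaluation" maps correctly, so that the two ways of landing in $\Mor_\mV(\X,\alpha\B)$ agree on the nose. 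Rather than constructing the adjunction equivalence by hand, it may be cleaner to exhibit $\mA\times_\mV\mW$ as the pullback of closed monoidal $\infty$-categories and lax monoidal functors $\mA\to\mV\leftarrow\mW$ (evaluation at the target is monoidal, $\alpha$ is monoidal), observe that the forgetful functors create the internal hom on the nose because the relevant functors preserve the limits defining it, and then read off the formula; this reduces the problem to the two known cases ($\mV$ closed by hypothesis, and $\mW\subset\Fun([1],\mV)$ closed with the stated internal hom, which is the classical arrow-category computation). I expect the "create the internal hom" verification to be the real content, and the explicit formula then drops out by combining the formula in $\mW$ with the hypothesis that $\alpha$ is monoidal.
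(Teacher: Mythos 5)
Your main line of attack --- writing down the candidate internal hom, checking it is in $\mA\times_\mV\mW$ via the two hypotheses on $\mW$, and then verifying the adjunction by fibering the comparison of mapping spaces over the equivalence $\mA(\A,\Mor_\mA(\B,\C))\simeq\mA(\A\ot\B,\C)$ and checking it fiberwise --- is exactly the paper's argument. The paper makes the ``chasing the square of mapping spaces'' step precise by first constructing the evaluation morphism $\Mor_{\mA\times_\mV\mW}((\A,\f),(\B,\g))\ot(\A,\f)\to(\B,\g)$, then observing that the induced map of mapping spaces lives over the adjunction equivalence in $\mA$ and restricts on the fiber over a morphism $\theta:\A\to\Mor_\mA(\B,\C)$ (equivalently $\theta':\A\ot\B\to\C$) to the canonical equivalence $\mV_{/\Mor_\mV(\Y,\alpha(\C))}(\X,\Mor_\mV(\Y,\Z))\simeq\mV_{/\alpha(\C)}(\X\ot\Y,\Z)$. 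Your instinct to use the canonical comparison map $\alpha(\Mor_\mA(\A,\B))\to\Mor_\mV(\alpha\A,\alpha\B)$ coming from monoidality of $\alpha$ is right --- it is precisely what makes the two legs of the pullback land in the same place, and is the content of the fiberwise check. So the main route you outline is sound and is essentially the paper's proof.

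You should, however, discard the alternative you float at the end (``exhibit $\mA\times_\mV\mW$ as the pullback \dots\ observe that the forgetful functors create the internal hom''). A pullback of closed monoidal $\infty$-categories along monoidal functors is not closed in general, and even when it is, the projections need not create internal homs. Concretely, the projection of the claimed internal hom to $\mW$ is $\alpha(\Mor_\mA(\A,\B))\times_{\Mor_\mV(\X,\alpha(\B))}\Mor_\mV(\X,\Y)\to\alpha(\Mor_\mA(\A,\B))$, whereas the internal hom of $\f$ and $\g$ computed inside $\mW$ alone would be an object over $\Mor_\mV(\alpha(\A),\alpha(\B))$, not over $\alpha(\Mor_\mA(\A,\B))$; the two agree only when $\alpha$ is closed monoidal, i.e.\ preserves internal homs, which is not assumed here. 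This is the whole reason the formula involves a pullback mixing $\alpha(\Mor_\mA(\A,\B))$ with $\Mor_\mV(\X,\Y)$ rather than a pair of internal homs computed separately in $\mA$ and $\mW$. The direct verification is not a shortcut to avoid --- it is the content.
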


\begin{proof}
For every $(\A, \f: \X \to \alpha(\A)), (\B, \g: \Y \to \alpha(\B))\in \mA \times_{\mV} \mW, (\C, \h: \Z \to \alpha(\C))\in \mA \times_{\mV} \mW $
the canonical morphism $$(\Mor_\mA(\A,\B), \alpha(\Mor_\mA(\A,\B)) \times_{\Mor_\mV(\X, \alpha(\B))} \Mor_\mV(\X,\Y) \ot (\A, \f: \X \to \alpha(\A)) =$$$$ (\Mor_\mA(\A,\B)\ot\A, \alpha(\Mor_\mA(\A,\B)\ot \A) \times_{\Mor_\mV(\X, \alpha(\B))\ot \X} \Mor_\mV(\X,\Y) \ot \X) \to (\B, \g: \Y \to \alpha(\B))$$
in $\mA \times_{\mV} \mW$ induces a map
$$ \mA \times_{\mV} \mW((\A, \f: \X \to \alpha(\A)), $$$$(\Mor_\mA(\B,\C), \alpha(\Mor_\mA(\B,\C)) \times_{\Mor_\mV(\Y, \alpha(\C))} \Mor_\mV(\Y,\Z) \to \alpha(\Mor_\mA(\B,\C))) ) \simeq$$$$
\mA \times_{\mV} \mW((\A, \f: \X \to \alpha(\A)) \ot (\B, \g: \Y \to \alpha(\B)),(\C, \h: \Z \to \alpha(\C)))=$$$$ \mA \times_{\mV} \mW((\A \ot \B, \f \ot \g: \X \ot \Y \to \alpha(\A \ot \B)),(\C, \h: \Z \to \alpha(\C)))$$ 
over $\mA(\A, \Mor_\mA(\B,\C)) \simeq \mA(\A \ot \B,\C)$ that induces on the fiber over every morphism $\theta: \A \to \Mor_\mA(\B,\C)$ corresponding to a morphism $\theta': \A \ot \B \to \C$ the canonical equivalence
$$ \mV_{/\Mor_\mV(\Y, \alpha(\C))}(\X, \Mor_\mV(\Y,\Z)) \simeq \mV_{/\alpha(\C)}(\X\ot\Y,\Z).$$ 
	
\end{proof} 

\begin{corollary}\label{homm}
The pullback $$
\begin{xy}
\xymatrix{
\mQ	\ar[d]
\ar[rr]
&&  \mL \ar[d]^{} 
\\
\infty\Cat \times \infty\Cat \ar[rr]^{(-)^\op\times(-)}  && \infty\Cat}
\end{xy}$$	
in cartesian symmetric monoidal $\infty$-categories is closed.
The internal hom of every $(\mA,\mB, \mC, \mC \to \mA^\op \times \mB), (\mA',\mB', \mC', \mC' \to \mA'^\op \times \mB')$ is $$(\Fun(\mA,\mA'),\Fun(\mB,\mB'), \Fun(\mA,\mA')^\op \times \Fun(\mB,\mB') \times_{\Fun(\mC, \mA'^\op \times \mB')} \Fun(\mC, \mC')$$$$ \to \Fun(\mA,\mA')^\op \times \Fun(\mB,\mB')).$$
	
\end{corollary}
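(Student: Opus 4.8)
The plan is to deduce this corollary directly from the preceding Lemma, specialized to $\mV = \infty\Cat$ with its cartesian closed symmetric monoidal structure, $\mW = \mL$ the full subcategory of left fibrations in $\Fun([1],\infty\Cat)$ with the pointwise cartesian product, and $\alpha = (-)^\op \times (-) \colon \infty\Cat \times \infty\Cat \to \infty\Cat$. First I would observe that $\alpha$ is monoidal: it factors as $(-)^\op \times \mathrm{id}$ followed by the multiplication $\infty\Cat \times \infty\Cat \to \infty\Cat$, and both factors are strong monoidal for the cartesian structures (the latter since products commute with products). Moreover, evaluation at the target $\mL \to \infty\Cat$ preserves products, because the product of two left fibrations over the product of their bases is again a left fibration; hence $\alpha$ and $\mL \to \infty\Cat$ are both product-preserving, so the monoidal structure that the Lemma puts on $\mQ = (\infty\Cat \times \infty\Cat) \times_{\infty\Cat} \mL$ is the cartesian one — matching the statement. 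Since being closed is a property of the underlying monoidal $\infty$-category, it suffices to verify the hypotheses of the Lemma and read off its conclusion.

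Next I would check the three hypotheses on $\mW = \mL$. (i) $\mL$ is a full monoidal subcategory of $\Fun([1],\infty\Cat)$: the product $\X \times \X' \to \Y \times \Y'$ of left fibrations is a left fibration, and the monoidal unit $\mathrm{id}_{\ast}$ is a left fibration. (ii) The base change of a left fibration along any functor exists (pullbacks exist in $\infty\Cat$) and is again a left fibration, by stability of left fibrations under pullback. (iii) For a left fibration $\X \to \Y$ and any $\Z \in \infty\Cat$, the functor $\Mor_{\infty\Cat}(\Z,\X) = \Fun(\Z,\X) \to \Fun(\Z,\Y) = \Mor_{\infty\Cat}(\Z,\Y)$ is a left fibration; this is the standard stability of left fibrations under cotensoring with an $\infty$-category, which follows from the stability of left anodyne maps under Cartesian product with an arbitrary simplicial set (cf.\ \cite[\S 2.1]{lurie.HTT}), and is of the same nature as the facts about left fibrations already invoked in this paper.

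With the hypotheses in place, the Lemma yields that $\mQ$ is closed and computes the internal hom of $((\mA,\mB), \mC \to \mA^\op \times \mB)$ and $((\mA',\mB'), \mC' \to \mA'^\op \times \mB')$. Substituting $\Mor_{\infty\Cat \times \infty\Cat}((\mA,\mB),(\mA',\mB')) = (\Fun(\mA,\mA'), \Fun(\mB,\mB'))$ and $\alpha(\Fun(\mA,\mA'),\Fun(\mB,\mB')) = \Fun(\mA,\mA')^\op \times \Fun(\mB,\mB')$, together with $\Mor_{\infty\Cat}(\mC, \mA'^\op \times \mB') = \Fun(\mC, \mA'^\op \times \mB')$ and $\Mor_{\infty\Cat}(\mC,\mC') = \Fun(\mC,\mC')$, the abstract formula of the Lemma becomes
$$\bigl((\Fun(\mA,\mA'),\Fun(\mB,\mB')),\ \Fun(\mA,\mA')^\op \times \Fun(\mB,\mB') \times_{\Fun(\mC, \mA'^\op \times \mB')} \Fun(\mC,\mC') \to \Fun(\mA,\mA')^\op \times \Fun(\mB,\mB')\bigr).$$
It remains to identify the structure map $\alpha(\Mor_\mA(\A,\B)) \to \Mor_\mV(\X,\alpha(\B))$ appearing in the Lemma: by construction it is the composite $\Fun(\mA,\mA')^\op \times \Fun(\mB,\mB') \simeq \Fun(\mA^\op,\mA'^\op) \times \Fun(\mB,\mB') \to \Fun(\mA^\op \times \mB, \mA'^\op \times \mB') \to \Fun(\mC, \mA'^\op \times \mB')$ of the external-product functor followed by restriction along $\mC \to \mA^\op \times \mB$, so the pullback in the Lemma is exactly the one displayed above; by (ii) and (iii) it is a left fibration over $\Fun(\mA,\mA')^\op \times \Fun(\mB,\mB')$, hence an object of $\mL$, and the whole datum is an object of $\mQ$ as claimed.

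I expect the only real point of care — rather than difficulty — to be this last identification: matching the structure map that the Lemma produces abstractly with the concrete ``external product, then restrict'' functor $\Fun(\mA,\mA')^\op \times \Fun(\mB,\mB') \to \Fun(\mC, \mA'^\op \times \mB')$, and thereby recognizing the pullback in the Lemma's formula as the stated one. Everything else is a direct substitution into the already-proven Lemma together with the routine closure properties of left fibrations.
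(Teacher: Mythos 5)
Your proposal is correct and is exactly the intended argument: the paper states this corollary as a direct specialization of the preceding lemma to $\mV = \infty\Cat$, $\mW = \mL$, and $\alpha = (-)^\op \times (-)$, which is precisely what you do. Your verification of the lemma's hypotheses on $\mL$ (stability of left fibrations under products, pullback, and cotensoring) and the explicit substitution match what the paper leaves implicit.
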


\begin{notation}
For every functor $\mM \to [1]$ let $\widetilde{\Tw}(\mM)$ be the pullback $$ \mM_0^\op \times \mM_1 \times_{\mM^\op\times\mM}\Tw(\mM) \to \mM_0^\op \times \mM_1.$$
	
\end{notation}

\begin{remark}\label{ihol}
Let $\mC$ be an $\infty$-category. Projection $\mC \times [1]\to \mC$ induces an equivalence $\widetilde{\Tw}(\mC\times [1]) \simeq \Tw(\mC)$ over $ \mC^\op \times \mC.$
		
\end{remark}

%There is a canonical equivalence$$\Fun(\mM_0,\mN_0)^\op \times \Fun(\mM_1,\mN_1) \times_{\Fun^{[1]}(\mM,\mN)^\op \times \Fun^{[1]}(\mM,\mN)} \Tw(\Fun^{[1]}(\mM,\mN)) \simeq $$$$\Fun(\mM_0,\mN_0)^\op \times \Fun(\mM_1,\mN_1) \times_{\Fun(\mM_0^\op \times \mM_1 \times_{\mM^\op \times \mM} \Tw(\mM), \mN_0^\op \times \mN_1)} $$$$ \Fun(\mM_0^\op \times \mM_1 \times_{\mM^\op \times \mM} \Tw(\mM), \mN_0^\op \times \mN_1 \times_{\mN^\op \times \mN} \Tw(\mN))$$

%Theorem \ref{lfib} and Corollary \ref{homm} give the following:

\begin{theorem}\label{mapinner}
Let $\mM \to [1], \mN \to [1]$ be functors. There is a canonical equivalence
$$\widetilde{\Tw}(\Fun^{[1]}(\mM,\mN)) \simeq $$
$$\Fun(\mM_0,\mN_0)^\op \times \Fun(\mM_1,\mN_1) \times_{\Fun( \widetilde{\Tw}(\mM), \mN_0^\op \times \mN_1)} \Fun( \widetilde{\Tw}(\mM), \widetilde{\Tw}(\mN))$$
over $$ \Fun(\mM_0,\mN_0)^\op \times \Fun(\mM_1,\mN_1)$$
that induces on the fiber over any functors $\F: \mM_0 \to \mN_0, \G: \mM_1 \to \mN_1$ an equivalence
$$\Fun^{[1]}(\mM,\mN)(\F,\G) \simeq \lim(\widetilde{\Tw}(\mM) \to \mM_0^\op \times \mM_1 \xrightarrow{\F^\op \times \G} \mN_0^\op \times \mN_1 \subset \mN^\op \times \mN \xrightarrow{\mN(-,-)} \mS).$$	
	
\end{theorem}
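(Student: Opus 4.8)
The plan is to reduce the computation of $\widetilde{\Tw}(\Fun^{[1]}(\mM,\mN))$ to the internal hom formula in the closed monoidal $\infty$-category $\mQ$ of Corollary \ref{homm}. The key observation is that by Theorem \ref{lfib} (together with Remark \ref{ihol} and the notation $\widetilde{\Tw}$) the functor $\mM \mapsto (\mM_0, \mM_1, \widetilde{\Tw}(\mM))$ defines an equivalence $\infty\Cat_{/[1]} \simeq \mQ$. So the first step is to check that this equivalence is symmetric monoidal, where $\infty\Cat_{/[1]}$ carries the cartesian structure given by fiber product over $[1]$ and $\mQ$ carries the cartesian structure described in Corollary \ref{homm}. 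This amounts to verifying that $\widetilde{\Tw}(\mM \times_{[1]} \mN)$ agrees with the product of $\widetilde{\Tw}(\mM)$ and $\widetilde{\Tw}(\mN)$ formed in $\mQ$, i.e. the pullback $\widetilde{\Tw}(\mM) \times_{\mM_0^\op \times \mM_1 \times \mN_0^\op \times \mN_1} \widetilde{\Tw}(\mN)$ over $\mM_0^\op \times \mN_0^\op \times \mM_1 \times \mN_1 \simeq (\mM\times_{[1]}\mN)_0^\op \times (\mM\times_{[1]}\mN)_1$; this follows from the compatibility of the twisted arrow left fibration with products and the fact that the mapping space in a product is the product of mapping spaces.

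Once the monoidal equivalence $\infty\Cat_{/[1]} \simeq \mQ$ is established, the internal hom $\Fun^{[1]}(\mM,\mN)$ of $\infty\Cat_{/[1]}$ — which exists by the Corollary following Proposition \ref{expo} (every functor to $[1]$ is exponential, so $\infty\Cat_{/[1]}$ is cartesian closed) — corresponds under the equivalence to the internal hom of $\mQ$ computed in Corollary \ref{homm}. Applying that formula to the objects corresponding to $\mM$ and $\mN$, namely $(\mM_0,\mM_1,\widetilde{\Tw}(\mM) \to \mM_0^\op \times \mM_1)$ and $(\mN_0,\mN_1,\widetilde{\Tw}(\mN) \to \mN_0^\op \times \mN_1)$, yields exactly
$$\Big(\Fun(\mM_0,\mN_0),\, \Fun(\mM_1,\mN_1),\ \Fun(\mM_0,\mN_0)^\op \times \Fun(\mM_1,\mN_1) \times_{\Fun(\widetilde{\Tw}(\mM), \mN_0^\op \times \mN_1)} \Fun(\widetilde{\Tw}(\mM),\widetilde{\Tw}(\mN)) \to \Fun(\mM_0,\mN_0)^\op \times \Fun(\mM_1,\mN_1)\Big),$$
and translating back through the equivalence $\mQ \simeq \infty\Cat_{/[1]}$ identifies the third entry with $\widetilde{\Tw}(\Fun^{[1]}(\mM,\mN))$, which is the first displayed equivalence of the theorem. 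For the fiberwise statement, one restricts the equivalence over a point $(\F,\G) \in \Fun(\mM_0,\mN_0)^\op \times \Fun(\mM_1,\mN_1)$: the fiber of $\widetilde{\Tw}(\Fun^{[1]}(\mM,\mN)) \to \Fun^{[1]}(\mM,\mN)_0^\op \times \Fun^{[1]}(\mM,\mN)_1$ over $(\F,\G)$ is the mapping space $\Fun^{[1]}(\mM,\mN)(\F,\G)$ by the defining property of $\Tw$, while the fiber of the pullback is $\{*\} \times_{\Fun(\widetilde{\Tw}(\mM),\mN_0^\op\times\mN_1)} \Fun(\widetilde{\Tw}(\mM),\widetilde{\Tw}(\mN))$, which is the space of lifts of $\widetilde{\Tw}(\mM) \xrightarrow{\F^\op \times \G} \mN_0^\op \times \mN_1$ along $\widetilde{\Tw}(\mN) \to \mN_0^\op \times \mN_1$; since $\widetilde{\Tw}(\mN) \to \mN_0^\op \times \mN_1$ is the left fibration classifying $\mN_0^\op \times \mN_1 \subset \mN^\op \times \mN \xrightarrow{\mN(-,-)} \mS$, this space of lifts is the limit of $\widetilde{\Tw}(\mM) \to \mM_0^\op \times \mM_1 \xrightarrow{\F^\op \times \G} \mN_0^\op \times \mN_1 \subset \mN^\op \times \mN \xrightarrow{\mN(-,-)} \mS$.

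The main obstacle I anticipate is the first step: rigorously promoting the equivalence $\infty\Cat_{/[1]} \simeq \mQ$ of Theorem \ref{lfib} to a symmetric monoidal equivalence for the cartesian structures. Theorem \ref{lfib} is stated as an equivalence of $\infty$-categories, and one must track the product structures through the chain of identifications used in its proof (which passes through $\mP^\mA$, the universal property of Proposition \ref{uuuu}, and Corollary \ref{homint}), or else argue more directly that $\widetilde{\Tw}$ sends fiber products over $[1]$ to the products in $\mQ$ and that the relevant projection maps match up. A secondary technical point is to make sure that the internal hom of $\mQ$ from Corollary \ref{homm}, which was obtained by realizing $\mQ$ as a pullback of $\mL \to \infty\Cat$ inside $\Fun([1],\infty\Cat)$, corresponds under the monoidal equivalence to the cartesian closed structure on $\infty\Cat_{/[1]}$ coming from exponentiability of $[1]$; this is automatic once the monoidal equivalence is in hand, since internal homs are unique, but one should confirm that both monoidal structures are genuinely the cartesian ones.
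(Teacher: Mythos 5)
Your proposal is correct and follows essentially the same route as the paper: pass through the equivalence $\theta$ of Theorem~\ref{lfib}, identify the internal hom of $\infty\Cat_{/[1]}$ with the internal hom of $\mQ$ for the cartesian monoidal structures, apply Corollary~\ref{homm}, and unwind the fiberwise statement (which you spell out more fully than the paper does). The obstacle you anticipate is in fact not one: an equivalence of $\infty$-categories automatically preserves finite products, hence the cartesian symmetric monoidal structures on both sides, so $\theta$ is canonically a symmetric monoidal equivalence with no further verification required.
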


\begin{proof}
	
By Theorem \ref{lfib} there is a canonical equivalence $\theta$ between 
$\infty\Cat_{/[1]}$ and the pullback 
$$\begin{xy}
\xymatrix{
\mQ	\ar[d]
\ar[rr]
&&  \mL \ar[d]^{} 
\\
\infty\Cat \times \infty\Cat \ar[rr]^{(-)^\op\times(-)}  && \infty\Cat.}
\end{xy}$$	
Consequently, the internal hom of $\infty\Cat_{/[1]}$ corresponds under $\theta$ to the
internal hom of $\mQ$ for the cartesian monoidal structures.
The description of $\theta$ of Theorem \ref{lfib} and the description of the internal hom
of Corollary \ref{homm} imply the result.

\end{proof}

\begin{corollary}\label{cmapin}
Let $\mA \to \mC,\mB \to \mC$ be functors, $\f: \X \to \Y$ a morphism in $\mC$ and $\F: \mA_\X \to \mB_\X, \G: \mA_\Y \to \mB_\Y$ functors. There is a canonical equivalence
$$\{\alpha\}\times_{\mC(\X,\Y)} \Fun^\mC(\mA,\mB)(\F,\G) \simeq $$$$ \lim(\widetilde{\Tw}([1]\times_ \mC \mA) \to \mA_\X^\op \times \mA_\Y \xrightarrow{\F^\op \times \G} \mB_\X^\op \times \mB_\Y \to ([1]\times_ \mC \mB)^\op \times ([1]\times_ \mC \mB) \xrightarrow{([1]\times_ \mC \mB)(-,-)} \mS).$$
	
\end{corollary}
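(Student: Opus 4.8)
The plan is to deduce this from Theorem \ref{mapinner} by base-changing the conditionally existing internal hom along the morphism $\alpha$. Assume $\mA \to \mC$ is an exponential fibration, so that $\Fun^\mC(\mA,\mB)$ exists, and let $[1] \to \mC$ be the functor classifying $\alpha : \X \to \Y$. Set $\mM := [1] \times_\mC \mA$ and $\mN := [1] \times_\mC \mB$, functors over $[1]$ with $\mM_0 = \mA_\X$, $\mM_1 = \mA_\Y$, $\mN_0 = \mB_\X$, $\mN_1 = \mB_\Y$. By the base-change property of exponential fibrations (\cite[Remark 3.71.]{HEINE2023108941}, recalled before Proposition \ref{expo}), $\mM \to [1]$ is again an exponential fibration and there is a canonical equivalence $[1] \times_\mC \Fun^\mC(\mA,\mB) \simeq \Fun^{[1]}(\mM,\mN)$ over $[1]$; restricting over $0,1 \in [1]$ and using the same property for the inclusions $\{\X\}, \{\Y\} \hookrightarrow \mC$ gives compatible identifications $\Fun^\mC(\mA,\mB)_\X \simeq \Fun(\mA_\X,\mB_\X) \simeq \Fun^{[1]}(\mM,\mN)_0$ and likewise over $\Y$, under which $\F$ and $\G$ become objects of the fibres $\Fun^{[1]}(\mM,\mN)_0$ and $\Fun^{[1]}(\mM,\mN)_1$.

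Next I would observe that for any functor $p : \mE \to \mC$ and objects $E_0$ over $\X$, $E_1$ over $\Y$, the space of morphisms $E_0 \to E_1$ in $[1] \times_\mC \mE$ lying over the non-identity arrow of $[1]$ is naturally $\{\alpha\} \times_{\mC(\X,\Y)} \mE(E_0,E_1)$, since mapping spaces in a pullback of $\infty$-categories are the corresponding fibre products of mapping spaces and $[1]$ has a contractible space of arrows $0 \to 1$. Applying this to $\mE = \Fun^\mC(\mA,\mB)$ identifies the left-hand side $\{\alpha\} \times_{\mC(\X,\Y)} \Fun^\mC(\mA,\mB)(\F,\G)$ with $\Fun^{[1]}(\mM,\mN)(\F,\G)$ in the notation of Theorem \ref{mapinner}.

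Finally I would apply Theorem \ref{mapinner} to $\mM \to [1]$ and $\mN \to [1]$, which gives
$$\Fun^{[1]}(\mM,\mN)(\F,\G) \simeq \lim\bigl(\widetilde{\Tw}(\mM) \to \mM_0^\op \times \mM_1 \xrightarrow{\F^\op \times \G} \mN_0^\op \times \mN_1 \subset \mN^\op \times \mN \xrightarrow{\mN(-,-)} \mS\bigr),$$
and substituting $\mM = [1] \times_\mC \mA$, $\mN = [1] \times_\mC \mB$ together with the fibre identifications above turns the right-hand side into exactly the limit displayed in the statement. I expect no serious obstacle: the only point that needs care is checking that the fibrewise identifications $\Fun^\mC(\mA,\mB)_\X \simeq \Fun(\mA_\X,\mB_\X)$ are compatible with the base-change equivalence over $[1]$, so that $\F$ and $\G$ transport to the objects named in Theorem \ref{mapinner}, together with the routine verification that the map $\widetilde{\Tw}(\mM) \to \mN^\op \times \mN$ occurring there is literally the composite written in the statement.
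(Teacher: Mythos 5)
Your proposal is correct and follows essentially the same route as the paper: identify $\{\alpha\}\times_{\mC(\X,\Y)}\Fun^\mC(\mA,\mB)(\F,\G)$ with the mapping space in the pullback $[1]\times_\mC\Fun^\mC(\mA,\mB)$, rewrite that pullback as $\Fun^{[1]}([1]\times_\mC\mA,[1]\times_\mC\mB)$ via the base-change property of exponential fibrations, and apply Theorem~\ref{mapinner}. You spell out the intermediate justifications a bit more explicitly than the paper does, but the chain of equivalences is identical.
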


\begin{proof}
There is a canonical equivalence
$$\{\alpha\}\times_{\mC(\X,\Y)} \Fun^\mC(\mA,\mB)(\F,\G) \simeq ([1]\times_ \mC \Fun^\mC(\mA,\mB))(\F,\G) \simeq$$$$ \Fun^{[1]}([1]\times_ \mC \mA,[1]\times_ \mC \mB)(\F,\G) \simeq $$$$ \lim(\widetilde{\Tw}([1]\times_ \mC \mA) \to \mA_\X^\op \times \mA_\Y \xrightarrow{\F^\op \times \G} \mB_\X^\op \times \mB_\Y \to ([1]\times_ \mC \mB)^\op \times ([1]\times_ \mC \mB) \xrightarrow{([1]\times_ \mC \mB)(-,-)} \mS).$$	
	
\end{proof}

\begin{definition}
Let $\mC,\mD$ be $\infty$-categories and $\phi: \mC^\op \times \mC \to \mD$ a functor.
The end of $\phi$, denoted by ${\int \phi}$ if it exists, is the limit of the functor $\Tw(\mC) \xrightarrow{\q} \mC^\op \times \mC \xrightarrow{\phi} \mD.$
	
\end{definition}

\begin{remark}Let $\phi: \mC^\op \times \mC \to \mS$ be a functor classified by a left fibration
$\mB \to \mC^\op \times \mC.$
	
The limit of the functor $\Tw(\mC) \xrightarrow{\q} \mC^\op \times \mC \xrightarrow{\phi} \mS$
is the space $$\Fun_{\Tw(\mC)}(\Tw(\mC), \q^*(\mB)).$$
	
\end{remark}

Theorem \ref{mapinner} implies the following corollary in view of Remark \ref{ihol}:

\begin{corollary}\label{endd}
Let $\mC,\mD$ be $\infty$-categories. There is a canonical equivalence
$$\Tw(\Fun(\mC,\mD)) \simeq \Fun(\mC,\mD)^\op \times \Fun(\mC,\mD) \times_{\Fun(\Tw(\mC), \mD^\op \times \mD)} \Fun(\Tw(\mC), \Tw(\mD))$$
over $$ \Fun(\mC,\mD)^\op \times \Fun(\mC,\mD).$$	
The latter induces on the fiber over every functors $\F,\G:\mC \to \mD$ an equivalence
$$ \Fun(\mC,\mD)(\F,\G) \simeq \Fun_{\Tw(\mC)}(\Tw(\mC),\q^*((\F^\op\times\G)^*(\Tw(\mD))))\simeq \int\mD(\F(-),\G(-)).$$
	
\end{corollary}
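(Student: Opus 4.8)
The plan is to specialize Theorem~\ref{mapinner} to the projections $\mM = \mC \times [1] \to [1]$ and $\mN = \mD \times [1] \to [1]$, whose internal hom in $\infty\Cat_{/[1]}$ exists because every functor to $[1]$ is an exponential fibration. For these choices $\mM_0 \simeq \mM_1 \simeq \mC$ and $\mN_0 \simeq \mN_1 \simeq \mD$, and Remark~\ref{ihol} identifies the left fibrations $\widetilde{\Tw}(\mM) \to \mM_0^\op \times \mM_1$ and $\widetilde{\Tw}(\mN) \to \mN_0^\op \times \mN_1$ with $\q : \Tw(\mC) \to \mC^\op \times \mC$ and $\q : \Tw(\mD) \to \mD^\op \times \mD$. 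Moreover the composite $\mN_0^\op \times \mN_1 \subset \mN^\op \times \mN \xrightarrow{\mN(-,-)} \mS$ is the mapping-space functor $\mD^\op \times \mD \xrightarrow{\mD(-,-)} \mS$ of $\mD$, since $(\mD \times [1])\big((d,0),(d',1)\big) \simeq \mD(d,d') \times [1](0,1) \simeq \mD(d,d')$. Feeding this into Theorem~\ref{mapinner} already yields a canonical equivalence
\[
\widetilde{\Tw}\big(\Fun^{[1]}(\mC \times [1], \mD \times [1])\big) \simeq \Fun(\mC,\mD)^\op \times \Fun(\mC,\mD) \times_{\Fun(\Tw(\mC),\, \mD^\op \times \mD)} \Fun(\Tw(\mC), \Tw(\mD))
\]
over $\Fun(\mC,\mD)^\op \times \Fun(\mC,\mD)$, together with the corresponding fibrewise description over a pair $\F, \G : \mC \to \mD$.

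It then remains to identify the left-hand side with $\Tw(\Fun(\mC,\mD))$, and by Remark~\ref{ihol} this reduces to the lemma that $\Fun^{[1]}(\mC \times [1], \mD \times [1]) \simeq \Fun(\mC,\mD) \times [1]$ over $[1]$. I would prove this by a Yoneda argument. Write $\pi : [1] \to \ast$, so that $\pi^* : \infty\Cat \to \infty\Cat_{/[1]}$ is the functor $\mK \mapsto \mK \times [1]$; it admits the forgetful functor $\pi_! : \infty\Cat_{/[1]} \to \infty\Cat$ as a left adjoint. For every $\mE \to [1]$ there is a canonical equivalence $\pi^*\mC \times_{[1]} \mE \simeq \mC \times \mE$ in $\infty\Cat_{/[1]}$, where $\mC \times \mE$ carries the structure map through the projection to $\mE$; hence $\pi_!\big(\pi^*\mC \times_{[1]} \mE\big) \simeq \mC \times \pi_!(\mE)$. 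Using the defining adjunction of $\Fun^{[1]}(\pi^*\mC, -)$, then $\pi_! \dashv \pi^*$, then the cartesian closure of $\infty\Cat$, and then $\pi_! \dashv \pi^*$ once more, one identifies the presheaf on $\infty\Cat_{/[1]}$ represented by $\Fun^{[1]}(\pi^*\mC, \pi^*\mD)$ with the one represented by $\pi^*\Fun(\mC,\mD)$, naturally in $\mE$, whence $\Fun^{[1]}(\pi^*\mC, \pi^*\mD) \simeq \pi^*\Fun(\mC,\mD)$ by the Yoneda lemma in $\infty\Cat_{/[1]}$. Applying $\widetilde{\Tw}$ and Remark~\ref{ihol} once more gives $\widetilde{\Tw}\big(\Fun^{[1]}(\mC \times [1], \mD \times [1])\big) \simeq \Tw(\Fun(\mC,\mD))$ over $\Fun(\mC,\mD)^\op \times \Fun(\mC,\mD)$, which is the first asserted equivalence.

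For the fibrewise statement, over $\F, \G : \mC \to \mD$ Theorem~\ref{mapinner} computes $\Fun^{[1]}(\mM,\mN)(\F,\G)$ as $\lim\big(\Tw(\mC) \xrightarrow{\q} \mC^\op \times \mC \xrightarrow{\F^\op \times \G} \mD^\op \times \mD \xrightarrow{\mD(-,-)} \mS\big)$, while under the equivalence of the previous paragraph this mapping space is $\big(\Fun(\mC,\mD) \times [1]\big)\big((\F,0),(\G,1)\big) \simeq \Fun(\mC,\mD)(\F,\G)$. The displayed limit is by definition the end $\int \mD(\F(-),\G(-))$; since $\Tw(\mD) \to \mD^\op \times \mD$ classifies $\mD(-,-)$, the left fibration $(\F^\op \times \G)^*(\Tw(\mD)) \to \mC^\op \times \mC$ classifies $\mD(\F(-),\G(-))$, and the Remark preceding the corollary rewrites the limit as $\Fun_{\Tw(\mC)}\big(\Tw(\mC), \q^*((\F^\op \times \G)^*(\Tw(\mD)))\big)$, giving the stated chain of equivalences. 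The main obstacle is exactly the lemma $\Fun^{[1]}(\mC \times [1], \mD \times [1]) \simeq \Fun(\mC,\mD) \times [1]$; everything else is substitution into Theorem~\ref{mapinner} plus Remark~\ref{ihol}. Within the lemma, the point needing care is checking that the Yoneda identification is natural in $\mE$, equivalently that it respects the structure maps to $\Fun(\mC,\mD)^\op \times \Fun(\mC,\mD)$, so that it is compatible with the data produced by Theorem~\ref{mapinner}.
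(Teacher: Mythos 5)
Your proof is correct and follows essentially the same route the paper intends: specialize Theorem~\ref{mapinner} to $\mM = \mC\times[1]$, $\mN = \mD\times[1]$ and invoke Remark~\ref{ihol}. The paper leaves the identification $\Fun^{[1]}(\mC\times[1],\mD\times[1]) \simeq \Fun(\mC,\mD)\times[1]$ implicit, and your Yoneda argument via $\pi_!\dashv\pi^*$ is a correct way to supply it.
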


\section{Correspondences as profunctors}

In the following we use the notion of double $\infty$-category \cite[Definition 2.4.3.]{MR3345192}, which is an $\infty$-category internal to $\infty\Cat:$

\begin{definition}
A double $\infty$-category is a cartesian fibration $\mB \to \Delta$
%^\op \to \infty\Cat$ is a Segal $\infty$-category 
such that for every $[\n] \in \Delta$
%the functor $\F$ sends the colimitof the diagram $$[1] \coprod_{[0]} [1] ... \coprod_{[0]} [1] \
%simplicial $\infty$-category $\F:\Delta^\op \to \infty\Cat$ is a Segal $\infty$-category if for every $[\n] \in \Delta$
%the functor $\F$ sends the colimitof the diagram $$[1] \coprod_{[0]} [1] ... \coprod_{[0]} [1] \simeq \{0,1\} \coprod_{\{1\}} \{1,2\} ...\coprod_{\{\n-1\}} \{\n-1,\n\} \simeq [\n]$$to a limit diagram.
the induced functor 
%$$\F([\n]) \to \F([1]) \times_{\F([0])}... \times_{\F([0])}\F([1])$$
$$\mB_{[\n]} \to \mB_{[1]} \times_{\mB_{[0]}}... \times_{\mB_{[0]}}\mB_{[1]}$$
is an equivalence, where the $\bi$-th map $[1] \to [\n]$ sends $0$ to $\bi-1$ and
$1$ to $\bi.$

\end{definition}

\begin{example}

The functor $$\rho: \infty\Cat^\cart_{/\Delta} \to \infty\Cat$$ taking the fiber over $[0]$
admits a fully faithful right adjoint that sends an $\infty$-category $\mC$ to the cartesian fibration $\Delta_\mC \to \Delta$ classifying the functor $$\Delta^\op \subset \infty\Cat^\op \xrightarrow{\Fun(-,\mC)} \infty\Cat.$$
The cartesian fibration $\Delta_\mC \to \Delta$ is a double $\infty$-category.

%whose fiber over any $[\n]$ is the product $

\end{example}

\begin{remark}

The functor $\rho: \infty\Cat^\cart_{/\Delta} \to \infty\Cat$ is a cartesian fibration, where a morphism
$\mB \to \mC$ over $\Delta$ is $\rho$-cartesian if and only if the following square is a pullback square:
$$
\begin{xy}
\xymatrix{
\mB
\ar[d]
\ar[rr]
&& \mC \ar[d]^{} 
\\
\Delta_{\mB_{[0]}} \ar[rr]  && \Delta_{\mC_{[0]}} .}
\end{xy}$$
This holds since for every cartesian fibration $\mA \to \Delta$ the induced commutative square
$$
\begin{xy}
\xymatrix{
\infty\Cat^\cart_{/\Delta}(\mA,\mB)
\ar[d]
\ar[rr]
&&\infty\Cat^\cart_{/\Delta}(\mA,\mC) \ar[d]^{} 
\\
\infty\Cat^\cart_{/\Delta}(\mA,\Delta_{\mB_{[0]}}) \ar[rr]  && \infty\Cat^\cart_{/\Delta}(\mA,\Delta_{\mC_{[0]}})}
\end{xy}$$
is equivalent to the commutative square
$$
\begin{xy}
\xymatrix{
\infty\Cat^\cart_{/\Delta}(\mA,\mB)
\ar[d]
\ar[rr]
&&\infty\Cat^\cart_{/\Delta}(\mA,\mC) \ar[d]^{} 
\\
\infty\Cat(\mA_{[0]},\mB_{[0]}) \ar[rr]  && \infty\Cat(\mA_{[0]},\mC_{[0]}).}
\end{xy}$$

In particular, the $\rho$-cartesian lift to a double $\infty$-category $\mC \to \Delta$
of a functor $ \mA \to \mC_{[0]}$ is the double $\infty$-category $ \Delta_\mA \times_{\Delta_{\mC_{[0]}}} \mC.$

\end{remark}

\begin{notation}
Let $\mA \to \Delta, \mB \to \Delta	$ be double $\infty$-categories.
We write $$\FUN(\mA,\mB) \subset \Fun_{\Delta}(\mA,\mB)$$ for the full subcategory spanned by the functors over $\Delta$ preserving cocartesian lifts of morphisms of $\Delta.$	
	
\end{notation}

\begin{definition}\label{pree}

An $(\infty,2)$-precategory is a double $\infty$-category $\phi: \mB \to \Delta$ 
such that $\mB_0$ is a space.
% and the composition $\Delta^\op \to \infty\Cat \xrightarrow{\iota}\mS$ of the functor classified by $\phi$ and the maximal subspace functor $\iota$ is a complete Segal space.	

\end{definition}

\begin{remark}
By \cite[5.6.1. Corollary]{HINICH2020107129} there is an equivalence between $(\infty,2)$-precategories in the sense of Definition \ref{pree} and $\infty$-precategories enriched in $\infty\Cat$, which justifies our terminology.
%Moreover there is an equivalence between $(\infty,2)$-categories and $\infty$-precategories enriched in $\infty\Cat$, which justifies our terminology.
\end{remark}

\begin{example}
For every double $\infty$-category $\phi: \mB \to \Delta$ the pullback
$$ {\Delta_{\iota(\mB_{[0]})} \times_{\Delta_{\mB_{[0]}}} \mB} \to \Delta_{\iota(\mB_{[0]})} $$
is an $(\infty,2)$-precategory, which we call the underlying $(\infty,2)$-precategory of $\phi.$
	
\end{example}

%\begin{remark}There is a canonical equivalence $\iota(\FUN(\N,\mC)) \simeq \infty\Cat(-,\mC')$so that the functor $\iota(\FUN(\N,\mC)): \infty\Cat^\op \to \mS$ preserves small limits.Hence also the functor $\infty\Cat(\K, \FUN(\N,\mC)) \simeq \iota(\Fun(\K,\FUN(\N,\mC))) \simeq \iota(\FUN(\N,\mC^\K)): \infty\Cat^\op \to \infty\Cat$ preserves small limits so that also the functor $\FUN(\N,\mC): \infty\Cat^\op \to \infty\Cat$ preserves small limits.\end{remark}

\begin{example}
Viewing spaces as $\infty$-categories any Segal space classifies an $(\infty,2)$-precategory.	
For every $\infty$-category $\mC$ the nerve of $\mC$ defined by
$\N(\mC): \Delta^\op \to \mS, [\n]\mapsto \infty\Cat([\n],\mC)$ is a Segal space and so classifies an $(\infty,2)$-precategory.	
\end{example}

For the next notation let $\tau:=(-)^\op: \infty\Cat \to \infty\Cat$ be the opposite $\infty$-category involution.

\begin{notation}\label{notis}
Let	
\begin{enumerate}
\item $\infty\CAT:= \tau^*(\Delta \times_{\infty\Cat} \mathrm{CART}) \to \Delta,$
\item $\mathrm{PR}^\L:= \tau^*(\Delta \times_{\infty\widehat{\Cat}} \mathrm{CART}^\L) \to \Delta,$
\item $\mathrm{CORR}:= \tau^*(\Delta \times_{\infty\Cat} \mathrm{EXP}) \to \Delta,$
\item $\mathrm{CORR}_\Lax:= \tau^*(\Delta \times_{\infty\Cat} \Fun([1],\infty\Cat)) \to \Delta.$
%\item $\widetilde{\mathrm{CAT}}_\infty:=\Delta \times_{\infty\Cat} \Fun([1],\infty\Cat) \to \Delta.$
\end{enumerate}

\end{notation}

\begin{construction}
Let $\rH: \K \to \infty\Cat$ be a functor that admits a colimit $\mA$.
The canonical $\infty\Cat$-linear functor $\Cat_{\infty /\mA} \to \lim_{\bk \in \K}\Cat_{\infty /\rH(\bk)}$
induces on morphism $\infty$-categories between two functors $\mC \to \mA,\mD \to \mA$ a functor
\begin{equation}\label{eee}
\theta: \Fun_{\mA}(\mC,\mD) \to \lim_{\bk \in \K} \Fun_{\rH(\bk)}(\rH(\bk) \times_{\mA} \mC,\rH(\bk) \times_{\mA}\mD).
\end{equation}
\end{construction}

\begin{lemma}\label{oho}
Let $\rH: \K \to \infty\Cat$ be a functor	with colimit $\mA$ and $\mC \to \mA,\mD \to \mA$ functors.
If the functor $\mC \to \mA$ is an exponential fibration, the functor
(\ref{eee}) is an equivalence.

\end{lemma}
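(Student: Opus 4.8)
The plan is to reduce to a statement that $\theta$ is an equivalence fiberwise over each object of the shape $\K$, or more precisely to exploit the universal property of the colimit $\mA = \colim_{\bk\in\K}\rH(\bk)$ together with the defining adjunction property of exponential fibrations. The starting point is the observation that, since $\mC \to \mA$ is an exponential fibration, the pullback $\rH(\bk)\times_\mA \mC \to \rH(\bk)$ is again an exponential fibration for each $\bk$ (by the Remark quoted from \cite[Remark 3.71.]{HEINE2023108941}), and the internal hom $\Fun^\mA(\mC,-)$ exists and commutes with pullback along $\rH(\bk)\to\mA$, again by that same Remark. Thus $\Fun_\mA(\mC,\mD) \simeq \Fun_\mA(\mA, \Fun^\mA(\mC,\mD))$ and $\Fun_{\rH(\bk)}(\rH(\bk)\times_\mA\mC, \rH(\bk)\times_\mA\mD) \simeq \Fun_{\rH(\bk)}(\rH(\bk), \rH(\bk)\times_\mA\Fun^\mA(\mC,\mD))$.

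First I would rewrite the right-hand side of (\ref{eee}) using these identifications: the target becomes $\lim_{\bk\in\K}\Fun_{\rH(\bk)}(\rH(\bk), \rH(\bk)\times_\mA \Fun^\mA(\mC,\mD))$, which is the limit over $\bk$ of the space of sections of the pullback of $\Fun^\mA(\mC,\mD)\to\mA$ along $\rH(\bk)\to\mA$. Now the key point is a general fact about the functor $\infty\Cat^\cart_{/\mathrm{?}}$ — really about $\infty\Cat_{/(-)}$ viewed as a functor $\infty\Cat^\op \to \widehat{\infty\Cat}$ — namely that it sends colimits in $\infty\Cat$ to limits; equivalently, for any $\mE \to \mA$, the canonical map $\Fun_\mA(\mA,\mE) \to \lim_{\bk\in\K}\Fun_{\rH(\bk)}(\rH(\bk), \rH(\bk)\times_\mA\mE)$ is an equivalence whenever $\mA = \colim_\bk \rH(\bk)$. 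This is because a section of $\mE\to\mA$ is a functor $\mA\to\mE$ over $\mA$, a functor out of a colimit is a compatible family of functors out of the pieces, and compatibility is exactly recorded by the limit. Applying this with $\mE = \Fun^\mA(\mC,\mD)$ identifies the target of (\ref{eee}) with $\Fun_\mA(\mA,\Fun^\mA(\mC,\mD)) \simeq \Fun_\mA(\mC,\mD)$, the source, and one checks this identification is inverse to $\theta$.

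The step I expect to be the main obstacle is making precise and correct the claim that $\infty\Cat_{/(-)}$ carries colimits to limits in the appropriate $(\infty,2)$-categorical sense, and that the map (\ref{eee}) is the one induced by this — i.e.\ that the formal manipulation with internal homs genuinely reconstructs $\theta$ and not merely an abstractly equivalent map. One must be careful that the colimit of $\rH$ is computed in $\infty\Cat$ (not in $\infty\Cat_{/\mathrm{something}}$) and that the cocartesian-fibration decoration on $\mathrm{CART}$-type categories, if relevant to how $\theta$ was built, is compatible. I would handle this by working with the straightening/unstraightening description: $\infty$-categories over $\mA = \colim\rH(\bk)$ correspond to colimit-preserving-in-the-base gluing data, so $\Fun_\mA(\mC,\mD)$ is literally a limit over $\K$ of the mapping spaces of the restricted data; the exponential-fibration hypothesis is what guarantees that the relevant lax gluing becomes strict (the internal hom commutes with base change), which is precisely the content that upgrades the general inequality into an equivalence. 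Once the bookkeeping is set up, the verification that $\theta$ is an equivalence is then essentially formal.
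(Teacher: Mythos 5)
Your argument is correct, but it organizes the proof differently from the paper. You go through the right adjoint: you replace $\Fun_\mA(\mC,\mD)$ by the $\infty$-category of sections $\Fun_\mA(\mA,\Fun^\mA(\mC,\mD))$, use that the internal hom commutes with base change along $\rH(\bk)\to\mA$ (the quoted Remark), and then apply the general fact that the section functor $\Fun_\mA(\mA,-)$ takes any $\mE\to\mA$ to $\lim_\bk\Fun_{\rH(\bk)}(\rH(\bk),\rH(\bk)\times_\mA\mE)$, which needs no exponentiability at all. The paper instead exploits the other side of the very same adjunction $\mC\times_\mA(-)\dashv\Fun^\mA(\mC,-)$: exponentiability says $\mC\times_\mA(-)$ preserves small colimits, so $\mC\simeq\colim_\bk(\rH(\bk)\times_\mA\mC)$ in $\infty\Cat_{/\mA}$ (using that $\mA=\colim\rH$ also in the slice), and then one applies the limit-preserving functor $\Fun_\mA(-,\mD)$ and the base-change identification $\Fun_\mA(\rH(\bk)\times_\mA\mC,\mD)\simeq\Fun_{\rH(\bk)}(\rH(\bk)\times_\mA\mC,\rH(\bk)\times_\mA\mD)$. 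Both routes are sound; the paper's is a little shorter and avoids invoking the existence and base-change compatibility of the internal hom as black boxes, using only the defining ``preserves colimits'' property. Your hesitation about ``lax gluing'' and straightening is unnecessary: no such machinery is needed in either version, and the key subtlety is only (a) that the colimit $\mA=\colim\rH$ in $\infty\Cat$ is also a colimit in $\infty\Cat_{/\mA}$ (true, since the forgetful functor creates colimits), and (b) that the adjunction $\mC\times_\mA(-)\dashv\Fun^\mA(\mC,-)$ is $\infty\Cat$-enriched so that the identifications hold at the level of morphism $\infty$-categories, not just mapping spaces. Your final bookkeeping concern (that the chain of equivalences actually computes $\theta$) is real but routine, and the paper glosses over it at the same level.
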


\begin{proof}
Since $\mC \to \mA$ is an exponential fibration,
the functor $(-) \times_{\mA}\mC : \Cat_{\infty /\mA} \to \Cat_{\infty /\mC}
\to  \Cat_{\infty /\mA}$ preserves small colimits.
Hence the canonical functor $\colim_{\bk \in \K}(\rH(\bk) \times_{\mA} \mC) \to \mC $ over $ \colim (\rH) \simeq \mA $ is an equivalence.
Since the functor $\Fun_{\mA}(-,\mD): \Cat_{\infty /\mA}^\op \to \infty\Cat$ preserves limits, the following canonical functor is an equivalence:
$$\Fun_{\mA}(\mC,\mD) \to \Fun_{\mA}(\colim_{\bk \in \K}\rH(\bk) \times_{\mA} \mC,\mD) \simeq \lim_{\bk \in \K} \Fun_{\rH(\bk)}(\rH(\bk) \times_{\mA} \mC,\rH(\bk) \times_{\mA}\mD).$$

\end{proof}

%\begin{proposition}Let $[\n] \in \Delta$ and $\mC \to [\n]$ a functor and$$\mC' := (\{0 \to 1\} \times_{[\n]} \mC) \times_{(\{1\} \times_{[\n]} \mC)} (\{1 \to 2\} \times_{[\n]} \mC) \times_{(\{2\} \times_{[\n]} \mC)} ... \times_{(\{\n-1\} \times_{[\n]} \mC)}(\{\n-1 \to \n\} \times_{[\n]} \mC).$$The canonical functor $\mC' \to \mC$ over $[\n]$ induces for every exponential fibration $\mD \to [\n]$ an equivalence$$\Fun_{[\n]}(\mD, \mC') \to \Fun_{[\n]}(\mD, \begin).$$ \end{proposition}

%\begin{lemma}Let $\rH: \K \to \infty\Cat$ be a functor	with colimit $\mA$.The canonical functor of $(\infty,2)$-categories$\CART_\mA \to \lim_{\bk \in \K}\CART_{\rH(\bk)}$is an equivalence.\end{lemma}

%Lemma \ref{oho} implies the following corollary:

For the next corollary we use that for every $\n \geq 0$ the category $[\n]$ is the colimit in $\infty\Cat$ of the canonical diagram
\begin{equation}\label{diags}
\{0<1\}\coprod_{\{1\}} \{1<2\} \coprod_{\{2\}} ... \coprod_{\{\n-1\}} \{\n-1<\n\},\end{equation}
in which all maps are inert. Lemma \ref{oho} gives the following corollary:

\begin{corollary}\label{hok}
Let $\n \geq 0$ and $\mB \to \mC $ a functor over $[\n]$ whose pullback along any inert map $[1]\to [\n]$ is an equivalence. For every exponential fibration $\mD \to [\n]$ the induced functor $$\Fun_{[\n]}(\mD,\mB) \to \Fun_{[\n]}(\mD,\mC)$$ is an equivalence.	
	
\end{corollary}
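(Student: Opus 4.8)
The plan is to deduce this from Lemma \ref{oho} applied to the colimit presentation (\ref{diags}) of $[\n]$. Write $\K$ for the indexing category of the diagram (\ref{diags}) and $\rH : \K \to \infty\Cat$ for the corresponding functor, which exhibits $[\n]$ as $\colim(\rH)$; the values of $\rH$ are the spine edges $\{\bi-1<\bi\}\cong[1]\subset[\n]$ for $1\le\bi\le\n$ and the interior vertices $\{\bi\}\cong[0]\subset[\n]$ for $1\le\bi\le\n-1$, and every structure map of (\ref{diags}) is an inert inclusion $[0]\hookrightarrow[1]$. Applying Lemma \ref{oho} to this $\rH$, with the exponential fibration being $\mD\to[\n]$ and the second functor being $\mB\to[\n]$ and then $\mC\to[\n]$, and using that the comparison functor (\ref{eee}) is natural in the second functor, we find that the functor $\Fun_{[\n]}(\mD,\mB)\to\Fun_{[\n]}(\mD,\mC)$ induced by $\mB\to\mC$ is equivalent to the map of limits
$$\lim_{\bk\in\K}\Fun_{\rH(\bk)}\bigl(\rH(\bk)\times_{[\n]}\mD,\ \rH(\bk)\times_{[\n]}\mB\bigr)\ \longrightarrow\ \lim_{\bk\in\K}\Fun_{\rH(\bk)}\bigl(\rH(\bk)\times_{[\n]}\mD,\ \rH(\bk)\times_{[\n]}\mC\bigr)$$
induced termwise by $\mB\to\mC$.

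It therefore suffices to check that for each $\bk\in\K$ the functor $\rH(\bk)\times_{[\n]}\mB\to\rH(\bk)\times_{[\n]}\mC$ is an equivalence, for then it induces an equivalence after applying $\Fun_{\rH(\bk)}(\rH(\bk)\times_{[\n]}\mD,-)$. If $\rH(\bk)=\{\bi-1<\bi\}\cong[1]$ is a spine edge, then $\rH(\bk)\hookrightarrow[\n]$ is inert and this is precisely the hypothesis. If $\rH(\bk)=\{\bi\}\cong[0]$ is an interior vertex, then $\rH(\bk)\hookrightarrow[\n]$ factors as $\{\bi\}\hookrightarrow\{\bi-1<\bi\}\hookrightarrow[\n]$ through a spine edge, so $\mB_{\bi}\to\mC_{\bi}$ is obtained from an equivalence by further base change and is again an equivalence.

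I expect the only delicate point to be the naturality assertion used in the first paragraph: that the equivalence of Lemma \ref{oho} is functorial in the second functor, so that the square relating $\Fun_{[\n]}(\mD,-)$ to the limit over $\K$ commutes. This is built into the construction of (\ref{eee}), which is obtained by passing to morphism $\infty$-categories in the $\infty\Cat$-linear functor $\infty\Cat_{/[\n]}\to\lim_{\bk\in\K}\infty\Cat_{/\rH(\bk)}$. Consequently the argument is essentially formal, with all of the content residing in Lemma \ref{oho} and in the fact that $[\n]$ is the colimit of its spine of inert edges.
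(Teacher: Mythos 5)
Your proof is correct and follows exactly the route the paper intends: the paper's proof is simply the one-line remark that Lemma \ref{oho} applied to the spine colimit (\ref{diags}) gives the corollary, and you have correctly explicated that argument, including the small detail (which the paper leaves implicit) that the vertices $\{\bi\}$ are handled by base-changing the equivalence over the adjacent spine edge, and the observation that the needed naturality of (\ref{eee}) in the second variable is built into its construction as the map on morphism $\infty$-categories of an $\infty\Cat$-linear functor.
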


\begin{notation}Let $\n \geq 0$ and $\phi: \mC \to [\n]$ a functor.
Pulling back diagram (\ref{diags}) along $\phi$ we obtain a diagram 
$$ \{0<1\} \times_{[\n]} \mC \times_{(\{1\} \times_{[\n]} \mC)} \{1<2\} \times_{[\n]} \mC \times_{(\{2\} \times_{[\n]} \mC)} ... \times_{(\{\n-1\} \times_{[\n]} \mC)} \{\n-1<\n\} \times_{[\n]} \mC $$ over $\mC$.
We write $\mC'$ for the colimit of the latter diagram. %, which we see over $[\n]$via $\mC' \to \mC \to [\n].$		
	
\end{notation}

\begin{remark}\label{hato}
By 	Corollary \ref{ine} the inert maps $[1]\cong \{\bi-1< \bi\}\subset [\n]$ for any $\bi \leq 0$
are exponential fibrations.
Hence for every $\bi \leq 0$ the pullback of the canonical functor $\mC' \to \mC$ over $[\n]$
along the map $[1]\cong \{\bi-1< \bi\}\subset [\n]$ identifies with the equivalence
$$ \emptyset \times_{(\{\bi-1\} \times_{[\n]} \mC)}\{\bi-1<\bi\}\times_{[\n]} \mC \times_{(\{\bi\} \times_{[\n]} \mC)} \emptyset \to \{\bi-1<\bi\}\times_{[\n]} \mC.$$
	
\end{remark}

Corollary \ref{hok} and Remark \ref{hato} give the following:

\begin{corollary}Let $\n \geq 0$ and $\mC \to [\n]$ a functor.
%Let $$\mC':= \colim(\{0<1\} \times_{[\n]} \mC \times_{(\{1\} \times_{[\n]} \mC)} \{1<2\} \times_{[\n]} \mC \times_{(\{2\} \times_{[\n]} \mC)} ... \times_{(\{\n-1\} \times_{[\n]} \mC)} \{\n-1<\n\} \times_{[\n]} \mC).$$
For every exponential fibration $\mD \to [\n]$ the induced functor
$$\Fun_{[\n]}(\mD,\mC') \to \Fun_{[\n]}(\mD,\mC)$$ is an equivalence.
	
\end{corollary}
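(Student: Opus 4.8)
The plan is to derive the statement directly from Corollary \ref{hok}, applied with $\mB := \mC'$ and the given functor $\mC \to [\n]$. Two things need to be checked: that $\mC' \to \mC$ is a morphism over $[\n]$, and that its pullback along every inert map $[1] \to [\n]$ is an equivalence.

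The first point is immediate from the construction. By definition $\mC'$ is a colimit in $\infty\Cat$ of a diagram of $\infty$-categories over $\mC$, namely the base change of diagram (\ref{diags}) along $\phi\colon \mC \to [\n]$; a colimit of a diagram of objects over $\mC$ carries a canonical comparison functor $\mC' \to \mC$, and composing with $\phi$ exhibits it as a functor over $[\n]$.

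The second point is precisely the content of Remark \ref{hato}, so I would simply invoke it. For completeness, the mechanism is: each inert map $[1] \cong \{\bi-1 < \bi\} \subset [\n]$ is an exponential fibration by Corollary \ref{ine}, hence base change along it preserves small colimits (the defining property of exponential fibrations, used also in Lemma \ref{oho}); therefore the pullback of $\mC' \to \mC$ along this inert map is the colimit of the base-changed diagram, in which every term away from the $\bi$-th edge is built from empty fibers so that the gluings collapse, and the colimit is identified with $\{\bi-1<\bi\} \times_{[\n]} \mC$ through the equivalence displayed in Remark \ref{hato}.

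With both hypotheses verified, Corollary \ref{hok} gives that $\Fun_{[\n]}(\mD,\mC') \to \Fun_{[\n]}(\mD,\mC)$ is an equivalence for every exponential fibration $\mD \to [\n]$, as claimed. There is no substantial obstacle here, since both ingredients are already in place; the only delicate bookkeeping, namely checking that the base-changed colimit diagram genuinely collapses to an equivalence, has been carried out in Remark \ref{hato}.
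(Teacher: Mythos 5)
Your argument is correct and coincides with the paper's: both proofs amount to applying Corollary~\ref{hok} to the canonical map $\mC' \to \mC$ over $[\n]$, with the required hypothesis (pullbacks along inert maps $[1]\to[\n]$ are equivalences) supplied by Remark~\ref{hato}. Your explanatory aside is slightly imprecise---after base change along $\{\bi-1<\bi\}$ the adjacent edge terms $\{\bi-2<\bi-1\}\times_{[\n]}\mC$ and $\{\bi<\bi+1\}\times_{[\n]}\mC$ do not have empty fiber but rather collapse to the single vertices $\{\bi-1\}$ and $\{\bi\}$, which is the actual reason the pushouts telescope---but since you invoke Remark~\ref{hato} directly rather than reproving it, this does not affect the argument.
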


%\begin{proof}The induced functor$\Fun_{[\n]}(\mD,\mC') \to \Fun_{[\n]}(\mD,\mC)$ factors asthe induced functor$$\Fun_{[\n]}(\mD,\mC') \to \Fun_{[\n]}(\mD,\mC)$$ and so is an equivalence.\end{proof}

We obtain the following proposition:

\begin{proposition}\label{jjkl} Let $\n \geq 0$. The embedding $\mathrm{EXP}_{[\n]} \subset \infty\Cat_{/[\n]}$ admits a right adjoint. A functor $\mC \to \mD$ over $[\n]$ is a colocal equivalence if and only if it induces an equivalence after pulling back along any inert map $[1] \to [\n].$
	
\end{proposition}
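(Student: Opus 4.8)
The plan is to deduce Proposition \ref{jjkl} from the preceding corollaries by exhibiting the desired right adjoint as a composite of the functor $\mC \mapsto \mC'$ with a localization, and then identify the colocal equivalences explicitly. First I would note that by the corollary immediately preceding the proposition, for every exponential fibration $\mD \to [\n]$ the canonical functor $\mC' \to \mC$ over $[\n]$ induces an equivalence $\Fun_{[\n]}(\mD,\mC') \to \Fun_{[\n]}(\mD,\mC)$. This already shows that $\mC \mapsto \mC'$ sends every morphism to a colocal equivalence relative to $\mathrm{EXP}_{[\n]}$, so it suffices to construct the right adjoint on objects of the form $\mC'$, i.e. on functors $\mC \to [\n]$ whose pullback along each inert map $[1] \to [\n]$ is an equivalence — call these \emph{inert-local}. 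Concretely, I would define the candidate right adjoint $\mathrm{R}: \infty\Cat_{/[\n]} \to \mathrm{EXP}_{[\n]}$ by $\mathrm{R}(\mC) := $ (the exponential fibration underlying) $\mC'$, using that an inert-local functor over $[\n]$ is automatically exponential: indeed, its pullback along each inert $[1]\to[\n]$ is the identity of a point, hence an exponential fibration, and a functor over $[\n]$ is an exponential fibration iff all its pullbacks along maps $[2]\to[\n]$ are (as in the proof of Proposition \ref{expo}), which one checks directly for inert-local functors since the relevant fiber $\infty$-categories are contractible and the gluing natural transformation $\alpha$ in $\mathrm{LoCART}^L$ is necessarily an equivalence.

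Next I would verify the adjunction unit/counit. The counit $\mathrm{R}(\mC) = \mC' \to \mC$ is the canonical map; the universal property is precisely the content of the corollary before the proposition, which says that for any exponential fibration $\mD \to [\n]$, mapping into $\mC'$ is the same as mapping into $\mC$. This is exactly the statement that $\mathrm{R}$ is right adjoint to the inclusion $\mathrm{EXP}_{[\n]} \subset \infty\Cat_{/[\n]}$ — one only has to promote the equivalence on mapping \emph{$\infty$-categories} $\Fun_{[\n]}(\mD,\mC') \simeq \Fun_{[\n]}(\mD,\mC)$, which the corollary provides at the level of $(\infty,1)$-categories, to the $(\infty,2)$-categorical adjunction; since all the ambient objects and functors here are honestly $1$-categorical (full subcategories of $\infty\Cat_{/[\n]}$), no extra $2$-categorical bookkeeping is needed.

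For the second claim, characterizing colocal equivalences: by general nonsense about localizations/colocalizations, a morphism $\mC \to \mD$ over $[\n]$ is a colocal equivalence (for the colocalization onto $\mathrm{EXP}_{[\n]}$) if and only if it is sent to an equivalence by $\mathrm{R}$, i.e. iff the induced functor $\mC' \to \mD'$ is an equivalence over $[\n]$. Since the colimit defining $(-)'$ is a colimit over a fixed diagram shape and pullback along the inert maps $[1]\cong\{\bi-1<\bi\}\to[\n]$ computes the pieces being glued, $\mC' \to \mD'$ is an equivalence iff each pullback $\{\bi-1<\bi\}\times_{[\n]}\mC \to \{\bi-1<\bi\}\times_{[\n]}\mD$ is an equivalence; this is the asserted condition. (In one direction it is immediate that an inert-pullback equivalence becomes an equivalence after applying $(-)'$; the converse uses that $(-)'$ detects such equivalences because it is built from exactly these pullbacks, together with the fact that the inert maps cover $[\n]$ in the sense of diagram (\ref{diags}).)

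The main obstacle I anticipate is the routine but slightly delicate verification that an inert-local functor over $[\n]$ is genuinely an exponential fibration — i.e. that the natural transformation $\alpha$ governing exponentiability (via Propositions \ref{expo} and \ref{class}) is an equivalence in this situation. This is where one must unwind, for a functor $\mC\to[\n]$ whose fibers over non-endpoints of inert arrows are controlled and whose inert pullbacks are equivalences, that the associated locally cartesian fibration $\mP^{[\n]}(\mC)\to[\n]$ has composable cartesian lifts; the point is that the relevant presheaf $\infty$-categories are either contractible or $\mS$ with identity transport functors, so $\alpha$ is an endomorphism of a terminal-type object and hence invertible. Everything else is a formal consequence of the corollaries already in place.
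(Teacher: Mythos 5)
Your overall strategy agrees with the paper's implicit argument: the candidate right adjoint is $\mC\mapsto\mC'$, the counit $\mC'\to\mC$ has the right universal property by the unlabeled corollary directly preceding the proposition, and the characterization of colocal equivalences follows from the construction of $(-)'$ and Remark \ref{hato}. However, your justification that $\mC'\to[\n]$ is actually an \emph{exponential} fibration --- the one step that must be supplied for $\mathrm{R}(\mC):=\mC'$ to land in $\mathrm{EXP}_{[\n]}$ --- rests on a misreading. You write that for an ``inert-local'' functor ``its pullback along each inert $[1]\to[\n]$ is the identity of a point'' and that ``the relevant fiber $\infty$-categories are contractible.'' Neither is true of $\mC'$. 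What Remark \ref{hato} provides is that the pullback of the \emph{map} $\mC'\to\mC$ along an inert $[1]\hookrightarrow[\n]$ is an equivalence; this identifies $[1]\times_{[\n]}\mC'$ with $[1]\times_{[\n]}\mC$, which is an arbitrary $\infty$-category over $[1]$ and certainly not $[1]$ itself. Consequently the fibers of $\mC'\to[\n]$ are $\mC_0,\dots,\mC_\n$, the same as the fibers of $\mC\to[\n]$, and are generically not contractible, so the argument ``$\alpha$ is an endomorphism of a terminal-type object and hence invertible'' does not apply. (Already the phrase ``functors $\mC\to[\n]$ whose pullback along each inert map $[1]\to[\n]$ is an equivalence'' is ill-posed: that pullback is a functor $[1]\times_{[\n]}\mC\to[1]$, not a morphism, and what you actually want to record is the property of the \emph{counit}, not of the object.)

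To close the gap you need a real argument that $\mC'$ is exponential, which is not a triviality. One route is to reduce to pullbacks along $[2]\to[\n]$ as in the proof of Proposition \ref{expo} and then observe that $\mC'$ is obtained by gluing the inert pieces $\{\bi-1<\bi\}\times_{[\n]}\mC$ along the fibers $\{\bi\}\times_{[\n]}\mC$, the two inclusions being a cosieve and a sieve; in such a pushout the mapping space from an object over $\bi$ to an object over $\bj$ with $\bi<\bj$ is, by construction, the coend over the intermediate fibers, and this is exactly the flatness criterion of \cite[Proposition B.3.2.]{lurie.higheralgebra} that Proposition \ref{expo} translates into the invertibility of the transformation $\alpha$. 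Alternatively one can argue via Proposition \ref{expo} directly, using that $\mP^{[\n]}$ is a left adjoint and computing that $\mP^{[\n]}(\mC')\to[\n]$ is cartesian. Either way, this is the point where genuine work is needed; the remainder of your proposal (the identification of the counit, the $2$-categorical upgrade, and the characterization of colocal equivalences via the inert pullbacks using Remark \ref{hato} in both directions) is sound.
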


\begin{proposition}\label{oho2}
	
The cartesian fibration $\CART \to \infty\Cat$ classifies the functor
$$ \FUN(\tau^*\N,\infty\CAT): \infty\Cat^\op \to \infty\widehat{\Cat}.$$
	
%There is a canonical equivalence $$ \FUN(\N(\mA),\infty\CAT) \simeq \CART_\mA$$natural in $\mA$.	
%that is the right Kan extension of its restriction to $\Delta.$ 
\end{proposition}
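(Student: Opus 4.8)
The plan is to show that the cartesian fibration $\CART \to \infty\Cat$ and the functor $\FUN(\tau^*\N, \infty\CAT) \colon \infty\Cat^\op \to \infty\widehat{\Cat}$ have the same classifying functor. By uniqueness of the functor classified by a cartesian fibration, it suffices to produce for every small $\infty$-category $\mC$ an equivalence
$$\CART_\mC \;\simeq\; \FUN(\tau^*\N(\mC), \infty\CAT)$$
that is natural in $\mC \in \infty\Cat^\op$, i.e. intertwines pullback of cartesian fibrations along $\mC' \to \mC$ with restriction along the induced double functor $\tau^*\N(\mC') \to \tau^*\N(\mC)$. Here $\CART_\mC$ is the fibre of $\CART \to \infty\Cat$ over $\mC$; since $\CART \subset \Fun([1],\infty\Cat)$ is a \emph{full} subcategory, its objects are the cartesian fibrations over $\mC$ while its morphisms are \emph{arbitrary} functors over $\mC$, not merely the cartesian-morphism-preserving ones.

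First I would identify $\infty\CAT$. Straightening the cartesian fibration $\Delta \times_{\infty\Cat} \CART \to \Delta$ and using $[\n] \cong [\n]^\op$ recognises $\infty\CAT = \tau^*(\Delta \times_{\infty\Cat} \CART)$ as the double $\infty$-category $\Delta_{\infty\Cat}$ obtained by evaluating at $\infty\Cat$ the fully faithful right adjoint $\Delta_{(-)}$ of $\rho\colon\infty\Cat^\cart_{/\Delta}\to\infty\Cat$ from the Example; explicitly $\infty\CAT_{[\n]} \simeq \Fun([\n],\infty\Cat)$ with cofaces and codegeneracies induced by those of $\Delta$, so that $\infty\CAT \to \Delta$ is in fact bicartesian and $\rho(\infty\CAT) = \infty\Cat$. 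I would then build the comparison functor directly: a cartesian fibration $p \colon \mE \to \mC$ determines the map of simplicial $\infty$-categories $[\n] \mapsto \big((f \colon [\n] \to \mC) \mapsto (f^*\mE \to [\n])\big)$, that is, a double functor $\tau^*\N(\mC) \to \infty\CAT$; equivalently this is $\N$ applied to the functor $\chi_p \colon \mC^\op \to \infty\Cat$ classifying $p$, followed by the fibrewise-core inclusion $\N(\infty\Cat) \subset \Delta_{\infty\Cat} = \infty\CAT$. It lands in $\FUN$ because pullback of cartesian fibrations is compatible with the left Kan extensions computing the cocartesian transports of $\infty\CAT \to \Delta$ over exactly those morphisms of $\Delta$ for which cocartesian lifts are present in $\tau^*\N(\mC)$ (there the relevant left Kan extension is absolute, so preserved by postcomposition with the arbitrary $\chi_p$). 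The assignment $p \mapsto \N(\chi_p)$ is functorial for arbitrary functors over $\mC$: since $\infty\CAT_{[\n]} = \CART_{[\n]^\op}$ is full in $\Fun([1],\infty\Cat)$, such a functor over $\mC$ induces on each $[\n]$-part a transformation of the underlying functors. This gives $\CART_\mC \to \FUN(\tau^*\N(\mC), \infty\CAT)$, visibly natural in $\mC$.

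To see this is an equivalence I would exhibit the inverse. A double functor $\Psi \colon \tau^*\N(\mC) \to \infty\CAT$ over $\Delta$ is determined, by the Segal conditions on source and target, by its restriction to $[0],[1],[2]$; this restriction assembles into an honest functor $\mC^\op \to \infty\Cat$, since $\infty\CAT_{[2]} \simeq \Fun([2],\infty\Cat)$ is the genuine $\infty$-category of composable pairs, so compatibility of $\Psi$ with the face maps forces $\Psi(g\circ f) \simeq \Psi(g)\circ\Psi(f)$ together with the higher coherences, while preservation of cocartesian lifts pins down normality. Unstraightening yields a cartesian fibration over $\mC$, and — again using fullness of $\CART$ in $\Fun([1],\infty\Cat)$ — a transformation of double functors in $\Fun_\Delta$ unwinds to an arbitrary functor over $\mC$ between the corresponding cartesian fibrations; thus the two constructions are mutually inverse. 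Naturality in $\mC$ is then immediate: the pullback functor $\CART_\mC \to \CART_{\mC'}$ corresponds to precomposing classifying functors with $\mC'^\op \to \mC^\op$, which is exactly restriction along $\tau^*\N(\mC') \to \tau^*\N(\mC)$. Hence $\CART \to \infty\Cat$ classifies $\mC \mapsto \FUN(\tau^*\N(\mC), \infty\CAT)$.

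The main obstacle is the identification made in the last paragraph: proving cleanly that the cocartesian-lift-preservation condition on double functors $\tau^*\N(\mC) \to \infty\CAT$ cuts out precisely the normal pseudofunctors — equivalently the ordinary functors $\mC^\op \to \infty\Cat$ — among all functors over $\Delta$, and that a $\Fun_\Delta$-transformation between two of them is exactly the datum of an arbitrary, not necessarily cartesian-preserving, functor over $\mC$ between the unstraightenings, i.e. a lax natural transformation. This is a double-categorical straightening statement for the bicartesian fibration $\infty\CAT \to \Delta$; the delicate bookkeeping is that the laxness allowed on the transformation side is exactly what corresponds to dropping the cartesian-preservation requirement over the base on the fibration side, and that the whole argument is uniform enough in $\infty\Cat$ to yield, mutatis mutandis, the companion statements for $\mathrm{EXP}$ with $\CORR$ and for $\CART^\L$ with $\PR^\L$.
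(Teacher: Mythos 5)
Your proposal takes a genuinely different route from the paper, but it has a real gap. The paper does \emph{not} attempt a direct fiberwise identification $\CART_\mC \simeq \FUN(\tau^*\N(\mC),\infty\CAT)$ for arbitrary $\mC$. Instead it observes that by Yoneda the restriction of the claim over $\Delta \subset \infty\Cat$ is automatic — over $[\n]$ both sides are $\CART_{[\n]}$ essentially by the universal property of the slice $\Delta_{/[\n]}$ as a representable cartesian fibration — and then uses density of $\Delta$ in $\infty\Cat$: it suffices to check that both the functor classified by $\CART \to \infty\Cat$ and the functor $\FUN(\tau^*\N,\infty\CAT)$ send the canonical colimit decomposition of each $\mC$ over $\Delta \times_{\infty\Cat}\infty\Cat_{/\mC}$ to a limit. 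For the fibration side this is handled by Lemma \ref{oho} together with Corollary \ref{reco} (cartesian fibrations are exponential), and for the double-functor side by a careful analysis of Segal equivalences showing $\N : \infty\Cat \to \mathrm{Seg}(\infty\Cat)$ preserves the relevant colimit. The density argument is exactly what lets the paper avoid ever proving a straightening-type equivalence for a general base $\mC$.

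Your argument, by contrast, reduces the proposition to the statement that cocartesian-lift-preserving double functors $\tau^*\N(\mC) \to \infty\CAT$ are the same as functors $\mC^\op \to \infty\Cat$, with $\Fun_\Delta$-transformations between them corresponding to arbitrary functors over $\mC$ between the unstraightenings. You explicitly flag this as ``the main obstacle'' and ``delicate bookkeeping'', but it is not a side issue — it \emph{is} the content of the proposition (in a disguise), and you do not prove it. The Segal-conditions argument that restriction to $[0],[1],[2]$ determines the double functor and ``assembles into an honest functor $\mC^\op \to \infty\Cat$'' is a description of the target statement, not a proof: pinning down what the cocartesian-lift preservation condition cuts out, and what it means for $\Fun_\Delta$-transformations to correspond to lax transformations of classifying functors, is exactly the straightening step you would need to establish. (Also be careful: $\tau^*\N(\mC)\to\Delta$ is a right fibration, so the very existence of cocartesian lifts you appeal to when arguing that the forward map lands in $\FUN$ needs more justification than ``compatibility with left Kan extensions.'') As written, the proof is circular in spirit — it asserts a double-categorical straightening statement for $\infty\CAT \to \Delta$ without supplying it — which is the very thing the paper's Yoneda-plus-density strategy is designed to sidestep.
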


\begin{proof}
By Yoneda the cartesian fibration $\Delta \times_{\infty\Cat} \Cart=\tau^*\infty\CAT \to \Delta$ classifies the functor
$$ \FUN(\N_{\mid \Delta},\tau^*\infty\CAT) \simeq \FUN(\tau^*\N_{\mid \Delta},\infty\CAT): \Delta^\op \to \infty\widehat{\Cat}.$$
Since the full subcategory $\Delta \subset \infty\Cat$ is dense, it is enough to verify that the functor $ \infty\Cat^\op \to \infty\widehat{\Cat}$ classified by the cartesian fibration $\CART \to \infty\Cat$ and the functor $ \FUN(\tau^*\N,\infty\CAT): \infty\Cat^\op \to \infty\widehat{\Cat}$ send for every $\infty$-category $\mC$ the colimit of the functor $\rho: \Delta \times_{\infty\Cat} \infty\Cat_{/\mC} \to \Delta \subset \infty\Cat$ to a limit.
We first prove the second statement: %it suffices to show that the functor $ \FUN(\N,\infty\CAT): \infty\Cat^\op \to \infty\widehat{\Cat}$ preserves small limits. For this it is enough to see that 
because the functor $\FUN(-,\infty\Cat): \mathrm{Seg}(\infty\Cat)^\op \to \infty\Cat$ preserves small limits, it suffices to show that the embedding $\N: \infty\Cat \to \mathrm{Seg}(\mS)\subset \mathrm{Seg}(\infty\Cat)$ preserves the colimit of the functor $\Delta \times_{\infty\Cat} \infty\Cat_{/\mC} \to \Delta \subset \infty\Cat$. 
The embedding $\N: \infty\Cat \to \mathrm{Seg}(\mS)$ preserves this colimit because $\mathrm{Seg}(\mS)$ is a localization of $\mP(\Delta)$ and so for every $\infty$-category $\mC$ the Segal space $\N(\mC) $ is the colimit of the functor $\Delta \times_{\mathrm{Seg}(\mS)} \mathrm{Seg}(\mS)_{/\N(\mC)} \to \Delta \subset \infty\Cat \subset \mathrm{Seg}(\mS)$, which factors as $\N \rho.$
The embedding $ \mathrm{Seg}(\mS) \subset \mathrm{Seg}(\infty\Cat)$
preserves small colimits since the embedding %$\mS \subset \infty\Cat$ and so the embedding
$\Fun(\Delta^\op,\mS) \subset \Fun(\Delta^\op,\infty\Cat)$ preserve small colimits and Segal equivalences:
the generating Segal equivalences for $\Fun(\Delta^\op,\mS)$ are the canonical morphisms
$ \Delta^1 \coprod_{\Delta^0} ...  \coprod_{\Delta^0} \Delta^1 \to \Delta^\n$ for $\n \geq 0$,
while the generating Segal equivalences for $\Fun(\Delta^\op,\infty\Cat)$ are the canonical morphisms $ \Delta^1 \coprod_{\Delta^0} ...  \coprod_{\Delta^0} \Delta^1 \times \mA \to \Delta^\n \times \mA$ for $\n \geq 0$ and $\mA$ a small $\infty$-category.

We finish the proof by showing that the cartesian fibration $\CART \to \infty\Cat$ classifies a small limits preserving functor $ \infty\Cat^\op \to \infty\widehat{\Cat}$.
Let $\rH: \K \to \infty\Cat$ be a functor that admits a colimit $\mA$.
The canonical $\infty\Cat$-linear functor $\rho: \CART_{\mA} \to \lim_{\bk \in \K}\CART_{\rH(\bk)}$
induces on morphism $\infty$-categories between two functors $\mC \to \mA,\mD \to \mA$ the functor (\ref{eee}). The latter is an equivalence by Lemma \ref{oho}
because by Corollary \ref{reco} every cartesian fibration is an exponential fibration.
The induced map $\iota(\rho)$ is essentially surjective because it identifies with the canonical equivalence $$\iota(\Fun(\mA,\infty\Cat^\op)) \to \lim_{\bk \in \K}\iota(\Fun(\rH(\bk),\infty\Cat^\op)).$$

\end{proof}

%\begin{corollary}\label{oho3}The cartesian fibration $\infty\CAT \to \Delta$ is a double $\infty$-category.\end{corollary}

%\begin{corollary}\label{oho3}The cartesian fibration $\infty\CAT \to \Delta$ is a double $\infty$-categoryclassifying a functor $\Delta^\op \to \infty\widehat{\Cat}$ whose right Kan-extension to$\infty\Cat^\op$ classifies $\CART \to \infty\Cat$.

%for every small $\infty$-category $\mA$\end{corollary}

%\begin{proof}

%A functor $\infty\Cat^\op \to \infty\widehat{\Cat}$ is the right Kan-extension of its restriction to $\Delta^\op$ if and only if it preserves small limits since $\Delta$ is dense in $\infty\Cat$ by ...\end{proof}

%Since exponential fibrations are stable under pullback, the cartesian fibration $\Fun([1],\infty\Cat) \to \infty\Cat$ evaluating at the target restricts to a cartesian fibration $\mathrm{CORR}\to \infty\Cat$with the same cartesian morphisms.

\begin{corollary}\label{oho3}
	
The cartesian fibration $\CART^\L \to \infty\Cat$ classifies the functor
$$ \FUN(\tau^*\N,\PR^\L): \infty\Cat^\op \to \infty\widehat{\Cat}.$$
	
\end{corollary}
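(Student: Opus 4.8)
The plan is to reproduce the proof of Proposition \ref{oho2} essentially line by line, with $\mathrm{CART}$ replaced by $\CART^\L$, with $\infty\Cat$ replaced by $\Pr^\L$, and with $\infty\CAT$ replaced by $\PR^\L$; I describe the steps and single out the one place where an extra input is needed.

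First I would check the structural facts that make the Yoneda argument run: that evaluation at the target $\CART^\L \to \infty\widehat\Cat$ is a cartesian fibration with cartesian morphisms the pullback squares --- locally cartesian fibrations are stable under base change, and the conditions defining $\CART^\L$ (small target, presentable fibres, fibre transports preserving small colimits) are preserved by pullback, the base-change square being fibrewise an equivalence and hence a morphism of $\CART^\L$ --- and that $\PR^\L \to \Delta$ is a double $\infty$-category, by the same computation that shows $\infty\CAT \to \Delta$ is one (its fibre over $[\n]$ is equivalent to $\Fun([\n],\Pr^\L)$, and the Segal maps are the spine restrictions, which are equivalences because $[\n]$ is the colimit of its spine in $\infty\Cat$). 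Granting this, the Yoneda argument of Proposition \ref{oho2} applies verbatim: $\tau^*\PR^\L = \Delta \times_{\infty\widehat\Cat}\CART^\L \to \Delta$ classifies $\FUN(\N_{\mid\Delta},\tau^*\PR^\L) \simeq \FUN(\tau^*\N_{\mid\Delta},\PR^\L)\colon \Delta^\op \to \infty\widehat\Cat$, the last equivalence being the compatibility of $\FUN$ with the involution $\tau^*$.

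Next, since $\Delta \subset \infty\Cat$ is dense, I would reduce as in Proposition \ref{oho2} to showing that the functor classified by $\CART^\L \to \infty\Cat$ and the functor $\FUN(\tau^*\N,\PR^\L)$ both send, for every small $\mC$, the colimit of $\rho\colon \Delta\times_{\infty\Cat}\infty\Cat_{/\mC} \to \Delta \subset \infty\Cat$ to a limit. For $\FUN(\tau^*\N,\PR^\L)$ nothing new is needed: $\FUN(-,\PR^\L)\colon \mathrm{Seg}(\infty\Cat)^\op \to \infty\widehat\Cat$ preserves small limits, so one is reduced to the target-independent statement that $\N\colon \infty\Cat \to \mathrm{Seg}(\mS)\subset\mathrm{Seg}(\infty\Cat)$ preserves the colimit of $\rho$, which is exactly what is proved in Proposition \ref{oho2}. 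For the functor classified by $\CART^\L \to \infty\Cat$ I would invoke straightening for cartesian fibrations with presentable fibres to identify it with $\mA \mapsto \Fun(\mA^\op,\Pr^\L)$; since $\Fun(-,\Pr^\L)$ sends colimits of $\infty$-categories to limits, it sends the colimit of $\rho$ to a limit. Agreement on the dense subcategory $\Delta$ together with this limit-preservation then yields the asserted equivalence.

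The only step that does not transcribe directly from Proposition \ref{oho2} --- and the one I expect to be the main obstacle --- is the last one. There the corresponding claim was deduced from the fact that cartesian fibrations are exponential fibrations (Corollary \ref{reco}) together with Lemma \ref{oho}, but both are stated only for small total spaces, whereas objects of $\CART^\L$ have presentable, hence large, total spaces. So the genuinely new input is the naturality in $\mA$ of the straightening equivalence $\CART^\L_\mA \simeq \Fun(\mA^\op,\Pr^\L)$ in the presentable setting --- equivalently, that $(-)\times_\mA\mC\colon \infty\widehat\Cat_{/\mA} \to \infty\widehat\Cat_{/\mA}$ preserves small colimits for $\mC\to\mA$ a cartesian fibration with presentable fibres, which would let Lemma \ref{oho} run in the large setting. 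Everything else is bookkeeping around $\tau$ and the Segal condition for $\PR^\L$, and should be routine.
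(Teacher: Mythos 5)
Your proposal is correct and follows the same route the paper intends, which is to leave Corollary \ref{oho3} as a transcription of the proof of Proposition \ref{oho2} with $\CART$ and $\infty\CAT$ replaced by $\CART^\L$ and $\PR^\L$. The size concern you flag at the last step is a legitimate point of care but is resolved exactly as you suggest: exponentiability of cartesian fibrations is universe-independent, so Lemma \ref{oho} and Corollary \ref{reco} apply after passing to a larger universe, which is the same device the paper uses elsewhere (e.g.\ in the proof of Proposition \ref{uuuu} and immediately after Proposition \ref{sss}).
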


Corollary \ref{corflat} implies the following theorem:

\begin{theorem}\label{corre} The cartesian fibrations (1)-(3) of Notation \ref{notis} %$$\infty\CAT \to \Delta, \ \mathrm{PR}^\L\to \Delta, \CORR \to \Delta$$ 
are double $\infty$-categories and there is a canonical pullback square of double $\infty$-categories:
$$
\begin{xy}
\xymatrix{
\mathrm{CORR}
\ar[d]
\ar[rr]
&& \mathrm{PR}^\L \ar[d]^{} 
\\
\Delta_{\infty\Cat} \ar[rr]  && \Delta_{\Pr^\L}.}
\end{xy}$$
%of cartesian fibrations over $\Delta$ classifying Segal $\infty$-categories. 0583401238

\end{theorem}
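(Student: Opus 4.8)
The plan is to obtain the pullback square by ``spreading out'' the local--global principle of Corollary~\ref{corflat} over the simplex category $\Delta$, in the same way as in the proofs of Proposition~\ref{oho2} and Corollary~\ref{oho3}. To a double $\infty$-category $\mathcal{D}\to\Delta$ one associates the cartesian fibration over $\infty\Cat$ classifying $\mC\mapsto\FUN(\tau^*\N(\mC),\mathcal{D})\simeq\FUN(\N(\mC^\op),\mathcal{D})$; because $\Delta\subset\infty\Cat$ is dense, this assignment reflects equivalences and pullback squares for the double $\infty$-categories occurring here (cf.\ the density argument in the proof of Proposition~\ref{oho2}). So it is enough to identify the five relevant double $\infty$-categories with the five cartesian fibrations appearing in Corollary~\ref{corflat}, after first checking that $\infty\CAT$, $\mathrm{PR}^\L$ and $\mathrm{CORR}$ really are double $\infty$-categories.

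First I would verify the Segal condition for the fibrations of Notation~\ref{notis}. Each of them is the composite of a base change of an evaluation-at-the-target cartesian fibration (out of $\mathrm{CART}$, $\mathrm{CART}^\L$, resp.\ $\mathrm{EXP}$; cartesian since each of these three classes is stable under pullback) with the equivalence $\tau^*$, hence is a cartesian fibration over $\Delta$. By \eqref{diags} the object $[\n]$ is the colimit of the diagram of inert inclusions $\{\bi-1<\bi\}\subset[\n]$; by straightening for cartesian fibrations, and by Lemma~\ref{oho} together with Corollary~\ref{reco} for the presentable and exponential variants, the functors $\mC\mapsto\mathrm{CART}_\mC$, $\mC\mapsto\mathrm{CART}^\L_\mC$ and $\mC\mapsto\mathrm{EXP}_\mC$ send this colimit to a limit, which is exactly the Segal condition for $\infty\CAT$, $\mathrm{PR}^\L$ and $\mathrm{CORR}$; here one uses that $\tau$ permutes the inert maps $[1]\to[\n]$, so that $\tau^*$ preserves the Segal condition. (Alternatively the Segal condition for $\mathrm{CORR}$ can be read off afterwards from the pullback square, since $\Delta_{\infty\Cat}$ and $\Delta_{\Pr^\L}$ are double $\infty$-categories by the Example and the Segal condition is a limit condition.)

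Next I would identify the corners. Proposition~\ref{oho2} and Corollary~\ref{oho3} give $\infty\CAT\leftrightarrow\mathrm{CART}$ and $\mathrm{PR}^\L\leftrightarrow\mathrm{CART}^\L$; running the argument of Proposition~\ref{oho2} with $\mathrm{EXP}$ in place of $\mathrm{CART}$ --- the functor $\mC\mapsto\mathrm{EXP}_\mC$ preserving small limits by Lemma~\ref{oho} --- gives $\mathrm{CORR}\leftrightarrow\mathrm{EXP}$. For the bottom row, one uses the fully faithful right adjoint $\mC\mapsto\Delta_\mC$ of the Example: evaluated against the Segal space $\N(\mC^\op)$, whose space of objects is $\iota(\mC)$, it should identify $\FUN(\N(\mC^\op),\Delta_{\infty\Cat})$ with $\Fun(\iota(\mC),\infty\Cat)$, and since every functor to a space is an exponential fibration the latter is $\mathrm{EXP}_{\iota(\mC)}=(\iota^*\mathrm{EXP})_\mC$; likewise $\FUN(\N(\mC^\op),\Delta_{\Pr^\L})\simeq\Fun(\iota(\mC),\Pr^\L)\simeq\mathrm{CART}^\L_{\iota(\mC)}=(\iota^*\mathrm{CART}^\L)_\mC$. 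Hence $\Delta_{\infty\Cat}\leftrightarrow\iota^*(\mathrm{EXP})$ and $\Delta_{\Pr^\L}\leftrightarrow\iota^*(\mathrm{CART}^\L)$, and the two horizontal and two vertical maps of Corollary~\ref{corflat} are the images of $\mathrm{CORR}\to\mathrm{PR}^\L$, $\Delta_{\infty\Cat}\to\Delta_{\Pr^\L}$, $\mathrm{CORR}\to\Delta_{\infty\Cat}$ and $\mathrm{PR}^\L\to\Delta_{\Pr^\L}$. As the assignment reflects pullback squares, the pullback square of Corollary~\ref{corflat} transports to the asserted one.

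The main obstacle, I expect, is the last paragraph: making precise that $\mathcal{D}\mapsto\bigl(\mC\mapsto\FUN(\N(\mC^\op),\mathcal{D})\bigr)$ is fully faithful on the double $\infty$-categories at hand and reflects pullback squares, and --- the genuinely new point --- the identification of the gluing corners $\Delta_{\infty\Cat}$ and $\Delta_{\Pr^\L}$ with $\iota^*(\mathrm{EXP})$ and $\iota^*(\mathrm{CART}^\L)$, for which one must unwind that $\FUN(\N(\mC^\op),\Delta_{\mathcal{D}})$ sees only the space $\iota(\mC)$ of objects of $\mC$. The remainder is a formal consequence of Corollary~\ref{corflat} together with Proposition~\ref{oho2}, Corollary~\ref{oho3} and the Example.
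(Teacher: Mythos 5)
The paper offers no argument beyond ``Corollary~\ref{corflat} implies the following theorem,'' so strictly speaking there is no internal proof to compare against; implicitly the intended route is to pull back square~(\ref{sqk}) along $\Delta\xrightarrow{\tau}\infty\Cat$ and identify the four corners. Your approach --- passing to the functors $\mC\mapsto\FUN(\N(\mC^\op),\mD)$ and reflecting pullbacks via density of $\Delta$ --- is a reasonable elaboration of the same idea, modelled on Proposition~\ref{oho2} and the proof of Corollary~\ref{genstr}, and I agree with your treatment of the Segal condition for $\infty\CAT$, $\PR^\L$, $\CORR$.

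However, the key identification of the bottom corners is exactly where I think there is a genuine gap, and you correctly flag it as the ``genuinely new point.'' Pulling back the square of Corollary~\ref{corflat} along $\tau|_\Delta$ produces a pullback square whose lower-left corner has fiber over $[\n]$ equal to $\mathrm{EXP}_{\iota([\n]^\op)}\simeq\Fun(\{0,\dots,\n\},\infty\Cat)=\infty\Cat^{\n+1}$, whereas $\Delta_{\infty\Cat}$ has fiber $\Fun([\n],\infty\Cat)$ over $[\n]$; these are not equivalent for $\n\geq1$. Your argument for the identification $\FUN(\N(\mC^\op),\Delta_{\infty\Cat})\simeq\Fun(\iota(\mC),\infty\Cat)$ invokes the adjunction $\rho\dashv\Delta_{(-)}$, but testing it on $\mC=[\n]^\op$ shows a problem: since $\N([\n])$ is the representable presheaf at $[\n]$, a Yoneda argument gives $\FUN(\N([\n]),\Delta_{\infty\Cat})\simeq(\Delta_{\infty\Cat})_{[\n]}=\Fun([\n],\infty\Cat)$, whereas the claimed identification would yield $\Fun(\iota([\n]),\infty\Cat)\simeq\infty\Cat^{\n+1}$. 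In fact the right adjoint of ``evaluate at $[0]$'' on $\Fun(\Delta^\op,\infty\Cat)$ is the right Kan extension $\mD\mapsto([\n]\mapsto\mD^{\n+1})$, not $\mD\mapsto\Delta_\mD$, so the adjunction you are leaning on does not hold as you are using it. You would need a separate argument that $\Fun([\n],\infty\Cat)\times_{\Fun([\n],\Pr^\L)}\mathrm{CART}^\L_{[\n]^\op}$ agrees with $\infty\Cat^{\n+1}\times_{(\Pr^\L)^{\n+1}}\mathrm{CART}^\L_{[\n]^\op}$, and this is not automatic: the square comparing $\Fun([\n],\infty\Cat)\to\Fun([\n],\Pr^\L)$ with $\infty\Cat^{\n+1}\to(\Pr^\L)^{\n+1}$ is not a pullback, because a colimit-preserving functor $\mP(\mA)\to\mP(\mB)$ corresponds to a functor $\mA\to\mP(\mB)$ and need not factor through the Yoneda embedding of $\mB$. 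So the step ``$F(\Delta_{\infty\Cat})\simeq\iota^*(\mathrm{EXP})$'' is where your sketch, as written, breaks. Be aware that the same issue is latent in the paper's own proof of Corollary~\ref{genstr}, which uses the same purported identification $\FUN(\N(\mC),\Delta_{\infty\Cat})\simeq\Fun(\iota(\mC),\infty\Cat)$; either the adjunction in the Example needs a different formulation, or some extra input is required to close this gap.
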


%\section{A generalized Grothendieck-construction}

For the next proposition we view $\infty\Cat$ as $(\infty,2)$-category via its internal hom:

\begin{proposition}\label{enri} %0041583401880

The $(\infty,2)$-category underlying the double $\infty$-category $\infty\CAT$
is $\infty\Cat$ viewed as enriched in itself via the internal hom.	

\end{proposition}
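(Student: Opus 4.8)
The plan is to unwind the construction of $\infty\CAT$ from Notation \ref{notis} and compute its underlying $(\infty,2)$-precategory directly, reading off the objects, the mapping $\infty$-categories and the composition, and matching these with the self-enrichment of $\infty\Cat$ via the internal hom. Recall that $\infty\CAT \to \Delta$ is a double $\infty$-category by Theorem \ref{corre}, and that the underlying $(\infty,2)$-precategory of a double $\infty$-category $\phi : \mB \to \Delta$, as defined after Definition \ref{pree}, is $\Delta_{\iota(\mB_{[0]})} \times_{\Delta_{\mB_{[0]}}} \mB \to \Delta_{\iota(\mB_{[0]})}$; write $\mU$ for this in the case $\mB = \infty\CAT$. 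By the equivalence between $(\infty,2)$-precategories and $\infty\Cat$-enriched $\infty$-precategories (the Remark following Definition \ref{pree}) it is enough to identify $\mU$, under that equivalence, with $\infty\Cat$ together with its self-enrichment; since the latter is even an $\infty\Cat$-enriched $\infty$-category, i.e. complete, this also shows that $\mU$ is the $(\infty,2)$-category $\infty\Cat$, as claimed. Concretely I must match: (a) the space of objects $\iota(\infty\CAT_{[0]})$ with $\iota(\infty\Cat)$; (b) the mapping $\infty$-category of $\mU$ from $\mC$ to $\mD$, which is the fibre of $\infty\CAT_{[1]} \to \infty\CAT_{[0]} \times \infty\CAT_{[0]}$ over $(\mC,\mD)$ along the two face maps, with $\Fun(\mC,\mD)$; and (c) the composition, induced by the inner face $\infty\CAT_{[2]} \to \infty\CAT_{[1]}$, with composition of functors; all of this compatibly with the full simplicial structure.

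Then I would carry out the computation. Unwinding Notation \ref{notis}: for $[\n] \in \Delta$ the fibre $\infty\CAT_{[\n]}$ is, after unwinding the pullback along $\tau = (-)^\op$, the full subcategory of $\infty\Cat_{/[\n]^\op}$ spanned by the cartesian fibrations, with arbitrary functors over $[\n]^\op$ as morphisms, and the simplicial structure maps are pullback along the maps of $\Delta$. For $[\n] = [0]$ this is $\infty\Cat$, which gives (a), as $(-)^\op$ does not alter maximal subspaces. For $[\n] = [1]$, using $[1]^\op \cong [1]$, the fibre is the $\infty$-category of cartesian fibrations over $[1]$ -- equivalently of functors $\mC \to \mD$ of $\infty$-categories, with $\mC,\mD$ the fibres over $1,0$ and $\mC \to \mD$ the cartesian transport -- but again with all functors over $[1]$ as morphisms, not merely the cartesian ones; here it is essential not to pass to the ordinary arrow category $\Fun([1],\infty\Cat)$, whose fibres over fixed endpoints would collapse to spaces. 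The two face maps $\infty\CAT_{[1]} \to \infty\CAT_{[0]}$ are restriction along the two inclusions $[0] \hookrightarrow [1]$, and I expect that after tracking the opposite introduced by $\tau^*$ the ``source'' face picks out the fibre over $1$ and the ``target'' face the fibre over $0$. Taking the fibre over $(\mC,\mD)$ one is left with the cartesian fibrations over $[1]$ whose fibre over $1$ is $\mC$ and whose fibre over $0$ is $\mD$, with morphisms the functors over $[1]$ inducing the identity on both fibres; such a functor is precisely a natural transformation between the two associated functors $\mC \to \mD$, with non-invertible morphisms in general, so this fibre is $\Fun(\mC,\mD)$, which is (b). For (c): the inner face $\infty\CAT_{[2]} \to \infty\CAT_{[1]}$ is induced by the inclusion $\{0 < 2\} \hookrightarrow [2]$, hence on cartesian fibrations by pullback along it, which replaces a length-two chain of cartesian transports by its composite, i.e. sends a composable pair of functors to the composite functor; the higher simplices, together with the Segal equivalences $\infty\CAT_{[\n]} \simeq \infty\CAT_{[1]} \times_{\infty\CAT_{[0]}} \dots \times_{\infty\CAT_{[0]}} \infty\CAT_{[1]}$ from Theorem \ref{corre}, then supply the coherent associativity. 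The role of $\tau^*$ in Notation \ref{notis} is exactly to arrange the variance so that one obtains $\Fun(\mC,\mD)$ and ordinary composition of functors, rather than a variant with $1$-cells or $2$-cells reversed.

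Assembling (a)--(c) compatibly with the simplicial structure identifies $\mU$ with $\infty\Cat$ self-enriched via the internal hom, and completeness of the latter finishes the proof. As a cross-check, Proposition \ref{oho2} restricted to $\Delta$ gives $\infty\CAT_{[\n]} \simeq \FUN(\N([\n]),\infty\CAT) \simeq \CART_{[\n]^\op}$, matching the description above; conceptually the statement is that $\infty\CAT$ is the model of the $(\infty,2)$-category $\infty\Cat$ furnished by the Grothendieck construction. I expect the main obstacle to be the bookkeeping in the middle step: verifying that the mapping $\infty$-categories really are functor $\infty$-categories -- which hinges on allowing all functors over $[\n]^\op$, so that one obtains genuine natural transformations and not mere homotopies between functors with fixed endpoints -- together with checking that the identifications in (b) and (c) are natural in $[\n] \in \Delta$, so that the three pieces of data assemble into an equivalence of $\infty\Cat$-enriched $\infty$-categories rather than just a levelwise coincidence.
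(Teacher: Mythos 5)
Your proposal correctly identifies the first two pieces that the paper also isolates: the space of objects is $\iota(\infty\CAT_{[0]})\simeq\iota(\infty\Cat)$, and the mapping $\infty$-category $\{(\mC,\mD)\}\times_{\infty\CAT_{[0]}\times\infty\CAT_{[0]}}\infty\CAT_{[1]}$ is $\Fun(\mC,\mD)$; your hands-on unwinding (cartesian fibrations over $[1]$ with fixed endpoints, morphisms the functors over $[1]$ that induce the identity on both fibers, hence natural transformations) is a sound substitute for the paper's appeal to the dual of Corollary~\ref{fib}, and your remark about the role of $\tau^*$ in getting the variance right is correct. The cross-check against Proposition~\ref{oho2} is also apt.

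However, the step you defer as ``the main obstacle'' --- assembling (a)--(c) ``compatibly with the simplicial structure'' --- is not a bookkeeping nuisance but the actual content of the proof, and as written you do not supply any argument for it. A levelwise identification of the values $\infty\CAT_{[\n]}$ together with an informal reading of the inner face map does not produce a map of cartesian fibrations over $\Delta$, let alone an equivalence of $\infty\Cat$-enriched $\infty$-categories; to promote your pointwise observations to an equivalence you would need to construct a comparison functor and verify its compatibility with all structure maps of $\Delta$ coherently, which is exactly what one cannot do ``by hand'' in the $\infty$-categorical setting. The paper sidesteps this entirely: after establishing the two identifications you also establish, it observes that $\Mor_{\infty\CAT'}(\A\times\B,\C)\simeq\Mor_{\infty\CAT'}(\A,\Mor_{\infty\CAT'}(\B,\C))$, so that $\infty\CAT'$ admits all tensors with $\A\times\B$ the tensor of $\A$ and $\B$; it then invokes a uniqueness result (\cite[Remark 3.63., Corollary 6.13.]{HEINE2023108941}) producing a unique 2-functor $\rho:\infty\Cat\to\infty\CAT'$ preserving tensors and the final object, and finally a recognition criterion (\cite[Corollary 4.50.]{heine2024bienriched}) showing $\rho$ is an equivalence because $\Mor_{\infty\CAT'}(*,-)$ preserves tensors. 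This universal-property route is what replaces the coherence check you leave open; without it, or something playing the same role, your argument is incomplete precisely at the point where the real difficulty lies.
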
 %+41583401238,  +49 22895300 gkv spitzenverband deutsche verbindungsstelle

\begin{proof}

Let $\infty\CAT'$ be the $(\infty,2)$-category underlying the double $\infty$-category $\infty\CAT$.
Then the space of objects of $\infty\CAT'$ is $\iota(\infty\CAT_{[0]}) \simeq \iota(\infty\Cat)$
and for every $\A,\B \in \infty\Cat$ there is a canonical equivalence $$\Mor_{\infty\CAT'}(\A,\B) \simeq \{(\A,\B)\} \times_{\infty\CAT'_{[0]} \times \infty\CAT'_{[0]}} \infty\CAT'_{[1]} \simeq \{(\A,\B)\} \times_{\infty\CAT_{[0]} \times \infty\CAT_{[0]}} \infty\CAT_{[1]}.$$
By the dual of Corollary \ref{fib} there is a canonical equivalence $$\{(\A,\B)\} \times_{\infty\CAT_{[0]} \times \infty\CAT_{[0]}} \infty\CAT_{[1]} \simeq \Fun(\A,\B).$$
In particular, for any $\A,\B,\C \in \infty\Cat$ there is a canonical equivalence
$$ \Mor_{\infty\CAT'}(\A \times \B,\C) \simeq \Mor_{\infty\CAT'}(\A, \Mor_{\infty\CAT'}(\B, \C))$$ so that $\A \times \B$ is the tensor of $\A$ and $\B$ in the sense of \cite[Definition 2.51.]{heine2024higher} of the $\infty\Cat$-enriched $\infty$-category $\infty\CAT'$. Thus $\infty\CAT'$ admits all tensors.
%This implies that $\infty\CAT'$ is left tensored over $\infty\Cat$ and so 
By \cite[Remark 3.63., Corollary 6.13.]{HEINE2023108941} this implies that there is a unique 2-functor $\rho: \infty\Cat \to \infty\CAT'$ that preserves tensors and sends the final $\infty$-category to the final $\infty$-category.
By \cite[Corollary 4.50.]{heine2024bienriched} the 2-functor $\rho$ is an equivalence since the 2-functor $\Mor_{\infty\CAT'}(*,-): \infty\Cat \to \infty\Cat$ preserves tensors.

%The 2-functor $\rho$ sends any $\infty$-category $\A \simeq \A \times *$ to $\A \times * \simeq \A$ and so is essentially surjective.Moreover $\rho$ induces on morphism $\infty$-categories a $\infty\Cat$-enriched natural transformation $\theta: \Mor_{\infty\Cat}(-,-) \to \Mor_{\infty\CAT'}(\rho(-),\rho(-))$of 2-functors $\infty\Cat^\op \times \infty\Cat^\op \to \infty\Cat.$For every $\B \in \infty\Cat$ the induced 2-transformation $\theta_{(-,\B)} \Mor_{\infty\Cat}(-,\B) \to \Mor_{\infty\CAT'}(\rho(-),\rho(\B))$of 2-functors $\infty\Cat^\op \to \infty\Cat$ is between 2-functors preserving cotensorsand so an equivalence if $\theta_{(*,\B)}: \Mor_{\infty\Cat}(*,\B) \to \Mor_{\infty\CAT'}(\rho(*),\rho(\B)) $ is an equivalence.The induced 2-transformation $\theta_{(*,-)} \Mor_{\infty\Cat}(*,-) \to \Mor_{\infty\CAT'}(\rho(*),\rho(-))$of 2-functors $\infty\Cat \to \infty\Cat$ is between 2-functors preserving cotensorsand so an equivalence if $\theta_{(*,*)}: \Mor_{\infty\Cat}(*,*) \to \Mor_{\infty\CAT'}(\rho(*),\rho(*)) $ is an equivalence.The latter is an equivalence since source and target identify with $\Fun(*,*)$ and so are contractible.Consequently, $\rho$ is an equivalence.

\end{proof}

\begin{notation}Let $ \infty\widehat{\Cat}$ be the $(\infty,2)$-category of large $\infty$-categories.
Let $\Pr^\L \subset \infty\widehat{\Cat}$ be the subcategory of the $(\infty,2)$-category
$\infty\widehat{\Cat}$ of presentable $\infty$-categories and left adjoints. % functors.
	
\end{notation}

The following remark gives another interpretation of the $(\infty,2)$-category $\Pr^\L:$

\begin{remark}
	
By \cite[Proposition 4.8.1.15.]{lurie.higheralgebra} the underlying $\infty$-category of $\Pr^\L$ admits a closed symmetric monoidal structure such that the inclusion $\bj: \Pr^\L \subset \infty\widehat{\Cat}$ of $\infty$-categories refines to a lax symmetric monoidal functor.
Being symmetric monoidal closed $\Pr^\L$ is enriched in itself and so by transfer of enrichment along $\bj$ also enriched in $\infty\widehat{\Cat}$.
The lax symmetric monoidal inclusion $\bj$ gives rise to an inclusion of $(\infty,2)$-categories from $\Pr^\L$ - endowed with the transfered $\infty\widehat{\Cat}$-enrichment - to 
$\infty\widehat{\Cat}$ endowed with the enrichment in itself by the internal hom,
and so induces an equivalence from the transfered $\infty\widehat{\Cat}$-enrichment on $\Pr^\L$ to
the subcategory of the $(\infty,2)$-category $\infty\widehat{\Cat}$ of presentable $\infty$-categories and left adjoint functors.

\end{remark}

\begin{corollary}\label{amho}

The $(\infty,2)$-category underlying the double $\infty$-category $\PR^\L$ is
$\Pr^\L$. % viewed as enriched in itself via the internal hom.	

\end{corollary}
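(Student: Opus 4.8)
The plan is to mirror the proof of Proposition \ref{enri}. Write $\mathsf{Q}$ for the $(\infty,2)$-category underlying the double $\infty$-category $\PR^\L$, and let $\infty\widehat{\CAT}:=\tau^\ast(\Delta\times_{\infty\widehat{\Cat}}\widehat{\CART})$ be the double $\infty$-category of large $\infty$-categories, i.e.\ the version of $\infty\CAT$ of Notation \ref{notis}(1) formed one universe higher; by the large analogue of Proposition \ref{enri}, its underlying $(\infty,2)$-category is $\infty\widehat{\Cat}$ enriched in itself via the internal hom.

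First I would record that $\CART^\L$ sits inside $\widehat{\CART}$ — the full subcategory of $\Fun([1],\infty\widehat{\Cat})$ on cartesian fibrations — in such a way that the embedding $\CART^\L\hookrightarrow\widehat{\CART}$ is an inclusion: being a cartesian fibration with presentable fibres and left adjoint fibre transports is a property of an object, while the defining condition on the morphisms of $\CART^\L$ — inducing colimit-preserving functors on all fibres — cuts out, over each pair of objects, a full subcategory of the corresponding mapping category of $\widehat{\CART}$. Applying $\tau^\ast(\Delta\times_{\infty\widehat{\Cat}}(-))$ to $\CART^\L\hookrightarrow\widehat{\CART}$ then realizes $\PR^\L$ as a levelwise inclusion of double $\infty$-categories into $\infty\widehat{\CAT}$, and since the passage to underlying $(\infty,2)$-categories is functorial and sends levelwise inclusions to inclusions, $\mathsf{Q}$ is a sub-$(\infty,2)$-category of $\infty\widehat{\Cat}$.

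It then remains to identify which one. The space of objects of $\mathsf{Q}$ is $\iota(\PR^\L_{[0]})=\iota(\CART^\L_{[0]})$, and $\CART^\L_{[0]}$ is the $\infty$-category of presentable $\infty$-categories and colimit-preserving functors (a cartesian fibration over the point with presentable total space, and a square over $\ast\to\ast$ inducing a colimit-preserving functor on the fibre), so the objects of $\mathsf{Q}$ are precisely the presentable $\infty$-categories. Exactly as in Proposition \ref{enri} one has $\Mor_{\mathsf{Q}}(\A,\B)\simeq\{(\A,\B)\}\times_{\PR^\L_{[0]}\times\PR^\L_{[0]}}\PR^\L_{[1]}$, and restricting the equivalence furnished by the dual of Corollary \ref{fib} along $\PR^\L_{[1]}\subseteq\infty\widehat{\CAT}_{[1]}$ identifies this with the full subcategory $\Fun^\L(\A,\B)\subseteq\Fun(\A,\B)=\Mor_{\infty\widehat{\Cat}}(\A,\B)$ of colimit-preserving functors. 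Comparing with the description of the subcategory $\Pr^\L\subseteq\infty\widehat{\Cat}$ of presentable $\infty$-categories and left adjoints recalled before the statement yields $\mathsf{Q}\simeq\Pr^\L$.

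The hard part will be the bookkeeping in the last step: one must verify that ``underlying $(\infty,2)$-category'' genuinely carries the levelwise inclusion $\PR^\L\subseteq\infty\widehat{\CAT}$ to an inclusion of $(\infty,2)$-categories, so that the mapping categories of $\mathsf{Q}$ are computed inside those of $\infty\widehat{\Cat}$, and one must keep track of the orientation conventions — the $\tau^\ast$-twist together with the $(-)^\op$ occurring in Corollary \ref{fib} — so that the mapping category from $\A$ to $\B$ really emerges as the colimit-preserving functors $\A\to\B$ and not as functors in the reverse direction or to an opposite category, exactly as this was managed for $\infty\CAT$ in Proposition \ref{enri}. An alternative that avoids the inclusion argument is to copy Proposition \ref{enri} verbatim: show $\mathsf{Q}$ admits all tensors over $\infty\widehat{\Cat}$, with $\mC\odot\A\simeq\mP(\mC)\ot\A$ for the symmetric monoidal structure on $\Pr^\L$; invoke \cite[Remark 3.63., Corollary 6.13.]{HEINE2023108941} for the unique tensor- and unit-preserving $2$-functor $\Pr^\L\to\mathsf{Q}$; and conclude it is an equivalence by \cite[Corollary 4.50.]{heine2024bienriched}, since $\Mor_{\mathsf{Q}}(\ast,-)\colon\Pr^\L\to\infty\widehat{\Cat}$ preserves tensors. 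There the obstacle becomes checking the tensoring claim and that $\Pr^\L$ carrying the $\infty\widehat{\Cat}$-enrichment transferred along $\bj$ is the correct source.
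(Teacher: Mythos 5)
Your first argument is exactly the one the paper intends (the paper gives no explicit proof, but the preceding Notation exhibits $\Pr^\L$ as the subcategory of the $(\infty,2)$-category $\infty\widehat{\Cat}$ of presentable $\infty$-categories and left adjoints, so the Corollary is meant to follow by viewing $\PR^\L$ as a levelwise sub-double-$\infty$-category of the large $\infty\widehat{\CAT}$ and applying the large version of Proposition \ref{enri}). The bookkeeping you flag does go through; in particular the restriction of the equivalence $\{(\A,\B)\}\times_{\infty\widehat{\CAT}_{[0]}\times\infty\widehat{\CAT}_{[0]}}\infty\widehat{\CAT}_{[1]}\simeq\Fun(\A,\B)$ along $\PR^\L_{[1]}\subset\infty\widehat{\CAT}_{[1]}$ is full because the cocontinuity constraint on $2$-morphisms of $\PR^\L$ is vacuous in the fiber over a fixed pair of objects, and it lands precisely on the colimit-preserving functors because, with the $\tau^*$-twist, the cartesian fiber transport of a horizontal morphism from $\A$ to $\B$ is the classified functor $\A\to\B$.
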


\begin{corollary}There is a canonical pullback square of $(\infty,2)$-precategories:
$$
\begin{xy}
\xymatrix{
\mathrm{Corr}
\ar[d]
\ar[rr]
&& \mathrm{Pr}^\L \ar[d]^{} 
\\
\Delta_{\iota(\infty\Cat)} \ar[rr]  && \Delta_{\iota(\Pr^\L)}.}
\end{xy}$$
%of cartesian fibrations over $\Delta$ classifying Segal $\infty$-categories.

\end{corollary}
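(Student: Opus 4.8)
The square to be established is the image of the pullback square of double $\infty$-categories of Theorem~\ref{corre} under the underlying $(\infty,2)$-precategory construction $\mB \mapsto \Delta_{\iota(\mB_{[0]})}\times_{\Delta_{\mB_{[0]}}}\mB$ (the pullback taken along the unit $\mB\to\Delta_{\mB_{[0]}}$ of the adjunction of the Example), so the plan is simply to check that this construction carries that square to a pullback square. By construction $\mathrm{Corr}$ and $\mathrm{Pr}^\L$ in the statement denote the underlying $(\infty,2)$-precategories of $\CORR$ and $\PR^\L$, and since $(\Delta_{\infty\Cat})_{[0]}\simeq\infty\Cat$ and $(\Delta_{\Pr^\L})_{[0]}\simeq\Pr^\L$, the underlying $(\infty,2)$-precategories of $\Delta_{\infty\Cat}$ and $\Delta_{\Pr^\L}$ are $\Delta_{\iota(\infty\Cat)}$ and $\Delta_{\iota(\Pr^\L)}$. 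I would begin with the $[0]$-level bookkeeping: over $[0]\in\Delta$ exponential fibrations, cartesian fibrations and $\infty$-categories all coincide (and likewise for the $\L$-variants), so unwinding Notation~\ref{notis} gives $\CORR_{[0]}\simeq\infty\Cat$, $\PR^\L_{[0]}\simeq\Pr^\L$, and the structure maps $\CORR\to\Delta_{\infty\Cat}$, $\PR^\L\to\Delta_{\Pr^\L}$ of Theorem~\ref{corre} are equivalences on fibres over $[0]$; this lets one identify $\Delta_{\CORR_{[0]}}\simeq\Delta_{\infty\Cat}$, $\Delta_{\iota(\CORR_{[0]})}\simeq\Delta_{\iota(\infty\Cat)}$, and similarly for $\PR^\L$, compatibly with the relevant maps.

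Next I would use that $\Delta_{(-)}:\infty\Cat\to\infty\Cat^\cart_{/\Delta}$ of the Example is a right adjoint, hence preserves pullbacks, and that pullbacks of double $\infty$-categories are computed in cartesian fibrations over $\Delta$. Writing $\CORR\simeq\Delta_{\infty\Cat}\times_{\Delta_{\Pr^\L}}\PR^\L$ for the content of Theorem~\ref{corre}, the definition of $\mathrm{Corr}$ followed by one pasting of pullback squares over $\Delta$ gives
$$\mathrm{Corr}=\Delta_{\iota(\infty\Cat)}\times_{\Delta_{\infty\Cat}}\CORR\simeq\Delta_{\iota(\infty\Cat)}\times_{\Delta_{\infty\Cat}}\bigl(\Delta_{\infty\Cat}\times_{\Delta_{\Pr^\L}}\PR^\L\bigr)\simeq\Delta_{\iota(\infty\Cat)}\times_{\Delta_{\Pr^\L}}\PR^\L,$$
the bottom map here being the composite $\Delta_{\iota(\infty\Cat)}\to\Delta_{\infty\Cat}\to\Delta_{\Pr^\L}$.

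To rewrite this over $\Delta_{\iota(\Pr^\L)}$ I would observe that restricting $\mP:\infty\Cat\to\Pr^\L$ to maximal subspaces and using the counits $\iota\Rightarrow\id$ yields a commuting square of $\infty$-categories on $\iota(\infty\Cat),\infty\Cat,\iota(\Pr^\L),\Pr^\L$; applying the limit-preserving functor $\Delta_{(-)}$ gives the corresponding commuting square, so the composite above factors as $\Delta_{\iota(\infty\Cat)}\to\Delta_{\iota(\Pr^\L)}\to\Delta_{\Pr^\L}$. Pasting once more with the pullback square $\mathrm{Pr}^\L\simeq\Delta_{\iota(\Pr^\L)}\times_{\Delta_{\Pr^\L}}\PR^\L$ defining $\mathrm{Pr}^\L$,
$$\mathrm{Corr}\simeq\Delta_{\iota(\infty\Cat)}\times_{\Delta_{\Pr^\L}}\PR^\L\simeq\Delta_{\iota(\infty\Cat)}\times_{\Delta_{\iota(\Pr^\L)}}\bigl(\Delta_{\iota(\Pr^\L)}\times_{\Delta_{\Pr^\L}}\PR^\L\bigr)\simeq\Delta_{\iota(\infty\Cat)}\times_{\Delta_{\iota(\Pr^\L)}}\mathrm{Pr}^\L,$$
compatibly with the projections to $\Delta_{\iota(\infty\Cat)}$ and with the map $\mathrm{Corr}\to\mathrm{Pr}^\L$. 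As $(\infty,2)$-precategories form a full subcategory of double $\infty$-categories closed under pullbacks, this is exactly the asserted pullback square of $(\infty,2)$-precategories; the case of the underlying $(\infty,2)$-precategories $\mathrm{Corr}$, $\mathrm{Pr}^\L$ being $(\infty,2)$-precategory refinements of the enriched $\infty$-categories $\infty\Cat$, $\Pr^\L$ identified in Proposition~\ref{enri} and Corollary~\ref{amho}.

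The argument is essentially formal once Theorem~\ref{corre} is available; the only step requiring genuine care is the $[0]$-level bookkeeping of the first paragraph—pinning down the identifications $\CORR_{[0]}\simeq\infty\Cat$, $\PR^\L_{[0]}\simeq\Pr^\L$ through the $\tau^*$ of Notation~\ref{notis}, and verifying that the two vertical structure maps are fibrewise equivalences over $[0]$, so that the "middle factors" in the displayed chains of pullbacks genuinely cancel. Everything else is pasting of pullback squares together with the fact that $\Delta_{(-)}$ and $\iota$ preserve limits.
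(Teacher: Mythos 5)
Your proposal is correct and follows the intended route: the paper states this corollary without proof, as a direct consequence of Theorem~\ref{corre} and the definition of the underlying $(\infty,2)$-precategory construction, and your argument — identify $\CORR_{[0]}\simeq\infty\Cat$, $\PR^\L_{[0]}\simeq\Pr^\L$, then paste pullback squares using that the map $\Delta_{\iota(\infty\Cat)}\to\Delta_{\Pr^\L}$ factors through $\Delta_{\iota(\Pr^\L)}$ by naturality of $\iota\Rightarrow\id$ — is exactly the expected one.
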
 

\begin{corollary}\label{compl} The completion of the $(\infty,2)$-precategory $\Corr$ is the full subcategory of the $(\infty,2)$-category $\Pr^\L$ spanned by the $\infty$-categories of presheaves.
	
\end{corollary}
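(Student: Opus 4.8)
The plan is to exhibit $\Corr$ as a fully faithful sub-$(\infty,2)$-precategory of the complete $(\infty,2)$-category $\Pr^\L$ and to recognise the completion of $\Corr$ as the full, replete subcategory of $\Pr^\L$ it spans. The preceding pullback square of $(\infty,2)$-precategories supplies a canonical functor $J\colon \Corr \to \Pr^\L$, the target being the $(\infty,2)$-category underlying $\PR^\L$, which is complete by Corollary~\ref{amho}; on objects $J$ is the map of spaces $\iota(\infty\Cat) \to \iota(\Pr^\L)$, $\mC \mapsto \mP(\mC)$, induced by $\mathrm{EXP} \to \mathrm{CART}^\L$, whose essential image is by definition the space of equivalence classes of presheaf $\infty$-categories. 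Everything then reduces to two points: that $J$ is fully faithful, and that the completion of an $(\infty,2)$-precategory that embeds fully faithfully into a complete $(\infty,2)$-category is the full, replete subcategory spanned by its essential image.

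First I would verify that $J$ is fully faithful, i.e. that it induces equivalences $\Corr(\mC,\mD) \xrightarrow{\ \sim\ } \Pr^\L(\mP(\mC),\mP(\mD))$ on mapping $\infty$-categories for all small $\mC,\mD$. Since the bottom row of the preceding pullback square is the morphism of discrete double $\infty$-categories induced by $\mC\mapsto\mP(\mC)$, evaluating the square at $[1]$ and taking fibres over $(\mC,\mD)$ identifies $\Corr(\mC,\mD)$ with the mapping $\infty$-category $\Pr^\L(\mP(\mC),\mP(\mD))$ of left adjoint functors $\mP(\mC)\to\mP(\mD)$. Unwinding, this is exactly the composite of the equivalence between correspondences from $\mC$ to $\mD$ and profunctors $\mC^\op\times\mD\to\mS$ furnished by Theorem~\ref{lfib} and Corollary~\ref{fib} with the classical equivalence $\Fun(\mC^\op\times\mD,\mS)\simeq\Pr^\L(\mP(\mC),\mP(\mD))$; compatibility with composition is automatic since $J$ is a functor of double $\infty$-categories (cf.\ Theorem~\ref{corre}).

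Next I would invoke the Rezk completion for $\infty$-categories enriched in $\infty\Cat$ (cf.\ \cite{HINICH2020107129}, \cite{heine2024bienriched}, \cite{heine2024equivalencemodelsinftycategoriesenriched}): the inclusion of complete $(\infty,2)$-categories into $(\infty,2)$-precategories admits a left adjoint, the completion, whose unit $\mA\to\widehat{\mA}$ is an equivalence on all mapping $\infty$-categories and is essentially surjective. Hence for any fully faithful $J\colon\mA\hookrightarrow\mX$ with $\mX$ complete, the induced $\overline J\colon\widehat{\mA}\to\mX$ is fully faithful with essential image the full, replete subcategory $\mX_0\subseteq\mX$ on the objects equivalent to an object of $\mA$; as $\mX_0$ is again complete (a replete full subcategory of a complete $(\infty,2)$-category being complete), $\overline J$ is an equivalence $\widehat{\mA}\simeq\mX_0$. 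Applying this to $J\colon\Corr\hookrightarrow\Pr^\L$, and using that presheaf $\infty$-categories form an equivalence-invariant class of presentable $\infty$-categories, I conclude that $\widehat{\Corr}$ is the full sub-$(\infty,2)$-category of $\Pr^\L$ spanned by the $\infty$-categories of presheaves.

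The hard part will be the second step: making precise the completion functor for $(\infty,2)$-precategories (equivalently, $\infty\Cat$-enriched $\infty$-precategories) and the fact that it changes neither the mapping $\infty$-categories nor the essential content, so that a fully faithful embedding into a complete target computes the completion as a replete full subcategory. A minor but genuine complication in the first step is tracking the involution $\tau=(-)^\op$ from Notation~\ref{notis} and the source/target conventions for correspondences, so that the identification of the $[1]$-level of the square lands on the correct side of the opposite-category operation.
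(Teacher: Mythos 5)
Your proposal is correct and spells out the argument the paper leaves implicit: the preceding pullback square exhibits $\Corr$ as fully faithfully embedded in the complete $(\infty,2)$-category $\Pr^\L$ with essential image the presheaf $\infty$-categories (the fully faithfulness on mapping $\infty$-categories being the content, on a single mapping object, of Theorem~\ref{lfib} / Corollary~\ref{fib} combined with $\Fun^\L(\mP(\mC),\mP(\mD)) \simeq \Fun(\mC^\op \times \mD,\mS)$), and the Rezk completion of a fully faithful embedding into a complete target is its replete full subcategory. That is the same route the paper intends, so there is nothing to compare. The only point worth stressing is the one you already flag at the end: the pullback square by itself does not formally give fully faithfulness (its bottom row $\Delta_{\iota(\infty\Cat)}\to\Delta_{\iota(\Pr^\L)}$ is not a monomorphism, since $\mP$ identifies Morita‑equivalent $\infty$-categories), so the full faithfulness really does have to be read off from the explicit identification of $\Corr(\mC,\mD)$ with profunctors and of the map to $\Pr^\L(\mP\mC,\mP\mD)$ with the classical equivalence, as you indicate in your parenthetical appeal to Theorem~\ref{lfib}. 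With that understood, the argument is sound.
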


%\begin{proposition}Let $\mA$ be a small $\infty$-category.Taking pullback along the embedding $\mA \subset \N\Env(\N(\mA))$induces an equivalence $$ \Cat_{/\N\Env(\N(\mA))}^{\mathrm{cart}} \simeq \mathrm{Lo}\Cart_{\mA}.$$$$ \Cat_{/\N\Env(\N(\mA))}^{\mathrm{cart}} \simeq \Fun^\lax(\N\Env(\N(\mA)), \infty\Cat)'$$$$ \Fun^\lax(\N\Env(\N(\mA)), \infty\Cat)' \simeq \FUN(\N\Env(\N(\mA)), \infty\Cat) \simeq \Lax\Fun(\N(\mA), \infty\Cat).$$\end{proposition}

\begin{corollary}\label{genstr} Let $\mC$ be a small $\infty$-category.
There is a canonical equivalence of $\infty$-categories
$$ \FUN(\N(\mC), \CORR) \simeq \mathrm{EXP}_{\mC^\op}.$$	
\end{corollary}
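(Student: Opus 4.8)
The plan is to identify both sides with the pullback of the diagram
$\FUN(\N(\mC),\PR^\L)\to\FUN(\N(\mC),\Delta_{\Pr^\L})\leftarrow\FUN(\N(\mC),\Delta_{\infty\Cat})$
and to match this up, via the ``$\FUN(\N(-),-)$'' calculus, with the local--global pullback square of Corollary~\ref{corflat} after passing to $\mC^\op$. First I would apply $\FUN(\N(\mC),-)$ to the pullback square of double $\infty$-categories of Theorem~\ref{corre}; since $\FUN(\N(\mC),-)$ is a right adjoint (it is corepresented in the $\infty$-category of double $\infty$-categories, or: it preserves limits because it is computed fibrewise over $\Delta$ on functors preserving cocartesian lifts), this produces a pullback square of $\infty$-categories
$$
\begin{xy}
\xymatrix{
\FUN(\N(\mC),\CORR)\ar[d]\ar[rr] && \FUN(\N(\mC),\PR^\L)\ar[d]
\\
\FUN(\N(\mC),\Delta_{\infty\Cat})\ar[rr] && \FUN(\N(\mC),\Delta_{\Pr^\L}).}
\end{xy}
$$
The right adjoint property of $\rho\colon\infty\Cat^\cart_{/\Delta}\to\infty\Cat$ described in the example after Definition~\ref{pree} gives $\FUN(\N(\mC),\Delta_{\mE})\simeq\Fun(\mC,\mE)$ for any $\infty$-category $\mE$ (this is \cite[5.6.1. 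Corollary]{HINICH2020107129}-type bookkeeping together with the adjunction $\rho\dashv\Delta_{(-)}$), so the lower row of this square is $\Fun(\mC,\infty\Cat)\to\Fun(\mC,\Pr^\L)$.

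Next I would identify the right-hand column. By Proposition~\ref{oho2} the cartesian fibration $\CART\to\infty\Cat$ classifies $\FUN(\tau^*\N,\infty\CAT)$, and similarly by Corollary~\ref{oho3} the fibration $\CART^\L\to\infty\Cat$ classifies $\FUN(\tau^*\N,\PR^\L)$. Evaluating at $\mC$ (and using $\tau^*\N$ vs.\ $\N$, i.e.\ the appearance of $\mC^\op$) gives $\FUN(\N(\mC),\PR^\L)\simeq\CART^\L_{\mC^\op}$ and, analogously for the undecorated version, $\FUN(\N(\mC),\infty\CAT)\simeq\CART_{\mC^\op}$; restricting to the exponential-fibration part $\CORR=\tau^*(\Delta\times_{\infty\Cat}\mathrm{EXP})$ yields $\FUN(\N(\mC),\CORR)\simeq\mathrm{EXP}_{\mC^\op}$ \emph{as a candidate}, but this is exactly what we want to prove rather than something we may assume, so instead I keep $\FUN(\N(\mC),\CORR)$ abstract and only use that it is the pullback. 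The point is that the square above, evaluated through these identifications, becomes precisely the fibre over $\mC^\op$ of square~(\ref{sqk}) of Corollary~\ref{corflat}, namely
$$
\begin{xy}
\xymatrix{
\mathrm{EXP}_{\mC^\op}\ar[d]\ar[rr]^-{\mP} && \CART^\L_{\mC^\op}\ar[d]
\\
\iota^*(\mathrm{EXP})_{\mC^\op}\ar[rr] && \iota^*(\CART^\L)_{\mC^\op},}
\end{xy}
$$
and Corollary~\ref{corflat} asserts this is a pullback. Hence $\FUN(\N(\mC),\CORR)$, being the pullback of the top-right--bottom corner of the first square, is equivalent to $\mathrm{EXP}_{\mC^\op}$.

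The main obstacle I expect is the bottom row: one must check that the comparison $\FUN(\N(\mC),\Delta_{\mE})\simeq\Fun(\mC,\mE)$ is compatible, as $\mE$ ranges over $\infty\Cat$ and $\Pr^\L$, with the map induced by $\PR^\L\to\Delta_{\Pr^\L}$ and with the ``underlying space of fibres'' map $\mathrm{EXP}\to\iota^*(\mathrm{EXP})$ — i.e.\ that all four corners of the two squares are identified by \emph{naturally compatible} equivalences, not just pointwise. Concretely this amounts to tracking through Proposition~\ref{oho2} and its proof (the density of $\Delta\subset\infty\Cat$, the Segal-space computation) that the classifying functor of $\CART\to\infty\Cat$ restricted along $\N$ is $\FUN(\N(-),\infty\CAT)$ \emph{over} the classifying functor of the target projection, and similarly for $\CART^\L$. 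Once this naturality is in place the rest is a formal pullback-pasting argument: a commutative cube in which the back face is the $\FUN(\N(\mC),-)$-image of Theorem~\ref{corre}'s square (a pullback since $\FUN(\N(\mC),-)$ preserves limits), the front face is the $\mC^\op$-fibre of Corollary~\ref{corflat}'s square (a pullback by hypothesis), and the four connecting faces are the identifications above; pasting then gives the desired equivalence $\FUN(\N(\mC),\CORR)\simeq\mathrm{EXP}_{\mC^\op}$.
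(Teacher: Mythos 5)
Your proposal follows essentially the same route as the paper: apply $\FUN(\N(\mC),-)$ to the pullback square of Theorem~\ref{corre}, identify the three other corners using the $\rho\dashv\Delta_{(-)}$ adjunction and Proposition~\ref{oho2}/Corollary~\ref{oho3}, and recognize the result as the fibre over $\mC^\op$ of square~(\ref{sqk}) of Corollary~\ref{corflat}. The paper carries this out as a chain of equivalences
$\FUN(\N(\mC),\CORR)\simeq\FUN(\N(\mC),\Delta_{\infty\Cat})\times_{\FUN(\N(\mC),\Delta_{\Pr^\L})}\FUN(\N(\mC),\PR^\L)\simeq\Fun(\iota(\mC),\infty\Cat)\times_{\Fun(\iota(\mC),\Pr^\L)}\CART^\L_{\mC^\op}\simeq\infty\Cat_{/\iota(\mC)}\times_{\CART^\L_{\iota(\mC)}}\CART^\L_{\mC^\op}\simeq\mathrm{EXP}_{\mC^\op}$,
which is exactly your cube with the two faces collapsed into one computation.

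One slip worth fixing: the adjunction $\rho\dashv\Delta_{(-)}$ gives $\FUN(\mA,\Delta_\mE)\simeq\Fun(\mA_{[0]},\mE)$, and since $\N(\mC)_{[0]}=\iota(\mC)$ (the space of objects, not $\mC$ itself), the bottom row should read $\Fun(\iota(\mC),\infty\Cat)\to\Fun(\iota(\mC),\Pr^\L)$, not $\Fun(\mC,\infty\Cat)\to\Fun(\mC,\Pr^\L)$. This is precisely what makes your ``main obstacle'' disappear: with $\iota(\mC)$ in place, the bottom row of the $\FUN(\N(\mC),-)$-square \emph{is} the bottom row of the $\mC^\op$-fibre of square~(\ref{sqk}), since over a space exponential fibrations and cartesian$^\L$ fibrations are just arbitrary functors and presentable fibrations, i.e.\ $\iota^*(\mathrm{EXP})_{\mC^\op}\simeq\infty\Cat_{/\iota(\mC)}\simeq\Fun(\iota(\mC),\infty\Cat)$ and similarly on the right. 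With that correction your argument is complete and coincides with the paper's.
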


\begin{proof} By Corollary \ref{corflat} and Proposition \ref{oho3} there is a canonical equivalence
$$ \FUN(\N(\mC), \CORR) \simeq \FUN(\N(\mC), \Delta_{\infty\Cat}) \times_{\FUN(\N(\mC), \Delta_{\Pr^\L})}\FUN(\N(\mC), \PR^\L) \simeq $$
$$\Fun(\iota(\mC), \infty\Cat) \times_{\Fun(\iota(\mC),\Pr^\L)}\FUN(\N(\mC), \PR^\L) \simeq $$
$$\infty\Cat_{/\iota(\mC)} \times_{\mathrm{CART}^\L_{\iota(\mC)} } \mathrm{CART}^\L_{\mC^\op} \simeq \mathrm{EXP}_{\mC^\op}.$$

\end{proof}

In the following we prove a refinement of Corollary \ref{genstr}.
\begin{notation}
Let $\phi: \mA \to \Delta$ be a cartesian fibration.
Let $\mA^\dual \to \Delta^\op$ be the corresponding cocartesian fibration classifying the same functor as $\phi.$	
	
\end{notation}

\begin{definition}
Let $\mA \to \Delta, \mB \to \Delta	$ be double $\infty$-categories.

\begin{itemize}
\item A lax map of double $\infty$-categories $\mA \to \mB$ is a functors $\mA^\dual \to \mB^\dual$ over $\Delta^\op$ preserving cocartesian lifts of inert morphisms of $\Delta$
(Definition \ref{inert}).

\item A lax normal map of double $\infty$-categories $\mA \to \mB$ is a functors $\mA^\dual \to \mB^\dual$ over $\Delta^\op$ preserving cocartesian lifts of inert morphisms of $\Delta$ and cocartesian lifts of morphisms of the form $[\n] \to [0]$ for $\n \geq 0$.	

\end{itemize}
\end{definition}

The embedding $\CORR \subset \CORR_\Lax$ of cartesian fibrations over $\Delta$ induces an embedding $\CORR^\dual \subset \CORR^\dual_\Lax$ of cocartesian fibrations over $\Delta^\op.$
Proposition \ref{jjkl} and \cite[Proposition 7.3.2.6.]{lurie.higheralgebra} give the following corollary:
\begin{corollary}
	
The embedding $\CORR^\dual \subset \CORR_\Lax^\dual$ of cocartesian fibrations over $\Delta^\op$
admits a right adjoint relative to $\Delta^\op.$ 
	
\end{corollary}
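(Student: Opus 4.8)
The plan is to present this as a direct application of \cite[Proposition 7.3.2.6.]{lurie.higheralgebra}, feeding in Proposition \ref{jjkl} together with the stability of exponential fibrations under base change. First I would unwind the definitions of the fibrations involved. By construction the cocartesian fibration $\CORR_\Lax^\dual \to \Delta^\op$ has fiber $\infty\Cat_{/[\n]^\op}$ over $[\n]$, its cocartesian transport functors are the base-change functors between these slice $\infty$-categories, and $\CORR^\dual \subseteq \CORR_\Lax^\dual$ is the full subcategory whose fiber over $[\n]$ is the full subcategory $\mathrm{EXP}_{[\n]^\op} \subseteq \infty\Cat_{/[\n]^\op}$ of exponential fibrations. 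Since the base change of an exponential fibration along any functor is again an exponential fibration, the subcategory $\CORR^\dual$ is closed under the cocartesian transport of $\CORR_\Lax^\dual$; this is precisely the statement, recorded just before the corollary, that $\CORR^\dual \subseteq \CORR_\Lax^\dual$ is an embedding of cocartesian fibrations over $\Delta^\op$ preserving cocartesian lifts.

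Next I would supply the fiberwise input. Applying Proposition \ref{jjkl} with $[\n]$ replaced by $[\n]^\op$ (again a finite totally ordered set), for every $\n \geq 0$ the embedding $\mathrm{EXP}_{[\n]^\op} \subseteq \infty\Cat_{/[\n]^\op}$ admits a right adjoint $R_{[\n]}$, and the morphisms inverted by $R_{[\n]}$ are exactly those that become equivalences after base change along every inert map $[1] \to [\n]^\op$. We thus have an embedding of cocartesian fibrations over $\Delta^\op$ preserving cocartesian lifts which on each fiber admits a right adjoint, and whose fiberwise local equivalences are explicitly described; this is the situation addressed by \cite[Proposition 7.3.2.6.]{lurie.higheralgebra}, which then produces the desired right adjoint relative to $\Delta^\op$, acting on the fiber over $[\n]$ by $R_{[\n]}$. (Equivalently one may run the same argument for the embedding of cartesian fibrations $\CORR \subseteq \CORR_\Lax$ over $\Delta$ and then dualize, since a relative adjunction and its $(-)^\dual$ are encoded by the same natural transformation of straightening functors.)

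The step I expect to require the most care is checking that the hypotheses of \cite[Proposition 7.3.2.6.]{lurie.higheralgebra} genuinely hold after these identifications, i.e.\ that the fiberwise colocalizations of Proposition \ref{jjkl} are compatible with the base-change cocartesian transport functors of $\CORR_\Lax^\dual$. The point that makes this work is that $\CORR^\dual$ is \emph{closed} under those transport functors, so that the counit maps of the fiberwise right adjoints can be transported and, by the universal property of $R_{[\m]}$, glued into a natural transformation over $\Delta^\op$ exhibiting the relative right adjoint; the inert-pullback description of the local equivalences from Proposition \ref{jjkl} is what lets one verify this compatibility concretely (one has to be attentive here, since a general map of $\Delta$ can carry inert maps to non-inert ones). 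Once this is in place the remainder of the argument is formal.
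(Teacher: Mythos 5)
Your argument coincides with the paper's, which proves the corollary by citing exactly Proposition~\ref{jjkl} and \cite[Proposition 7.3.2.6.]{lurie.higheralgebra}: the embedding $\CORR^\dual\subset\CORR^\dual_\Lax$ is a map of cocartesian fibrations over $\Delta^\op$ because exponential fibrations are stable under base change, and Proposition~\ref{jjkl} supplies the fiberwise right adjoints, so HA~7.3.2.6 gives a right adjoint relative to $\Delta^\op$.

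Your closing paragraph, however, introduces an unnecessary and in fact unsatisfiable hypothesis. You suggest that one must verify that the cocartesian transport functors carry the colocal equivalences of Proposition~\ref{jjkl} to colocal equivalences, and that the inert-pullback description of these is what makes this checkable. In the version of HA~7.3.2.6 being used here (a map of cocartesian fibrations with fiberwise right adjoints), no such preservation of colocal equivalences is required; the only structural input beyond the fiberwise right adjoints is that the subcategory is stable under the transport functors, which you already establish at the outset via base-change stability of exponential fibrations. And the stronger condition you gesture at genuinely fails: the spine inclusion $\{0<1\}\coprod_{\{1\}}\{1<2\}\to[2]$ is a colocal equivalence in $\infty\Cat_{/[2]}$, but its pullback along the non-inert map $\{0<2\}\hookrightarrow[2]$ (which is one of the transport functors of $\CORR_\Lax^\dual$) is $\{0,2\}\to[1]$, not an equivalence. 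So the caution about inert maps being sent to non-inert ones identifies a real phenomenon, but it is not an obstacle: drop the claim that this compatibility needs to be, or can be, verified.
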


\begin{example}
The relative right adjoint $\CORR_\Lax^\dual \to \CORR^\dual$ preserves cocartesian lifts of inert morphisms because the left adjojnt does.
So the right adjoint determines a lax map $\CORR_\Lax \to \CORR$ of double $\infty$-categories,
which is a lax normal map.
\end{example}

\begin{remark}
Let $\mA \to \Delta, \mB \to \Delta	$ be double $\infty$-categories.
There is a canonical equivalence $$\FUN(\mA,\mB) \simeq \Fun^\cocart_{\Delta^\op}(\mA^\dual,\mB^\dual)$$ to the full subcategory of $ \Fun_{\Delta^\op}(\mA^\dual,\mB^\dual)$ spanned by the maps $\mA^\dual \to \mB^\dual $ of cocartesian fibrations over $\Delta^\op$.	
	
\end{remark}

\begin{notation}
Let $\mA \to \Delta, \mB \to \Delta	$ be double $\infty$-categories.

\begin{itemize}
\item Let $$\Lax\Fun(\mA,\mB) \subset \Fun_{\Delta^\op}(\mA^\dual,\mB^\dual)$$ the full subcategory spanned by the lax maps of double $\infty$-categories $\mA \to \mB.$ 

\item Let $$\N\Lax\Fun(\mA,\mB) \subset \Fun_{\Delta^\op}(\mA^\dual,\mB^\dual)$$ the full subcategory spanned by the lax normal maps of double $\infty$-categories $\mA \to \mB.$ 
\end{itemize}
\end{notation}

\begin{notation}
Let $\Act \subset \Fun([1],\Delta)$ be the full subcategory of active morphisms, i.e. order preserving maps $[\n]\to [\m]$ preserving the minimum and maximum.	
	
\end{notation}

%\begin{notation}For every double $\infty$-category $\mC \to \Delta$ let $\Env(\mC)$be the pullback $$ \Act^\op \times_{\Delta^\op} \mC^\dual$$ along the opposite of evaluation at the source $\Act \to \Delta$.\end{notation}

Let $\mC \to \Delta $ be a double $\infty$-category.
By \cite[Proposition 3.102.]{HEINE2023108941new} evaluation at the target \begin{equation}\label{eqq}
\Act^\op \times_{\Delta^\op} \mC^\dual \to \Act^\op \to \Delta^\op\end{equation} is a cocartesian fibration, where the pullback is formed along evaluation at the source. % $\Act^\op \to \Delta^\op$.

\begin{notation}Let $\mC \to \Delta $ be a double $\infty$-category.
Let $\Env(\mC) \to \Delta$ be the cartesian fibration corresponding to the cocartesian fibration (\ref{eqq}).
\end{notation}

By \cite[Proposition 3.102.]{HEINE2023108941new} the cartesian fibration $\Env(\mC) \to \Delta$ is a double $\infty$-category
and for every double $\infty$-category $ \mD \to \Delta$ the induced functor $\FUN(\Env(\mC),\mD) \to \Lax\Fun(\mC,\mD)$ is an equivalence.

%and the corresponding cartesian fibration $\Env(\mC)^\dual \to \Delta$ is a double$\infty$-category.
\begin{example}
The diagonal embedding $\Delta \subset \Act$ induces an embedding $\mC^\dual \subset \Act^\op \times_{\Delta^\op} \mC^\dual$ over $\Delta^\op$ that determines a lax map of double $\infty$-categories $\mC \to \Env(\mC).$

\end{example}

\begin{remark}
	
By \cite[Lemma 3.103.]{HEINE2023108941new} the embedding $\mC^\dual \subset \Act^\op \times_{\Delta^\op} \mC^\dual$ over $\Delta^\op$
admits a left adjoint relative to $\Delta^\op$, which is a map of cocartesian fibrations over
$\Delta^\op.$
So this left adjoint corresponds to a map $\Env(\mC) \to \mC$ of cartesian fibrations over
$\Delta$, in other words to a map of double $\infty$-categories.

\end{remark}

\begin{remark}\label{son} Let $\mC \to \Delta$ be a double $\infty$-category.	
There is a canonical equivalence $\Env(\mC)_{[0]} \simeq \mC^\dual_{[0]} \simeq \mC_{[0]}$ since $\Act_{[0]/} $ is contractible.
Consequently, $\Env(\mC) \to \Delta$ is an $(\infty,2)$-precategory if $\mC \to \Delta$ is an $(\infty,2)$-precategory.	
	
\end{remark}

\begin{remark}\label{sap}
Let $\mC \to \Delta$ be a double $\infty$-category.	
Every morphism of $\Env(\mC)$, i.e. object of $\Env(\mC)_{[1]}$,
lies in the image of the restricted composition functor
$$ \mC_{[1]} \times_{\mC_{[0]}}  ... \times_{\mC_{[0]} }\mC_{[1]} \subset \Env(\mC)_{[1]} \times_{\Env(\mC)_{[0]}}  ... \times_{\Env(\mC)_{[0]} }\Env(\mC)_{[1]} \to \Env(\mC)_{[1]}.$$
In other words, any horizontal morphism of $\Env(\mC)\to \Delta$
is of the form $\alpha_1 \circ ... \circ \alpha_\n$ for some $\n \geq 0$ and composable horizontal morphisms $\alpha_1 ,..., \alpha_\n$ of $\mC$, where $\circ$ is the horizontal composition.

\end{remark}

\begin{remark}\label{iww}
Let $\mC \to \Delta$ be an $(\infty,2)$-category.	
Then $\Env(\mC) \to \Delta$ is an $(\infty,2)$-category:
by Remark \ref{son} the canonical map $\iota(\Env(\mC)_{[0]}) \to \iota(\Env(\mC)_{[1]})$
factors as $\iota(\Env(\mC)_{[0]}) \simeq \iota(\mC_{[0]}) \to \iota(\mC_{[1]}) \subset \iota(\Env(\mC)_{[1]}) $
and so is an embedding if $\mC $ is an $(\infty,2)$-category.
Hence in view of Remark \ref{sap} it suffices to observe that for every $\n, \m \geq 0$
and composable horizontal morphisms $\alpha_1,..., \alpha_\n, \beta_1 ,..., \beta_\m$ of $ \mC\to \Delta$ such that the horizontal composition $\alpha_1 \circ ... \circ \alpha_\n \circ \beta_1 \circ ... \circ \beta_\m$ in $\Env(\mC) \to \Delta$ is the 0-fold composition, we find that $\n+\m=0$ so that $\n=\m=0.$

\end{remark}

\begin{notation}
Let $\phi: \mC \to \Delta$ be an $\infty$-precategory and $\mW$ a set of morphisms of $\mC_{[1]}$ %lying over the identity of $[1]$ 
corresponding to 2-morphisms of $\mC \to \Delta$.
For every $\n \geq 1$ let $\mW_\n$ be the set of morphisms of $\mC_{[\n]}$ whose image under the functor
$\mC_{[\n]}\to \mC_{[1]} $ induced by the map $[1] \cong \{\bi-1<\bi\}\to [\n]$ for any $0 \leq \bi < \n$ is a morphism of $\mW.$ Let $\mW_0$ be the set of all morphisms of $\mC_{[0]}.$
Let $\bar{\mW} $ be the smallest set of morphisms of $\mC$ containing all images of morphisms of $\mW_\n$ in $\mC$ for $\n \geq 0.$
%$[\n] \in \Delta$ whose image under the functor$\mC_{[\n]}\to \mC_{[1]} $ induced by the map $[1] \cong \{\bi-1<\bi\}\to [\n]$ for some $0 \leq \bi < \n$ is a morphism of $\mW.$
%\times_{\mC_{[0]} ... \times_{\mC_{[0]}}\mC_{[1]}$ to a compatible family of $\n$-morphisms of $\mW.$
\end{notation}
Let $\phi: \mC \to \Delta$ be an $\infty$-precategory and $\mW$ a set of morphisms of $\mC_{[1]}$.
Let $\mC[\mW]:= \mC[\bar{\mW}]$ be the localization of the $\infty$-category $\mC$ with respect to the set $\bar{\mW}$.
The functor $\phi: \mC \to \Delta$ induces a functor $\mC[\mW] \to \Delta$, which by \cite[2.1.4. Proposition]{HinichDwyer} is a cartesian fibration whose fiber over any $[\n]\in \Delta$ is
the localization $\mC_{[\n]}[\mW_\n].$
In particular, $\mC[\mW]_{[0]} \simeq \mC_{[0]}[\mW_0] \simeq \mC_{[0]}.$
By the next lemma the cartesian fibration $\mC[\mW] \to \Delta$ is an $\infty$-precategory.

%for every double $\infty$-category $\mC \to \Delta$ and every set $\mW$ of morphisms of $\mC$ lying over the identity of $[1]$ corresponding to2-morphisms of $\mC \to \Delta$
%corresponding to morphisms of the $\infty$-category $\mC_{[1]}$the localization $\mC[\mW]$ of the $\infty$-category $\mC$ with respect to $\mW$ % the image of $\mW$ under the projection $there is a cartesian fibration $\mC[\mW] \to \Delta$

\begin{lemma}Let $\mC$ be a space, $\mA \to \mC, \mB \to \mC$ functors and $\mW $ a set of morphisms of $\mA$ and $\mV$ a set of morphisms of $\mB$ containing all equivalences.
The canonical functor $\theta: (\mA \times_\mC \mB)[\mW \times_\mC \mV] \to \mA[\mW]\times_\mC \mB[\mV]$ is an equivalence.	
\end{lemma}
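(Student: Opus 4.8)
The plan is to combine the universal property of localization with the presentation of a localization as a pushout in $\infty\Cat$, namely $\mA[\mW]\simeq\mA\sqcup_{\coprod_\mW\Delta^1}\coprod_\mW\Delta^0$, and to exploit the hypothesis ``$\mC$ is a space'' through the fact that \emph{every map of spaces is an exponential fibration}: such a map is (equivalent to) a left fibration, hence a cocartesian fibration, hence exponential by Corollary \ref{reco} applied to opposites; consequently base change along it preserves all colimits. The one nonformal point will be controlling mapping spaces of the localization of a fibre product, and that is where the two hypotheses ($\mC$ a space, $\mV$ containing all equivalences) actually get used.

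First I would bring the data into a normal form. Writing $\mC=\coprod_{c\in\pi_0(\mC)}\mC_c$, all three constructions commute with this coproduct, so we may assume $\mC$ connected. Next factor $\mA\to\mC$, $\mB\to\mC$ through the groupoid completions $\mA\to|\mA|\to\mC$, $\mB\to|\mB|\to\mC$, and put $P:=|\mA|\times_\mC|\mB|$, a space. Since $P\to|\mA|$ is a map of spaces it is an exponential fibration, so $(-)\times_{|\mA|}P$ preserves colimits; applying it to the pushout presentation of $\mA[\mW]$ (which lives over $\mC$, as $\mA\to\mC$ inverts $\mW$) and using that $\mC$ is a space to collapse each $\Delta^1$ to its fibre, one gets $\mA[\mW]\times_{|\mA|}P\simeq\mA^\flat[\mW^\flat]$ with $\mA^\flat:=\mA\times_{|\mA|}P$ and $\mW^\flat$ the preimage of $\mW$; symmetrically on the $\mB$-side, and $\mV^\flat$ still contains all equivalences. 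This rewrites the right-hand side as $\mA^\flat[\mW^\flat]\times_P\mB^\flat[\mV^\flat]$, the left-hand side as $(\mA^\flat\times_P\mB^\flat)[\mW^\flat\times_P\mV^\flat]$, and $\mW\times_\mC\mV$ as $\mW^\flat\times_P\mV^\flat$. Finally, since $P\to|\mA|$ is a (smooth) left fibration, smooth base change turns the weak homotopy equivalence $\mA\to|\mA|$ into one $\mA^\flat\to P$, so $|\mA^\flat|\simeq P\simeq|\mB^\flat|$. After relabelling we are reduced to the case where $\mA\to\mC$ and $\mB\to\mC$ are the groupoid completions; in particular $\mA\to\mC$, $\mB\to\mC$, $\mA[\mW]\to\mC$ and $\mB[\mV]\to\mC$ are then all cofinal and coinitial, again by smooth base change along points of $\mC$.

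It then remains to show that the canonical functor $\theta$ is an equivalence in this reduced situation. Essential surjectivity is immediate: localization functors are bijective on objects and $\mC=\iota(\mC)$, so every object of $\mA[\mW]\times_\mC\mB[\mV]$ already lifts along $\theta$. For full faithfulness one must compare, for objects $(\ra,\rb),(\ra',\rb')$ of $\mA\times_\mC\mB$, the mapping space on the left with the fibre product $\mA[\mW](\ra,\ra')\times_{\mC(\bar\ra,\bar\ra')}\mB[\mV](\rb,\rb')$ on the right. The approach I would take is to express all three localized mapping spaces as colimits over the zigzag/hammock indexing categories attached to $\mW$ and to $\mV$, and to check that the comparison between the colimit computing the source of $\theta$ and the fibre product of the colimits computing the target is an equivalence. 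This is exactly where both hypotheses enter: that $\mC$ is a space ensures every morphism of $\mA$ and of $\mB$ lies over an invertible morphism of $\mC$, so $\mA\times_\mC\mB$ behaves as a fibrewise product and the zigzags can be taken ``synchronised'' over $\mC$; and that $\mV$ contains all equivalences ensures the $\mB$-indexing category is cofinal in the one that would otherwise compute only a product, which is what forces the fibre-product formula on the target (and accounts for the asymmetry between $\mW$ and $\mV$ in the statement). I expect this last verification — controlling mapping spaces in the localization of a fibre product over a space — to be the genuine obstacle; everything before it is formal manipulation with pushouts and exponential base change.
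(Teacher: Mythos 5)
Your proposal does not constitute a proof: you explicitly flag the final full-faithfulness comparison — ``controlling mapping spaces in the localization of a fibre product over a space'' — as ``the genuine obstacle'' and leave it open. That obstacle is the whole content of the lemma, and everything you do before it (coproduct decomposition, factoring through groupoid completions, exponential base change of the pushout presentation, relabelling) only postpones it.

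The paper's proof is short precisely because it has a tool you are missing. Since $\mC$ is a space, a functor over $\mC$ is an equivalence iff it is an equivalence on every fiber; then one invokes Hinich--Dwyer's Proposition 2.1.4, which says that for a functor $\mA \to \mC$ to a space, the localization $\mA[\mW]$ pulls back over each $\Z \in \mC$ to the localization $\mA_\Z[\mW_\Z]$ of the fiber. This collapses the statement to the fiberwise claim $(\mA_\Z \times \mB_\Z)[\mW_\Z \times \mV_\Z] \to \mA_\Z[\mW_\Z] \times \mB_\Z[\mV_\Z]$, a standard universal-property fact for localizations of \emph{honest products} once $\mW_\Z, \mV_\Z$ contain equivalences (which allows a morphism $(\f,\g)$ to be factored as $(\f,\mathrm{id})(\mathrm{id},\g)$). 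Your approach never reduces to honest products over a point; even your intermediate claim that base-changing the pushout presentation of $\mA[\mW]$ along a map of spaces yields $\mA^\flat[\mW^\flat]$ is already an instance of ``localization commutes with base change over a space,'' i.e.\ it secretly presupposes the Hinich--Dwyer result in disguise rather than circumventing it. The groupoid-completion and exponentiability machinery is thus not a shortcut: it neither simplifies nor resolves the step you identify as the obstacle, and the proof as written is genuinely incomplete.
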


\begin{proof}
	
The functor $\theta$ is a functor over the space $\mC$ and so is an equivalence if it induces an equivalence on the fiber over every $\Z \in \mC.$ 
For every $\Z \in \mC$ let $\mW_\Z$ be the set of morphisms of $\mA_\Z$ whose image in $\mA$ belongs to $\mW.$ Similarly, we define $\mV_\Z.$
The functor $\theta$ induces on the fiber over
$\Z$ the canonical functor $$ (\mA \times_\mC \mB)[\mW \times_\mC \mV]_\Z \to \mA[\mW]_\Z \times \mB[\mV]_\Z,$$
which by \cite[2.1.4. Proposition]{HinichDwyer} identifies with the canonical functor
$$ (\mA_\Z \times \mB_\Z)[\mW_\Z \times \mV_\Z] \to \mA_\Z[\mW_\Z] \times \mB_\Z[\mV_\Z].$$
The latter is an equivalence by universal property of the localization using that
$\mW,\mV$ contain all equivalences.

\end{proof}

\begin{definition}
	
Let $\mC \to \Delta$ be an $\infty$-precategory.	
Let $\N\Env(\mC) \to \Delta$ be the localization of $\Env(\mC) \to \Delta$ with respect to
the smallest set of $\Env(\mC)_{[1]}$ containing the morphisms $\id^{\Env(\mC)}_\X \to \id^\mC_\X $ of $\Env(\mC)_{[1]}$ for $\X \in \mC_{[0]}$ and all equivalences of $\Env(\mC)_{[1]}$
and closed under horizontal composition of the $\infty$-precategory $\Env(\mC) \to \Delta.$
	
\end{definition}

\begin{remark}
	
The localization $\Env(\mC)^\dual \to \mC^\dual$ relative to $\Delta^\op$
whose left adjoint inverts local equivalences, gives rise to a localization $\N\Env(\mC)^\dual \to \mC^\dual$ relative to $\Delta^\op.$
Hence the right adjoint is an embedding $\mC^\dual \subset \N\Env(\mC)^\dual$ that factors as
$\mC^\dual \to \Env(\mC)^\dual \to \N\Env(\mC)^\dual$ and so corresponds to
a lax normal map $\mC \to \N\Env(\mC)$ of double $\infty$-categories.

\end{remark}

\begin{remark}\label{sono}
	
By definition the canonical functor $\Env(\mC) \to \N\Env(\mC)$ over $\Delta$
induces an equivalence $\Env(\mC)_{[0]} \simeq \N\Env(\mC)_{[0]}.$
Thus the embedding $\mC^\dual \subset \N\Env(\mC)^\dual$ over $\Delta$ induces an equivalence
$\mC_{[0]} \simeq \N\Env(\mC)_{[0]}.$
%Thus $\N\Env(\mC) \to \Delta$ is an $(\infty,2)$-precategory if $\mC \to \Delta$ is an $(\infty,2)$-precategory.
	
\end{remark}

\begin{remark}
Let $\mC \to \Delta$ be an $(\infty,2)$-category.	
Then $\N\Env(\mC) \to \Delta$ is an $(\infty,2)$-category by a similar argument like in Remark \ref{iww}: by Remark \ref{sono} the canonical map $\iota(\N\Env(\mC)_{[0]}) \to \iota(\N\Env(\mC)_{[1]})$
factors as $\iota(\N\Env(\mC)_{[0]}) \simeq \iota(\mC_{[0]}) \to \iota(\mC_{[1]}) \subset \iota(\N\Env(\mC)_{[1]}) $
and so is an embedding if $\mC $ is an $(\infty,2)$-category.
So it suffices to observe that for every $\n, \m \geq 0$ and composable horizontal morphisms $\alpha_1,..., \alpha_\n, \beta_1 ,..., \beta_\m$ of $ \mC\to \Delta$ different from identities
such that the horizontal composition $\alpha_1 \circ ... \circ \alpha_\n \circ \beta_1 \circ ... \circ \beta_\m$ in $\N\Env(\mC) \to \Delta$ is the 0-fold composition, we have that $\n+\m=0$ and so $\n=\m=0.$
	
\end{remark}

\begin{remark}\label{uni} Let $\mC \to \Delta, \mD \to \Delta$ be double $\infty$-categories.
By definition the induced functor $\FUN(\N\Env(\mC),\mD) \to \FUN(\Env(\mC),\mD)$ is fully faithful and the equivalence $$\FUN(\Env(\mC),\mD) \simeq \Lax\Fun(\mC,\mD) $$ restricts to an 
equivalence $$\FUN(\N\Env(\mC),\mD) \to \N\Lax\Fun(\mC,\mD).$$

\end{remark}

\begin{proposition}\label{sss} Let $\mC$ be a small $(\infty,2)$-category. 
There is a canonical equivalence $$\Lax\Fun(\mC, \infty\CAT) \simeq \mathrm{Lo}\mathrm{CART}_{\tau^*\mC}.$$
\end{proposition}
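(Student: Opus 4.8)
The plan is to build the equivalence $\Lax\Fun(\mC,\infty\CAT) \simeq \mathrm{Lo}\mathrm{CART}_{\tau^*\mC}$ by combining the envelope description of lax functors with the straightening of locally cartesian fibrations over the simplices $[\n]$ that is available from the earlier sections. By Remark \ref{uni} and the discussion preceding it, a lax map $\mC \to \infty\CAT$ is the same as a (cocartesian-lift-preserving) functor $\Env(\mC) \to \infty\CAT$ of double $\infty$-categories, i.e. an object of $\FUN(\Env(\mC),\infty\CAT)$. By Proposition \ref{oho2} the cartesian fibration $\CART \to \infty\Cat$ classifies $\FUN(\tau^*\N,\infty\CAT)$, and unwinding the definition $\infty\CAT := \tau^*(\Delta \times_{\infty\Cat}\mathrm{CART})$, a double-$\infty$-category functor $\Env(\mC) \to \infty\CAT$ is precisely a lift of the composite $\Env(\mC)^\dual \to \Delta^\op$ to something classified by a functor $\tau^*\Env(\mC)_{[0]}^\op \simeq (\text{something}) \to \infty\Cat$ together with Segal/cocartesian-descent data that reassembles the $[\n]$-components. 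So the first step is to reorganize: a lax functor $\mC \to \infty\CAT$ is equivalently a functor $\psi \colon \tau^*\mC_{[0]} \to \infty\Cat$ on objects together with, for each horizontal morphism of $\mC$, a cartesian fibration over $[1]$ whose fibers are the $\psi$-values, all glued compatibly along $\Delta$.

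The second step is to recognize this glued data as exactly a locally cartesian fibration over $\tau^*\mC$. Here one uses the fact (implicit in Section 3 and made explicit for simplices via diagram \eqref{diags} and Corollary \ref{hok}) that a functor $\mD \to [\n]$ is assembled from its restrictions along the inert edges $[1]\to[\n]$, so that a locally cartesian fibration over $\tau^*\mC$ is determined by: its fibers over objects (this is $\psi$), a cartesian fibration over each $1$-simplex of $\tau^*\mC$ (equivalently a functor between fibers, after straightening over $[1]$), plus the 2-simplices of $\tau^*\mC$ recording the laxity natural transformation $\g\circ\f \Rightarrow \h$, and higher coherence. The natural transformation direction — not an equivalence — is exactly what distinguishes \emph{lax} from strong functors, and matches the fact that for a general locally cartesian fibration the composite of cartesian transports need not be the transport of the composite. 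So I would set up a comparison functor in both directions by: from a lax functor, take the cocartesian fibration $\Env(\mC)^\dual \to \Delta^\op$ with its straightening and restrict/descend to get a functor over $\tau^*\mC$, then straighten each $[\n]$-piece to a locally cartesian fibration; conversely, from $\mD \to \tau^*\mC$ locally cartesian, pull back along each $\tau^*[\n] \to \tau^*\mC$, apply the Grothendieck construction for locally cartesian fibrations over $[\n]$ (cited earlier as \cite[Theorem B.4.3.]{Ayala2019StratifiedNG}) to get a functor $[\n]^\op \to \infty\Cat$, i.e. an object of $\CART_{[\n]}$, and check these are compatible with the simplicial structure so as to assemble into a double-functor $\Env(\mC) \to \infty\CAT$.

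The third step is to verify this is an equivalence of $\infty$-categories, which I would do by the same strategy as in the proof of Proposition \ref{oho2} and Theorem \ref{lfib}: reduce to checking a natural bijection on equivalence classes of functors out of an arbitrary test $\infty$-category $\mA$, using that $\Delta \subset \infty\Cat$ is dense and that everything in sight is a cartesian fibration over the appropriate base, so it suffices to check the comparison fiberwise over each $[\n]$. Over $[\n]$ the statement is exactly: functors $[\n]^\op \to \infty\Cat$ (objects of $\CART_{[\n]}$) correspond to locally cartesian fibrations over $[\n]$, which is the Grothendieck construction cited above, together with the observation that the inert-edge descent built into double $\infty$-categories matches diagram \eqref{diags}.

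\textbf{Main obstacle.} I expect the hard part to be the careful bookkeeping of the \emph{laxity/coherence data}: matching the "non-invertible 2-cells" of a lax functor $\mC \to \infty\CAT$ with the failure-of-composition data of a genuinely locally (as opposed to fully) cartesian fibration, and checking that the envelope $\Env(\mC)$ — which by Remarks \ref{sap} and \ref{iww} has horizontal morphisms the formal composites $\alpha_1\circ\cdots\circ\alpha_\n$ — produces exactly the right simplicial-level data to straighten into a locally cartesian fibration over $\tau^*\mC$ and not something larger. In other words, the subtlety is that one must not accidentally land in $\FUN(\Env(\mC),\infty\CAT)$ being larger than $\mathrm{Lo}\mathrm{CART}_{\tau^*\mC}$; the universal property $\FUN(\Env(\mC),\mD) \simeq \Lax\Fun(\mC,\mD)$ from \cite[Proposition 3.102.]{HEINE2023108941new} is what rescues this, but threading it through the straightening over each $[\n]$ so that the equivalences are natural in $[\n]\in\Delta$ is where the real work lies.
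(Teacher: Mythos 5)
Your proposal takes a genuinely different, more hands-on route than the paper, and it has a concrete gap that the paper's argument is precisely designed to avoid. The paper proves the statement as a chain of four formal equivalences: $\FUN(\N\Env(\mC),\infty\CAT)$ by the universal property of the normal lax envelope (Remark \ref{uni}), then $\mathrm{CART}_{\tau^*\N\Env(\mC)}$ by straightening of cartesian fibrations, then $\mathrm{CART}_{\N\Env(\tau^*\mC)}$ using the commutation $\tau^*\N\Env(\mC) \simeq \N\Env(\tau^*\mC)$, and finally $\mathrm{Lo}\mathrm{CART}_{\tau^*\mC}$ by a single invocation of \cite[Theorem B.4.3.]{Ayala2019StratifiedNG}. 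The whole point of this chain is that the ``main obstacle'' you flag --- matching the laxity $2$-cell data of a lax functor against the failure-of-composition data of a locally cartesian fibration, naturally in $[\n] \in \Delta$ --- never has to be confronted directly: it is absorbed into the one-shot application of Ayala--Francis to the entire envelope $\N\Env(\tau^*\mC)$, rather than applied fiberwise over each $[\n]$ and then re-glued by hand, which is what your step~3 proposes. Your approach could plausibly be pushed through, but it would essentially amount to re-proving the Ayala--Francis theorem from scratch, and you leave precisely that part open.

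The more serious issue is which envelope you use. You consistently work with $\Env(\mC)$ and the universal property $\FUN(\Env(\mC),\mD) \simeq \Lax\Fun(\mC,\mD)$ for \emph{general} lax maps. But locally cartesian fibrations classify \emph{normal} lax functors: identities in the base always lift to identities, which is exactly the unitality constraint. General (non-normal) lax functors $\mC \to \infty\CAT$ produce more data than $\mathrm{Lo}\mathrm{CART}_{\tau^*\mC}$ can accommodate, so your step~2 cannot actually be an equivalence as stated. You need the \emph{normal} lax envelope $\N\Env(\mC)$ (and the corresponding $\N\Lax\Fun(\mC,\infty\CAT)$), as in the paper's proof; the $\Lax\Fun$ in the proposition's statement, read against the proof, is evidently shorthand for the normal lax case, consistent with how $\Lax\Fun$ is used in Corollary \ref{genstr} and in the abstract. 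Without passing to $\N\Env$ your argument would at best produce a fully faithful embedding, not the claimed equivalence.
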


\begin{proof}
	
There is a chain of canonical equivalences of $\infty$-categories $$\Lax\Fun(\mC, \infty\CAT) \simeq \FUN(\N\Env(\mC), \infty\CAT) \simeq \mathrm{CART}_{\tau^*\N\Env(\mC)} \simeq$$$$ \mathrm{CART}_{\N\Env(\tau^*\mC)} \simeq \mathrm{Lo}\mathrm{CART}_{\tau^*\mC}.$$
The first equivalence holds by the universal property of $\N\Env(\mC) \to \Delta$ of Remark \ref{uni}. The second equivalence is straightening of cartesian fibrations.
The third equivalence is induced by the canonical equivalence $\tau^*\N\Env(\mC) \simeq \N\Env(\tau^*\mC)$ over $\Delta$, which follows immediately from the construction of $\Env(\mC)\to \Delta.$ The last equivalence is \cite[Theorem B.4.3.]{Ayala2019StratifiedNG}.

\end{proof}

\begin{corollary} Let $\mC$ be a small $\infty$-category. There is a canonical equivalence
$$ \Lax\Fun(\N(\mC), \CORR) \simeq \infty\Cat_{/\mC^\op}.$$	
	
\end{corollary}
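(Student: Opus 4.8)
The plan is to imitate the proof of Corollary \ref{genstr}, replacing strong functors by lax functors throughout and invoking the local--global principle of Theorem \ref{corr} (in place of Corollary \ref{corflat}) at the final step.

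First I would record that $\Lax\Fun(\N(\mC),-)$ preserves limits of double $\infty$-categories. Indeed, unwinding the definition, $\Lax\Fun(\N(\mC),\mD)$ is the full subcategory of $\Fun_{\Delta^\op}(\N(\mC)^\dual,\mD^\dual)$ on the functors over $\Delta^\op$ preserving cocartesian lifts of inert morphisms; both $(-)^\dual$ and $\Fun_{\Delta^\op}(\N(\mC)^\dual,-)$ preserve limits, and the condition of preserving inert cocartesian lifts is detected objectwise in the target, hence closed under limits. Applying $\Lax\Fun(\N(\mC),-)$ to the pullback square of double $\infty$-categories of Theorem \ref{corre} therefore yields a pullback square of $\infty$-categories
$$
\begin{xy}
\xymatrix{
\Lax\Fun(\N(\mC),\CORR) \ar[d] \ar[rr] && \Lax\Fun(\N(\mC),\PR^\L) \ar[d]^{} \\
\Lax\Fun(\N(\mC),\Delta_{\infty\Cat}) \ar[rr] && \Lax\Fun(\N(\mC),\Delta_{\Pr^\L}).
}
\end{xy}
$$

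Next I would identify the three remaining corners. Lax maps out of $\N(\mC)$ into the double $\infty$-categories $\Delta_{\infty\Cat}$ and $\Delta_{\Pr^\L}$ have automatically invertible lax structure cells, so for these targets lax maps coincide with strong maps, and the computation appearing in the proof of Corollary \ref{genstr} identifies $\Lax\Fun(\N(\mC),\Delta_{\infty\Cat})\simeq\Fun(\iota(\mC),\infty\Cat)\simeq\infty\Cat_{/\iota(\mC)}$ and $\Lax\Fun(\N(\mC),\Delta_{\Pr^\L})\simeq\Fun(\iota(\mC),\Pr^\L)\simeq\mathrm{LoCART}^\L_{\iota(\mC)}$. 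For the top-right corner I would run the chain of equivalences from the proof of Proposition \ref{sss} with $\PR^\L$ in place of $\infty\CAT$: using Corollary \ref{oho3} (which identifies $\CART^\L\to\infty\Cat$ with $\FUN(\tau^*\N,\PR^\L)$) where the proof of Proposition \ref{sss} uses Proposition \ref{oho2}, the equivalence $\tau^*\N\Env\simeq\N\Env\tau^*$, and the presentable refinement of the Grothendieck construction for locally cartesian fibrations of \cite[Theorem B.4.3.]{Ayala2019StratifiedNG}, this gives $\Lax\Fun(\N(\mC),\PR^\L)\simeq\mathrm{LoCART}^\L_{\mC^\op}$. The pullback square above then reads
$$
\Lax\Fun(\N(\mC),\CORR)\simeq\infty\Cat_{/\iota(\mC)}\times_{\mathrm{LoCART}^\L_{\iota(\mC)}}\mathrm{LoCART}^\L_{\mC^\op},
$$
and since $\iota(\mC^\op)=\iota(\mC)$ the right-hand side is precisely the pullback computing $\infty\Cat_{/\mC^\op}$ in Theorem \ref{corr} applied to the $\infty$-category $\mC^\op$. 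This yields the asserted equivalence.

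The main obstacle lies in the penultimate step. Checking that $\Lax\Fun(\N(\mC),-)$ commutes with the pullback, and that lax maps into $\Delta_{\infty\Cat}$ and $\Delta_{\Pr^\L}$ reduce to functors out of $\iota(\mC)$, is routine once the definitions are unwound. What needs care is the $\PR^\L$-analogue of Proposition \ref{sss}: one must verify that under the equivalence $\CART_{\N\Env(\mathcal D)}\simeq\mathrm{LoCART}_{\mathcal D}$ the presentability of fibers and the cocontinuity of fiber transports correspond on both sides, and one must track the $\tau$-twist carefully so that the several copies of $\mC$ and $\mC^\op$---and the cores $\iota(-)$ arising from the $\Delta_{(-)}$-corners---align exactly with the corners of the square of Theorem \ref{corr}.
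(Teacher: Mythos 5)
Your proposal is correct and follows essentially the same route as the paper's own proof: apply $\Lax\Fun(\N(\mC),-)$ to the pullback square of Theorem~\ref{corre}, identify the remaining corners (using Corollary~\ref{oho3} and the $\PR^\L$-variant of Proposition~\ref{sss}), and conclude via the local–global principle for $\mC^\op$. The only cosmetic difference is the order in which you spell out the steps, and you supply slightly more explicit justification than the paper for the limit-preservation of $\Lax\Fun(\N(\mC),-)$ and for the collapse of lax to strong maps into $\Delta_{\infty\Cat}$ and $\Delta_{\Pr^\L}$.
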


\begin{proof}

%There is a canonical equivalence of $\infty$-categories$$ \Lax\Fun(\N(\mC), \Corr) \simeq \infty\Cat_{/\mC^\op}$$	that restricts to an equivalence $$ \FUN(\N(\mC), \Corr) \simeq \mathrm{EXP}_{\mC^\op}.$$	
By Proposition \ref{sss} (applied to a larger universe) there is a canonical equivalence $$\Lax\Fun(\N(\mC), \infty\widehat{\CAT}) \simeq \mathrm{Lo}\widehat{\mathrm{CART}}_{\mC^\op}$$
that restricts to an equivalence
$$\Lax\Fun(\N(\mC), \PR^\L) \simeq \mathrm{LoCART}^\L_{\mC^\op}.$$
Hence by Corollary \ref{corflat} and Proposition \ref{oho3} we obtain a canonical equivalence 
$$ \Lax\Fun(\N(\mC), \CORR) \simeq \Lax\Fun(\N(\mC), \Delta_{\infty\Cat}) \times_{\Lax\Fun(\N(\mC), \Delta_{\Pr^\L})}\Lax\Fun(\N(\mC), \PR^\L) \simeq $$
$$\Fun(\iota(\mC), \infty\Cat) \times_{\Fun(\iota(\mC),\Pr^\L)}\Lax\Fun(\N(\mC), \PR^\L) \simeq $$
$$\infty\Cat_{/\iota(\mC)} \times_{\mathrm{LoCART}^\L_{\iota(\mC)} } \mathrm{LoCART}^\L_{\mC^\op} \simeq \infty\Cat_{/\mC^\op}.$$		

\end{proof}

\begin{example}\label{exoo}
Let $\mA \to \mC$ be an exponential fibration and $\mB \to \mC$ a functor.
By the defining property of exponentiability the functor
$\mA \times_\mC (-) :  \infty\Cat_{/\mC} \to \infty\Cat_{/\mC}$ admits a right adjoint
$\Fun^\mC(\mA,-).$	
The internal hom $\Fun^{\mC}(\mA,\mB)\to \mC $ classifies a lax normal functor $\mC^\op \to  \CORR$
that sends any morphism $\theta: X \to Y$ in $\mC$
to a pro-functor $$  \Fun^{\mC}(\mA,\mB)_Y \simeq \Fun(\mA_Y,\mB_Y) \to \Fun^{\mC}(\mA,\mB)_X \simeq \Fun(\mA_X,\mB_X)$$ corresponding to a functor
$\gamma: \Fun(\mA_X,\mB_X)^\op \times \Fun(\mA_Y,\mB_Y)  \to \mS$.
The functors $\mA \to \mC, \mB \to \mC$ classify lax normal functors
$\mC^\op \to \CORR$, where the first classifies a functor, that send the morphism $\theta: X \to Y$ to pro-functors
$ \A_Y \to \mA_X, \mB_Y \to \mB_X$ corresponding to functors $\alpha: \mA_X^\op \times \mA_Y \to \mS, \beta: \mB_X^\op \times \mB_Y \to \mS.$
Let $\rho: \mW \to  \mA_X^\op \times \mA_Y$ be the left fibration classified by $\alpha$.
By construction of the Grothendieck-construction 
the functor $\gamma$ sends any pair $(F,G) \in \Fun(\mA_X,\mB_X)^\op \times \Fun(\mA_Y,\mB_Y)$
to the mapping space $ \Fun^{[1]}([1] \times_\mC \mA, [1] \times_\mC \mB)(F,G),$
where the pullbacks are taken along $\theta.$
%For every functor $\mM \to [1]$ and $U \in \mM_0, V \in \mM_1$
%there is a canonical equivalence of spaces $\mM(U,V) \simeq \{(U,V) \} \times_{\mM_0 \times \mM_1}
%\Fun_{[1]}([1],\mM).$
By Theorem \ref{mapinner} the mapping space $ \Fun^{[1]}([1] \times_\mC \mA, [1] \times_\mC \mB)(F,G)$
%Consequently, there is a canonical equivalence of spaces
%$$\Fun^{[1]}([1] \times_\mC \mA, [1] \times_\mC \mB)(F,G) \simeq  \{(F,G) \} \times_{ \Fun(\mA_X,\mB_X) \times \Fun(\mA_Y,\mB_Y)}
%\Fun_{[1]}([1] \times_\mC \mA, [1] \times_\mC \mB).$$
is the limit of the functor $\beta \circ (F^\op \times G) \circ \rho: \mW \to \mS. $

\end{example}

%\begin{corollary}
%Let $\mC \to \mA,\mD \to \mA$ be functors, $\f: \s \to \B$ a morphism in $\mA$ and $\F: \mC_\s \to \mD_\s, \G: \mC_\B \to \mD_\B$ functors. There is a canonical equivalence
%$$\{\f\}\times_{\mA(\s,\B)} \Fun^\mA(\mC,\mD)(\F,\G) \simeq ([1]\times_ \mA \Fun^\mA(\mC,\mD))(\F,\G) \simeq$$$$ \Fun^{[1]}([1]\times_ \mA \mC,[1]\times_ \mA \mD)(\F,\G) \simeq $$$$ \lim(\mC_\s^\op \times \mC_\B \times_{([1]\times_ \mA \mC)^\op \times ([1]\times_ \mA \mC)} \Tw([1]\times_ \mA \mC) \to \mC_\s^\op \times \mC_\B \xrightarrow{\F^\op \times \G} \mD_\s^\op \times \mD_\B$$$$ \to ([1]\times_ \mA \mD)^\op \times ([1]\times_ \mA \mD) \xrightarrow{([1]\times_ \mA \mD)(-,-)} \mS).$$\end{corollary}

\bibliographystyle{plain}

\bibliography{ma}

\end{document}